\def\ie{{\itshape i.e.},\/}
\def\adhoc{{\itshape{ad hoc}}\/}
\def\resp{\textit{resp. }}
\def\MINLP{(A)}
\def\rec{\textnormal{rs}}
\def\strip{\textnormal{ss}}
\def\feed{\textnormal{in}}
\def\upbnd{\textnormal{up}}
\def\lobnd{\textnormal{lo}}
\def\bin{\textnormal{bin}}
\def\polydeg{\textnormal{deg}}
\def\FC{\mathit{FC}}
\def\FR{\mathit{FR}}
\def\vapduty{\mathit{VD}}
\newtheorem{corollary}{Corollary}
\newtheorem{proposition}{Proposition}
\newtheorem{remark}{Remark}
\newtheorem{lemma}{Lemma}
\newtheorem{definition}{Definition}
\newtheorem{example}{Example}
\DeclareMathOperator{\Conv}{Conv}
\DeclareMathOperator{\Relax}{Relax}
\DeclareMathOperator{\proj}{proj}
\newcounter{eqnind}
\newcounter{eqnalind}
\newenvironment{eqn}
{\stepcounter{eqnind}%
	\addtocounter{equation}{-1}%
	\renewcommand\theequation{A\arabic{eqnind}}\equation}
{\endequation}
\newenvironment{eqnal}
{\setcounter{eqnalind}{\theequation}
	\setcounter{equation}{\theeqnind}%
	\renewcommand\theequation{A\arabic{equation}}
	\align}
{\endgather\setcounter{eqnind}{\value{equation}}\setcounter{equation}{\theeqnalind}}
\title{Advances in MINLP to Identify Energy-efficient Distillation Configurations}
\author{
 Radhakrishna Tumbalam Gooty, Rakesh Agrawal \\
  Davidson School of Chemical Engineering\\
  Purdue University\\
  West Lafayette, IN 47907 \\
  \texttt{rtumbala@purdue.edu, agrawalr@purdue.edu} 
  \And
 Mohit Tawarmalani \\
  Krannert School of Management\\
  Purdue University\\
  West Lafayette, IN 47907 \\
  \texttt{mtawarma@purdue.edu} \\
}
\begin{document}
\maketitle
\begin{abstract}
In this paper, we describe the first mixed-integer nonlinear programming (MINLP) based solution approach that successfully identifies the most energy-efficient distillation configuration sequence for a given separation. Current sequence design strategies are largely heuristic. The rigorous approach presented here can help reduce the significant energy consumption and consequent greenhouse gas emissions by separation processes, where crude distillation alone is estimated to consume 6.9 quads of energy per year globally \citep{sholl2016seven}. The challenge in solving this problem arises from the large number of feasible configuration sequences and because the governing equations contain non-convex fractional terms. We make several advances to enable solution of these problems. First, we model discrete choices using a formulation that is provably tighter than previous formulations. Second, we highlight the use of partial fraction decomposition alongside Reformulation-Linearization Technique (RLT). Third, we obtain convex hull results for various special structures. Fourth, we develop new ways to discretize the MINLP. Finally, we provide computational evidence to demonstrate that our approach significantly outperforms the state-of-the-art techniques.
\end{abstract}

\keywords{Multicomponent Distillation \and Fractional Program \and Reformulation-Linearization Technique (RLT) \and Piecewise Relaxation}

\section{Introduction}
\label{sec:Introduction}
Separation of mixtures of chemical components is ubiquitous in all chemical and petrochemical industries. Among the numerous technologies available for separation of multicomponent mixtures (three or more components) into almost pure components, distillation is the predominant choice. A few well-known applications include fractionation of crude oil \citep{sholl2016seven}, production of ultra pure nitrogen and oxygen from air \citep{agrawal1991}, Natural Gas Liquid (NGL) recovery, Benzene-Toluene-Xylene (BTX) separation, etc. It is estimated that distillation accounts for 90 -- 95\% of the liquid phase separations in the US \citep{humphrey}. With the increased potential to harness shale reserves \citep{siirola,ridha}, the use of distillation is projected to increase further. Industrial distillations are energy intensive, and the energy consumed constitutes about 40 -- 60\% of the total operating cost \citep{humphrey}. Since energy consumed affects the effective fuel consumption, suboptimal configurations also tend to release more CO\textsubscript{2}. In this article, we develop the first tractable approach to solve distillation sequencing via Mixed-integer Nonlinear Programming (MINLP) techniques.

Mixtures are separated in a \textit{distillation configuration} consisting of a series of distillation columns/towers (see Figure \ref{fig:conDC}) arranged to carry out the separation in a specific order (see Figure \ref{fig:exconfiguration} for an example). The number of admissible configurations grows rapidly with the number of components in the given mixture. For example, over half a million alternative configurations are admissible for separating a six component mixture \citep{shah2010}. Besides the number of choices, the nonconvex fractional terms used to model the minimum energy requirement make this problem hard to solve. Prior to this work, the state-of-the-art methods are unable optimize the material flows even for a specific configuration. As such, conventional design practices are based on heuristics and intuition of the process engineer, and they often result in suboptimal solutions.  

To address these challenges, we develop a new mixed-integer nonlinear programming (MINLP) based approach and make several advances. First, we develop a new model for the space of admissible configurations, by incorporating convex hulls of various substructures. The resulting formulation is provably tighter than those in the literature \citep{caballero2006,giridhar2010b,tumbalamgooty2019}. Second, we show that polynomial division and partial fraction decomposition can significantly improve the quality of relaxations developed using the classical Reformulation-Linearization Technique (RLT) \citep{sherali1992new} when the constraints involve fractional terms. Third, we develop simultaneous convex hulls of multiple nonlinear terms over a polytope obtained by intersecting bounds on variables with material balance equations. Fourth, we derive the first rigorous relaxation for distillation sequencing. The governing equations for this problem involve fractions, whose denominator can approach zero. To sidestep this issue, the literature has imposed an \adhoc{} bound on the denominator. This approach, however, can prune optimal solutions when some component flows are small. Instead, using the cuts from the RLT variant described above, our approach derives a rigorous relaxation for the problem. 

In \textsection \ref{sec:Preliminaries}, we briefly describe the key concepts of multicomponent distillation, and survey the current literature. \textsection \ref{sec:Prob-Definition} defines the problem statement and introduces relevant notation. We formulate the MINLP in \textsection \ref{sec:Prob-Formulation}, and outline the overall relaxation and solution procedure in \textsection \ref{sec:Relaxation}. 
We report on computational experiments in \textsection \ref{sec:Comp-Results}. Finally, we make concluding remarks in \textsection \ref{sec:Conclusion}.

\begin{figure}
	\centering
	\includegraphics[scale=1]{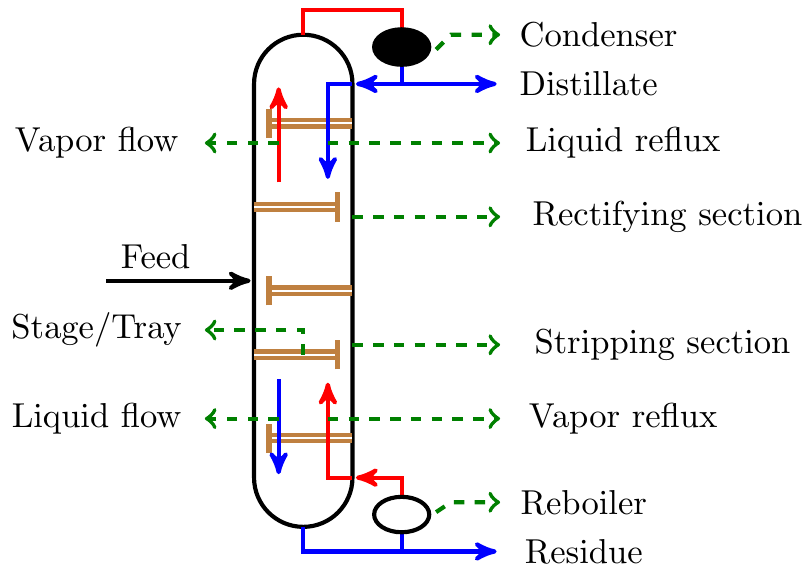}
	\caption{Schematic of a conventional distillation column. Double-lines in brown, known as \textit{trays/stages}, establish contact between the vapor (red arrows) and liquid (blue arrows) for mass transfer. Here, and in the rest of the article, condensers and reboilers are denoted by filled and open ellipses (or circles), respectively.}
	\label{fig:conDC}
\end{figure}

\section{The Distillation Process}
\label{sec:Preliminaries}
Distillation is a way to separate mixtures, consisting of two or more components with different relative volatilities, by boiling the mixture so that the vapor produced is rich in more volatile (or \textit{light}) components, while the residual liquid is enriched in less volatile (or \textit{heavy}) components. Industrial distillation is carried out in a staged-tower/column (see Figure \ref{fig:conDC}), where each stage establishes liquid-vapor contact for mass transfer. The \textit{feed} (mixture of components) is introduced at an intermediate location of the column. The \textit{sections} above and below the feed stream are known as \textit{rectifying} and \textit{stripping sections}, respectively. Conventional columns have a condenser (\resp{} reboiler) at the top (\resp{} bottom) which condenses (\resp{} vaporizes) the vapor (\resp{} liquid), and feeds a portion of it back to the column, known as \textit{liquid} (\resp{} \textit{vapor}) \textit{reflux}. The liquid flowing from the top to bottom strips away heavy components from the vapor, while the vapor flowing from bottom to top gets enriched with lighter components. The net outflow from the rectifying and stripping sections, respectively, are known as \textit{distillate} and \textit{residue}. In short, distillation enriches the distillate with light components, and the residue with heavy components.
\begin{remark}
	\label{rem:recovery}
	The recovery of a lighter component in distillate (ratio of component flowrate in distillate to flowrate in feed) is higher than the recovery of a heavier component, and the converse is true for residue \citep{nallasivam2016,mathewtony}. \qed
\end{remark}

A given $N-$component mixture, referred to as the \textit{process feed}, is separated into $N$ constituent components using a sequence of distillation columns (see Figure \ref{fig:exconfiguration}, for example). Let $C_i\ldots C_j$ denote an intermediate stream (referred to as \textit{submixture}), where components are sorted in a decreasing order of relative volatilities, and $C_p$ denotes the $p^{\text{th}}$ component in the process feed. Each column \textit{splits} a feed submixture into two product submixtures, each of which has at least one component less than the feed. The composition of the product submixtures governs the threshold vapor requirement for the column. This requirement can be determined using the classical Underwood method \citep{underwood}, as long as relative volatilities are constant, each section has infinite stages, and there is constant molar overflow. As shown in Figure \ref{fig:exconfiguration}, condensers and reboilers can be replaced with two-way vapor-liquid transfer streams known as \textit{thermal couplings}, so that the required liquid/vapor reflux is borrowed from other columns. Since thermal couplings allow vapor to be transferred between two or more columns, a column may be operated above its threshold vapor requirement to supply the vapor to another column. 
We remark that configurations with many thermal couplings may be hard to control \citep{agrawal2000thermally} and require hot and/or cold utilities at extreme temperatures. Although we do not explicitly model these issues, configurations with few thermal couplings can easily be found by simple changes to our formulation.

\begin{figure}
	\centering
	\includegraphics[scale=0.8]{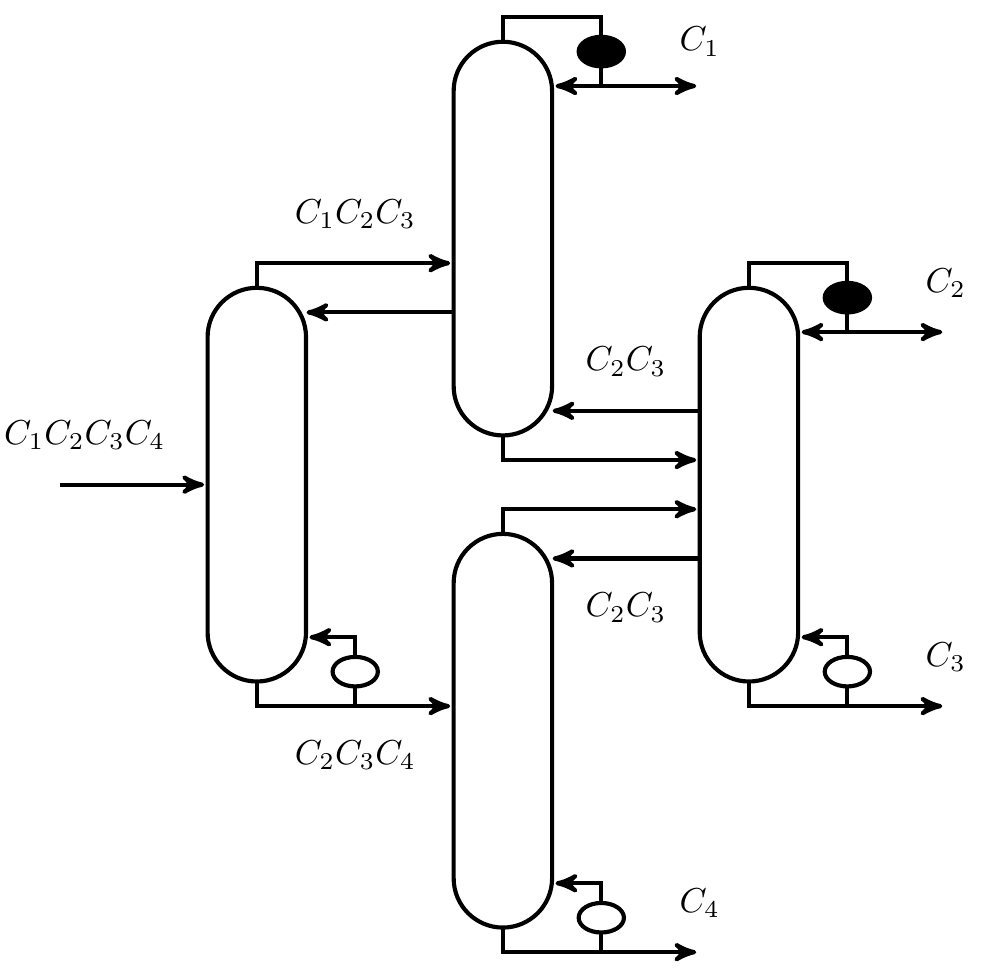}
	\caption{Example of a distillation configuration. $C_i\dots C_j$ denotes a mixture, and each $C_p$ corresponds to a distinct chemical component. $C_1\dots C_4$ is the process feed, and intermediate mixtures $C_1C_2C_3$, $C_2C_3C_4$ and $C_2C_3$ are referred as submixtures.}
	\label{fig:exconfiguration}
\end{figure}

For above ambient distillation, the required vapor flow is generated at reboilers by a hot utility. By adding these vapors, we obtain the \textit{vapor duty} of the configuration, which is often used as a proxy for its energy consumption and operating cost. The vapor duty indirectly affects the capital cost as well, since internal vapor flows dictate column diameters. For these reasons, we will minimize vapor duty, an objective that has also been used in previous studies \citep{fidkowski,fidkowski4compftc,nallasivam2016}. Industrial practitioners may instead be interested in minimizing the total annualized cost (capital plus operating costs), or maximizing the thermodynamic efficiency. The model we propose can be tailored to the desired objective by appending the relevant constraints and modifying the objective as in \citet{jiang2019exergy,jiang2019cost}.

Given its importance, this problem has been studied extensively, but has resisted formal solution guarantees. \citet{caballero2004,caballero2006} formulated an MINLP to identify configurations with lowest total annualized cost, but did not certify global optimality. Given the non-convexity, these local approaches do not always find optimal designs \citep{nallasivam2013,jiang2019cost}. \citet{giridhar2010b} proposed an alternate MINLP formulation to minimize vapor duty, and solved it using BARON \citep{tawarmalani2005} for three and four-component mixtures. However, their methodology does not scale to five component mixtures. \citet{nallasivam2016} enumerated all configurations, and solved a nonlinear program for each using BARON. However, some configurations fail to converge. The current state-of-the-art formulation of \citet{tumbalamgooty2019} still fails to converge on 36\% of the MINLP instances.  

\section{Problem Definition}
\label{sec:Prob-Definition}
Figure \ref{fig:superstructure} shows all possible streams and heat exchangers in a distillation configuration that separates a four-component mixture into pure components. We represent streams as squares, condensers as filled circles and reboilers as open circles. Each condenser/reboiler is associated with a process stream, that is not the process feed $C_1\dots C_N$. Throughout the formulation, we denote a stream $C_i\dots C_j$ as $[i,j]$, and heat exchangers as $(i,j)$, so that condenser $(i,j)$ (\resp reboiler $(i,j)$) represents the heat exchanger through which $[i,j]$ is withdrawn as distillate (\resp residue). By Remark \ref{rem:recovery}, a configuration cannot contain streams of the form $C_i\dots C_kC_{k+l}\dots C_j$, where $l>1$. 

\begin{figure}[h]
	\centering
	\includegraphics[scale=1]{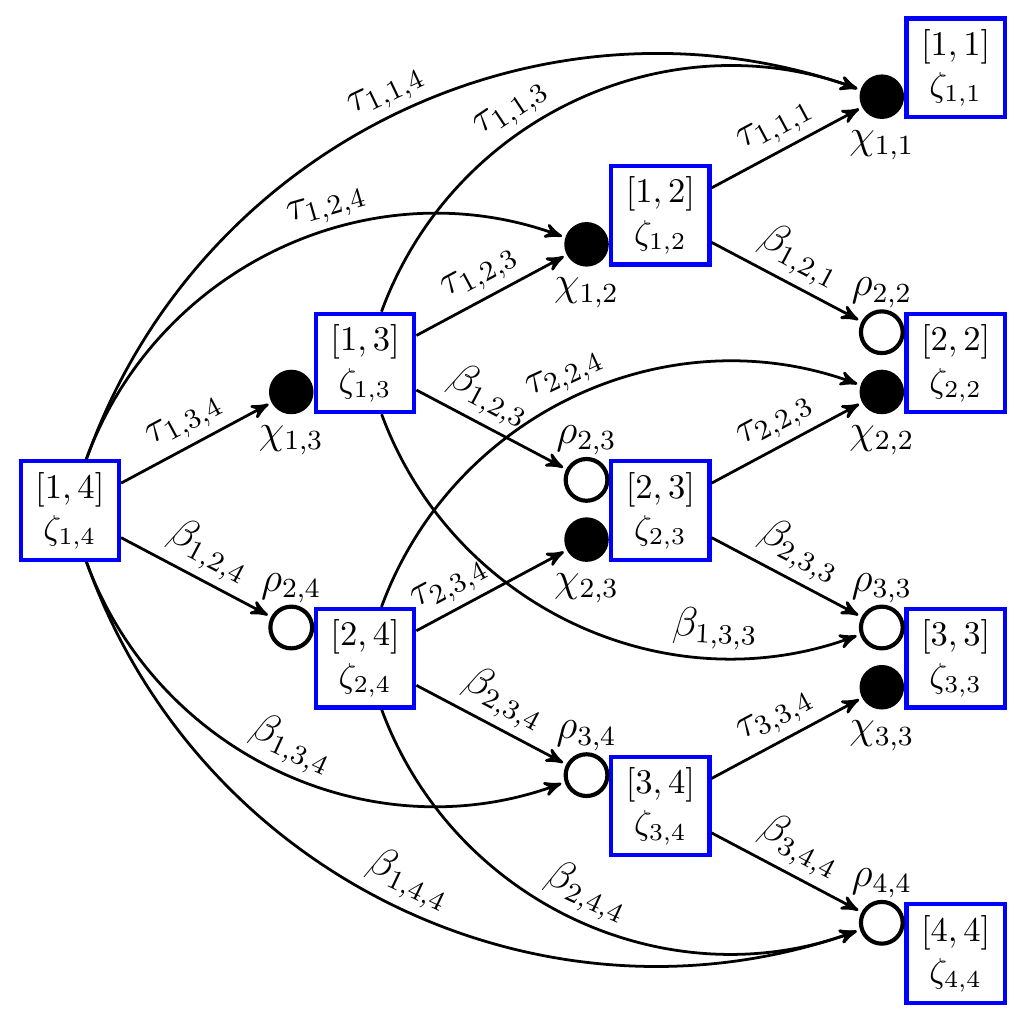}
	\caption{Figure depicting streams ($\zeta_{i,j}$), reboilers ($\rho_{i,j}$) and condensers ($\chi_{i,j}$) present in a four-component system. Section variables $\tau_{i,k,j}$ and $\beta_{i,l,j}$ are defined in \eqref{eq:secvardef}. }
	\label{fig:superstructure}
\end{figure}

We denote the set of streams as $\mathcal{T}$, the set of condensers as $\mathcal{C}$, and the set of reboilers as $\mathcal{R}$ (see Table \ref{tab:setdef} for definition). For convenience, we create a set containing streams that are mixtures $\mathcal{P}=\mathcal{T} \setminus\{[1,1],\dots,[N,N]\} $, and a set containing submixtures $\mathcal{S} = \mathcal{P}\setminus \{[1,N]\}$. Note that every stream in $\mathcal{P}$ is a mixture, and must undergo a split in order to produce products.

\begin{table}[ht]
	\centering
	\renewcommand\arraystretch{1.5}
	\begin{tabular}{lcl}
		\toprule
		\textbf{Set}		&	\textbf{Symbol} &	\textbf{Definition}\\
		\midrule
		Streams &	$\mathcal{T}$	&	$\{[i,j]:\;1\leq i\leq j\leq N\}$\\
		Splits	&	$\mathcal{P}$	&	$\mathcal{T}\setminus\{[i,i]\}_{i=1}^{N}$\\
		Submixtures & $\mathcal{S}$	&	$\mathcal{P}\setminus\{[1,N]\}$\\
		Condensers & $\mathcal{C}$		&	$\{(i,j):\;1\leq i\leq j \leq N-1\}$\\
		Reboilers & $\mathcal{R} $		&	$\{(i,j):\;2\leq i\leq j\leq N\}$\\
		\bottomrule
	\end{tabular}
	\caption{Definition of sets.}
	\label{tab:setdef}
\end{table}

The required input to the problem consists of (1) composition of the process feed $\{F_p\}_{p=1}^{N}$ either in terms of mole fractions or molar flowrates of the components in the stream, (2) relative volatilities $\{\alpha_p\}_{p=1}^{N}$ (such that $\alpha_{N} < \dots < \alpha_1$) of its constituent components; and (3) liquid fraction (fraction of the total flow in liquid phase) of the process feed $\Phi_{1,N}$ and that of the pure components $\{\Phi_{i,i}\}_{i=1}^N$. We write $\{p\}_{p=1}^{N}$ or $\{p\}_{1 \le p \le n}$ to denote the set $\{1,2,\dots,N\}$, and $\llbracket p\rrbracket_{1}^N $ to denote $\forall \; p\in \{1,\dots,N\}$.
Given a process feed, the problem is then to identify the best distillation configuration, along with its optimal operating conditions, that requires least vapor duty.

\section{Problem Formulation}
\label{sec:Prob-Formulation}
We formulate the MINLP in this section. Before proceeding further, we introduce the definition of \textit{parents} and \textit{children} of a stream. By \textit{top} (\resp \textit{bottom}) \textit{parents} of $[i,j]$: we refer to streams $\{[i,n]\}_{n=j+1}^{N}$ (\resp $\{[m,j]\}_{m=1}^{i-1}$) which can produce $[i,j]$ as distillate (\resp residue). Analogously, by \textit{top} (\resp \textit{bottom}) \textit{children} of $[i,j]$, we refer to streams $\{[i,k]\}_{k=i}^{j-1}$ (\resp $\{[l,j]\}_{l=i+1}^j$) which can be produced as distillate (\resp residue) from $[i,j]$. For conciseness, we write $[i,j]\uparrow[i,k]$ (\resp $[i,j]\downarrow[l,j]$) to denote stream $[i,k]$ (\resp $[l,j]$) is produced as the distillate (\resp residue) from $[i,j]$, and $[i,k]/[l,j]$ to denote $[i,k]$ and $[l,j]$ are produced as the distillate and residue from $[i,j]$.

\subsection{Objective Function}
The objective is to determine the configuration(s) which minimizes the total vapor duty:
\begin{eqn}
	\text{\MINLP{}: Minimize  } \sum_{(i,j)\in \mathcal{R}} \FR_{i,j},
	\label{eq:objectivefun}
\end{eqn}
where $\FR_{i,j}$ is the vapor flow generated in reboiler $(i,j)$. The MINLP we develop will be denoted as MINLP \MINLP{}, and the constraints will be numbered as (A\#).

\subsection{Space of Admissible Configurations}\label{sec:space_of_admissible_configurations}
We define \textit{column/stream} binary variables so that $\forall\;[i,j]\in \mathcal{T}$, $\zeta_{i,j} = 1$ if $[i,j]$ is present and 0 otherwise. Further, we define binary variables associated with the presence/absence of condensers and reboilers so that $\forall\;(i,j)\in \mathcal{C}$ (\resp{} $\forall\;(i,j)\in \mathcal{R}$), $\chi_{i,j} = 1$ (\resp{} $\rho_{i,j}=1$) if condenser (\resp{} reboiler) $(i,j)$ is present and 0 otherwise (See Table \ref{tab:setdef} for set definitions). Although these variables suffice \citep{tumbalamgooty2019}, we introduce auxiliary variables to derive a tighter representation. 

For every $[i,j]\in \mathcal{P}$, we define \textit{section} variables $\{\tau_{i,k,j}\}_{k=i}^{j-1}$ and $\{\beta_{i,l,j}\}_{l=i+1}^j$, such that $\tau_{i,k,j}=\{1,\text{ if }[i,j]\uparrow [i,k];\,0,\text{ otherwise}\}$ and $\beta_{i,l,j}=\{1,\text{ if }[i,j]\downarrow [l,j];\,0,\text{ otherwise}\}$. In other words, section variables model distillate and residue streams from a mixture. Figure \ref{fig:superstructure} shows all the section variables for a four-component mixture. We now relate column and section variables. Consider the split of stream $[i,j]$. In configurations of interest, known as \textit{regular-column configurations}, if $[i,j]\uparrow[i,k]$, for any $i \le k \le j-1$, then $[i,j]$ and $[i,k]$ must be present and $\{[i,n]\}_{n=k+1}^{j-1}$ must be absent \citep{caballero2006,giridhar2010b}. Analogously, if $[i,j]\downarrow[l,j]$, for any $i+1 \le l \le j$, $[i,j]$ and $[l,j]$ must be present, while $\{[m,j]\}_{m=i+1}^{l-1}$ must be absent. Therefore, section variables are defined as
\begin{equation}
	\allowdisplaybreaks
	\begin{split}
		\tau_{i,k,j}  & = \zeta_{i,j}(1-\zeta_{i,j-1})\dots (1-\zeta_{i,k+1})\zeta_{i,k}\\
		& = \prod_{n=k+1}^{j-1}(1-\zeta_{i,n})-\prod_{n=k}^{j-1}(1-\zeta_{i,n})-\prod_{n=k+1}^{j}(1-\zeta_{i,n})+\prod_{n=k}^{j}(1-\zeta_{i,n}), \\
		\beta_{i,l,j} & = \zeta_{i,j}(1-\zeta_{i+1,j})\dots (1-\zeta_{l-1,j})\zeta_{l,j}\\
		& = \prod_{m=i+1}^{l-1}(1-\zeta_{m,j})-\prod_{m=i}^{l-1}(1-\zeta_{m,j})-\prod_{m=i+1}^{l}(1-\zeta_{m,j})+\prod_{m=i}^{l}(1-\zeta_{m,j}).
	\end{split}
	\label{eq:secvardef}
\end{equation}
We introduce variables $\{\nu_{i,k,j}:\;1\leq i\leq k\leq j\leq N\}$ and $\{\omega_{i,l,j}:\;1\leq i\leq l\leq j\leq N\}$ to linearize \eqref{eq:secvardef}:
\begin{eqnal}
	\label{eq:secvar}
	\text{for }\; [i,j] \in \mathcal{P}\quad \left\{\begin{aligned}
		\tau_{i,k,j}  & = \nu_{i,k+1,j-1}-\nu_{i,k,j-1}-\nu_{i,k+1,j}+\nu_{i,k,j},\quad \llbracket k\rrbracket_i^{j-1}\\
		\beta_{i,l,j} & = \omega_{i+1,l-1,j}-\omega_{i,l-1,j}-\omega_{i+1,l,j}+\omega_{i,l,j},\quad \llbracket l\rrbracket_{i+1}^{j},
	\end{aligned}\right.
\end{eqnal}
where $\nu_{i,k,j} = \prod_{n=k}^j (1-\zeta_{i,n})$ and  $\omega_{i,l,j}=\prod_{m=i}^l (1-\zeta_{m,j})$. Note that $\nu_{i,k+1,j-1}$ (\resp $\omega_{i+1,l-1,j}$) are defined as one if $k+1=j$ (\resp $i+1=l$). Clearly, $\nu_{i,k,j}=\omega_{i,l,j}=1-\zeta_{i,j}$ if $k=j$ and $l=i$. Besides this relationship, the introduced variables $\nu_{i,k,j}$ and $\omega_{i,l,j}$ are linearly independent.
To see this, note that $\prod_{j\in J} x_j$, where $J \subseteq \{1,\dots,n\}$ are linearly independent and, therefore, so are $\prod_{j\in J} (1-y_j)$, where $y_j =1-x_j$. Since $\nu_{i,k,j}$ and $\omega_{i,l,j}$ are of the latter form, they are linear independent.

We now relax $\nu_{i,k,j}$ and $\omega_{i,l,j}$ variables for $k\ne j$ and $l \ne i$ as follows. Since $\zeta_{i,j}$ is binary, $(1-\zeta_{i,j})^2=(1-\zeta_{i,j})$. We use the definition of $\nu_{i,k,j}$ and $\omega_{i,l,j}$, to derive the following:
\begin{gather}
	\text{for }\; [i,j]\in \mathcal{P} \quad \left\{\begin{gathered}
		\nu_{i,k,j} = \nu_{i,k,m}\nu_{i,n,j},\quad \llbracket n \rrbracket_{k+1}^{m+1},\; \llbracket m\rrbracket_{k}^{j-1}, \; \llbracket k\rrbracket_i^{j-1}\\
		\omega_{i,l,j} = \omega_{i,m,j}\omega_{n,l,j},\quad \llbracket n \rrbracket_{i+1}^{m+1},\; \llbracket m\rrbracket_{i}^{l-1}, \; \llbracket l\rrbracket_{i+1}^{j}.
	\end{gathered}\right.
	\label{eq:uwprod}
\end{gather}
In the above, for $n \le m+1$, $\nu_{i,n,m}$ (\resp $\omega_{n,m,j}$) is a common factor for both $\nu_{i,k,m}$ and $\nu_{i,n,j}$ (\resp $\omega_{i,m,j}$ and $\omega_{n,l,j}$). we regard $\nu_{i,n,m}$ and $\omega_{n,m,j}$ as one if $n= m+1$. Thus, $0 \le \nu_{i,k,m} \le \nu_{i,n,m}$, $0 \le \nu_{i,n,j} \le \nu_{i,n,m}$, $0 \le \omega_{i,m,j} \le \omega_{n,m,j}$, and $0 \le \omega_{n,l,j} \le \omega_{n,m,j}$. Using these bounds, we relax \eqref{eq:uwprod} as:
\begin{eqnal}
	\text{for }\; [i,j]\in \mathcal{P} \; \left\{
	\begin{aligned}
		& \nu_{i,j,j} = \omega_{i,i,j} = 1-\zeta_{i,j}\\
		& \max\{0,\nu_{i,k,m}+\nu_{i,n,j}-\nu_{i,n,m}\} \leq \nu_{i,k,j}\leq \min\{\nu_{i,k,m},\nu_{i,n,j}\},\quad \llbracket n \rrbracket_{k+1}^{m+1},\; \llbracket m\rrbracket_{k}^{j-1}, \; \llbracket k\rrbracket_i^{j-1} \\
		& \max\{0,\omega_{i,m,j}+\omega_{n,l,j}-\omega_{n,m,j}\}\leq\omega_{i,l,j}\leq \min\{\omega_{i,m,j},\omega_{n,l,j}\},\quad \llbracket n \rrbracket_{i+1}^{m+1},\; \llbracket m\rrbracket_{i}^{l-1}, \; \llbracket l\rrbracket_{i+1}^{j},
	\end{aligned}\right.
	\label{eq:mccor}
\end{eqnal}
where we used $\nu_{i,k,m} = \nu_{i,k,m}\nu_{i,n,m}$, $\nu_{i,n,j} = \nu_{i,n,j}\nu_{i,n,m}$, $\omega_{i,m,j}=\omega_{i,m,j}\omega_{n,m,j}$, and $\omega_{n,l,j}=\omega_{n,l,j}\omega_{n,m,j}$.

\begin{proposition}
	Let $S = \{(x,z) \in [0,1]^{2n}\; | \; z_j = \prod_{k=1}^{j} x_k, \; \llbracket j\rrbracket_1^{n}\}$. The convex hull of $S$, $\Conv(S)$, is the intersection of convex hulls of $z_j = z_{j-1} \cdot x_j$, $\llbracket j\rrbracket_2^{n}$ over $[0,1]^2$ (McCormick relaxation). 
	\label{prop:bin-simultaneous-hull}
\end{proposition}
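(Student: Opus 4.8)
The plan is to prove the two inclusions separately, the substantial one by induction on $n$. Write $M$ for the claimed description of $\Conv(S)$: the set of $(x,z)\in[0,1]^{2n}$ with $z_1=x_1$ and, for each $j=2,\dots,n$, the four McCormick inequalities $0\le z_j\le z_{j-1}$ and $z_{j-1}+x_j-1\le z_j\le x_j$ of the term $z_j=z_{j-1}x_j$. The inclusion $\Conv(S)\subseteq M$ is immediate: every $(x,z)\in S$ has $z_j=z_{j-1}x_j$ with $z_j,z_{j-1},x_j\in[0,1]$, hence satisfies each McCormick inequality, and $M$ is convex as a finite intersection of halfspaces.

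For $M\subseteq\Conv(S)$ it suffices to show $M\subseteq\Conv(V)$, where $V:=\{(x,z)\in S:\ x\in\{0,1\}^n\}$ (on such points $z$ is automatically $0/1$-valued), since $V\subseteq S$. I would establish the stronger identity $M_k=\Conv(V_k)$ for all $k$ by induction, the subscript recording the number of $x$-variables (so $M_n=M$). The engine is a lemma, to be stated and proved once: if $C=\Conv(W)$ for a finite set $W$ on whose points a distinguished coordinate $z_{k-1}$ takes values only in $\{0,1\}$, then adjoining a fresh variable $x_k\in[0,1]$ and the product $z_k=z_{k-1}x_k$ yields $\Conv(\widehat W)=\{(u,x_k,z_k):u\in C,\ (z_{k-1},x_k,z_k)\ \text{satisfies McCormick over }[0,1]^2\}$, where $\widehat W=\{(w,e,a_w e):w\in W,\ e\in\{0,1\}\}$ and $a_w\in\{0,1\}$ is the $z_{k-1}$-entry of $w$. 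Granting it, the induction is short: the base $n=1$ is $M_1=\{(x_1,x_1):x_1\in[0,1]\}=\Conv(V_1)$; for the step, apply the lemma with $C=M_{n-1}=\Conv(V_{n-1})$, using that the inequalities defining $M_n$ that do not involve $x_n,z_n$ are exactly those defining $M_{n-1}$, that $z_{n-1}$ is $0/1$-valued on $V_{n-1}$, and that $\widehat{V_{n-1}}=V_n$. Chaining $\Conv(V)\subseteq\Conv(S)\subseteq M=\Conv(V)$ then gives the proposition.

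The real content is the lemma, and I expect its second inclusion to be the step needing care. One inclusion is routine (the generators of $\widehat W$ clearly lie in the right-hand side, which is convex). For the other, take $(u^*,x_k^*,z_k^*)$ in the right-hand side, write $u^*=\sum_t\lambda_t w^{(t)}$ with $a_t\in\{0,1\}$ the $z_{k-1}$-entry of $w^{(t)}$ (so $\sum_t\lambda_t a_t=z_{k-1}^*$), and split each weight as $\mu_{t,1}=\lambda_t\phi_t$, $\mu_{t,0}=\lambda_t(1-\phi_t)$ with $\phi_t\in[0,1]$, assigning $\mu_{t,e}$ to $(w^{(t)},e,a_t e)$. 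This reproduces $u^*$ automatically and turns the two new coordinates into $\sum_t\lambda_t\phi_t$ and $\sum_{t:a_t=1}\lambda_t\phi_t$, which must equal $x_k^*$ and $z_k^*$; since the groups $\{a_t=1\}$ and $\{a_t=0\}$ carry total weight $z_{k-1}^*$ and $1-z_{k-1}^*$, taking $\phi_t$ constant on each group works precisely when $0\le z_k^*\le z_{k-1}^*$ and $0\le x_k^*-z_k^*\le 1-z_{k-1}^*$ (with the obvious reading when $z_{k-1}^*\in\{0,1\}$) --- and these are exactly the four McCormick inequalities of $z_k=z_{k-1}x_k$ assumed at the point. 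The obstacle to flag is precisely what the lemma circumvents: one cannot run the same induction from the weaker hypothesis $M_{k-1}=\Conv(S_{k-1})$, because a generic representation of $u^*$ may put mass on non-$0/1$ values of $z_{k-1}$ that admit no $x_k$ completing the product $z_k=z_{k-1}x_k$; keeping the inductive hypothesis in the vertex form $\Conv(V_k)$ is exactly what makes the weight-splitting go through.
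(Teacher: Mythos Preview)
Your argument is correct. The induction with the lifting lemma goes through exactly as you sketch: the weight-splitting works because the McCormick inequalities are precisely the conditions $0\le z_k^*\le z_{k-1}^*$ and $0\le x_k^*-z_k^*\le 1-z_{k-1}^*$ needed for $\phi_1=z_k^*/z_{k-1}^*$ and $\phi_0=(x_k^*-z_k^*)/(1-z_{k-1}^*)$ to lie in $[0,1]$, and the degenerate cases $z_{k-1}^*\in\{0,1\}$ are handled by the same inequalities forcing $z_k^*=0$ or $z_k^*=x_k^*$.

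Your route is genuinely different from the paper's. The paper identifies the extreme points of $S$ globally---they lie in the faces $S_i=\{x_1=\cdots=x_{i-1}=1,\ x_i=0\}$ for $i\le n$ together with the all-ones point---then builds $\Conv(S)$ as a disjunctive union in a lifted space with multipliers $\lambda^i$ and disaggregated copies $x_j^i$, and finally projects out those auxiliary variables via Fourier--Motzkin to recover the McCormick inequalities. Your approach is local: you never introduce auxiliary variables, and instead prove directly that appending one McCormick block preserves the hull whenever the shared factor is $0/1$-valued on the current vertex set. What the paper's approach buys is an explicit extended formulation and a visible connection to the disjunctive machinery used elsewhere in the article (and to the $\gamma$-acyclic multilinear polytope literature it cites). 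What your approach buys is a shorter, projection-free argument whose lemma is modular: it applies verbatim whenever one multiplies by a variable that is binary on the current extreme points, which is exactly the running-intersection structure underlying the recursive McCormick result.
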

\begin{proof}
	See \textsection \ref{sec:ecom:acyclic-proof} in the appendix.
\end{proof}

We remark that the result in Proposition \ref{prop:bin-simultaneous-hull} also follows from Theorem 10 in \citet{del2018multilinear}. Our proof is, however, different and elementary. We mention that this proof shows a previously unobserved connection to the recursive McCormick procedure. Our proof can be used to show that the recursive McCormick procedure, with a few additional linearization variables, yields the convex hull of the multilinear polytopes for $\gamma$-acyclic hypergraphs, as obtained in \citet{del2018multilinear}.

\begin{remark}
	\label{rem:linearization}
	Proposition \ref{prop:bin-simultaneous-hull} shows that the set of $\nu$ (\resp $\omega$) variables satisfying \eqref{eq:mccor} belong to the intersection of simultaneous convex hulls of $(\nu_{i,j,j+1},\dots,\nu_{i,j,N},\nu_{i,j,j},\dots,\nu_{i,N,N})$ for all $[i,j] \in \mathcal{T}\setminus\{[k,N]\}_{k=1}^N $ (\resp $(\omega_{1,2,j},\dots,\omega_{1,i,j},\omega_{1,1,j},\dots,\omega_{i,i,j})$ for all $ [i,j] \in \mathcal{T}\setminus\{[1,l]\}_{l=1}^N $).\qed
\end{remark}

\begin{remark}
	\label{rem:linearization-tb}
	For every $[i,j] \in \mathcal{P}$, $\llbracket k \rrbracket_{i}^{j-1}$ (\resp $\llbracket l \rrbracket_{i+1}^j$), the convex hull of $\tau_{i,k,j}$ (\resp $\beta_{i,l,j}$) over $(\zeta_{i,k},\dots, \zeta_{i,j}) \in [0,1]^{j-k+1}$ (\resp $(\zeta_{i,j},\dots,\zeta_{l,j}) \in [0,1]^{l-i+1}$) is implied by \eqref{eq:secvar} and \eqref{eq:mccor}. (see \textsection \ref{sec:ecom:tau-hull-proof} for the proof). \qed
\end{remark}

We now describe the constraints to model the space of admissible distillation configurations.

\subsubsection{Presence of process feed and products}
Every admissible configuration has the process feed ($[1,N]$) and the pure components ($\{[i,i]\}_{i=1}^{N}$), \ie{}
\begin{eqn}
	\zeta_{1,N}=\zeta_{1,1}= \dots \zeta_{N,N}=1.
	\label{eq:feedprod}
\end{eqn}
To restrict the search to a subset of configurations, for example, in order to retrofit an existing design, we may 
explicitly include (\resp eliminate) a specific submixture $[i,j]$ by setting $\zeta_{i,j} = 1$ (\resp $\zeta_{i,j} = 0$). We show next that $\zeta_{i,j}$ variables are affinely related to $\tau_{i,k,j}$ and $\beta_{i,l,j}$ variables.

\begin{proposition}
	\label{prop:invertible-transform}
	Let, $x \in [0,1]^n$, $y_{i,j} = (1-x_i) x_{i+1}\dots x_{j-1} (1-x_j)$ for $1 \le i < j \le n$, $z_{i,j} = \prod_{r = i}^j x_r$ for $1 \le i \le j \le n$, and $x_n=0$, which in turn implies that $z_{i,n}=0$ for $1\le i \le n$. Then, there is an invertible affine transformation between $\{y_{i,j}\}_{1\le i < j \le n}$ and $\{z_{i,j}\}_{1\le i \le j \le n}$, given by
	\begin{align*}
	& y_{i,j} = z_{i+1,j-1} -z_{i+1,j} - z_{i,j-1} + z_{i,j}, \\
	& z_{p,q} = 1 - \sum_{r=p}^q \sum_{s=q+1}^n y_{r,s}.
	\end{align*}
\end{proposition}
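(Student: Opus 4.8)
The plan is to establish the two displayed identities separately and then invoke a dimension count to conclude invertibility, since the bulk of the work is elementary bookkeeping with products. First I would verify $y_{i,j} = z_{i+1,j-1} - z_{i+1,j} - z_{i,j-1} + z_{i,j}$ by direct expansion: write $y_{i,j} = (1-x_i)(1-x_j)\prod_{r=i+1}^{j-1} x_r$ and distribute $(1-x_i)(1-x_j) = 1 - x_i - x_j + x_i x_j$. The four resulting products are, with their signs, $+\prod_{r=i+1}^{j-1}x_r = z_{i+1,j-1}$, $-x_i\prod_{r=i+1}^{j-1}x_r = -z_{i,j-1}$, $-x_j\prod_{r=i+1}^{j-1}x_r = -z_{i+1,j}$, and $+x_ix_j\prod_{r=i+1}^{j-1}x_r = z_{i,j}$, using the empty-product convention (so $z_{i+1,j-1}$ is read as $1$ when $j=i+1$, consistent with the paper's convention). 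This step is purely routine.

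For the second identity I would compute $\sum_{r=p}^q \sum_{s=q+1}^n y_{r,s}$ by telescoping twice, for $1\le p\le q\le n-1$. Fixing $r$ and factoring out $(1-x_r)$, the inner sum is $(1-x_r)\sum_{s=q+1}^n\big(\prod_{t=r+1}^{s-1}x_t - \prod_{t=r+1}^{s}x_t\big) = (1-x_r)\big(\prod_{t=r+1}^{q}x_t - \prod_{t=r+1}^{n}x_t\big)$, and the last product vanishes because $x_n=0$; writing $\prod_{t=r+1}^{q}x_t = z_{r+1,q}$ (read as $1$ when $r=q$) gives $\sum_{s=q+1}^n y_{r,s} = (1-x_r)z_{r+1,q} = z_{r+1,q} - z_{r,q}$. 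Summing this over $r=p,\dots,q$ telescopes to $z_{q+1,q} - z_{p,q} = 1 - z_{p,q}$, which rearranges to the claimed formula. Alternatively, one can substitute the first identity into the double sum and telescope in $r$ and then in $s$, again using $z_{i,n}=0$; I would present whichever version reads more cleanly.

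For invertibility I would note that, since $z_{i,n}=0$ for all $i$, the genuine $z$-data is the tuple $(z_{i,j})_{1\le i\le j\le n-1}$, which has $\binom{n}{2}$ entries — exactly the number of variables $y_{i,j}$, $1\le i<j\le n$. The first identity defines an affine map $\Phi$ from the $z$-tuple to the $y$-tuple (the constant terms coming from empty products), and the second defines an affine map $\Psi$ back; the computation above shows $\Psi\circ\Phi$ is the identity purely algebraically, as it uses only telescoping, the empty-product convention, and $z_{i,n}=0$, and not the fact that the $z$'s come from an actual $x$. Since $\Phi$ and $\Psi$ are affine maps between vector spaces of the same dimension and one of the composites is the identity, both are invertible and mutually inverse, which is exactly the assertion.

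I expect the only delicate points to be the boundary cases: the empty-product reading $z_{q+1,q}=1$, the vanishing $z_{i,n}=0$, the behaviour of the two telescoping sums at their endpoints, and recording that the second formula is meant only for $q\le n-1$ (the entries $z_{i,n}$ being prescribed to be $0$ rather than recovered). None of this is deep; I would defuse it by stating all conventions explicitly at the outset of the proof.
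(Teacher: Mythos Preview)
Your proposal is correct and essentially parallels the paper's proof: both expand $y_{i,j}$ directly for the first identity, and for the second both pass through the intermediate quantity $(1-x_r)\prod_{t=r+1}^{q}x_t$ (which the paper names $w_{r,q}$) and then telescope in $r$. The only cosmetic differences are that the paper packages the two telescoping steps as separate inductions on $n-l$ and on $q-p$, whereas you write them as explicit telescoping sums, and you add an explicit dimension count for invertibility that the paper leaves implicit. One small caveat: your first telescoping computation goes through the $x_r$'s, so the clean way to justify ``$\Psi\circ\Phi=\mathrm{id}$ purely algebraically'' is via your alternative route---substitute the first identity into the double sum and telescope directly in the $z$'s using $z_{i,n}=0$ and $z_{q+1,q}=1$---which is exactly what the paper's final sentence suggests as a direct verification.
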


\begin{proof}
First, we show that $y_{i,j}$ can be written as an affine transformation of $z_{i,j}$. By definition, $y_{i,j} = (1-x_i) x_{i+1}\dots x_{j-1} (1-x_j) = \prod_{r = i+1}^{j-1} x_r - \prod_{r = i+1}^jx_r - \prod_{r = i}^{j-1} x_r + \prod_{r = i}^j x_r = \prod_{r = i+1}^{j-1} x_r -z_{i+1,j} - z_{i,j-1} + z_{i,j}$. Substituting the first term in the last equality, $\prod_{r = i+1}^{j-1} x_r$, with 1 if $i+1 = j$, and $z_{i+1,j-1}$ if $i+1 <j$, yields the required affine transformation.
	
Next, to obtain the inverse affine transformation, we define $w_{k,l} = (1-x_k)x_{k+1}\dots x_l$ for $1 \le k \le l \le n$. We show the affine transformation between $\{w_{k,l}\}_{1 \le k \le l \le n}$ and $\{y_{i,j}\}_{1\le i < j \le n}$ variables to be
\begin{equation}
w_{k,l} = \sum_{r=l+1}^{n} y_{k,r},
\label{eq:w-y-affine}
\end{equation}
using induction on $n-l$. For $l=n$, \eqref{eq:w-y-affine} is trivially satisfied because $w_{k,n}=0$ as $x_n=0$. Now, assuming that \eqref{eq:w-y-affine} holds for $l+1$, \ie{} $w_{k,l+1} = \sum_{r=l+2}^{n} y_{k,r}$, we show that it holds for $w_{k,l}$ as well: $w_{k,l} = (1-x_k)x_{k+1} \dots x_l (1-x_{l+1}+x_{l+1}) = y_{k,l+1} + w_{k,l+1} = y_{k,l+1} + \sum_{r=l+2}^n y_{k,r} = \sum_{r=l+1}^n y_{k,r}$.
	
	In a similar vein, we show for $1 \le p \le q \le n$, the affine transformation between $\{z_{p,q}\}$ and $\{w_{k,l}\}$ variables to be
	\begin{equation}
	z_{p,q} = 1- \sum_{r=p}^q w_{r,q},
	\label{eq:z-x-affine}
	\end{equation}
	using induction on $q-p$. For $q=p$, \eqref{eq:z-x-affine} follows because $z_{q,q} = x_q = 1-(1-x_q) = 1-w_{q,q}$. Next, assuming \eqref{eq:z-x-affine} holds for $p+1$ \ie{} $z_{p+1,q} = 1 - \sum_{r=p+1}^q w_{r,q}$, we show that it holds for $z_{p,q}$ as well: $z_{p,q} = \prod_{r=p}^q x_r = [1-(1-x_p)]\prod_{r=p+1}^q x_r = z_{p+1,q} - w_{p,q} = 1-\sum_{r=p+1}^q w_{r,q} -w_{p,q} = 1 - \sum_{r=p}^q w_{p,q}$.
	Finally, substituting \eqref{eq:w-y-affine} in \eqref{eq:z-x-affine} leads to the required inverse affine transformation given below:
	\begin{equation}
	z_{p,q} = 1 - \sum_{r=p}^q \sum_{s=q+1}^n y_{r,s}.
	\label{eq:inv-affine}
	\end{equation}
	Indeed, the correctness of \eqref{eq:inv-affine} can be checked via direct verification using $y_{r,s}=z_{r+1,s-1}-z_{r+1,s}-z_{r,s-1}+z_{r,s}$, $z_{i,n}=0$ for $1 \le i \le n$, and $z_{i+1,i}=1$ for $1\le i \le n$. 
\end{proof}

We note that Proposition \ref{prop:invertible-transform} shows, by defining $n=N-i+1$ (\resp $n=j$) and $x_r=1-\zeta_{i,N-r+1}$ (\resp $x_r=1-\zeta_{r,j}$), there is an invertible linear transformation between $\{\tau_{i,k,j}\}_{i \le k < j \le N}$ and $\{\nu_{i,k,j}\}_{i \le k \le j \le N}$ (\resp $\{\beta_{i,l,j}\}_{1 \le i < l \le j}$ and $\{\omega_{i,l,j}\}_{1 \le i < l \le j}$). We expressed $\tau$ (\resp $\beta$) as an affine function of $\nu$ (\resp $\omega$) in \eqref{eq:secvar}. The inverse transformation is:
\begin{align}
& \text{for } \; [i,j] \in \mathcal{T}, \quad \nu_{i,k,j} = \begin{cases}
0, & \text{for } k = i\\
1-\sum\limits_{s=k}^j\sum\limits_{r=i}^{k-1} \tau_{i,r,s}, & \text{for } i+1 \le k \le j
\end{cases}, \label{eq:nu-tau-inverse}\\
& \text{for } \; [i,j] \in \mathcal{T}, \quad \omega_{i,l,j} = \begin{cases}
1-\sum\limits_{r=i}^l\sum\limits_{s=l+1}^{j} \beta_{r,s,j}, & \text{for } i \le l \le j-1\\
0, & \text{for } l = j.
\end{cases} \label{eq:omega-tau-inverse}
\end{align}
Since $\nu_{i,j,j}=\omega_{i,i,j}=1-\zeta_{i,j}$, Corollary \ref{prop:sumteqz} follows directly from \eqref{eq:nu-tau-inverse} and \eqref{eq:omega-tau-inverse}.

\begin{corollary}
	\eqref{eq:secvar}--\eqref{eq:feedprod} imply that $\sum_{k=i}^{j-1}\tau_{i,k,j} =\sum_{l=i+1}^j \beta_{i,l,j} = \zeta_{i,j}$ for all $[i,j]\in \mathcal{P}$. \qed 
	\label{prop:sumteqz}
\end{corollary}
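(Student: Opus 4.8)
The plan is to derive the two summation identities directly from the inverse affine transformations \eqref{eq:nu-tau-inverse} and \eqref{eq:omega-tau-inverse}, specializing them to the diagonal indices and invoking \eqref{eq:feedprod}. Recall that \eqref{eq:secvar} defines $\nu_{i,j,j}=\omega_{i,i,j}=1-\zeta_{i,j}$, and that Proposition \ref{prop:invertible-transform} (as applied in the paragraph following its proof) establishes \eqref{eq:nu-tau-inverse} and \eqref{eq:omega-tau-inverse} as consequences of \eqref{eq:secvar}. So the only work is a substitution.

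Concretely, first I would fix $[i,j] \in \mathcal{P}$ and set $k = j$ in \eqref{eq:nu-tau-inverse}. Since $[i,j]\in\mathcal P$ means $i<j$, we are in the branch $i+1 \le k \le j$, which gives $\nu_{i,j,j} = 1 - \sum_{s=j}^{j}\sum_{r=i}^{j-1}\tau_{i,r,s} = 1 - \sum_{r=i}^{j-1}\tau_{i,r,j}$. Combining this with $\nu_{i,j,j} = 1-\zeta_{i,j}$ yields $\sum_{k=i}^{j-1}\tau_{i,k,j} = \zeta_{i,j}$ after renaming the summation index. Symmetrically, setting $l = i$ in \eqref{eq:omega-tau-inverse} (the branch $i \le l \le j-1$, valid since $i<j$) gives $\omega_{i,i,j} = 1 - \sum_{r=i}^{i}\sum_{s=i+1}^{j}\beta_{r,s,j} = 1 - \sum_{s=i+1}^{j}\beta_{i,s,j}$, and equating with $\omega_{i,i,j} = 1-\zeta_{i,j}$ gives $\sum_{l=i+1}^{j}\beta_{i,l,j} = \zeta_{i,j}$. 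Together these are exactly the claimed identities.

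I anticipate no genuine obstacle here; the statement is flagged in the text as following \emph{directly} from \eqref{eq:nu-tau-inverse} and \eqref{eq:omega-tau-inverse}, so the ``proof'' is really just recording which index specialization to make and noting the role of \eqref{eq:feedprod}. The one point worth stating carefully is the appeal to \eqref{eq:feedprod}: it is used to guarantee $x_n = 0$ in the hypothesis of Proposition \ref{prop:invertible-transform} under the substitutions $n = N-i+1,\ x_r = 1-\zeta_{i,N-r+1}$ (so $x_n = 1-\zeta_{i,i} = 0$) and $n = j,\ x_r = 1-\zeta_{r,j}$ (so $x_n = 1-\zeta_{j,j} = 0$) — this is precisely what makes \eqref{eq:nu-tau-inverse} and \eqref{eq:omega-tau-inverse} valid, and hence what licenses the two-line derivation above. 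Since these identities are equalities between affine functions of the variables, the conclusion holds for the continuous relaxation as well, not merely at binary points, which is why the corollary is stated as an implication of \eqref{eq:secvar}--\eqref{eq:feedprod} rather than requiring integrality.
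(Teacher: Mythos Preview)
Your proof is correct and follows exactly the route the paper indicates: specialize \eqref{eq:nu-tau-inverse} at $k=j$ and \eqref{eq:omega-tau-inverse} at $l=i$, then use $\nu_{i,j,j}=\omega_{i,i,j}=1-\zeta_{i,j}$ (this identity is actually recorded in \eqref{eq:mccor} rather than \eqref{eq:secvar}, but both lie in the cited range). Your explicit remark on how \eqref{eq:feedprod} supplies the hypothesis $x_n=0$ for Proposition~\ref{prop:invertible-transform} is a useful clarification that the paper leaves implicit.
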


\subsubsection{Conservation of components}
\label{sec:compconservation}
Corollary \ref{prop:sumteqz} has the physical interpretation that the stream $[i,j]$, when present, produces exactly one stream as distillate and one stream as residue. However, the distillate and residue streams cannot be chosen arbitrarily. They must be chosen such that, all components are conserved when $[i,j]$ undergoes a split. In other words, for $\llbracket k \rrbracket_{i}^{j-1}$ (\resp $\llbracket l \rrbracket_{i+1}^j$), if $[i,j]\uparrow [i,k]$ (\resp $[i,j]\downarrow [l,j]$), then for conservation of components, the residue (\textit{resp.} distillate) from $[i,j]$ must be one of $\{[l,j]\}_{l=i+1}^{k+1}$ (\resp $\{[i,k]\}_{k=l-1}^{j-1}$). Consider the digraph shown in Figure \ref{fig:SplitFeasibility} for stream $[i,j]$.

\begin{figure}[h]
	\centering
	\includegraphics[scale=1]{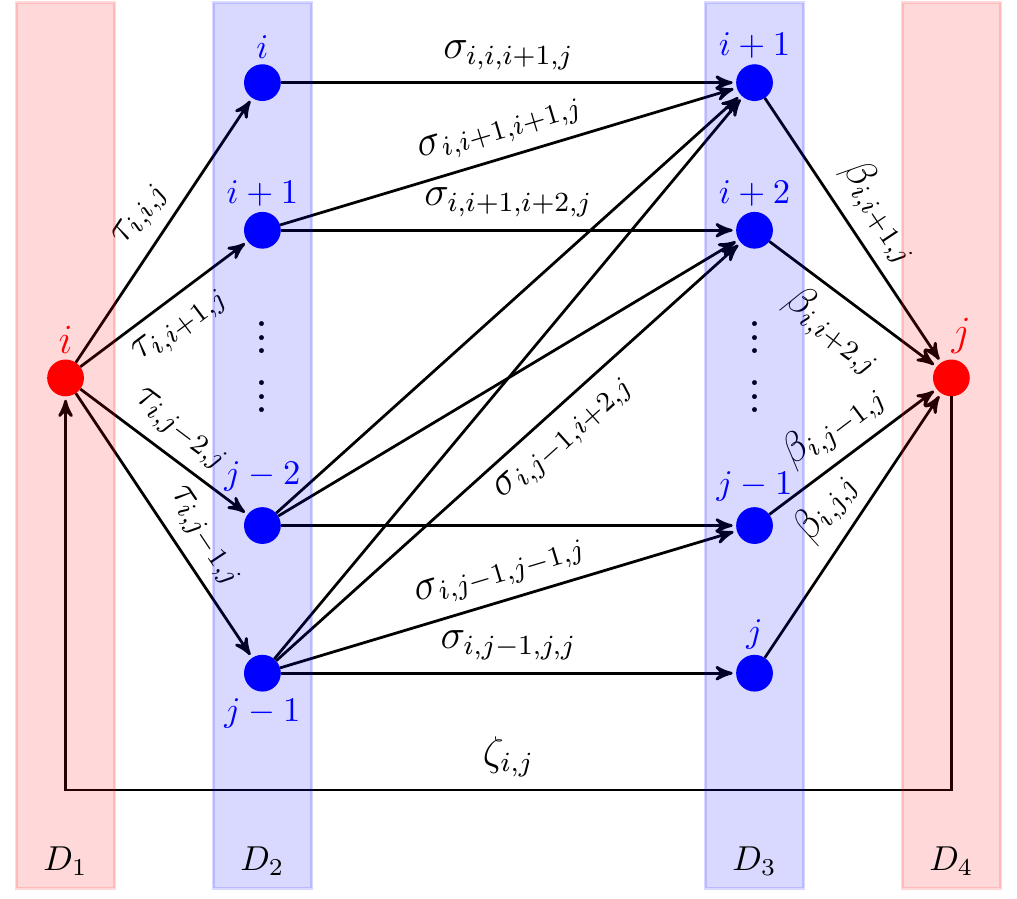}
	\caption{Digraph for deriving conservation of components constraint in \textsection\ref{sec:compconservation}}
	\label{fig:SplitFeasibility}
\end{figure}
 
We partition the nodes into four sets $D_1$ through $D_4$, where $D_1 = \{i\}$ (\resp $D_4=\{j\}$), and $D_2 = \{k\}_{k=i}^{j-1}$ (\resp $D_3 = \{l\}_{l=i+1}^j$) contains the heaviest (resp. lightest) component in the top (\resp bottom) children of $[i,j]$. The edges in $D_1\times D_2$ (\resp $D_3 \times D_4$) correspond to all plausible distillate (\resp residue) streams from $[i,j]$. Edges in $D_2\times D_3$ correspond to feasible \textit{splits} of $[i,j]$, \ie{} each node $k \in D_2$ connects to $\{i+1,\dots,k+1\} \in D_3$. We associate these edges with auxiliary variables $\bigcup_{k=i}^{j-1}\; \{\sigma_{i,k,l,j}\}_{l=i+1}^{k+1}$, referred as \textit{split} variables hereafter (see Figure \ref{fig:SplitFeasibility}). We let $\sigma_{i,k,l,j}=\{1,\;\text{if }[i,k]/[l,j];\;0,\;\text{otherwise}\}$, and write mass balances on the network by interpreting stream, section and split variables as material flows along the respective edges of the graph.
\begin{eqnal}
	\text{For } \; [i,j] \in \mathcal{P}\mskip 15mu  \left\{\begin{aligned}
		& \sum_{l=i+1}^{k+1}\sigma_{i,k,l,j} =\tau_{i,k,j},\, \llbracket k \rrbracket_{i}^{j-1}; \mskip 10mu
		\sum_{k=l-1}^{j-1}\sigma_{i,k,l,j} =\beta_{i,l,j},\, \llbracket l \rrbracket_{i+1}^j; \\
		& \sigma_{i,k,l,j} \ge 0, \llbracket l \rrbracket_{i+1}^{k+1}, \llbracket k \rrbracket_{i}^{j-1}.
	\end{aligned}\right.
	\label{eq:RLTbin}
\end{eqnal}
Mass balances around the nodes in $D_1$ and $D_4$, and non-negativity constraint on section variables are implied from \eqref{eq:secvar}-- \eqref{eq:feedprod} (see Corollary \ref{prop:sumteqz} and Remark \ref{rem:linearization-tb}), so it is not required to impose them explicitly. We show below that, for any $[i,j] \in \mathcal{P}$, the relaxation \eqref{eq:secvar}--\eqref{eq:RLTbin} is the best possible for the substructure represented by the digraph in Figure \ref{fig:SplitFeasibility}.

\begin{proposition}
	The constraints \eqref{eq:secvar}--\eqref{eq:RLTbin}, and $0 \le \zeta_{i,j} \le 1$ define a set such that, for any $[i,j] \in \mathcal{P}$, $(\sigma,\tau,\beta,\zeta)$ is contained in the convex hull of
	\begin{gather}
	S_{i,j} = \left\{ (\sigma,\tau,\beta,\zeta) \middle|
	\begin{aligned}
	& \sigma_{i,k,l,j} = \tau_{i,k,j}\beta_{i,l,j}, & & \llbracket l \rrbracket_{i+1}^{k+1}; \quad \llbracket k \rrbracket_{i}^{j-1}\\
	& \tau_{i,k,j}\beta_{i,l,j} =0,		& & \llbracket l \rrbracket_{k+2}^{j}; \quad \llbracket k \rrbracket_{i}^{j-2}\\
	& \sum_{k=i}^{j-1} \tau_{i,k,j} = \sum_{l=i+1}^{j} \beta_{i,l,j} = \zeta_{i,j}, & &\\
	& \tau_{i,k,j}, \; \beta_{i,l,j}, \; \zeta_{i,j} \in \{0,1\}& &\llbracket l \rrbracket_{i+1}^{j}; \quad \llbracket k \rrbracket_{i}^{j-1}
	\end{aligned}
	\right\}.
	\end{gather}
	\label{prop:conv-hull-SplitFeas}
\end{proposition}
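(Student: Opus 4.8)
The plan is to realize every point $(\sigma,\tau,\beta,\zeta)$ admitted by the relaxation as an explicit convex combination of the integer points of $S_{i,j}$, using the split variables $\sigma_{i,k,l,j}$ themselves as the convex multipliers. First I would enumerate $S_{i,j}$. Because $\tau_{i,k,j},\beta_{i,l,j},\zeta_{i,j}\in\{0,1\}$ force $\sigma_{i,k,l,j}=\tau_{i,k,j}\beta_{i,l,j}\in\{0,1\}$, the set is finite, and a case split on $\zeta_{i,j}$ suffices: if $\zeta_{i,j}=0$ then $\sum_k\tau_{i,k,j}=\sum_l\beta_{i,l,j}=0$ kills every variable and we get the origin $\mathbf{0}$; if $\zeta_{i,j}=1$ then a unique $\tau_{i,k^{\ast},j}$ and a unique $\beta_{i,l^{\ast},j}$ equal $1$, the equations $\tau_{i,k,j}\beta_{i,l,j}=0$ for $l\ge k+2$ force $l^{\ast}\le k^{\ast}+1$, and $\sigma_{i,k,l,j}=\tau_{i,k,j}\beta_{i,l,j}$ then pins $\sigma_{i,k^{\ast},l^{\ast},j}=1$ with the remaining split variables zero. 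So the extreme points of $\Conv(S_{i,j})$ are $\mathbf{0}$ together with the $0/1$ indicator vectors $v_{k,l}$ having $\tau_{i,k,j}=\beta_{i,l,j}=\sigma_{i,k,l,j}=\zeta_{i,j}=1$ and all other coordinates zero, one for each feasible split $(k,l)$ with $\llbracket k\rrbracket_i^{j-1}$ and $\llbracket l\rrbracket_{i+1}^{k+1}$.

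Next, given any point of the relaxation, I would take as multipliers $\lambda_{k,l}:=\sigma_{i,k,l,j}$ for each feasible split $(k,l)$ and $\lambda_0:=1-\zeta_{i,j}$ for $\mathbf{0}$. Non-negativity is immediate from $\sigma_{i,k,l,j}\ge 0$ in \eqref{eq:RLTbin} and $\zeta_{i,j}\le 1$, and the multipliers sum to one because $\sum_{k,l}\sigma_{i,k,l,j}=\sum_k\sum_{l=i+1}^{k+1}\sigma_{i,k,l,j}=\sum_k\tau_{i,k,j}=\zeta_{i,j}$ by the row sums in \eqref{eq:RLTbin} and Corollary \ref{prop:sumteqz}. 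The remaining task is a one-line coordinatewise check that $\lambda_0\mathbf{0}+\sum\lambda_{k,l}v_{k,l}$ returns the original point: the $\sigma$-coordinates are the $\lambda_{k,l}$ by definition, the $\tau_{i,k,j}$- and $\beta_{i,l,j}$-coordinates come out as $\sum_{l=i+1}^{k+1}\sigma_{i,k,l,j}$ and $\sum_{k=l-1}^{j-1}\sigma_{i,k,l,j}$, which are exactly the left-hand sides of the two flow-balance equalities in \eqref{eq:RLTbin}, and the $\zeta_{i,j}$-coordinate is the sum computed above. Note that only $\sigma\ge 0$, the two equalities of \eqref{eq:RLTbin}, Corollary \ref{prop:sumteqz}, and $\zeta_{i,j}\le 1$ are used; the full strength of \eqref{eq:secvar}--\eqref{eq:RLTbin} is not needed here.

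I do not anticipate a real obstacle: the proof is elementary once one sees that the split variables are the natural convex weights --- this is the ``flow decomposes into single-path solutions'' picture behind the digraph of Figure \ref{fig:SplitFeasibility}. The only thing that needs genuine care is the index bookkeeping: confirming that the pairs $(k,l)$ with $\llbracket l\rrbracket_{i+1}^{k+1}$ are simultaneously the domain of $\sigma_{i,k,l,j}$ in \eqref{eq:RLTbin}, the pairs for which $\tau_{i,k,j}\beta_{i,l,j}$ is \emph{not} set to zero in $S_{i,j}$, and the supports of the vertices $v_{k,l}$ --- and making sure the degenerate case $\zeta_{i,j}=0$ is absorbed entirely into the weight $\lambda_0$. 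I would also state explicitly, since it underpins the first paragraph, that $S_{i,j}$ contains no integer points other than $\mathbf{0}$ and the $v_{k,l}$, which is precisely what the case analysis on $\zeta_{i,j}$ establishes.
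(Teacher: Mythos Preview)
Your proposal is correct and closely related to the paper's argument, but the presentation differs in a way worth noting. The paper observes that \eqref{eq:RLTbin}, the equalities of Corollary~\ref{prop:sumteqz}, $0\le\zeta_{i,j}\le 1$, and non-negativity of the section variables together describe a network-flow polytope on the digraph of Figure~\ref{fig:SplitFeasibility}; it then invokes the standard fact that such polytopes have integral extreme points, enumerates those extreme points, and checks they are exactly the elements of $S_{i,j}$. You instead give the explicit convex decomposition directly, using the $\sigma_{i,k,l,j}$ as the multipliers and $1-\zeta_{i,j}$ as the residual weight on the origin. This is precisely the path-decomposition argument that underlies the integrality result for network flows, so the two proofs are essentially the same idea viewed from opposite ends: the paper cites the general theorem, you unpack it for this specific instance. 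Your version is more self-contained and makes clear that only \eqref{eq:RLTbin}, Corollary~\ref{prop:sumteqz}, $\sigma\ge 0$, and $\zeta_{i,j}\le 1$ are actually used; the paper's version is terser and connects the result to a recognizable combinatorial structure.
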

\begin{proof}
First, note that \eqref{eq:RLTbin}, equations in Corollary \ref{prop:sumteqz}, $0 \le \zeta_{i,j} \le 1$ and non-negativity of section variables together constitute a network flow polytope (see Figure \ref{fig:SplitFeasibility}) in $(\tau,\beta,\sigma,\zeta)$ space. The extreme points of the polytope are integral, and are given by
	\begin{subequations}
		\begin{align}
		& \left. \begin{aligned}
		& \zeta_{i,j} = \tau_{i,k,j} = \beta_{i,l,j} = \sigma_{i,k,l,j} = 1, \\
		& \tau_{i,k',j} = \beta_{i,l',j} = 0, \quad \text{for} \quad k' \ne k, \; l' \ne l
		\end{aligned} \right\}\quad \llbracket l \rrbracket_{i+1}^{k+1}; \quad \llbracket k \rrbracket_{i}^{j-1},
		\label{eq:feas-split-proof1}\\
		& \left. \zeta_{i,j} = \tau_{i,k,j} = \beta_{i,l,j} = \sigma_{i,k,l,j} = 0.\right. \label{eq:feas-split-proof2}
		\end{align}
	\end{subequations}
	We show that the only solutions to $S_{i,j}$ are those in \eqref{eq:feas-split-proof1} and \eqref{eq:feas-split-proof2}. Assume $\zeta_{i,j} =0$. Then, $\tau_{i,k,j}=0$ for $\llbracket k \rrbracket_{i}^{j-1}$, $\beta_{i,l,j} = 0$ for $\llbracket l \rrbracket_{i+1}^{j}$ and $\sigma_{i,k,l,j}=0$ for $\llbracket l \rrbracket_{i+1}^{k+1}; \; \llbracket k \rrbracket_{i}^{j-1}$. Now, assume $\zeta_{i,j}=1$. Then, there exists $k$ and $l$ satisfying $i<l\le k+1 \le j$ such that $\tau_{i,k,j}=\beta_{i,l,j}=\sigma_{i,k,l,j}=1$ and for $k' \ne k$, $l' \ne l$; $\tau_{i,k',j}=\beta_{i,l',j}=\sigma_{i,k',l',j}=0$. 
\end{proof}

\subsubsection{Presence of a parent}
\label{sec:parent-presence}
Stream $[i,j] \in \mathcal{T} \setminus \{[1,N]\}$ is present in a configuration, only if it is produced as a distillate from one of its top parents and/or as a residue from one of its bottom parents. To derive the required constraints, we consider the digraph shown in Figure \ref{fig:ParentConstraints}. 

\begin{figure}[h]
	\centering
	\includegraphics[scale=1]{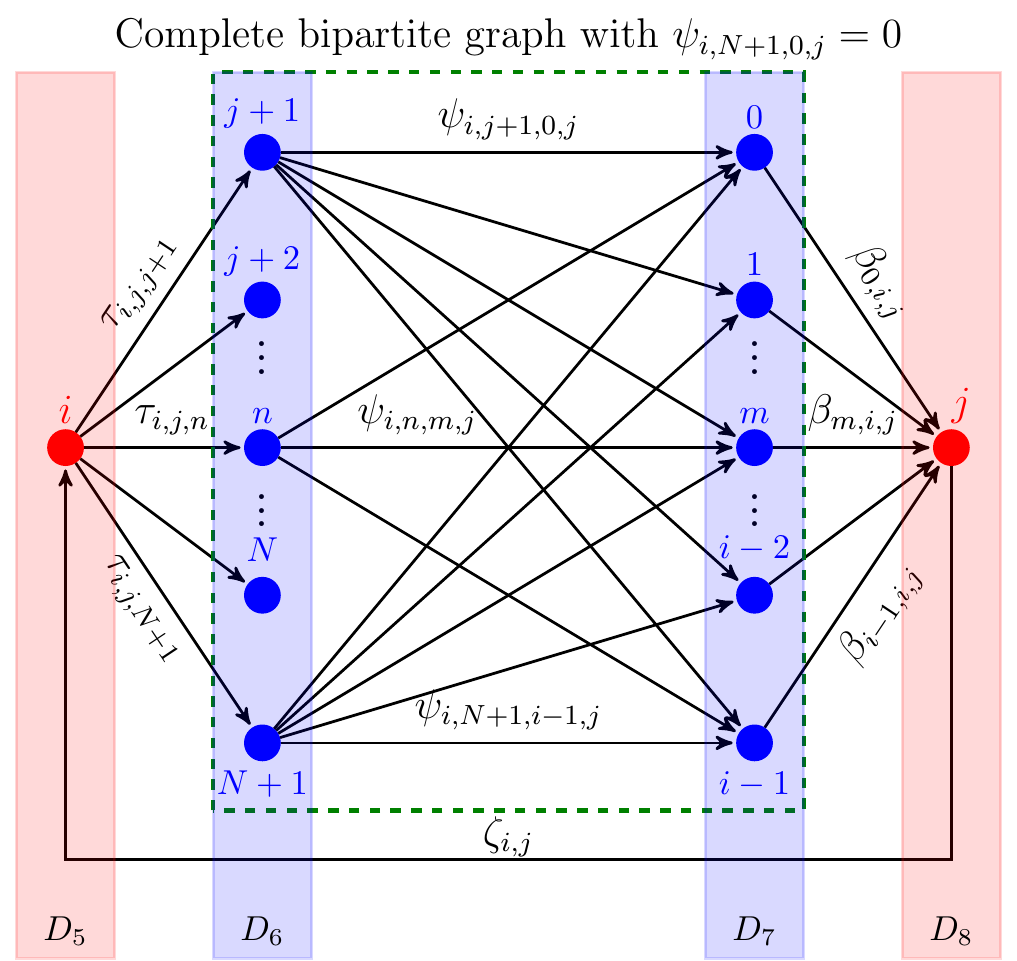}
	\caption{Digraph for deriving presence of parent constraint in \textsection\ref{sec:parent-presence}}
	\label{fig:ParentConstraints}
\end{figure}
 
The graph is inspired from the observation that $\sum_{n=j+1}^{N+1} \tau_{i,j,n} = \zeta_{i,j}$ and $\sum_{m=0}^{i-1} \beta_{m,i,j} = \zeta_{i,j}$, where we define $\tau_{i,j,N+1} = \nu_{i,j+1,N} - \nu_{i,j,N}$ and $\beta_{0,i,j} = \omega_{1,i-1,j} - \omega_{1,i,j} $. From \eqref{eq:mccor}, it can be verified that $0 \le \tau_{i,j,N+1} \le 1$ and $0 \le \beta_{0,i,j} \le 1$. Physically, $\tau_{i,j,N+1} = 1$ (\resp $\beta_{0,i,j}=1$) indicates that $[i,j]$ is not produced as distillate (\resp residue), because $\tau_{i,j,N+1}=1$ (\resp $\beta_{0,i,j}=1$) \textit{iff} $[i,j]$ is present ($\zeta_{i,j}=1$) and all its top (\resp bottom) parents are absent i.e., $\nu_{i,j+1,N}=1$ (\resp $\omega_{1,i-1,j}=1$).

As in \textsection \ref{sec:compconservation}, we partition the nodes into four sets $D_5$ through $D_8$ (see Figure \ref{fig:ParentConstraints}), where $D_5 = \{i\}$ (\resp $D_8=\{j\}$), 
and $D_6 = \{n\}_{n=j+1}^{N+1}$ (\resp $D_7 = \{m\}_{m=0}^{i-1}$) contains the heaviest (\resp lightest) component in the top (\resp bottom) parents of $[i,j]$. Recall that $m=0$ and $n=N+1$ have a special meaning as described in the previous paragraph. The edges in $D_5\times D_6$ (\resp $D_7 \times D_8$) correspond to all plausible ways $[i,j]$ can be produced as distillate (\resp residue), and the edges in $D_6 \times D_7$ indicate whether $[i,j]$ is produced only as distillate or only as residue or both. We introduce variables for edges in $D_6 \times D_7$ such that $\psi_{i,n,m,j}=1$ \textit{iff} $[i,n] \uparrow [i,j]$ and $[m,j] \downarrow [i,j]$.

We require that $\psi_{i,N+1,0,j} = 0$, which, otherwise, would mean that $[i,j]$ can be present even if it is neither produced as distillate nor as residue. Now, we write mass balances on the network.
\begin{align}
\text{for } \; [i,j] \in \mathcal{T}\setminus\{[1,N]\} \quad \left\{\begin{aligned}
&\sum_{m=0}^{i-1} \psi_{i,n,m,j} = \tau_{i,j,n},\, \llbracket n \rrbracket_{j+1}^{N+1};\mskip 15mu
\sum_{n=j+1}^{N+1} \psi_{i,n,m,j} = \beta_{m,i,j}, \llbracket m \rrbracket_{0}^{i-1};\\
&\psi_{i,n,m,j} \ge 0,\, \llbracket n \rrbracket_{j+1}^{N+1},\, \llbracket m \rrbracket_{0}^{i-1};\mskip 15mu
\psi_{i,N+1,0,j} = 0.
\end{aligned}\right.
\label{eq:Parent-network}
\end{align}

Mass balances around the nodes in $D_5$ and $D_8$, and non-negativity constraint on section variables are implied from \eqref{eq:secvar} and \eqref{eq:mccor}, so it is not required to impose them explicitly.

\begin{proposition}
	The constraints \eqref{eq:secvar}, \eqref{eq:mccor}, \eqref{eq:Parent-network} and $0 \le \zeta_{i,j} \le 1$ define a set such that, for every $[i,j] \in \mathcal{T}\setminus\{[1,N]\}$, $(\tau,\beta,\zeta,\psi)$ is contained in the convex hull of
	\begin{gather}
	S_{i,j} = \left\{ (\tau,\beta,\zeta,\psi) \middle|
	\begin{aligned}
	& \psi_{i,n,m,j} = \tau_{i,j,n}\beta_{m,i,j}, & & \llbracket m \rrbracket_{0}^{i-1}; \quad \llbracket n \rrbracket_{j+1}^{N+1}\\
	& \sum_{n=j+1}^{N+1} \tau_{i,j,n} = \sum_{m=0}^{i-1} \beta_{m,i,j} = \zeta_{i,j},\quad  \psi_{i,N+1,0,j} =0,& &\\
	& \tau_{i,j,n}, \; \beta_{m,i,j}, \; \zeta_{i,j} \in \{0,1\}& &\llbracket m \rrbracket_{0}^{i-1}; \quad \llbracket n \rrbracket_{j+1}^{N+1}
	\end{aligned}
	\right\}.
	\end{gather}
	\label{prop:conv-hull-parents}
\end{proposition}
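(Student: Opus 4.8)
The plan is to follow the same route as the proof of Proposition~\ref{prop:conv-hull-SplitFeas}. Restricting attention to the coordinates $\zeta_{i,j}$, $(\tau_{i,j,n})_{n=j+1}^{N+1}$, $(\beta_{m,i,j})_{m=0}^{i-1}$ and $(\psi_{i,n,m,j})$, I would first show that the constraints \eqref{eq:secvar}, \eqref{eq:mccor}, \eqref{eq:Parent-network} and $0\le\zeta_{i,j}\le 1$ force these coordinates into the arc-flow polytope of the digraph in Figure~\ref{fig:ParentConstraints} with the single arc $(N+1,0)\in D_6\times D_7$ deleted, in which a flow of value $\zeta_{i,j}$ is routed from node $i$ to node $j$. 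Concretely: the identities $\sum_{n=j+1}^{N+1}\tau_{i,j,n}=\zeta_{i,j}$ and $\sum_{m=0}^{i-1}\beta_{m,i,j}=\zeta_{i,j}$ (established just before the statement from the definitions of $\tau_{i,j,N+1}$, $\beta_{0,i,j}$ together with \eqref{eq:secvar}--\eqref{eq:mccor}) are conservation at $D_5$ and $D_8$; the two families of equalities in \eqref{eq:Parent-network} are conservation at the nodes of $D_6$ and $D_7$; nonnegativity of the section variables (including $\tau_{i,j,N+1},\beta_{0,i,j}\ge0$, cf.\ Remark~\ref{rem:linearization-tb} and \eqref{eq:mccor}) and of $\psi$, together with $\zeta_{i,j}\le1$ and $\psi_{i,N+1,0,j}=0$, are the arc bounds and the removal of the forbidden arc. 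Adjoining a return arc $j\to i$ of capacity $[0,1]$ carrying $\zeta_{i,j}$ exhibits this as a capacitated circulation.

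Next I would invoke total unimodularity: since the node--arc incidence matrix of a digraph is totally unimodular and both deleting arcs and imposing the box constraint $0\le\zeta_{i,j}\le1$ preserve this with integral right-hand sides, every vertex of this polytope is integral, so the circulation value $\zeta_{i,j}\in\{0,1\}$. If $\zeta_{i,j}=0$ all arc flows vanish; if $\zeta_{i,j}=1$ the unit of flow traverses a single path $i\to n^{\ast}\to m^{\ast}\to j$, whence exactly one $\tau_{i,j,n^{\ast}}$, one $\beta_{m^{\ast},i,j}$ and the single $\psi_{i,n^{\ast},m^{\ast},j}$ equal $1$ and all other coordinates are $0$, with $(n^{\ast},m^{\ast})\ne(N+1,0)$ because that arc is absent. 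Each such vertex manifestly satisfies $\psi_{i,n,m,j}=\tau_{i,j,n}\beta_{m,i,j}$ for every $n,m$, the balances $\sum_n\tau_{i,j,n}=\sum_m\beta_{m,i,j}=\zeta_{i,j}$, the condition $\psi_{i,N+1,0,j}=0$, and integrality; conversely each point of $S_{i,j}$ is exactly such a $0/1$ flow. Hence, in these coordinates, the polytope is the $0/1$ polytope whose extreme points are precisely the elements of $S_{i,j}$, i.e.\ it equals $\Conv(S_{i,j})$, and since every constraint used above is implied by the full relaxation, the projection of \eqref{eq:secvar}, \eqref{eq:mccor}, \eqref{eq:Parent-network}, $0\le\zeta_{i,j}\le1$ onto $(\tau,\beta,\zeta,\psi)$ lies in $\Conv(S_{i,j})$.

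The delicate point is the first step: one must be careful that (a) the auxiliary variables $\nu,\omega$ and the residual content of \eqref{eq:secvar}--\eqref{eq:mccor} only tighten the feasible region, so the projection is contained in (not necessarily equal to) the network-flow polytope — which is all the containment claim needs — and (b) the two boundary quantities genuinely behave as flows, namely $0\le\tau_{i,j,N+1}\le1$, $0\le\beta_{0,i,j}\le1$ and the two sum identities above, which are exactly the facts recorded before the proposition and parallel Corollary~\ref{prop:sumteqz} and Remark~\ref{rem:linearization-tb}. The remaining work is only the bookkeeping needed to keep the virtual indices $n=N+1$, $m=0$ and the forbidden arc consistent; beyond that, the argument is the same totally-unimodular network-flow argument used for Proposition~\ref{prop:conv-hull-SplitFeas}.
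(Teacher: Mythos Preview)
Your proposal is correct and follows essentially the same route as the paper: recognize that \eqref{eq:Parent-network}, the identities $\sum_{n}\tau_{i,j,n}=\sum_{m}\beta_{m,i,j}=\zeta_{i,j}$, nonnegativity, and $0\le\zeta_{i,j}\le1$ cut out a network-flow polytope on the digraph of Figure~\ref{fig:ParentConstraints} (with the arc $(N{+}1,0)$ removed), invoke integrality of its vertices, and check that these vertices coincide with $S_{i,j}$. Your write-up is more explicit about the total-unimodularity step and the boundary cases $\tau_{i,j,N+1},\beta_{0,i,j}$, but the argument is the same.
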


\begin{proof}
We use a similar argument as the one used to prove Proposition \ref{prop:conv-hull-SplitFeas}. We recognize that \eqref{eq:Parent-network}, $\sum_{n=j+1}^{N+1} \tau_{i,j,n} = \sum_{m=0}^{i-1} \beta_{m,i,j} = \zeta_{i,j}$, $0 \le \zeta_{i,j} \le 1$ and non-negativity requirement on section variables together constitute a network flow polytope, whose extreme points are integral and precisely those in $S_{i,j}$.
\end{proof}

\subsubsection{Constraints on Heat Exchanger Variables}
For every $(i,j) \in \mathcal{C}$, condenser $(i,j)$ is present only if the stream $[i,j]$ is not produced as residue, \ie{} $\beta_{0,i,j}=1$ \citep{tumbalamgooty2019}. Similarly, for every $(i,j) \in \mathcal{R}$, reboiler $(i,j)$ is present only if the stream $[i,j]$ is not produced as distillate, \ie{} $\tau_{i,j,N+1}=1$. Further, a condenser (\resp reboiler) must be present with a pure component $[i,i]$, if $[i,i]$ is not produced as residue (\resp distillate) i.e. $\beta_{0,i,i} = 1$ (\resp $\tau_{i,i,N+1}=1$).
\begin{eqnal}
	& 
	\begin{aligned}
		& \chi_{i,j} \le \beta_{0,i,j}, \forall \; (i,j) \in \mathcal{C};\quad
		& \rho_{i,j} \le \tau_{i,j,N+1}, \forall \; (i,j) \in \mathcal{R}
	\end{aligned}  \label{eq:hex1}\\
	& \begin{aligned}
		& \chi_{i,i} \ge \beta_{0,i,i}, \forall \; (i,i) \in \mathcal{C};\quad
		& \rho_{i,i} \ge \tau_{i,i,N+1}, \forall \; (i,i) \in \mathcal{R}.
	\end{aligned} \label{eq:hex2}
\end{eqnal}

\begin{proposition}
	\label{prop:conv-hull-PresParent}
	The constraints \eqref{eq:secvar}--\eqref{eq:hex2}, \eqref{eq:Parent-network}, $0\le \zeta_{i,j} \le 1$, $\chi_{i,j} \ge 0$ and $\rho_{i,j} \ge 0$ define a set that, for every $[i,j] \in \mathcal{S}$, is contained in the convex hull of solutions that satisfy at least one of the following conditions, where unspecified $\tau_{i,\cdot,j}$, $\beta_{i,\cdot,j}$, $\sigma_{i,\cdot,\cdot,j}$, $\psi_{i,\cdot,\cdot,j}$, $\chi_{i,j}$, and $\rho_{i,j}$ variables are zero:
	\begin{enumerate}
		\item for some $1 \le m \le i-1$, $j+1 \le n \le N$, and $i < l \le k +1 \le j$, we have $\zeta_{i,j} = \tau_{i,k,j} = \beta_{i,l,j} = \sigma_{i,k,l,j}= \tau_{i,j,n} = \beta_{m,i,j}=\psi_{i,n,m,j}  = 1$,
		\item for some $j+1 \le n \le N$, and $i < l \le k +1 \le j$, we have $\zeta_{i,j} = \tau_{i,k,j} = \beta_{i,l,j} = \sigma_{i,k,l,j} = \tau_{i,j,n} = \beta_{0,i,j} = \psi_{i,n,0,j} =1; \; \chi_{i,j}=1 \; \textnormal{or } \; 0$,
		\item for some $1 \le m \le m-1$, and $i < l \le k +1 \le j$, we have $\zeta_{i,j} = \tau_{i,k,j} = \beta_{i,l,j} = \sigma_{i,k,l,j} = \tau_{i,j,N+1} = \beta_{m,i,j} = \psi_{i,N+1,m,j} = 1; \; \rho_{i,j}=1 \; \textnormal{or } \; 0$,
		\item all the variables are zero.
	\end{enumerate}
\end{proposition}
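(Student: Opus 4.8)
The plan is to realize the set of the proposition as the coordinate projection of a single circulation polytope, thereby extending to the combined substructure the network‑flow arguments used for Propositions~\ref{prop:conv-hull-SplitFeas} and~\ref{prop:conv-hull-parents}. Fix $[i,j]\in\mathcal{S}$ and build a digraph $G_{i,j}$ as follows. Take the presence‑of‑parent digraph of Figure~\ref{fig:ParentConstraints} (source $D_5$, sink $D_8$) and the conservation‑of‑components digraph of Figure~\ref{fig:SplitFeasibility} (source $D_1$, sink $D_4$); identify the sink $D_8$ of the former with the source $D_1$ of the latter, since both carry the flow $\zeta_{i,j}$; add a return arc $D_4\to D_5$ with lower bound $0$ and upper bound $1$, so that feasible flows are circulations; \emph{delete} the $D_6\times D_7$ arc carrying $\psi_{i,N+1,0,j}$ (this encodes $\psi_{i,N+1,0,j}=0$); when $(i,j)\in\mathcal{C}$, subdivide the arc carrying $\beta_{0,i,j}$ by a new node from which two parallel arcs reach $D_8$, one carrying $\chi_{i,j}$ and one carrying the slack $\beta_{0,i,j}-\chi_{i,j}$; and when $(i,j)\in\mathcal{R}$, subdivide the arc carrying $\tau_{i,j,N+1}$ similarly, with two parallel arcs carrying $\rho_{i,j}$ and $\tau_{i,j,N+1}-\rho_{i,j}$. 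Reading each stream, section, split, parent‑edge and heat‑exchanger variable as the flow on its arc, the circulation polytope $Q_{i,j}$ of $G_{i,j}$ is defined purely by flow conservation at the nodes together with the capacity $0\le\zeta_{i,j}\le1$ on the return arc; since a digraph incidence matrix is totally unimodular and all capacities are integral, $Q_{i,j}$ is an integral polytope.

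Next I would show that the set of the proposition, projected onto the variables indexed by $[i,j]$ (and eliminating $\nu,\omega$), is contained in $Q_{i,j}$. Every constraint defining $Q_{i,j}$ is already implied by \eqref{eq:secvar}--\eqref{eq:hex2}, \eqref{eq:Parent-network} and the stated bounds, after taking the two slack variables to be $\beta_{0,i,j}-\chi_{i,j}$ and $\tau_{i,j,N+1}-\rho_{i,j}$: the node balances of the split and parent digraphs are precisely \eqref{eq:RLTbin} and \eqref{eq:Parent-network}; the flow‑value equalities $\sum_{k}\tau_{i,k,j}=\sum_{l}\beta_{i,l,j}=\zeta_{i,j}$ are Corollary~\ref{prop:sumteqz} and $\sum_{n}\tau_{i,j,n}=\sum_{m}\beta_{m,i,j}=\zeta_{i,j}$ are the telescoping identities of \textsection\ref{sec:parent-presence}; the balances at the two subdividing nodes are the inequalities $\chi_{i,j}\le\beta_{0,i,j}$ and $\rho_{i,j}\le\tau_{i,j,N+1}$ of \eqref{eq:hex1}; and non‑negativity of every arc flow, together with $0\le\zeta_{i,j}\le1$, follows from \eqref{eq:mccor}, Remark~\ref{rem:linearization-tb}, \eqref{eq:RLTbin}, \eqref{eq:Parent-network}, \eqref{eq:feedprod}, and the hypotheses $\chi_{i,j},\rho_{i,j}\ge0$. (Any further constraints of the set, notably \eqref{eq:hex2} and those relating $\nu,\omega$ to $\zeta$, can only shrink it, which is harmless.) Hence the projection lies in $Q_{i,j}$, which equals the convex hull of its integral vertices, so it remains only to list those vertices.

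Finally I would enumerate the integral circulations of $G_{i,j}$. Deleting the return arc leaves a directed acyclic graph, so an integral circulation is either identically zero---condition~4---or sends one unit of flow around a simple cycle formed by a directed path from $D_5$ to $D_4$ followed by the return arc. Such a path selects one node $n\in D_6$ (so $\tau_{i,j,n}=1$), crosses a $\psi$‑arc to one node $m\in D_7$ (so $\psi_{i,n,m,j}=1$, necessarily $(n,m)\ne(N+1,0)$), reaches $D_8=D_1$, and then selects one $k\in D_2$ and one $l\in D_3$ with $i+1\le l\le k+1$ (so $\tau_{i,k,j}=\sigma_{i,k,l,j}=\beta_{i,l,j}=1$). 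If $m\ge1$ and $n\le N$ the cycle misses both gadgets, giving condition~1 with $\chi_{i,j}=\rho_{i,j}=0$; if $m=0$ (which forces $n\le N$) the cycle traverses the $\chi$‑gadget and may or may not use its $\chi_{i,j}$‑branch, giving condition~2 with $\chi_{i,j}\in\{0,1\}$; if $n=N+1$ (which forces $m\ge1$) it traverses the $\rho$‑gadget, giving condition~3 with $\rho_{i,j}\in\{0,1\}$; the remaining case $m=0$, $n=N+1$ is impossible because the relevant $\psi$‑arc was deleted. These are exactly the integral vertices of $Q_{i,j}$, and the claim follows.

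I expect the main obstacle to be the design of the heat‑exchanger gadgets: the subdivision must reproduce $0\le\chi_{i,j}\le\beta_{0,i,j}$ (\resp $0\le\rho_{i,j}\le\tau_{i,j,N+1}$) \emph{entirely} through flow conservation and non‑negativity, so that total unimodularity---and hence integrality of $Q_{i,j}$---is preserved, and one must then verify that the two parallel arcs contribute exactly the binary alternative $\chi_{i,j}\in\{0,1\}$ (\resp $\rho_{i,j}\in\{0,1\}$) of conditions~2 and~3, while both heat exchangers are necessarily off in condition~1 because the deleted arc $\psi_{i,N+1,0,j}$ keeps any circulation from passing through both gadgets. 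The remaining verifications are routine.
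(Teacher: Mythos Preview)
Your proposal is correct and follows essentially the same approach as the paper: combine the digraphs of Figures~\ref{fig:SplitFeasibility} and~\ref{fig:ParentConstraints}, modify the latter to encode the heat-exchanger inequalities \eqref{eq:hex1}, recognize the result as a network-flow (equivalently, circulation) polytope whose integrality yields the listed vertices. The paper's proof is terse and delegates the combined graph to Figure~\ref{fig:HexProof}, whereas you spell out the circulation framing (return arc $D_4\to D_5$), the identification $D_8=D_1$, and the parallel-arc gadgets for $\chi_{i,j}$ and $\rho_{i,j}$ explicitly; but these are the same construction.
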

\begin{proof}
We modify the graph in Figure \ref{fig:ParentConstraints} to accommodate \eqref{eq:hex1} and \eqref{eq:hex2}, and combine it with the graph in Figure \ref{fig:SplitFeasibility}. The resulting graph is shown in Figure \ref{fig:HexProof}. Next, observe that \eqref{eq:RLTbin}, \eqref{eq:Parent-network}, $\sum_{k=i}^{j-1}\tau_{i,k,j} = \sum_{l=i+1}^j \beta_{i,l,j} = \sum_{n=j+1}^{N+1} \tau_{i,j,n} = \sum_{m=0}^{i-1} \beta_{m,i,j} = \zeta_{i,j}$, $0\le \zeta_{i,j} \le 1$ (which are implied from \eqref{eq:secvar}--\eqref{eq:feedprod}), and non-negative constraint on all variables together constitute a network flow polytope. The extreme points this polytope are integral, and are precisely those mentioned in the Proposition. 
\end{proof}

\begin{figure}[h]
	\centering
	\includegraphics[scale=0.9]{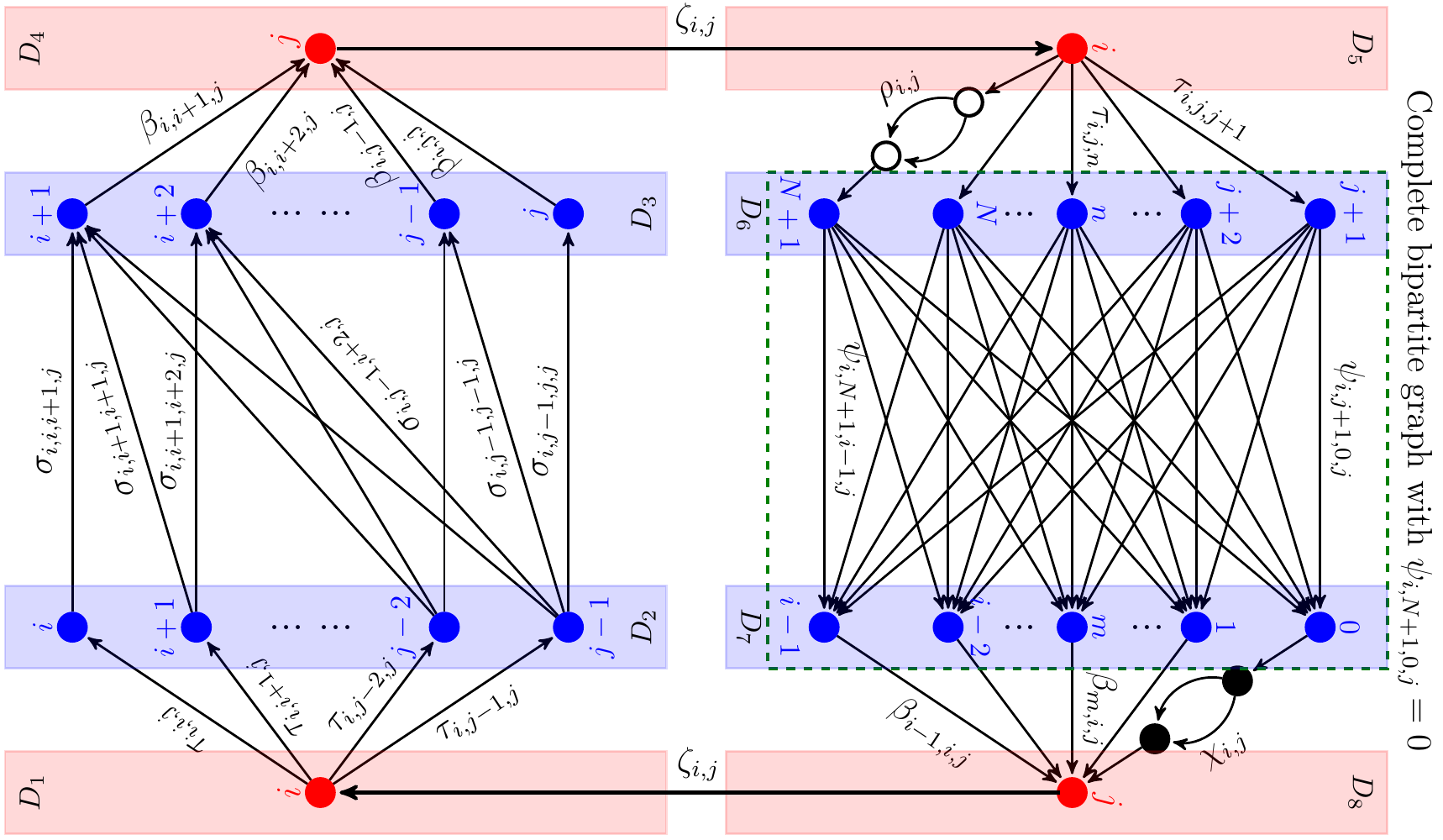}
	\caption{Digraph for the proof of Proposition \ref{prop:conv-hull-PresParent}}
	\label{fig:HexProof}
\end{figure}

Since $\psi$ variables are not used elsewhere, we project \eqref{eq:Parent-network} to the space of section variables  ($\tau,\beta$).
\begin{proposition}
	\label{prop:projection-parent-network}
	For every $[i,j] \in \mathcal{T}\setminus\{[1,N]\}$, let $S_{i,j} = \{ (\tau,\beta,\psi) \mid \eqref{eq:Parent-network};\;\sum_{m=0}^{i-1}\beta_{m,i,j} = \sum_{n=j+1}^{N+1} \tau_{i,j,n};\; \tau_{i,j,n} \ge 0,\; \llbracket n \rrbracket_{j+1}^{N+1}; \; \beta_{m,i,j} \ge 0,\; \llbracket m \rrbracket_{0}^{i-1} \}$. Then, the projection of $S_{i,j}$ in $(\tau,\beta)$ space is
	\begin{equation}
	\textnormal{proj}_{(\tau,\beta)}(S_{i,j})= \left\{ (\tau,\beta) \middle| \begin{aligned}
	& \beta_{0,i,j} \le \sum_{n=j+1}^N \tau_{i,j,n}; \quad \sum_{m=0}^{i-1}\beta_{m,i,j} = \sum_{n=j+1}^{N+1} \tau_{i,j,n}\\
	& \tau_{i,j,n} \ge 0, \; \llbracket n \rrbracket_{j+1}^{N+1}; \quad \beta_{m,i,j} \ge 0, \; \llbracket m \rrbracket_{0}^{i-1}
	\end{aligned} \right\}.
	\label{eq:projection-parent}
	\end{equation}
\end{proposition}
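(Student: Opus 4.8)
The plan is to read $S_{i,j}$ off Figure~\ref{fig:ParentConstraints} as the feasible-flow polytope of a (capacitated) transportation problem and then project out the arc-flow variables $\psi$. The nodes in $D_6=\{j+1,\dots,N+1\}$ act as sources carrying the fixed supply $\tau_{i,j,n}$, the nodes in $D_7=\{0,\dots,i-1\}$ act as sinks with the fixed demand $\beta_{m,i,j}$, every source--sink arc is uncapacitated except the arc $(N+1,0)$, which is forbidden ($\psi_{i,N+1,0,j}=0$), and the total balance $\sum_{m=0}^{i-1}\beta_{m,i,j}=\sum_{n=j+1}^{N+1}\tau_{i,j,n}=:T$ is imposed. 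The variables $\psi_{i,n,m,j}$ are exactly the arc flows and \eqref{eq:Parent-network} are the flow-conservation equations at $D_6\cup D_7$, so the proposition asks for $\textnormal{proj}_{(\tau,\beta)}$ of this polytope, and I would prove the two inclusions between it and the right-hand side of \eqref{eq:projection-parent}.

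The inclusion $\textnormal{proj}_{(\tau,\beta)}(S_{i,j})\subseteq(\cdot)$ (necessity) is immediate. Given $(\tau,\beta,\psi)\in S_{i,j}$, nonnegativity of $\tau,\beta$ and the equality $\sum_{m}\beta_{m,i,j}=\sum_{n}\tau_{i,j,n}$ are in the definition of $S_{i,j}$. For the remaining inequality, combine $\psi_{i,N+1,0,j}=0$ with the column balance $\sum_{n=j+1}^{N+1}\psi_{i,n,0,j}=\beta_{0,i,j}$ to get $\beta_{0,i,j}=\sum_{n=j+1}^{N}\psi_{i,n,0,j}$, and bound each term by its row sum $\psi_{i,n,0,j}\le\sum_{m=0}^{i-1}\psi_{i,n,m,j}=\tau_{i,j,n}$ to obtain $\beta_{0,i,j}\le\sum_{n=j+1}^{N}\tau_{i,j,n}$.

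The reverse inclusion (sufficiency) is the substantive part: given $(\tau,\beta)$ satisfying the right-hand side of \eqref{eq:projection-parent}, I must exhibit a feasible $\psi$. If $T=0$ then $\tau=\beta=0$ and $\psi\equiv0$ works, so assume $T>0$. Start from the product flow $\psi^{0}_{i,n,m,j}=\tau_{i,j,n}\beta_{m,i,j}/T$, which is nonnegative and has the correct row and column sums, but may place mass $\varepsilon:=\tau_{i,j,N+1}\beta_{0,i,j}/T$ on the forbidden arc. I then reroute those $\varepsilon$ units around one fractional cycle: zero out the $(N+1,0)$ cell, add $\varepsilon\lambda_m$ to each cell $(N+1,m)$, $m\ge1$, add $\varepsilon\mu_n$ to each cell $(n,0)$, $n\le N$, and subtract $\varepsilon\mu_n\lambda_m$ from each cell $(n,m)$ with $n\le N$, $m\ge1$, where $\mu_n=\tau_{i,j,n}/(T-\tau_{i,j,N+1})$ and $\lambda_m=\beta_{m,i,j}/(T-\beta_{0,i,j})$ (well defined, and with positive denominators when $\varepsilon>0$, since $T-\tau_{i,j,N+1}=\sum_{n=j+1}^{N}\tau_{i,j,n}\ge\beta_{0,i,j}$ and $T-\beta_{0,i,j}\ge\tau_{i,j,N+1}$). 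A direct check shows every row and column sum is preserved, so the resulting $\psi$ has all the equations of $S_{i,j}$; the only delicate point is nonnegativity of the decreased cells.

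The main obstacle is exactly that nonnegativity check, and it is where the inequality in \eqref{eq:projection-parent} gets used: for a cell with $\tau_{i,j,n}\beta_{m,i,j}>0$, dividing $\psi^{0}_{i,n,m,j}\ge\varepsilon\mu_n\lambda_m$ through by $\tau_{i,j,n}\beta_{m,i,j}$ reduces it to $(T-\tau_{i,j,N+1})(T-\beta_{0,i,j})\ge\varepsilon T=\tau_{i,j,N+1}\beta_{0,i,j}$, i.e.\ to $T\bigl(T-\tau_{i,j,N+1}-\beta_{0,i,j}\bigr)\ge0$, which holds precisely because $\beta_{0,i,j}\le\sum_{n=j+1}^{N}\tau_{i,j,n}=T-\tau_{i,j,N+1}$. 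As an alternative to this explicit construction one can invoke the Gale--Hoffman feasible-flow theorem: the transportation problem is feasible iff $\tau(A)\le\beta(N(A))$ for every set $A$ of sources, and since every source other than $N+1$ is adjacent to all sinks, the only non-redundant instance is $A=\{N+1\}$, giving $\tau_{i,j,N+1}\le\sum_{m=1}^{i-1}\beta_{m,i,j}$, which rearranges to the same inequality.
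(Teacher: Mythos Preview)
Your proof is correct and essentially matches the paper's: the paper also reads $S_{i,j}$ as a transshipment/transportation problem on the bipartite graph $D_6\times D_7$ with the arc $(N+1,0)$ deleted, invokes Rado's $b$-transshipment criterion (equivalent to your Gale--Hoffman argument) to show that the only nonredundant cut is $\beta_{0,i,j}\le\sum_{n=j+1}^N\tau_{i,j,n}$, and then supplies an explicit feasible $\psi$. The only cosmetic difference is in the explicit construction---the paper first allocates the $N{+}1$ row and the $0$ column proportionally and then fills the remaining block by a product formula, whereas you start from the full product flow and reroute the forbidden mass---but both verifications are straightforward and rely on the same inequality at the key nonnegativity step.
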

\begin{proof}
See \textsection \ref{sec:ecom:projection-proof} in the Appendix.
\end{proof}

Apart from the following, the remaining constraints in \eqref{eq:projection-parent} follow from \eqref{eq:secvar} and \eqref{eq:mccor}:
\begin{eqn}
	\text{for } \; [i,j]\in \mathcal{T}\setminus\{[1,N]\}, \quad \beta_{0,i,j} \le \sum_{n=j+1}^{N} \tau_{i,j,n}.
	\label{eq:Presence-of-parent}
\end{eqn}
\begin{remark}
	Using \eqref{eq:RLTbin}, \eqref{eq:nu-tau-inverse} and \eqref{eq:omega-tau-inverse}, $\tau$, $\beta$, $\nu$ and $\omega$ variables can be substituted out. \qed
\end{remark}
Constraints \eqref{eq:feedprod}--\eqref{eq:Presence-of-parent} model the space of admissible configurations. We compare this formulation with CG06, GA10, and TAT19, which refer to the formulations of \citet{caballero2006}, \citet{giridhar2010b}, and \citet{tumbalamgooty2019}, respectively.

\begin{proposition}
	The feasible region defined using constraints \eqref{eq:feedprod}--\eqref{eq:Presence-of-parent} is tighter than the set by imposing the constraints in the formulations of CG06, GA10, and TAT19. 
	\label{prop:searchspace}
\end{proposition}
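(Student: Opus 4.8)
The plan is to prove containment of relaxations: letting $\mathcal{F}$ denote the polyhedron cut out by \eqref{eq:feedprod}--\eqref{eq:Presence-of-parent} (with the discrete variables relaxed to $[0,1]$, as in the rest of the development), I would show that the projection of $\mathcal{F}$ onto the variables common to all four formulations — the column variables $\zeta$ and, where they occur, the heat-exchanger variables $\chi,\rho$ — is contained in the continuous relaxation of each of CG06, GA10, and TAT19, hence in their intersection. Since all four models have the same integer-feasible set (the admissible regular-column configurations), this is exactly the sense in which ours is ``tighter''. I would also verify that the containment is strict, which is presumably the intended full meaning of the statement.

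\emph{Step 1 (put the prior formulations in a common system).} TAT19 is already written in $(\zeta,\chi,\rho)$; CG06 and GA10 carry additional logical/auxiliary binaries, each of which is either fixed by, or sandwiched between, monotone expressions in $\zeta$, so eliminating them by Fourier--Motzkin leaves a system purely in $(\zeta,\chi,\rho)$. I would then sort the resulting constraints of each prior model into three groups: (i) ``a present mixture is split into exactly one distillate child and one residue child, and the two children are compatible for component conservation''; (ii) ``a present stream other than the process feed has at least one parent''; (iii) heat-exchanger placement rules; plus housekeeping constraints forcing the process feed and pure products to be present.

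\emph{Step 2 (dispatch each group using the convex-hull results).} The point is that the prior formulations describe exactly the substructures of Figures \ref{fig:SplitFeasibility} and \ref{fig:ParentConstraints} (and their heat-exchanger augmentation), but through weaker McCormick- or big-$M$-style relaxations of the bilinear products in \eqref{eq:secvardef}. For group (i), Propositions \ref{prop:bin-simultaneous-hull} and \ref{prop:conv-hull-SplitFeas} together with Remark \ref{rem:linearization-tb} show that \eqref{eq:secvar}, \eqref{eq:mccor}, \eqref{eq:RLTbin} (after substituting out $\tau,\beta,\nu,\omega$) give the convex hull of $S_{i,j}$ for every $[i,j]\in\mathcal{P}$; since a convex hull is at least as tight as any relaxation of the same set, every prior type-(i) constraint is a valid inequality for $\mathcal{F}$ and hence implied. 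For group (ii), Propositions \ref{prop:conv-hull-parents} and \ref{prop:projection-parent-network} show that \eqref{eq:Parent-network}, equivalently \eqref{eq:Presence-of-parent} with \eqref{eq:secvar}--\eqref{eq:mccor}, is the convex hull of the parent-presence substructure of Figure \ref{fig:ParentConstraints}, so every prior type-(ii) constraint is implied. For group (iii), Proposition \ref{prop:conv-hull-PresParent} furnishes the convex hull of the combined split/parent/heat-exchanger substructure, which dominates \eqref{eq:hex1}--\eqref{eq:hex2} and their CG06/GA10 analogues. The housekeeping constraints coincide with \eqref{eq:feedprod}. This yields the inclusion $\operatorname{proj}_{(\zeta,\chi,\rho)}(\mathcal{F})\subseteq \text{CG06}\cap\text{GA10}\cap\text{TAT19}$.

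\emph{Step 3 (strictness) and the main obstacle.} For a small instance, say $N=4$, I would exhibit a half-integral assignment of the column variables along a chain $\zeta_{i,i},\dots,\zeta_{i,j}$ (extended by a compatible $\chi,\rho$) that remains feasible in all three prior formulations — which relax the products in \eqref{eq:secvardef} only pairwise (recursive/plain McCormick) or via big-$M$ logic — but violates one of the simultaneous-hull inequalities of \eqref{eq:mccor} guaranteed by Proposition \ref{prop:bin-simultaneous-hull}, or the component-conservation coupling \eqref{eq:RLTbin}; checking feasibility for each prior model is a finite verification. The real work, and the main obstacle, is the bookkeeping of Step 1: faithfully transcribing three differently-notated formulations into a single system and correctly projecting out the auxiliary binaries of CG06 and GA10, and then ensuring that the one strictness example is simultaneously feasible for all three prior models while being excluded by ours. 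The conceptual content is light, because once the substructures are identified, Propositions \ref{prop:bin-simultaneous-hull}--\ref{prop:conv-hull-PresParent} do all the heavy lifting.
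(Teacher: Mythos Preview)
Your overall strategy --- show containment of LP relaxations and exhibit a fractional witness for strictness --- is exactly what the paper does, but the abstract argument in Step~2 has a real gap. Propositions~\ref{prop:conv-hull-SplitFeas}--\ref{prop:conv-hull-PresParent} say that for each fixed $[i,j]$ the projection of $\mathcal F$ onto the \emph{local} variables $(\sigma_{i,\cdot,\cdot,j},\tau_{i,\cdot,j},\beta_{i,\cdot,j},\zeta_{i,j})$ (respectively the parent-side variables) lies in $\Conv(S_{i,j})$. Your inference ``convex hull dominates any relaxation of the same set, hence each prior constraint is implied'' would be valid only if every prior constraint were a valid inequality for one such local $S_{i,j}$, i.e.\ involved only those variables. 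But almost none of them do: TAT19's (H3)--(H5) and (H7)--(H8), GA10's \S3.2 inequality, and most of CG06's constraints couple $\zeta_{i,j}$ with $\zeta$-variables of both parents \emph{and} children of $[i,j]$, so they are not valid inequalities for any single $S_{i,j}$ and are not automatically dominated by any one hull. Since $\bigcap_{[i,j]}\Conv(S_{i,j})$ is in general strictly larger than the integer hull, validity of a prior constraint for the integer set does not give validity for this intersection, and your dominance argument does not conclude.

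What the paper actually does --- and what you would be forced into when executing Step~2 --- is constraint-by-constraint derivation: each prior inequality is obtained as a chain in which every link invokes one of Corollary~\ref{prop:sumteqz}, Remark~\ref{rem:linearization-tb}, \eqref{eq:RLTbin}, \eqref{eq:Presence-of-parent}, or \eqref{eq:hex1}--\eqref{eq:hex2}. For example, TAT19's (H4) is derived as $\zeta_{i,k}-\sum_{n}\zeta_{i,n}+\zeta_{i,j}-1\le\tau_{i,k,j}\le\sum_{l}\beta_{i,l,j}\le\sum_{l}\zeta_{l,j}$, using Remark~\ref{rem:linearization-tb} at both ends and \eqref{eq:RLTbin} in the middle; GA10's \S3.2 requires first proving the auxiliary fact $\sum_{m\le k}\tau_{i,m,j}\le\sum_{l\le k+1}\beta_{i,l,j}$ from \eqref{eq:RLTbin}. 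This chaining across substructures is the substance of the proof, not light bookkeeping. Two smaller points: your Step~1 Fourier--Motzkin is unnecessary, since GA10 is already written in $(\tau,\beta)$ and CG06's only extra variable is $\eta_{i,j}=\chi_{i,j}+\rho_{i,j}$, so the paper simply adjoins this definition and works in the lifted space; and for strictness the paper gives three separate $N=4$ fractional points, one per prior model, rather than a single common witness.
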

\begin{proof}
See \textsection \ref{sec:ecom:search-space-proof} in the Appendix.
\end{proof}
The fact that our formulation is strictly tighter will follow from numerical examples.

\subsection{Mass Balance Constraints}
We model the problem as a network flow problem. Figure \ref{fig:flow-network} shows the representative nodes and arcs in the network, and variable definitions are in Table \ref{tab:notation}. Each split $[i,k]/[l,j]$ is performed in a distillation column $Q_{iklj}$ (see Figures \ref{fig:flow-network}(a) and \ref{fig:flow-network}(b)). Material flows to and from the column $Q_{iklj}$ only when $\sigma_{iklj}=1$. The material balances across each column $Q_{iklj}$ are as follows
\begin{eqnal}
	& \text{for } \; [i,j] \in \mathcal{S}, \;  \llbracket k \rrbracket_{i}^{j-1}, \; \llbracket l \rrbracket_{i+1}^{k+1}:\nonumber\\
	&\quad \left.
	\begin{alignedat}{2}
		& f^\feed_{ikljp} = f^\rec_{ikljp}\delta_{p\le k} + f^\strip_{ikljp}\delta_{p\ge l}, \llbracket p \rrbracket_{i}^j;
		&\mskip 10mu&U^\rec_{iklj}\delta_{j < N} - U^\strip_{iklj}\delta_{1<i} = V^\rec_{iklj} - V^\strip_{iklj}\\
		&K^\strip_{iklj}\delta_{1<i} - K^\rec_{iklj}\delta_{j<N} = L^\strip_{iklj} - L^\rec_{iklj}\\
		&\mathrlap{0 \le (\cdot)  \le \sigma_{iklj} \; (\cdot)^\upbnd,
			\forall \; (\cdot) \in \{ \text{All component, liquid and vapor flows}\}}
	\end{alignedat}
	\right\}, \label{eq:comp-vap-bal-on-S}\\\noalign{\vskip 1em}
	& \text{for } \; [i,j] \in \{[1,N]\}, \;  \llbracket k \rrbracket_{i}^{j-1}, \; \llbracket l \rrbracket_{i+1}^{k+1}:\nonumber\\
	&\quad\left.
	\begin{alignedat}{2}
		& F_p \sigma_{iklj} = f^\rec_{ikljp}\delta_{p\le k} + f^\strip_{ikljp}\delta_{p\ge l}, \llbracket p \rrbracket_{i}^j;
		&\mskip 10mu&\left(\sum\nolimits_{p=1}^N F_p\right) (1-\Phi_{1,N})\sigma_{iklj} = V^\rec_{iklj} - V^\strip_{iklj}\\
		& \left(\sum\nolimits_{p=1}^N F_p\right) \Phi_{1,N} \sigma_{iklj} = L^\strip_{iklj} - L^\rec_{iklj}\\
		& \mathrlap{0 \le (\cdot)  \le \sigma_{iklj} \; (\cdot)^\upbnd,
			\forall \; (\cdot) \in \{ \text{All component, liquid and vapor flows} \}}
	\end{alignedat}
	\right\}, \label{eq:comp-vap-bal-on-feed}\\\noalign{\vskip 1em}
	& \text{for } [i,j] \in \mathcal{P}, \;  \llbracket k \rrbracket_{i}^{j-1}, \; \llbracket l \rrbracket_{i+1}^{k+1}:\nonumber\\
	&\quad 
	\begin{alignedat}{2}
		& V^\rec_{iklj} - L^\rec_{iklj} = \sum\nolimits_{p=i}^k f^\rec_{ikljp};
		&\mskip 20mu& L^\strip_{iklj} - V^\strip_{iklj} = \sum\nolimits_{p=l}^j f^\strip_{ikljp}.
	\end{alignedat}
	\label{eq:dis-res-bal}
\end{eqnal}
The constraints in \eqref{eq:comp-vap-bal-on-S} model component, vapor, and liquid mass balances across column $Q_{iklj}$. In the above $\delta_{(\cdot)}$ is $1$ if $(\cdot)$ is true and $0$ otherwise. 
\eqref{eq:comp-vap-bal-on-feed} handles the case where the feed stream is the process feed, $[1,N]$. $F_p$ and $\Phi_{1,N}$ are as defined in \textsection\ref{sec:Prob-Definition}.
The last constraint in both \eqref{eq:comp-vap-bal-on-S} and \eqref{eq:comp-vap-bal-on-feed} 
suppresses material flows to column $Q_{iklj}$ when $\sigma_{iklj}=0$. We use $(\cdot)^\upbnd$ to denote the upper bound on $(\cdot)$, and discuss how these are obtained later. The first (\resp second) constraint in \eqref{eq:dis-res-bal} models that the net distillate (\resp residue) flow $Q_{iklj}$ as the difference between the vapor and liquid (\resp liquid and vapor) flows in the rectifying (\resp stripping) section.

\begin{figure}
	\centering
	\subfigure[]{\includegraphics[scale=1]{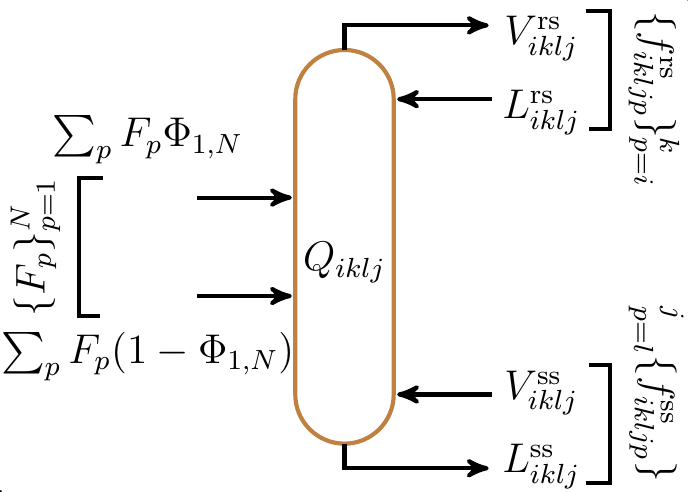}}
	\subfigure[]{\includegraphics[scale=1]{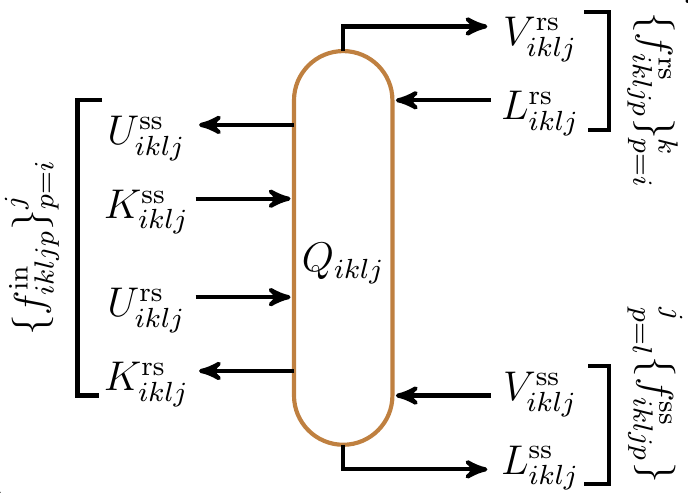}}
	\subfigure[]{\includegraphics[scale=1]{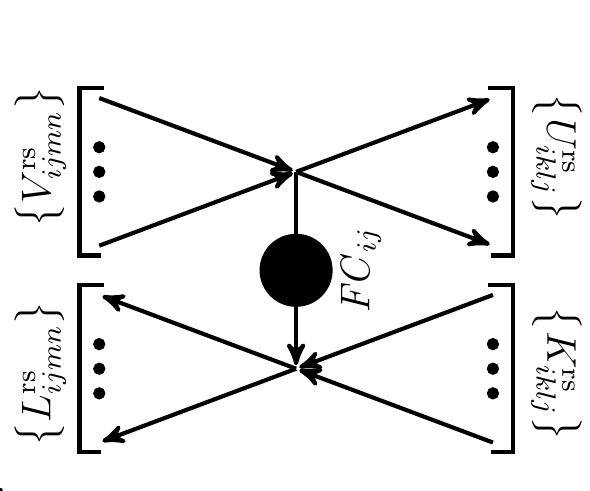}}
	\subfigure[]{\includegraphics[scale=1]{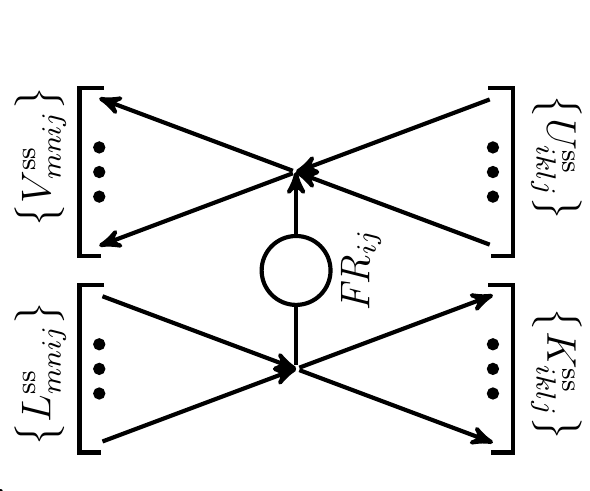}}
	\subfigure[]{\includegraphics[scale=1]{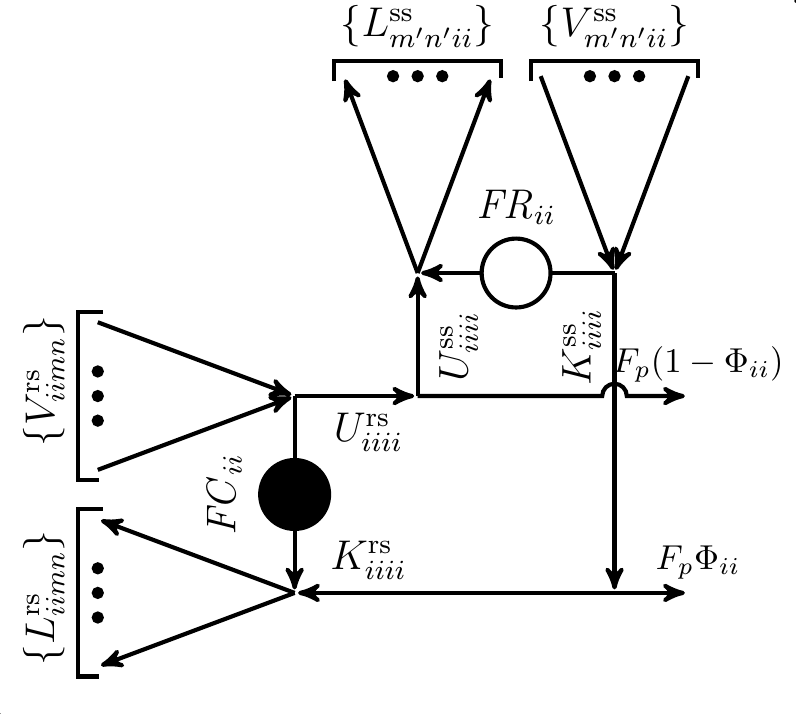}}
	\subfigure[]{\includegraphics[scale=1]{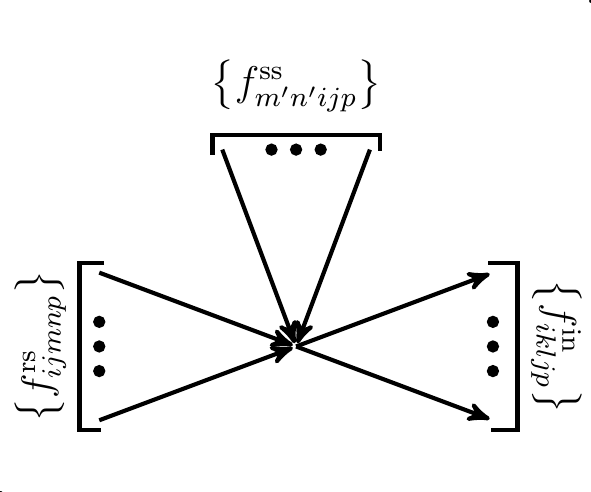}}
	\caption{(a) Representative column for splits of process feed i.e., $[i,j] \in \{[1,N]\}, \; \llbracket k \rrbracket_{i}^{j-1}, \; \llbracket l \rrbracket_{i+1}^{k+1}$ (b) Representative column for the remaining splits $[i,j] \in \mathcal{S}, \; \; \llbracket k \rrbracket_{i}^{j-1}, \; \llbracket l \rrbracket_{i+1}^{k+1}$ (c) Representative condenser for $(i,j) \in \mathcal{C}\setminus \{[i,i]\}_{i=1}^{N-1}$ (see \eqref{eq:cond-mass-bal} for domain of indices $m,n,k,l$) (d) Representative reboiler for $(i,j) \in \mathcal{R}\setminus \{[i,i]\}_{i=2}^N $ (see \eqref{eq:reb-mass-bal} for domain of indices $m,n,k,l$) (e) Representative arrangement for pure product withdrawals (see \eqref{eq:cond-mass-bal-pure} for domain of indices $m,n$, and \eqref{eq:reb-mass-bal-pure} for domain of indices $m',n'$) (f) Representative arrangement for overall component mass balance for $[i,j] \in \mathcal{S}$ (see \eqref{eq:net-comp-bal} for domain of indices $m,n,m',n',k,l$) }
	\label{fig:flow-network}
\end{figure}

\begin{table}[!hbt]
	\centering
	\renewcommand\arraystretch{1.5}
	\begin{tabular}{l@{\hskip 0.25in}l@{\hskip 0.25in}}
		\toprule
		Variable & Definition\\
		\midrule
		$\bigl\{f_{ikljp}^{\rec}\bigr\}_{p=i}^{k}$	&	Net molar flow of component $p$ in the rectifying section of $Q_{iklj}$\\
		$\bigl\{f_{ikljp}^{\strip}\bigr\}_{p=l}^j$	&	Net molar flow of component $p$ in the stripping section of $Q_{iklj}$\\
		$\bigl\{f_{ikljp}^{\feed}\bigr\}_{p=i}^{j}$	&	Net molar flow of component $p$ in the feed to $Q_{iklj}$\\
		$V_{iklj}^{\rec}$	&	Vapor flowrate in the rectifying section of $Q_{iklj}$\\
		$V_{iklj}^{\strip}$&	Vapor flowrate in the stripping section of $Q_{iklj}$\\
		$L_{iklj}^{\rec}$	&	Liquid flowrate in the rectifying section of $Q_{iklj}$\\
		$L_{iklj}^\strip$	&	Liquid flowrate in the stripping section of $Q_{iklj}$\\
		$U_{iklj}^{\rec}$&   Vapor in-flow into $Q_{iklj}$ from condenser $(i,j)$\\
		$U_{iklj}^{\strip}$&   Vapor out-flow from $Q_{iklj}$ to reboiler $(i,j)$\\
		$K_{iklj}^{\rec}$&   Liquid out-flow from $Q_{iklj}$ to condenser $(i,j)$\\
		$K_{iklj}^{\strip}$&   Liquid in-flow into $Q_{iklj}$ from reboiler $(i,j)$\\
		$\bigl\{\theta_{ijq}\bigr\}_{q=i}^{j-1}$&   Underwood root of $Q_{iklj}$ satisfying $\alpha_{q+1}\le \theta_{i,j,q}\le \alpha_q$\\
		$\Upsilon_{iklj}^{\rec}$&  Minimum vapor flow required in the rectifying section of $Q_{iklj}$\\
		$\Upsilon_{iklj}^{\strip}$ & Minimum vapor flow required in the stripping section of $Q_{iklj}$\\
		$\FC_{ij}$		&	Molar flowrate in condenser $(i,j)$\\
		$\FR_{ij}$		&	Molar flowrate in reboiler $(i,j)$\\
		\bottomrule
	\end{tabular}
	\caption{Definition of continuous decision variables.}
	\label{tab:notation}
\end{table}

Column $Q_{iklj}$ receives feed from the associated condenser $(i,j)$ and/or reboiler $(i,j)$ (see Figures \ref{fig:flow-network}(c) and \ref{fig:flow-network}(d)). Further, condenser (\resp reboiler)  $(i,j)$ regulates vapor-liquid traffic from all the splits producing $[i,j]$ as distillate (\resp residue), and distributes flows to all the splits of $[i,j]$. Material balances across these condensers and reboilers are given below:
\begin{eqnal}
	& \text{For } (i,j) \in \mathcal{C}\setminus\{[i,i]\}_{i=1}^{N-1}:\nonumber\\
	&\quad
	\left.\begin{alignedat}{2}
		& \sum_{n=j+1}^N \sum_{m=i+1}^{j+1} V^\rec_{ijmn} = \FC_{ij} + \sum_{k=i}^{j-1} \sum_{l=i+1}^{k+1} U^\rec_{iklj};
		&\mskip 10mu&\sum_{n=j+1}^N \sum_{m=i+1}^{j+1} L^\rec_{ijmn} = \FC_{ij} + \sum_{k=i}^{j-1} \sum_{l=i+1}^{k+1} K^\rec_{iklj}\\
		& 0 \le \FC_{ij} \le (\FC_{ij})^\upbnd \chi_{ij};
		&&0 \le K^\rec_{iklj} \le (K^\rec_{iklj})^\upbnd (1-\chi_{ij}),\; \llbracket l \rrbracket_{i+1}^{k+1},\; \llbracket k \rrbracket_{i}^{j-1}
	\end{alignedat}\right\}
	\label{eq:cond-mass-bal}\\\noalign{\vskip 0.5em}
	& \text{For } (i,j) \in \mathcal{R}\setminus\{[i,i]\}_{i=2}^{N}:\nonumber\\
	&\quad\left. 
	\begin{alignedat}{2}
		& \sum_{m=1}^{i-1} \sum_{n=i-1}^{j-1} V^\strip_{mnij} = \FR_{ij} + \sum_{k=i}^{j-1} \sum_{l=i+1}^{k+1} U^\strip_{iklj}; \quad 
		&&\sum_{m=1}^{i-1} \sum_{n=i-1}^{j-1} L^\strip_{mnij} = \FR_{ij} + \sum_{k=i}^{j-1} \sum_{l=i+1}^{k+1} K^\strip_{iklj}\\
		& 0 \le \FR_{ij} \le (\FR_{ij})^\upbnd \rho_{ij};\quad
		&&0 \le U^\strip_{iklj} \le (U^\strip_{iklj})^\upbnd (1-\rho_{ij}),\llbracket l \rrbracket_{i+1}^{k+1},\; \llbracket k \rrbracket_{i}^{j-1}.
	\end{alignedat}\right\}
	\label{eq:reb-mass-bal}
\end{eqnal}
We are interested in configurations that either have heat exchangers or thermal couplings, but not both. The last two constraints in \eqref{eq:cond-mass-bal} and \eqref{eq:reb-mass-bal} suppress flows in appropriate arcs if the heat exchangers are absent. The above constraints are written only for heat exchangers associated with mixtures. For heat exchangers associated with pure component products, the vapor and liquid flows are further constrained to produce $\Phi_{i,i} F_i $ and $(1-\Phi_{i,i}) F_i$ of component $i$ in liquid and vapor phases, respectively (see Figure \ref{fig:flow-network}(e)). Mass balances around these heat exchangers are given below.
\begin{eqnal}
	& \text{For } (i,i) \in \mathcal{C}: \quad \left.
	\begin{alignedat}{2}
		& \sum_{n=i+1}^N \sum_{m=i+1}^{i+1} V^\rec_{iimn} = \FC_{ii} + U^\rec_{iiii};\quad &&\mskip10mu \sum_{n=i+1}^N \sum_{m=i+1}^{i+1} L^\rec_{iimn} = \FC_{ii} + K^\rec_{iiii}\\
		& 0 \le U^\rec_{iiii};\;\; 0 \le \FC_{ii} \le (\FC_{ii})^\upbnd \chi_{ii};\quad	&&\mskip10mu (K^\rec_{iiii})^\lobnd \le K^\rec_{iiii} \le (K^\rec_{iiii})^\upbnd (1-\chi_{ii})
	\end{alignedat}
	\right\}, \label{eq:cond-mass-bal-pure}\\\noalign{\vskip 1em}
	& \text{For } (i,i) \in \mathcal{R}: \quad \left.
	\begin{alignedat}{2}
		& \sum_{m=1}^{i-1} \sum_{n=i-1}^{i-1} V^\strip_{mnii} = \FR_{ii} + U^\strip_{iiii};\quad &&\mskip10mu \sum_{m=1}^{i-1} \sum_{n=i-1}^{i-1} L^\strip_{mnii} = \FR_{ii} +  K^\strip_{iiii}\\
		& 0\le K^\strip_{iiii};\;\; 0 \le \FR_{ii} \le (\FR_{ii})^\upbnd \rho_{ii};\quad &&\mskip10mu (U^\strip_{iiii})^\lobnd \le U^\strip_{iiii} \le (U^\strip_{iiii})^\upbnd (1-\rho_{ii})
	\end{alignedat}
	\right\}, \label{eq:reb-mass-bal-pure}\\\noalign{\vskip 1em}
	& \text{For } (i,i) \in \mathcal{C}\cap \mathcal{R}: \quad
	\begin{aligned}
		& U^\rec_{iiii} - U^\strip_{iiii} = F_p (1-\Phi_{i,i});\quad &&\mskip10mu K^\strip_{iiii} - K^\rec_{iiii} = F_p\Phi_{i,i}.
	\end{aligned}
	\label{eq:mass-bal-pure}
\end{eqnal}
where $(\cdot)^\lobnd$ denotes the lower bound on $(\cdot)$. From \eqref{eq:mass-bal-pure} and \eqref{eq:reb-mass-bal-pure} (\resp \eqref{eq:cond-mass-bal-pure}), $(K^\rec_{iiii})^\lobnd = -F_p\Phi_{i,i}$ (\resp $(U^\strip_{iiii})^\lobnd = -F_p(1-\Phi_{i,i})$). For each submixture $[i,j] \in \mathcal{P}$, the net inflow of component $p$ equals the sum of component flows from all the splits that produce $[i,j]$ as distillate or residue. The net inflow is distributed among all splits of $[i,j]$ (see Figure \ref{fig:flow-network}(f)).
\begin{eqnal}
	& \text{For } [i,j] \in \mathcal{S}:  & & \sum_{k=i}^{j-1} \sum_{l=i+1}^{k+1} f^\feed_{ikljp} = \sum_{n=j+1}^N \sum_{m=i+1}^{j+1} f^\rec_{ijmnp} + \sum_{m=1}^{i-1}\sum_{n=i-1}^{j-1} f^\strip_{mnijp}, \quad \llbracket p\rrbracket_i^{j}.
	\label{eq:net-comp-bal}
\end{eqnal}
Finally, modeling the problem in the above manner requires rigorous bounds on all material flows. The net component inflow to and outflow from any column cannot exceed in steady-state the component flow in the process feed. Therefore, the upper bound on all flows of component $p$ is chosen to be $F_p$ i.e., $(f^\feed_{ikljp})^\upbnd = (f^\rec_{ikljp})^\upbnd = (f^\strip_{ikljp})^\upbnd = F_p$. However, although required for deriving rigorous relaxations, there is no simple upper bound on vapor and liquid flows in the columns and heat exchangers. For deriving a bound, we use optimality-based bound tightening, where we find feasible flows for an admissible configuration using the technique of \citet{nallasivam2013}. This technique can also be replaced with a local nonlinear programming solver.
Let this upper bound be $\vapduty^*$. Then, we solve the following linear programs (LP) to derive bounds:
\begin{align}
\label{eq:bounding-LPs}
\begin{aligned}
\max  \quad V^\rec_{iklj},\quad \text{s.t.}\quad \eqref{eq:feedprod}-\eqref{eq:net-comp-bal},\; \sum_{(i,j)\in\mathcal{R}} \FR_{i,j} \le \phi \vapduty^*
\end{aligned}
\end{align}
We choose $\phi = 1$, if only the optimal solution is desired. Since the model does not capture all operability concerns, such as controllability and suitability w.r.t heat integration with the rest of the plant, and vapor flow predictions are based on shortcut methods rather than rigorous simulations, industrial practitioners are often interested in identifying a ranklist of a few best solutions for this MINLP. Such a ranklist allows them to a posteriori incorporate such considerations. Therefore, to allow construction of such a ranklist, we choose $\phi = 1.5$. With this choice, any configuration that consumes at most 50\% more energy than the feasible solution remains in the search space. Our numerical experiments show that each LP can be solved in a fraction of a second using solvers such as Gurobi \citep{gurobi}, and the computational time taken to solve all the LPs for a five-component mixture is typically negligible.

\subsection{Underwood Constraints}
\label{sec:underwoodconst}
As mentioned in \textsection 2, for a given split, there is a minimum threshold vapor requirement in each section of a column, below which the products are not produced with the desired purity. A column can, however, carry more vapor than the threshold, and the excess vapor can, if transferred to other columns, be utilized in those columns. The threshold vapor requirement can be computed using Underwood constraints included below:
\begin{eqnal}
	\allowdisplaybreaks
	\text{For } \; [i,j] \in \mathcal{P}, \;  \llbracket k \rrbracket_{i}^{j-1}, \; \llbracket l \rrbracket_{i+1}^{k+1}:\nonumber&\\
	& \sum_{p=i}^j \frac{\alpha_pf^\feed_{ikljp}}{\alpha_p-\theta_{ijq}} = U_{iklj}^{\rec}\delta_{j<N}-U_{iklj}^{\strip}\delta_{1<i}, \quad\llbracket q \rrbracket_{l-1}^k,
	\label{eq:UWFeedEqn} \\
	& \sum_{p=i}^{k} \frac{\alpha_pf_{ikljp}^{\rec}}{\alpha_p-\theta_{ijq}} \leq \Upsilon_{iklj}^\rec,   \quad \sum_{p=l}^j \frac{\alpha_p f_{ikljp}^{\strip}}{\alpha_p-\theta_{ijq}} \geq -\Upsilon_{iklj}^\strip,   \quad \llbracket q \rrbracket_{l-1}^k,
	\label{eq:UWmvr-strip1}\\
	& \sum\limits_{p=i}^{k} \frac{\alpha_pf_{ikljp}^{\rec}}{\alpha_p-\theta_{ijq}} \geq \Upsilon_{iklj}^\rec, \quad \sum_{p=l}^j \frac{\alpha_p f_{ikljp}^{\strip}}{\alpha_p-\theta_{ijq}} \le -\Upsilon_{iklj}^\strip,  \quad \llbracket q \rrbracket_{l}^{k-1},
	\label{eq:UWmvr-strip2}\\
	& \alpha_{q+1} \le \theta_{ijq} \le \alpha_q,
	\label{eq:UWroot-bounds}\\
	& U^\rec_{iklj<N} - U^\strip_{1<iklj} = \Upsilon^\rec_{iklj} - \Upsilon^\strip_{iklj},
	\label{eq:minvap-mass-bal}\\
	& 0 \le \Upsilon^\rec_{iklj} \le V^\rec_{iklj}, \quad 0 \le \Upsilon^\strip_{iklj} \le V^\strip_{ikljp},
	\label{eq:minvap-less-actual}
\end{eqnal}
where $\Upsilon^\rec_{iklj}$ and $\Upsilon^\strip_{iklj}$ denote the threshold vapor flow in rectifying and stripping sections, respectively. Note that, for the process feed $[1,N]$, $f^\feed_{ikljp}$ and $U_{iklj}^{\rec}\delta_{j<N}-U_{1<iklj}^{\strip}\delta_{1<i}$ in \eqref{eq:UWFeedEqn} and \eqref{eq:minvap-mass-bal} are replaced by $F_p \sigma_{iklj}$ and $\left(\sum_{p=1}^{N}F_p\right)(1-\Phi_{1,N}) \sigma_{i,k,l,j}$, respectively. \eqref{eq:UWFeedEqn} is commonly known in the literature as the \textit{Underwood feed equation}, and it computes \textit{Underwood roots} $\{\theta_{ijq}\}_{q=l-1}^k$, which satisfy $\alpha_{q+1} \le \theta_{ijq} \le \alpha_q$ \citep{underwood}. \eqref{eq:UWmvr-strip1} governs the minimum vapor requirement in rectifying and stripping sections as a function of the distillate and residue compositions. \eqref{eq:UWmvr-strip2} ensures that the minimum vapor constraints are binding for $\{\theta_{ijq}\}_{q=l}^{k-1}$. These constraints are required for the model to have the correct degrees of freedom as described in \citet{tumbalamgooty2019}. \eqref{eq:minvap-mass-bal} models vapor balance at the feed location in terms of minimum vapor flows. \eqref{eq:minvap-less-actual} ensures that the actual vapor in each section is at least as high as the threshold vapor flow.

\begin{remark}
	Since the process feed is always present i.e., $\zeta_{1,N}=1$, and the net component and vapor inflow to columns $Q_{1klN}$ where $1 < l \le k+1 \le N$ are known, we solve the Underwood feed equation \eqref{eq:UWFeedEqn} \textit{a priori} to determine the Underwood roots $\{\theta_{1Nq}\}_{q=1}^{N-1}$, and fix these variables to the calculated values. \qed
	\label{rem:Feed-Bounds}
\end{remark}

\begin{remark}
	Recognizing that $f^\rec_{ikljp} \ge 0$, $\theta_{ijk} \le \alpha_k < \alpha_{k-1} < \dots < \alpha_i$, $f^\strip_{ikljp} \ge 0$ and $\alpha_j < \alpha_{j-1}<\dots < \alpha_l < \theta_{ijl-1}$, we have
	\begin{align}
	\text{for } \; [i,j] \in \mathcal{P}, \;  \llbracket k \rrbracket_{i}^{j-1}, \; \llbracket l \rrbracket_{i+1}^{k+1} \quad
	\left\{
	\begin{aligned}
	& 0 \le \sum_{p=i}^k \frac{\alpha_p f^\rec_{ikljp}}{\alpha_p-\theta_{ijk}}; \quad 0 \le -\sum_{p=l}^j \frac{\alpha_p f^\strip_{ikljp}}{\alpha_p-\theta_{ijl-1}}.
	\end{aligned} \right.
	\label{eq:LBs-minvap-1}
	\end{align}
	Next, using \eqref{eq:LBs-minvap-1}, component mass balance $f^\feed_{ikljp} = f^\rec_{ikljp} \delta_{p \le k} + f^\strip_{ikljp} \delta_{p \ge l}$, and \eqref{eq:UWFeedEqn}, it can be shown that
	\begin{align}
	\text{for } \; [i,j] \in \mathcal{P}, \;  \llbracket k \rrbracket_{i}^{j-1}, \; \llbracket l \rrbracket_{i+1}^{k+1} \quad \left\{
	\begin{aligned}
	& U^\rec_{iklj}\delta_{j<N}-U^\strip_{iklj}\delta_{1<i} \le \sum_{p=i}^k \frac{\alpha_p f^\rec_{ikljp}}{\alpha_p-\theta_{ijl-1}}\\
	& U^\strip_{iklj}\delta_{1<i}-U^\rec_{iklj}\delta_{j<N} \le -\sum_{p=l}^{j} \frac{\alpha_p f^\strip_{ikljp}}{\alpha_p-\theta_{ijk}}.
	\end{aligned}\right.
	\label{eq:LBs-minvap-2}
	\end{align}
	Since the vapor flows are bounded, we have finite upper and lower bounds on all nonlinear expressions in \eqref{eq:UWFeedEqn}--\eqref{eq:UWmvr-strip2}.\qed
\end{remark}

\subsection{Exploiting Monotonicity of Underwood Equations}
These cuts are inspired from \citet{carlberg1989a} and \citet{halvorsen2003b}. 
Although these relations are implicit in the model, they are not implied in the relaxation, when Underwood constraints are relaxed. We refer to \cite{tumbalamgooty2019} for a derivation.

When $[i,j]$ is produced as distillate from one of its top parent $[i,n]$ where $j+1 \le n \le N$ \ie{} $\tau_{i,j,n}=1$, but not produced as residue from any of its bottom parents \ie{} $\beta_{0,i,j}=1$, and the associated condenser $(i,j)$ is absent, then $\theta_{inq}$ lower bounds $\theta_{ijq}$ for $\llbracket q \rrbracket_{i}^{j-1}$. Similarly, when $[i,j]$ is produced as residue from one of its bottom parent $[m,j]$ where $1\le m\le i-1$ \ie{} $\beta_{m,i,j}=1$, but not produced as distillate from any of its top parents \ie{} $\tau_{i,j,N+1}=1$, and the associated reboiler $(i,j)$ is absent, then $\theta_{mjq}$ upper bounds $\theta_{ijq}$ for $\llbracket q \rrbracket_{i}^{j-1}$. These constraints are imposed as follows:
\begin{eqnal}
	\label{eq:Flow-of-roots}
	\text{for } \; [i,j]\in \mathcal{S} \quad \left\{
	\begin{aligned}
		& \theta_{inq} -\theta_{ijq} \le M_q \left[\chi_{i,j} + (1-\tau_{i,j,n}) + (1-\beta_{0,i,j}) \right], \quad \llbracket n \rrbracket_{j+1}^N, \; \llbracket q \rrbracket_{i}^{j-1}\\
		& \theta_{ijq} - \theta_{mjq} \le M_q \left[ \rho_{i,j} + (1-\beta_{m,i,j}) + (1-\tau_{i,j,N+1}) \right], \quad \llbracket m \rrbracket_{1}^{i-1}, \; \llbracket q \rrbracket_{i}^{j-1},
	\end{aligned}\right.
\end{eqnal}
where $M_q = (\alpha_q-\alpha_{q+1})$ corresponds to the upper bound on the difference of Underwood roots (see \eqref{eq:UWroot-bounds}). Numerical examples in \cite{tumbalamgooty2019} illustrate that these cuts help branch \& bound converge faster. Given that our formulation has been developed in a lifted space, we use $\tau$ and $\beta$ variables to give a tighter representation of the constraint in \eqref{eq:Flow-of-roots}. Moreover, if the variables $\psi_{1,m,n,j}$ are not eliminated using Proposition \ref{prop:projection-parent-network}, they can be used to further tighten the above constraints. For example, in the first constraint, $(1-\tau_{i,j,n})+(1-\beta_{0,i,j})$ can be replaced with $(1-\psi_{i,n,0,j})$. This concludes the formulation of MINLP \MINLP.

\section{Relaxation and Solution Procedure}
\label{sec:Relaxation}
Apart from integrality requirements on stream $(\zeta_{i,j})$ and heat exchanger variables ($\rho_{i,j}$ and $\chi_{i,j}$), the remaining source of nonconvexity in the MINLP is the Underwood constraints. In this section, we describe the construction of a convex relaxation of Underwood constraints (\eqref{eq:UWFeedEqn}--\eqref{eq:UWroot-bounds}), referred to hereafter as the relaxation, defined using convex constraints that admits all feasible solutions. One of the challenges in constructing a valid relaxation is that the denominator of certain fractions in Underwood constraints can approach arbitrarily close to zero (see \eqref{eq:UWFeedEqn}--\eqref{eq:UWroot-bounds}). Consequently, off-the-shelf global solvers, such as BARON \citep{tawarmalani2005}, report an error and are not able to solve the problem. The common strategy used in the literature is to add/subtract $\epsilon_\theta$ (typically $10^{-2}-10^{-3}$) from the bounds of $\theta_{ijq}$ to prevent it from approaching either $\alpha_{q+1}$ or $\alpha_{q}$ (see \eqref{eq:UWroot-bounds}). However, this ad-hoc strategy has been adopted without a rigorous proof.  Our numerical experiments suggest that the choice of this $\epsilon_\theta$ is not straightforward, and varies from one instance to another.  In the following, we show that a rigorous relaxation for the fraction can be constructed although the denominator may approach close to zero.



In the following, we drop indices $iklj$. This is because, Underwood equations apply to a column, say $Q_{iklj}$, and these indices are easily gleaned from the column specification or the associated split $[i,k]/[l,j]$. Moreover, for notational convenience, we describe the relaxation using $\mathcal{U}=\{ (f,U,\Upsilon,\theta) \; | \;\eqref{eq:surrogate-UW};\; (f_p^\feed,f_p^\rec,f_p^\strip) \in [0,F_p]^3,\; p={1,2}; \; 0 \le (\cdot) \le (\cdot)^\upbnd, \; \forall \; (\cdot) \in \{U^\rec,U^\strip,\Upsilon^\rec,\Upsilon^\strip\}  \}$, where
\begin{subequations}\label{eq:surrogate-UW}
	\begin{align}
	& \frac{\alpha_1f_1^\feed}{\alpha_1-\theta} - \frac{\alpha_2f_2^\feed}{\theta-\alpha_2} = U^\rec - U^\strip,
	\label{eq:surrogate-UW-feedeq}\\
	& E^\rec \le \frac{\alpha_1f_1^\rec}{\alpha_1-\theta} - \frac{\alpha_2f_2^\rec}{\theta-\alpha_2} \le \Upsilon^\rec,
	\label{eq:surrogate-UW-receq}\\
	& E^\strip \le -\frac{\alpha_1f_1^\strip}{\alpha_1-\theta} + \frac{\alpha_2f_2^\strip}{\theta-\alpha_2} \le \Upsilon^\strip,
	\label{eq:surrogate-UW-stripeq}\\
	& \alpha_2 \le \theta^\lobnd \le \theta \le \theta^\upbnd \le \alpha_1
	\label{eq:surrogate-UW-Thbnds},\\
	& U^\rec-U^\strip = \Upsilon^\rec - \Upsilon^\strip,
	\label{eq:surrogate-UW-vapbal}\\
	& f_p^\feed =f_p^\rec + f_p^\strip, \quad p = {1,2}.
	\label{eq:surrogate-UW-flowbal}
	\end{align}
\end{subequations}
Here, we assume that column $Q_{iklj}$ performs the split of a binary mixture. Observe that \eqref{eq:surrogate-UW-feedeq}, the second inequality in \eqref{eq:surrogate-UW-receq} and \eqref{eq:surrogate-UW-stripeq} are simplified versions of \eqref{eq:UWFeedEqn} and \eqref{eq:UWmvr-strip1} for binary mixtures. We ensure that all fractions are non-negative by factoring out a negative sign from the fractions whose denominator is negative (see \eqref{eq:surrogate-UW}). Next, $E^\rec$ and $E^\strip$ denote lower bounds on nonlinear expressions in \eqref{eq:surrogate-UW-receq} and \eqref{eq:surrogate-UW-stripeq}, respectively. We choose $E^\rec$ (\resp $E^\strip$) to be $\Upsilon^\rec$ (\resp $\Upsilon^\strip$) if the second inequality in \eqref{eq:surrogate-UW-receq} (\resp \eqref{eq:surrogate-UW-stripeq}) needs to be binding, as in \eqref{eq:UWmvr-strip2}. Else, we choose the lower bound derived in \eqref{eq:LBs-minvap-1} and \eqref{eq:LBs-minvap-2}. \eqref{eq:surrogate-UW-Thbnds}, \eqref{eq:surrogate-UW-vapbal}, and \eqref{eq:surrogate-UW-flowbal} correspond to \eqref{eq:UWroot-bounds}, \eqref{eq:minvap-mass-bal}, and \eqref{eq:comp-vap-bal-on-S}, repectively. Lastly, we remark that, in \eqref{eq:UWmvr-strip1} and \eqref{eq:UWmvr-strip2}, $f_2^\rec=f_1^\strip=0$ for a split of a binary mixture. Since our purpose in restricting to the binary case is to illustrate the mathematical structure of relaxations, we do not consider this restriction. In general splits, one or more components may distribute between the distillate and residue. 

The first step in standard approaches to relax $\mathcal{U}$ is to linearize Underwood constraints by introducing an auxiliary variable representing the graph of each fraction. Then, the restriction that this variable take the value of the fraction is replaced with the less stringent restriction that the variable lies in a convex set containing the graph of fraction. Instead, we reformulate $\mathcal{U}$ as described in \textsection \ref{sec:reformulation} before linearizing the Underwood constraints.

\subsection{Reformulation}
\label{sec:reformulation}
We adapt classical Reformulation-Linearization Technique (RLT) \citep{sherali1992new} to fractions, and reformulate $\mathcal{U}$ by appending RLT cuts derived using Underwood constraints. For clarity, we present the derivation of RLT cuts with Underwood minimum vapor constraint in the rectifying section (second inequality in \eqref{eq:surrogate-UW-receq}), and describe the entire reformulated set towards the end. We multiply each Underwood constraint with the bound factors of $\theta$, $(\theta-\theta^\lobnd)$, and $(\theta^\upbnd-\theta)$. A naive approach would then disaggregate the product, leading to
\begin{subequations}
	\label{eq:naive-RLT}
	\begin{align}
	& \frac{\alpha_1 f_1^\rec \theta}{\alpha_1-\theta} - \frac{\alpha_1 f_1^\rec \theta^\lobnd}{\alpha_1-\theta} - \frac{\alpha_2 f_2^\rec \theta}{\theta - \alpha_2} + \frac{\alpha_2 f_2^\rec \theta^\lobnd}{\theta - \alpha_2} \le \Upsilon^\rec\cdot\theta - \Upsilon^\rec \cdot \theta^\lobnd,
	\label{eq:naive-RLT-1}\\
	& \frac{\alpha_1 f_1^\rec \theta^\upbnd}{\alpha_1-\theta} - \frac{\alpha_1 f_1^\rec \theta}{\alpha_1-\theta} - \frac{\alpha_2 f_2^\rec \theta^\upbnd}{\theta - \alpha_2} + \frac{\alpha_2 f_2^\rec \theta}{\theta - \alpha_2} \le \Upsilon^\rec\cdot\theta^\upbnd - \Upsilon^\rec \cdot \theta,
	\label{eq:naive-RLT-2}
	\end{align}
\end{subequations}
following which auxiliary variables are introduced to linearize each nonlinear term: $H_p^\rec = f_p^\rec/|\alpha_p-\theta|$, $\underline{H\theta}_p^\rec = f_p^\rec\theta/|\alpha_p-\theta|$, for $p=1,2$, and $\underline{\Upsilon\theta}^\rec = \Upsilon^\rec \cdot \theta$. Here, and in the rest of the article, the variables introduced to linearize a product will be written by underlining the concatenation of symbols, as in $\underline{\Upsilon\theta}^\rec = \Upsilon^\rec \cdot \theta$. Instead, we use polynomial long division prior to linearization, which transforms \eqref{eq:naive-RLT} to
\begin{subequations}
	\label{eq:RDLT}
	\begin{align}
	& \phantom{-}\frac{\alpha_1(\alpha_1-\theta^\lobnd)f_1^\rec}{\alpha_1-\theta} -\alpha_1f_1^\rec + \frac{\alpha_2(\theta^\lobnd-\alpha_2)f_2^\rec}{\theta-\alpha_2}  -\alpha_2f_2^\rec \le \Upsilon^\rec \cdot \theta - \Upsilon^\rec\cdot \theta^\lobnd,
	\label{eq:RDLT-1}\\
	& -\frac{\alpha_1(\alpha_1-\theta^\upbnd)f_1^\rec}{\alpha_1-\theta} +\alpha_1f_1^\rec - \frac{\alpha_2(\theta^\upbnd-\alpha_2)f_2^\rec}{\theta-\alpha_2}  +\alpha_2f_2^\rec \le \Upsilon^\rec\cdot \theta^\upbnd - \Upsilon^\rec \cdot \theta.
	\label{eq:RDLT-2}
	\end{align}
\end{subequations}
Next, we introduce auxiliary variables to linearize nonlinear terms: $H_p^\rec = f_p^\rec/|\alpha_p-\theta|$, for $p=1,2$, and $\underline{\Upsilon\theta}^\rec = \Upsilon^\rec \cdot \theta$. We shall refer to the proposed variant as the \textit{Reformulation-Division-Linearization Technique} (RDLT) of fractional terms, in order to easily distinguish and emphasize the use of polynomial division as an intermediate step. Clearly, RDLT cuts require fewer variables than those derived by naive application of RLT as described above. In addition, RDLT cuts lead to a tighter relaxation of $\mathcal{U}$, which we demonstrate below.

\begin{proposition}
	\label{prop:std-RLT-RDT}
	Let $\mathcal{B} = [f_1^\lobnd,f_1^\upbnd] \times [f_2^\lobnd,f_2^\upbnd] \times [\Upsilon^\lobnd,\Upsilon^\upbnd] \times [\theta^\lobnd, \theta^\upbnd]$, and $S=\{ (f,\Upsilon,\theta) \in \mathcal{B} \mid \frac{\alpha_1f_1}{\alpha_1-\theta} - \frac{\alpha_2f_2}{\theta-\alpha_2} \le \Upsilon \}$. 	Let $\underline{\Upsilon\theta}$, $H_i$, $\underline{H\theta}_i$ be linearizations of $\Upsilon \cdot \theta$, $\frac{f_i}{|\alpha_i-\theta|}$, and $\frac{f_i\theta}{|\alpha_i-\theta|}$ respectively.
	Define $S_{\textnormal{std}}= \{(f,\Upsilon,\theta,H)\in\mathcal{B}\times \mathbb{R}^2 \mid  \alpha_1 H_1 - \alpha_2 H_2 \le \Upsilon, \; \widecheck{H}_i \le H_i \le \widehat{H}_i,\; i=1,2  \}$, $S_\textnormal{RLT} = \bigl\{ (f,\Upsilon,\theta,H,\underline{H\theta},\underline{\Upsilon\theta}) \in C \bigm| \eqref{eq:SRLT} \bigr\}$, where $C\subseteq \mathcal{B}\times\mathbb{R}^{5}$ and
	\begin{subequations}\label{eq:SRLT}
		\begin{align}
		& \alpha_1 (\underline{H\theta}_1-\theta^\lobnd H_1) - \alpha_2 (\underline{H\theta}_2 - \theta^\lobnd H_2) \le \underline{\Upsilon\theta} - \Upsilon\cdot \theta^\lobnd,
		\label{eq:SRLT-1}\\
		& \alpha_1 (\theta^\upbnd H_1 - \underline{H\theta}_1) - \alpha_2 (\theta^\upbnd H_2 - \underline{H\theta}_2) \le \Upsilon\cdot \theta^\upbnd - \underline{\Upsilon\theta}.
		\label{eq:SRLT-2}
		\end{align}
	\end{subequations}
	Let $S_\textnormal{RDLT}=\bigl\{ (f,\Upsilon,\theta,H,\underline{\Upsilon\theta}) \in C' \bigm| \eqref{eq:SRDLT} \bigr\}$, where $C'\subseteq \mathcal{B}\times\mathbb{R}^{5}$ and
	\begin{subequations}
		\label{eq:SRDLT}
		\begin{align}
		& \alpha_1 (\alpha_1 H_1-f_1-\theta^\lobnd H_1) - \alpha_2 (\alpha_2H_2 +f_2 -\theta^\lobnd H_2) \le \underline{\Upsilon\theta} - \Upsilon\cdot \theta^\lobnd,
		\label{eq:SRDLT-1}\\
		& \alpha_1 (\theta^\upbnd H_1 - \alpha_1H_1 +f_1) - \alpha_2 (\theta^\upbnd H_2 -f_2-\alpha H_2) \le \Upsilon\cdot \theta^\upbnd - \underline{\Upsilon\theta}.
		\label{eq:SRDLT-2}
		\end{align}
	\end{subequations}
	Assume that
	$C
	\supseteq
	\bigl\{(f, \Upsilon, \theta, H, \underline{H\theta}, \underline{\Upsilon\theta})
	\bigm|
	(f,\Upsilon,\theta,H, \underline{\Upsilon\theta})\in C',\;
	\underline{H\theta}_1 = \alpha_1H_1 - f_1,\;
	\underline{H\theta}_2 = \alpha_2H_2 + f_2
	\bigr\}
	$ and
	$\proj_{H_1,H_2} C
	\subseteq
	[\widecheck{H}_1,\widehat{H}_1]\times[\widecheck{H}_2,\widehat{H}_2]$.
	Then, $S_\textnormal{std} \supseteq \textnormal{proj}_{(f,\Upsilon,\theta,H)}(S_\textnormal{RLT})$ and $S_\textnormal{RLT} \supseteq \bigl\{ (f,\Upsilon,\theta,H,\underline{H\theta},\underline{\Upsilon\theta}) \in S_\textnormal{RDLT}\times\mathbb{R}^{2} \mid \underline{H\theta}_1 = \alpha_1H_1 - f_1, \underline{H\theta}_2 = \alpha_2H_2 + f_2\bigr\}$, where the right hand side is an affine lifting of $S_\textnormal{RDLT}$. 
\end{proposition}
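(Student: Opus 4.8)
The plan is to verify the two claimed inclusions by direct manipulation of the linear inequalities in \eqref{eq:SRLT} and \eqref{eq:SRDLT}, the whole argument resting on the polynomial-division identity that motivates the reformulation: when the linearization variables equal their defining expressions, $\frac{f_1\theta}{\alpha_1-\theta} = \frac{\alpha_1 f_1}{\alpha_1-\theta} - f_1$ and $\frac{f_2\theta}{\theta-\alpha_2} = \frac{\alpha_2 f_2}{\theta-\alpha_2} + f_2$, that is, $\underline{H\theta}_1 = \alpha_1 H_1 - f_1$ and $\underline{H\theta}_2 = \alpha_2 H_2 + f_2$. These are exactly the substitutions appearing in the hypotheses on $C$ and in the definition of the affine lifting of $S_\textnormal{RDLT}$, so each inclusion reduces to a short computation plus bookkeeping of the ambient sets.

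First I would prove $S_\textnormal{std} \supseteq \proj_{(f,\Upsilon,\theta,H)}(S_\textnormal{RLT})$. Take any point of $S_\textnormal{RLT}$. Adding \eqref{eq:SRLT-1} and \eqref{eq:SRLT-2} cancels every occurrence of $\underline{H\theta}_1$, $\underline{H\theta}_2$, and $\underline{\Upsilon\theta}$ and leaves $(\theta^\upbnd-\theta^\lobnd)\bigl(\alpha_1 H_1 - \alpha_2 H_2 - \Upsilon\bigr) \le 0$; since $\theta^\upbnd > \theta^\lobnd$ by \eqref{eq:surrogate-UW-Thbnds} (if $\theta^\upbnd = \theta^\lobnd$ then $\theta$ is fixed, all fractional expressions become affine, and the claim is immediate), dividing gives $\alpha_1 H_1 - \alpha_2 H_2 \le \Upsilon$. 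The box constraints $\widecheck H_i \le H_i \le \widehat H_i$ hold because $S_\textnormal{RLT} \subseteq C$ and, by hypothesis, $\proj_{H_1,H_2} C \subseteq [\widecheck H_1,\widehat H_1]\times[\widecheck H_2,\widehat H_2]$. Hence the projection lies in $S_\textnormal{std}$.

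Next I would prove that the affine lifting of $S_\textnormal{RDLT}$ is contained in $S_\textnormal{RLT}$. Take $(f,\Upsilon,\theta,H,\underline{\Upsilon\theta}) \in S_\textnormal{RDLT} \subseteq C'$ and set $\underline{H\theta}_1 = \alpha_1 H_1 - f_1$, $\underline{H\theta}_2 = \alpha_2 H_2 + f_2$; by the inclusion assumed for $C$, the lifted tuple lies in $C$. Substituting these values into \eqref{eq:SRLT-1} rewrites its left-hand side as $\alpha_1(\alpha_1 H_1 - f_1 - \theta^\lobnd H_1) - \alpha_2(\alpha_2 H_2 + f_2 - \theta^\lobnd H_2)$, leaving the right-hand side unchanged, which is exactly \eqref{eq:SRDLT-1}; likewise \eqref{eq:SRLT-2} becomes \eqref{eq:SRDLT-2}. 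Since the point satisfies \eqref{eq:SRDLT}, the lifted tuple satisfies \eqref{eq:SRLT} and therefore lies in $S_\textnormal{RLT}$.

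I do not expect a serious obstacle: both parts are one-line identities once the polynomial-division relations are recognized. The only point demanding care is threading the abstract sets $C$ and $C'$ — which stand in for the remaining, unspecified relaxation constraints (for instance the McCormick envelopes of the bilinear term $\Upsilon\cdot\theta$ and of the fractional graphs) — and checking that the hypotheses placed on $C$ and $C'$ are precisely what makes the substitution $\underline{H\theta}_i \mapsto (H_i,f_i)$ admissible; the degenerate case $\theta^\lobnd=\theta^\upbnd$ should also be flagged and dismissed.
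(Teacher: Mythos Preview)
Your proposal is correct and follows essentially the same approach as the paper: the paper also adds \eqref{eq:SRLT-1} and \eqref{eq:SRLT-2} to recover $\alpha_1 H_1 - \alpha_2 H_2 \le \Upsilon$ (and invokes the projection hypothesis on $C$ for the $H_i$ bounds), and for the second inclusion it observes that \eqref{eq:SRLT-1} and \eqref{eq:SRLT-2} become \eqref{eq:SRDLT-1} and \eqref{eq:SRDLT-2} once the identities $\underline{H\theta}_1=\alpha_1 H_1-f_1$ and $\underline{H\theta}_2=\alpha_2 H_2+f_2$ are substituted, with the assumed inclusion on $C$ handling the ambient constraints. Your explicit handling of the degenerate case $\theta^\lobnd=\theta^\upbnd$ is a small addition beyond what the paper writes.
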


\begin{proof}
The first part of the statement follows easily because $\alpha_1H_1 - \alpha_2H_2 \le \Upsilon$ is obtained by adding
	\eqref{eq:SRLT-1} and \eqref{eq:SRLT-2}, and the bounds on $H_i$ in $S_{\textnormal{std}}$ are implied by our assumption $\proj_{H_1,H_2} C
	\subseteq
	[\widecheck{H}_1,\widehat{H}_1]\times[\widecheck{H}_2,\widehat{H}_2]$.  The second part follows similarly because \eqref{eq:SRLT-1} is derived by adding \eqref{eq:SRDLT-1} with $\alpha_1(\underline{H\theta}_1 - \alpha_1 H_1 + f_1)=0$ and $\alpha_2(\underline{H\theta}_2 - \alpha_2 H_2 - f_2)=0$, and affine lifting of any point in $C'$ that satisfies this equation is assumed to be contained in $C$. 
\end{proof}

The sets $C$ and $C'$ in Proposition \ref{prop:std-RLT-RDT} are typically created by relaxing the nonlinear expressions. We illustrate, via an example, that the relations in Proposition \ref{prop:std-RLT-RDT} can be strict. 

\begin{example}
	\label{example:RLT}
	Let, $\alpha_1=15,\; \alpha_2 =9$, $f_1^\lobnd=f_1^\upbnd=0.6$, $f_2^\lobnd=f_2^\upbnd=0.4$, $\Upsilon^\lobnd = -10$, $\Upsilon^\upbnd = 10$, $\theta^\lobnd = 9.1$, $\theta^\upbnd = 14.9$. The sets $C$ and $C'$ are constructed by under- and over-estimating the nonlinear terms with their respective convex and concave envelopes. Figure \ref{fig:UWPlots}(a) depicts the projection of sets $S, \; S_\textnormal{std},\; S_\textnormal{RLT}$ and $S_\textnormal{RDLT}$ in $\Upsilon-\theta$ space. It is clear that $\textnormal{proj}_{(f,\Upsilon,\theta)}(S_\textnormal{std}) \supset \textnormal{proj}_{(f,\Upsilon,\theta)}(S_\textnormal{RLT}) \supset \textnormal{proj}_{(f,\Upsilon,\theta)}(S_\textnormal{RDLT}) \supset S$. 	Besides improving the quality of relaxation by introducing fewer auxiliary variables, RDLT has another benefit in our context that we describe next.
	
	\begin{figure}[h]
		\centering
		\subfigure[]{\includegraphics[scale=0.55]{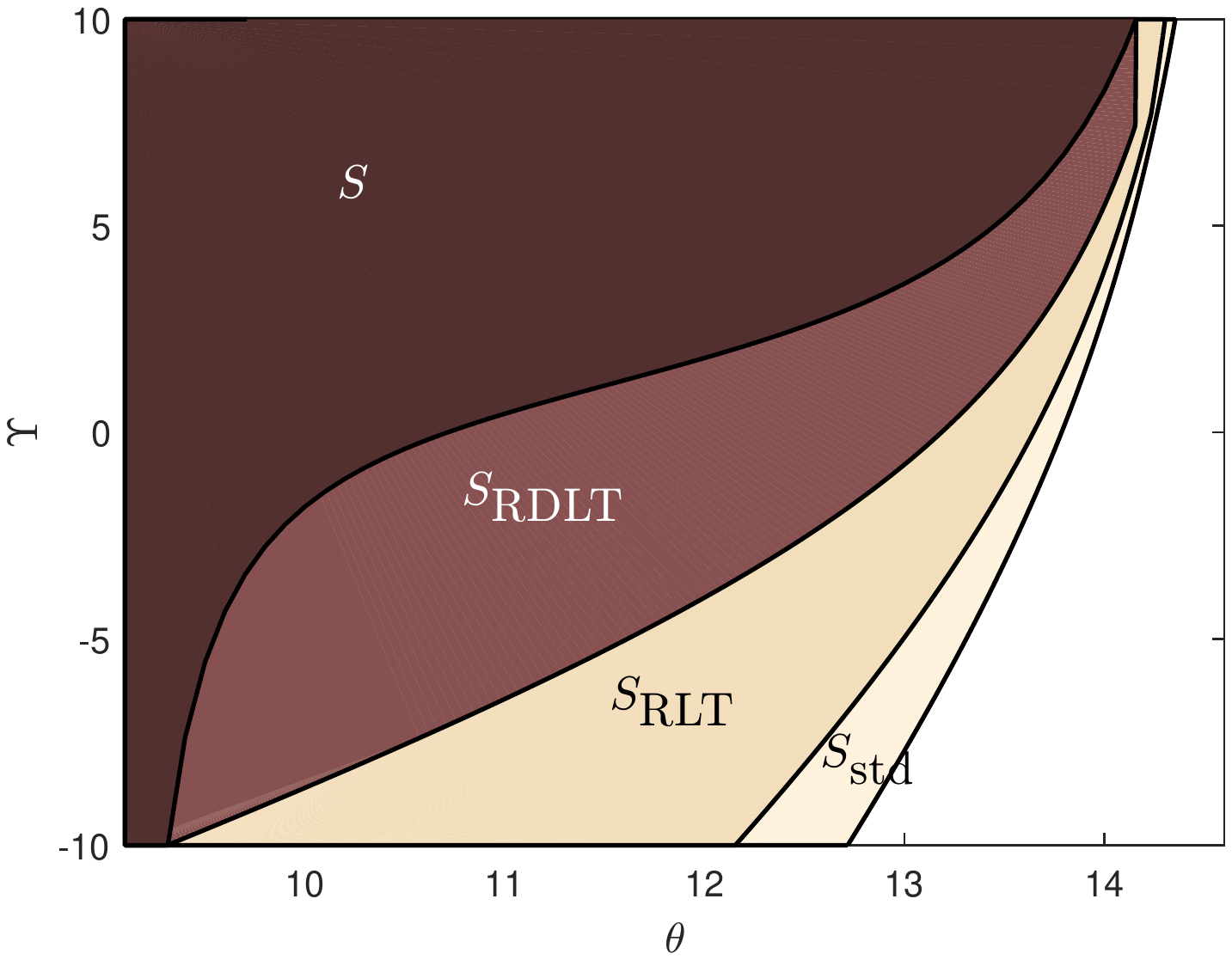}}
		\subfigure[]{\includegraphics[scale=0.55]{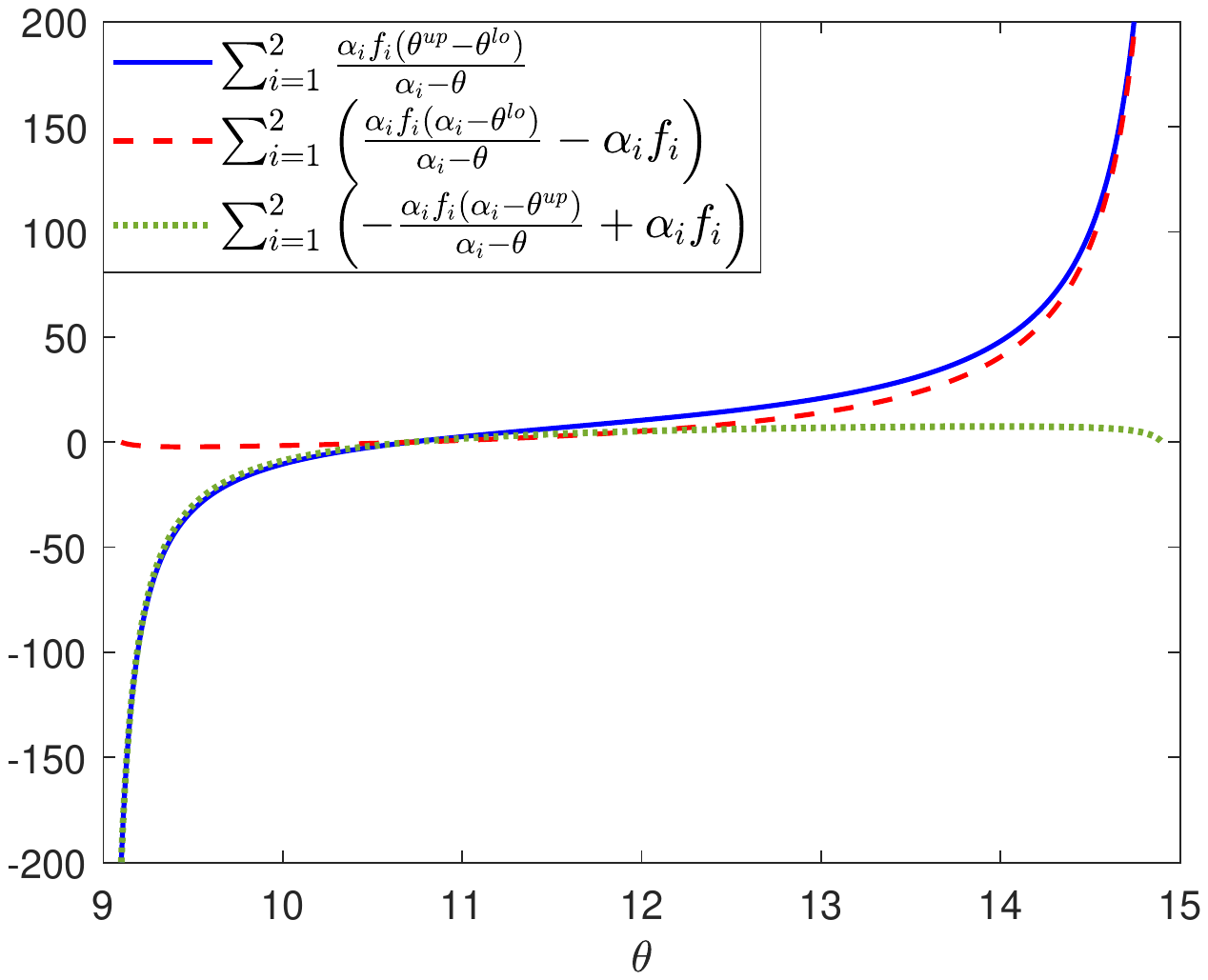}}
		\caption{(a) Projection of sets $S, \; S_\textnormal{std},\; S_\textnormal{RLT}$ and $S_\textnormal{RDLT}$ in Example \ref{example:RLT} in $\Upsilon-\theta$ space. (b) Plots of nonlinear expression in Underwood constraint }
		\label{fig:UWPlots}
	\end{figure}

	Even when $f_1$ and $f_2$ are fixed, the function $\frac{\alpha_1f_1}{\alpha_1-\theta} - \frac{\alpha_2f_2}{\theta-\alpha_2}$ is nonconvex (see Figure \ref{fig:UWPlots}(b)), because it is a difference of two convex functions. When this function is multiplied by $(\theta-\theta^\lobnd)$ (\resp ($\theta^\upbnd-\theta$)), it becomes convex (\resp concave) (see \ref{fig:UWPlots}(b)). In the naive RLT approach, where each fraction is relaxed independently, the product $(f_1/(\alpha_1-\theta)) \cdot (\theta-\theta^\lobnd)$ is disaggregated and relaxed as a difference of the convex envelope of $f_1\theta/(\alpha_1-\theta)$ with the concave envelope of $f_1/(\alpha_1-\theta)$. Whereas, the polynomial division step makes the convexity apparent revealing better ways to construct the relaxation. \qed
\end{example}

We use RDLT to obtain a reformulation of $\mathcal{U}$, denoted as $\mathcal{U}_\text{ref}$, in higher dimensional space as $\mathcal{U}_\text{ref}=\{ (f,U,\Upsilon,\theta,H,\underline{U\theta},\underline{\Upsilon\theta}) \;|\; \eqref{eq:reformulated-UW}; (f_p,\theta,H_p) \in \mathcal{F}_p, \; p=1,2; \; (U,\Upsilon,\theta,\underline{U\theta},\underline{\Upsilon\theta}) \in \mathcal{V}   \}$, where
\begin{subequations}
	\begin{align}
	& \sum\nolimits_{p=1}^2 \left(\alpha_p|\alpha_p-\theta^\lobnd|H_p^\feed -\alpha_pf_p^\feed\right) = (\underline{U\theta}^\rec - \theta^\lobnd U^\rec) - (\underline{U\theta}^\strip - \theta^\lobnd U^\strip),\\
	& \sum\nolimits_{p=1}^2 \left(\alpha_pf_p^\feed-\alpha_p|\alpha_p-\theta^\upbnd|H_p^\feed\right) = (\theta^\upbnd U^\rec-\underline{U\theta}^\rec) - (\theta^\upbnd U^\strip-\underline{U\theta}^\strip),\\
	& E^\rec (\theta-\theta^\lobnd) \le \sum\nolimits_{p=1}^2 \left(\alpha_p|\alpha_p-\theta^\lobnd|H_p^\rec -\alpha_pf_p^\rec\right) \le \underline{\Upsilon\theta}^\rec - \theta^\lobnd \Upsilon^\rec,
	\label{eq:reformulated-UW-3}\\
	& E^\rec(\theta^\upbnd-\theta) \le \sum\nolimits_{p=1}^2 \left(\alpha_pf_p^\rec-\alpha_p|\alpha_p-\theta^\upbnd|H_p^\rec\right) \le \theta^\upbnd \Upsilon^\rec -\underline{\Upsilon\theta}^\rec,
	\label{eq:reformulated-UW-4}\\
	& E^\strip(\theta-\theta^\lobnd) \le \sum\nolimits_{p=1}^2 \left(\alpha_pf_p^\strip-\alpha_p|\alpha_p-\theta^\upbnd|H_p^\strip\right) \le \underline{\Upsilon\theta}^\strip - \theta^\lobnd \Upsilon^\strip,\\
	& E^\strip(\theta^\upbnd-\theta) \le \sum\nolimits_{p=1}^2 \left(\alpha_p|\alpha_p-\theta^\lobnd|H_p^\strip -\alpha_pf_p^\strip\right) \le \theta^\upbnd \Upsilon^\strip -\underline{\Upsilon\theta}^\strip.
	\end{align}
	\label{eq:reformulated-UW}
\end{subequations}
In the above, $|\cdot|$ denotes absolute value function, and the sets $\mathcal{F}_p$, $p=1,2$, and $\mathcal{V}$ are defined as
\begin{align}
& \mathcal{F}_p = \left\{ (f_p,\theta,H_p) \,\middle|\,\begin{aligned}
& H_p^\feed = f_p^\feed\cdot T_p(\theta), \; H_p^\rec = f_p^\rec \cdot T_p(\theta), \; H_p^\strip = f_p^\strip \cdot T_p(\theta)\\
&  f_p^\feed=f_p^\rec+f_p^\strip\\
& (f_p^\feed,f_p^\rec,f_p^\strip) \in [0,F_p]^3 , \; \theta^\lobnd \le \theta \le \theta^\upbnd
\end{aligned} \right\},
\label{eq:set-Fp-def}\\
\noalign{and}
& \mathcal{V} = \left\{ (U,\Upsilon,\theta,\underline{U\theta},\underline{\Upsilon\theta}) \,\middle|\,\begin{aligned}
& \underline{U\theta}^\rec = U^\rec \cdot \theta, \; \underline{U\theta}^\strip = U^\strip \cdot \theta \\
& \underline{\Upsilon\theta}^\rec = \Upsilon^\rec \cdot \theta, \; \underline{\Upsilon\theta}^\strip = \Upsilon^\strip \cdot \theta\\
& U^\rec -U^\strip = \Upsilon^\rec - \Upsilon^\strip\\
& \theta^\lobnd \le \theta \le \theta^\upbnd\\
& 0 \le U^\rec \le (U^\rec)^\upbnd, \; 0 \le U^\strip \le (U^\strip)^\upbnd\\
& 0 \le \Upsilon^\rec \le (\Upsilon^\rec)^\upbnd, \; 0 \le \Upsilon^\strip \le (\Upsilon^\strip)^\upbnd
\end{aligned} \right\},
\label{eq:set-V-def}
\end{align}
where $T_1(\theta) = 1/(\alpha_1-\theta)$, and $T_2(\theta)=1/(\theta-\alpha_2)$.

\subsubsection{Generalizations}
\label{sec:RDLT-generalization}
We remark that RDLT can be used for problems with constraints that have the form $\sum_{i=1}^r \frac{x_i g_i(y)}{h_i(y)} \le x_0$, $\bigl\{g_i(y)\bigr\}_{i=1}^r$ and $\bigl\{h_i(y)\bigr\}_{i=1}^r$ are some polynomials of $y$. We follow the steps below to derive RDLT cuts.
\begin{enumerate}
	\item We multiply the constraint by some ratio of polynomials of $y$, $n(y)/d(y)$, such that the sign of the ratio does not change over the domain of $y$. Here, we assume, w.l.o.g, that $n(y)/d(y) \ge 0$ over the domain of $y$.
	\item We use polynomial long division to express each $\frac{g_i(y)\cdot n(y)}{h_i(y) \cdot d(y)} = m_i(y) + \frac{k_i(y)}{l_i(y)}$
	such that $\polydeg(k_i) < \polydeg(l_i)$, where $\polydeg(k_i)$ denotes degree of polynomial $k_i(y)$.
	\item We factorize $l_i(y)$ and express it as a product of polynomials $\{q_{ij}(y)\}_{j=1}^{s_i}$ that are non-factorizable over real numbers (e.g., $y+2$ or $y^2+y+1$).
	\item We use the general theorem of partial fraction decomposition to express each fraction $k_i(y)/l_i(y)$ as $\sum_{j=1}^{s_i} p_{ij}(y)/q_{ij}(y)$, where $\polydeg(p_{ij}) < \polydeg(q_{ij})$. This transforms the constraint to $\sum_{i=1}^r\left( x_i\cdot m_i(y) + \sum_{j=1}^{s_i} x_i \cdot p_{ij}(y)/q_{ij}(y) \right) \le x_0 \cdot n(y)/d(y)$.
	\item We linearize the constraint by introducing auxiliary variables for each nonlinear term.
\end{enumerate}
The reformulation described earlier is a specific case, where we chose to multiply each Underwood constraint by $(\theta-\theta^\lobnd)$ and $(\theta^\upbnd-\theta)$.
By changing the factor used in the reformulation step, we can derive alternative RDLT cuts by following the steps described above. As an illustration, we derive two types of additional RDLT cuts for reformulation of $\mathcal{U}$. While we do not use these cuts for our extensive computational experiments, we demonstrate with numerical examples in \textsection \ref{sec:Scenarios} that they further improve the relaxation for some instances.

\noindent\textbf{RDLT cuts with quadratic polynomials}: Here, we choose the product of bound factors of $\theta$, viz. $(\theta-\theta^\lobnd)^2$, $(\theta-\theta^\lobnd)\cdot (\theta^\upbnd-\theta)$ and $(\theta^\upbnd-\theta)^2$,  for reformulation.
As an illustration, we derive the RDLT cut by multiplying the second inequality in \eqref{eq:surrogate-UW-receq} with $(\theta-\theta^\lobnd)\cdot (\theta^\upbnd-\theta)$. The remaining RDLT cuts are derived in a similar fashion. Steps 1 and 2 lead to
\begin{align}
\sum_{p=1}^2 \left( \alpha_p f_p^\rec\cdot (\theta + \alpha_p-\theta^\lobnd-\theta^\upbnd)-\frac{\alpha_p(\alpha_p-\theta^\lobnd)(\alpha_p-\theta^\upbnd)f_p^\rec}{\alpha_p-\theta}  \right) \le \Upsilon^\rec\cdot (\theta-\theta^\lobnd)\cdot (\theta^\upbnd-\theta).
\label{eq:RDLT-quadratic}
\end{align}
Since \eqref{eq:RDLT-quadratic} is already in the form attained in Step 4, we do not need Steps 3 and 4. Finally, we disaggregate the products of $f_p^\rec$ and $\Upsilon^\rec$ with polynomials of $\theta$, and linearize \eqref{eq:RDLT-quadratic} by introducing auxiliary variables for $f_p^\rec/(\alpha_p-\theta)$, $f_p^\rec \cdot \theta$, $\Upsilon^\rec \cdot \theta^2$ and $\Upsilon \cdot \theta$.

\noindent \textbf{RDLT cuts with inverse bound factors}: Here, we use \textit{inverse bound factors} $\left(\frac{1}{\theta}-\frac{1}{\theta^\upbnd}\right)$ and $\left( \frac{1}{\theta^\lobnd}-\frac{1}{\theta} \right)$ for reformulation. Since $\left(\frac{1}{\theta}-\frac{1}{\theta^\upbnd}\right) = \frac{\theta^\upbnd-\theta}{\theta^\upbnd\cdot \theta}$, inverse bound factors are essentially ratios of first-degree polynomial to another first-degree polynomial. As before, for illustration, we derive the RDLT cut obtained by multiplying the second inequality in \eqref{eq:surrogate-UW-receq} with $\left(\frac{1}{\theta}-\frac{1}{\theta^\upbnd}\right)$. The remaining RDLT cuts are obtained in a similar fashion. Step 1 leads to $\sum_{p=1}^2 \frac{\alpha_p f_p^\rec}{(\alpha_p-\theta)\theta}-\frac{\alpha_p f_p^\rec}{(\alpha_p-\theta)\theta^\upbnd} \le \frac{\Upsilon^\rec}{\theta}-\frac{\Upsilon^\rec}{\theta^\upbnd}$, which is already in the form described in Step 2. Further, the denominator of each fraction is already expressed as product of non-factorizable polynomials. Next, we use partial fraction decomposition (Step 4) to obtain
\begin{align}
\sum_{p=1}^2 \left( \frac{f_p^\rec}{\theta} - \frac{(\alpha_p-\theta^\upbnd)f_p^\rec}{\theta^\upbnd(\alpha_p-\theta)} \right) \le \frac{\Upsilon^\rec}{\theta} - \frac{\Upsilon^\rec}{\theta^\upbnd}.
\label{eq:RDLT-inverse-bf}
\end{align}
Finally, we linearize \eqref{eq:RDLT-inverse-bf} by introducing auxiliary variables for $f_p^\rec/|\alpha_p-\theta|$, $f_p^\rec/\theta$ and $\Upsilon^\rec/\theta$.

\subsection{Relaxation for $\alpha_2<\theta^\lobnd$ and $\theta^\upbnd <\alpha_1$}
\label{sec:part-relaxation}
The nonconvexity in $\mathcal{U}_\text{ref}$ is
due to $\mathcal{F}_1$, $\mathcal{F}_2$, and $\mathcal{V}$. We convexify these sets to construct a convex relaxation of $\mathcal{U}_\text{ref}$. However, we first assume that $\alpha_2 < \theta^\lobnd$ and $\theta^\upbnd < \alpha_1$, and relax this assumption later in \textsection \ref{sec:full-relaxation}. This assumption prevents the denominator of fractions in $\mathcal{F}_1$ and $\mathcal{F}_2$ from becoming zero. This discussion is needed for two reasons: (i) it will guide us in deriving additional valid cuts needed to strengthen the relaxation when $\theta = \alpha_2$ and $\theta =\alpha_1$ are admissible (ii) it is needed to construct a piecewise relaxation in \textsection \ref{sec:Discretization}, where we discretize the domain of $\theta$ such that every partition excluding the extreme partitions satisfy $\alpha_2 < \theta^\lobnd \le \theta \le \theta^\upbnd < \alpha_1$.

The standard approach to create a relaxation is to replace each equality $H_p = f_p \cdot T_p(\theta)$ in $\mathcal{F}_p$ (\resp $\underline{\Upsilon\theta} = \Upsilon \cdot \theta$ in $\mathcal{V}$) with a less stringent restriction that $H_p$ (\resp $\underline{\Upsilon\theta}$) lies in the convex hull of $f_p\cdot T_p(\theta)$ (\resp $\Upsilon \cdot \theta$) over a rectangle defined by the ranges of $f_p$ (\resp $\Upsilon$) and $\theta$. However, this approach does not take advantage of the fact that the component (\resp vapor) flows are constrained by mass balances (see \eqref{eq:surrogate-UW-vapbal},\eqref{eq:surrogate-UW-flowbal}) and, thus, results in a weaker relaxation. Instead, we use Proposition \ref{prop:poly-simul-hull}, which describes the construction of simultaneous hull of multiple nonlinear terms over a polytope (not necessarily a hyperrectangle), to construct a tighter relaxation of $\mathcal{U}_\text{ref}$.
\begin{proposition}
	\label{prop:poly-simul-hull}
	Let $X = \{x \in \mathbb{R}^n_+  \mid Bx \le b \}$ be a polytope, $g(y)$ be continuous and convex for $y \in [y^\lobnd,y^\upbnd] \subset \mathbb{R}$, $\mathcal{D}= X \times [y^\lobnd,y^\upbnd] \times \mathbb{R}^{n+n}$, and $S = \{\pi \in  \mathcal{D} \mid z_j = x_j \cdot g(y),\; \underline{xy}_j = x_j \cdot y, \; \llbracket j \rrbracket_1^n   \}$, where $\pi=(x,y,z,\underline{xy})$ denotes an element of $S$. Then, $\Conv(S) = \textnormal{proj}_{\pi}\{ (\pi,y^1,\dots,y^m,w^1,\dots,w^m,w,\lambda^1,\dots,\lambda^m) \mid \eqref{eq:prop:simultaneous-hull-gy} \}$, where
	\begin{subequations}\label{eq:prop:simultaneous-hull-gy}
		\begin{align}
		& w^i \ge g^*(\lambda^i,y^i), \hspace{5.5cm} i=1,\dots,m
		\label{eq:prop:simultaneous-hull-gy-1}\\
		& w^i \le \lambda^i g(y^\lobnd) + \left(\frac{g(y^\upbnd)-g(y^\lobnd)}{y^\upbnd-y^\lobnd}\right)(y^i-\lambda^i y^\lobnd), \quad i = 1,\dots,m
		\label{eq:prop:simultaneous-hull-gy-2}\\
		& \lambda^i y^\lobnd \le y^i \le \lambda^i y^\upbnd, \hspace{5cm}  i = 1,\dots,m \label{eq:prop:simultaneous-hull-gy-3}\\
		& z = \sum\nolimits_{i=1}^m v^i w^i, \quad \underline{xy} = \sum\nolimits_{i=1}^m v^i y^i, \quad w = \sum\nolimits_{i=1}^m w^i,  \label{eq:prop:simultaneous-hull-gy-4} \\
		& y = \sum\nolimits_{i=1}^m y^i, \quad x = \sum\nolimits_{i=1}^m \lambda^i v^i, \quad (\lambda^1,\dots,\lambda^m) \in \Delta^m.
		\label{eq:prop:simultaneous-hull-gy-5}
		\end{align}
	\end{subequations}
	Here, $\textnormal{proj}_\pi\{\cdot\}$ represents projection of $\{\cdot\}$ onto the space of $(x,y,z,\underline{xy})$ variables, $\{v^i\}_{i=1}^m$ are the extreme points of $X$, $\Delta^m = \{(\lambda^1,\dots,\lambda^m) \in \mathbb{R}^m_+ \; | \; \sum_{i=1}^m \lambda^i =1 \}$, and positively homogeneous function $g^*(\lambda^*,y^*)$ related to $g(y) : [y^\lobnd,y^\upbnd] \rightarrow \mathbb{R}$ is defined as:
	\begin{align}
	g^*(\lambda^*,y^*) = \begin{cases}
	\lambda^* g((\lambda^*)^{-1}y^*), & \textnormal{if } (\lambda^*)^{-1}y^* \in [y^\lobnd,y^\upbnd], \; \lambda^* > 0\\
	0, & \textnormal{if } \lambda^* = 0, \; y^* = 0.
	\end{cases}
	\label{eq:perspective-fn}
	\end{align}
\end{proposition}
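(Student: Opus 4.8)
The plan is to prove the two inclusions separately, using as the engine the vertex representation of points of $X$ together with the observation that \eqref{eq:prop:simultaneous-hull-gy-1}--\eqref{eq:prop:simultaneous-hull-gy-3} are a \emph{perspective} (disaggregated) description of the convex hull of the graph of $g$. First I would show that the right-hand side $\proj_\pi(\cdot)$ of \eqref{eq:prop:simultaneous-hull-gy} contains $\Conv(S)$. The set cut out by \eqref{eq:prop:simultaneous-hull-gy} before projection is convex: $g^*$ is the perspective of the convex function $g$, so \eqref{eq:prop:simultaneous-hull-gy-1}, restricted to the domain carved out by \eqref{eq:prop:simultaneous-hull-gy-3} and $(\lambda^1,\dots,\lambda^m)\in\Delta^m$, is a convex constraint, and \eqref{eq:prop:simultaneous-hull-gy-2}--\eqref{eq:prop:simultaneous-hull-gy-5} are linear once the fixed vertices $\{v^i\}_{i=1}^m$ are substituted; hence its projection onto $\pi=(x,y,z,\underline{xy})$ is convex. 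To see it contains $S$, take $(x,y,z,\underline{xy})\in S$, write $x=\sum_i\lambda^iv^i$ with $(\lambda^1,\dots,\lambda^m)\in\Delta^m$, and set $y^i=\lambda^iy$, $w^i=\lambda^ig(y)$, $w=g(y)$. Then \eqref{eq:prop:simultaneous-hull-gy-1} holds at equality because $g^*(\lambda^i,\lambda^iy)=\lambda^ig(y)$ (and equals $0$ when $\lambda^i=0$), \eqref{eq:prop:simultaneous-hull-gy-2} holds because $g$ lies below its chord and $\lambda^i\ge 0$, \eqref{eq:prop:simultaneous-hull-gy-3} holds because $y\in[y^\lobnd,y^\upbnd]$, and \eqref{eq:prop:simultaneous-hull-gy-4}--\eqref{eq:prop:simultaneous-hull-gy-5} hold since $z_j=x_jg(y)=\sum_iv^i_j\lambda^ig(y)$, $\underline{xy}_j=x_jy=\sum_iv^i_j\lambda^iy$, $\sum_iy^i=y$, and $\sum_i\lambda^iv^i=x$. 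Convexity of the projection then gives $\Conv(S)\subseteq\proj_\pi(\cdot)$.

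For the reverse inclusion, fix a point of $\proj_\pi(\cdot)$ together with a certificate $(y^1,\dots,y^m,w^1,\dots,w^m,w,\lambda^1,\dots,\lambda^m)$ satisfying \eqref{eq:prop:simultaneous-hull-gy}. If $\lambda^i=0$, then \eqref{eq:prop:simultaneous-hull-gy-3} forces $y^i=0$ and then \eqref{eq:prop:simultaneous-hull-gy-1}--\eqref{eq:prop:simultaneous-hull-gy-2} force $w^i=0$, so such indices contribute nothing and are discarded. For each $i$ with $\lambda^i>0$, set $\tilde y^i=y^i/\lambda^i$ and $\tilde w^i=w^i/\lambda^i$; dividing \eqref{eq:prop:simultaneous-hull-gy-1}--\eqref{eq:prop:simultaneous-hull-gy-3} by $\lambda^i$ and using positive homogeneity of $g^*$ yields $\tilde y^i\in[y^\lobnd,y^\upbnd]$ together with $g(\tilde y^i)\le\tilde w^i\le g(y^\lobnd)+s(\tilde y^i-y^\lobnd)$, where $s=\frac{g(y^\upbnd)-g(y^\lobnd)}{y^\upbnd-y^\lobnd}$. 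Since $g$ is convex and continuous, this last relation says precisely that $(\tilde y^i,\tilde w^i)$ lies in $\Conv\{(t,g(t)):t\in[y^\lobnd,y^\upbnd]\}$, so by Carath\'{e}odory $(\tilde y^i,\tilde w^i)=\sum_k\nu^i_k(t^i_k,g(t^i_k))$ for finitely many $t^i_k\in[y^\lobnd,y^\upbnd]$ and nonnegative $\nu^i_k$ summing to one.

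I would then verify that
\[
\pi \;=\; \sum_{i:\,\lambda^i>0}\sum_k \lambda^i\nu^i_k\,\bigl(v^i,\; t^i_k,\; g(t^i_k)\,v^i,\; t^i_k\,v^i\bigr),
\]
which exhibits $\pi$ as a convex combination, since the weights are nonnegative and sum to $\sum_i\lambda^i=1$, of points of $S$: each tuple $(v^i,t^i_k,g(t^i_k)v^i,t^i_kv^i)$ has $x$-part $v^i\in X$, $y$-part $t^i_k\in[y^\lobnd,y^\upbnd]$, and satisfies $z=xg(y)$, $\underline{xy}=xy$. Checking the identity is pure bookkeeping: the $x$-block equals $\sum_i\lambda^iv^i\sum_k\nu^i_k=x$ by \eqref{eq:prop:simultaneous-hull-gy-5}; the $y$-block equals $\sum_i\lambda^i\tilde y^i=\sum_iy^i=y$; the $z$-block equals $\sum_i\lambda^i\tilde w^iv^i=\sum_iw^iv^i=z$ by \eqref{eq:prop:simultaneous-hull-gy-4}; and the $\underline{xy}$-block equals $\sum_i\lambda^i\tilde y^iv^i=\sum_iy^iv^i=\underline{xy}$ by \eqref{eq:prop:simultaneous-hull-gy-4}. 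Hence $\pi\in\Conv(S)$, and the two inclusions combine to the claimed identity. That $\Conv(S)$ is closed, so that the equality is literal rather than up to closure, follows because $S$ is the graph of continuous maps over the compact set $X\times[y^\lobnd,y^\upbnd]$ and hence compact; the auxiliary variable $w$ plays no role since it is projected out.

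The main obstacle is the reverse inclusion, and there the difficulty is conceptual rather than computational: one must recognize that, after dividing by $\lambda^i$, constraints \eqref{eq:prop:simultaneous-hull-gy-1}--\eqref{eq:prop:simultaneous-hull-gy-3} describe $\lambda^i$-scaled copies of the convex hull of the graph of $g$, so that the nonconvex equalities $z_j=x_jg(y)$ and $\underline{xy}_j=x_jy$ can be reassembled vertex by vertex of $X$ from the atomic feasible points $(v^i,t,g(t)v^i,tv^i)$. The only other point needing care is the degenerate case $\lambda^i=0$, where one must argue the corresponding block collapses to the origin; handling it correctly is what makes the simplex/perspective formulation exact rather than merely a valid relaxation.
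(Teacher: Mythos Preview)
Your proof is correct and rests on the same two ideas as the paper's: the vertex decomposition of $X$ and the fact that the convex hull of the graph of $g$ over $[y^\lobnd,y^\upbnd]$ is the region between the graph and its secant. The paper, however, packages the argument differently. It invokes Krein--Milman to observe that the extreme points of $S$ must have $x$ equal to some vertex $v^i$ of $X$, so that $\Conv(S)=\Conv\bigl(\bigcup_i S^i\bigr)$ with $S^i$ the slice at $x=v^i$; it then convexifies each $S^i$ as an affine image of $\{(y,w):w=g(y)\}$ and obtains \eqref{eq:prop:simultaneous-hull-gy} as the standard disjunctive-programming lift of $\Conv(S^1)\cup\dots\cup\Conv(S^m)$. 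Your argument is a direct two-inclusion verification: the forward inclusion is the same, but for the reverse you unpack the disjunctive lift by hand, dividing through by $\lambda^i$ and applying Carath\'eodory to the graph of $g$ to exhibit an explicit convex combination of points of $S$. This makes the proof self-contained without appealing to disjunctive-programming machinery, at the cost of a little extra bookkeeping; the paper's version is shorter but leans on that machinery as known. Both are equally rigorous, and your handling of the degenerate case $\lambda^i=0$ is exactly what is needed.
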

\begin{proof}
Since $S$ is compact, its convex hull is compact and, by Krein-Milman theorem, is the convex hull of its extreme points. Therefore, we determine the extreme points of $S$, and take their convex hull to obtain $\Conv(S)$. When $y$ is restricted to $\overline{y}\in [y^\lobnd,y^\upbnd]$, the set $S=\{(x,y,z,\underline{xy}) \mid z = g(\overline{y}) \; x, \; \underline{xy} = \overline{y}\; x, \; x\in X, \; y=\overline{y} \}$ can be expressed as an affine transform of $X$. Thus, the extreme points of $S$ project to the set of extreme points of $X$ and we may restrict attention to these points in order to construct $\Conv(S)$. Let $S^i$, for $i=1,\dots,m$, denote the set $S$ where $x$ is restricted to $v^i$ \ie{} $S^i = \{(x,y,z,\underline{xy}) \mid z=v^i\; g(y), \; \underline{xy} = v^i \; y, \; x=v^i, \; y\in[y^\lobnd,y^\upbnd] \}$. Then, $\Conv(S)$ is given as the convex hull of disjunctive union of $S^i$, $i=1,\dots,m$, \ie{} $\Conv(S) = \Conv(S^1\cup \dots \cup S^m) = \Conv(\Conv(S^1)\cup \dots \cup \Conv(S^m))$.
	
	To determine $\Conv(S^i)$, we reformulate each $S^i$ as $S^i = \{(x,y,z,\underline{xy},w) \mid z=v^i \; w, \; \underline{xy} = v^i \; y, \; w = g(y), \; x = v^i, \; y^\lobnd \le y \le y^\upbnd \}$, which is an affine transform of the set $\{(y,w)\in[y^\lobnd,y^\upbnd] \times \mathbb{R} \; | \; w=g(y) \}$. This implies that it suffices to convexify the latter set to obtain $\Conv(S^i)=\text{proj}_\pi\{ (\pi,w) \; | \; \eqref{eq:conv-si} \}$, where
	\begin{subequations}
		\label{eq:conv-si}
		\begin{align}
		& w \ge g(y),\\
		& w \le g(y^\lobnd) + \left( \frac{g(y^\upbnd)-g(y^\lobnd)}{y^\upbnd-y^\lobnd} \right)(y-y^\lobnd),
		\label{eq:conv-si-2}\\
		& y^\lobnd \le y \le y^\upbnd,
		\label{eq:conv-si-3}\\
		& z=v^i \; w,\quad \underline{xy} = v^i \; y, \quad x = v^i.
		\label{eq:conv-si-4}
		\end{align}
	\end{subequations}
	The disjunctive union of $\Conv(S^i)$, $i=1,\dots,m$, leads to \eqref{eq:prop:simultaneous-hull-gy},  where $w^i$ and $y^i$ are to be regarded as linearization of $\lambda^i w$ and $\lambda^iy$, respectively.
\end{proof}

\begin{remark}
	\label{rem:CQR-representability}
	In Proposition \ref{prop:poly-simul-hull}, if $\Conv(S^i)$ (see proof for definition) is bounded, closed and cone-quadratic representable (CQR), for $i=1,\dots,m$, then $\Conv(S)$ is CQR (see Proposition 3.3.5 in \cite{bental2001}). This result also applies to other conic representations. Let $P_3^{\delta,1-\delta} \coloneqq \{ x\in \mathbb{R}^3 \mid x_1^\delta \cdot x_2^{1-\delta} \ge |x_3| \} $ where $0 < \delta < 1$ is the \textit{power-cone}, and $K_{\exp} = \{ x_1 \ge x_2 \cdot \exp(x_3/x_2), \; x_2>0 \} \cup \{(x_1,0,x_3) \mid x_1 \ge 0, x_3 \le 0\}$ is the \textit{exponential-cone}. It is known that various elementary functions have cone representations \citep{aps2020mosek}. For example, let $g(y) = |y|^\delta$ where $\delta > 1$ (\resp $g(y)=y^\delta$ where $\delta<0$). Then, $w^i \ge g^*(\lambda^i,y^i)$ in Proposition \ref{prop:poly-simul-hull} can be replaced with $(w^i,\lambda^i,y^i) \in P_3^{1/\delta,1-1/\delta}$ (\resp $(w^i,y^i,\lambda^i) \in P_3^{1/(1-\delta),-\delta/(1-\delta)}$). For this work, we are interested in $\delta=-1$ and $\delta=2$ (for reformulation with quadratic polynomials described in \textsection \ref{sec:RDLT-generalization}). Next, let $g(y) = -\ln (y)$, $y>0$ (\resp $g(y)=\exp(y)$), which arises in formulations for identifying thermodynamically efficient distillation configurations (see \cite{jiang2019exergy}). Here, we replace $w^i \ge g^*(\lambda^i,y^i)$ in Proposition \ref{prop:poly-simul-hull} with $(y^i,\lambda^i,-w^i) \in K_{\exp} $ (\resp $(w^i,\lambda^i,y^i) \in K_{\exp} $).\qed
\end{remark}

\begin{remark}
	\label{rem:outer-approx}
	In Proposition \ref{prop:poly-simul-hull}, when $g(y)$ is nonlinear, the convex hull description has nonlinear constraints (see \eqref{eq:prop:simultaneous-hull-gy-1}). To capitalize on LP solvers, we derive a polyhedral outer-approximation of $\Conv(S)$ by outer-approximating the convex hull of each $S^i$ before taking their disjunctive union. Let $\overline{y}^r \in [y^\lobnd,y^\upbnd]$ for $r=1,\dots,R$. Then, an outer-approximation of the convex hull of $S^i$ is given by $\Conv_{OA}{(S^i)} = \text{proj}_\pi\{(\pi,w) \mid w \ge \max\{g(\overline{y}^r)+g'(\overline{y}^r)(y-\overline{y}^r)\}_{r=1}^R; \; \eqref{eq:conv-si-2}-\eqref{eq:conv-si-4} \} $, where $g'(y)$ denotes the first derivative of $g(y)$. The disjunctive union of $\Conv_{OA}{(S^i)}$, $i=1,\dots,m$, yields an outer-approximation of the convex hull of $S$, given by $\Conv_{OA}(S)=\textnormal{proj}_{\pi}\{ (\pi,y^1,\dots,y^m,w^1,\dots,w^m,w,\lambda^1,\dots,\lambda^m) \mid w^i \ge  \max\{g(\overline{y}^r)\lambda^i + g'(\overline{y}^r)(y^i-\overline{y}^r\lambda^i)\}_{r=1}^R; \; \eqref{eq:prop:simultaneous-hull-gy-2}-\eqref{eq:prop:simultaneous-hull-gy-5} \}$.\qed
\end{remark}

Now, consider the set $\mathcal{F}_p$. We lift $\mathcal{F}_p$ to a higher dimensional space by appending bilinear terms of the form $f_p \cdot \theta$ \ie{} $\mathcal{F}_p = \{ (f,\theta,H,\underline{f\theta}) \mid \eqref{eq:set-Fp-def}, \;\underline{f\theta}_p^\feed = f_p^\feed \cdot \theta, \; \underline{f\theta}_p^\rec = f_p^\rec \cdot \theta, \; \underline{f\theta}_p^\strip = f_p^\strip \cdot \theta \}$. Observe that the fractions and bilinear terms in $\mathcal{F}_p$ are defined over the polytope obtained by the intersection of hyperplane $f_p^\feed = f_p^\rec + f_p^\strip$ with the hypercube $[0,F_p]^3$ (see \eqref{eq:set-Fp-def}). We now use Proposition \ref{prop:poly-simul-hull} to obtain $\Conv(\mathcal{F}_p) = \{ (f_p,\theta,H_p,\underline{f\theta}_p) \mid \eqref{eq:conv-Fp-final}\}$ (see \textsection \ref{sec:conv-hull-Fp} for a detailed derivation), where
\begin{subequations}\label{eq:conv-Fp-final}
	\begin{align}
	& H_p^\rec \ge F_p \; T_p^*\left(\frac{f_p^\rec}{F_p},\frac{\underline{f\theta}_p^\rec}{F_p}\right), \quad H_p^\strip \ge F_p \; T_p^*\left(\frac{f_p^\strip}{F_p},\frac{\underline{f\theta}_p^\strip}{F_p}\right),
	\label{eq:conv-Fp-final-1}\\
	& H_p^\rec \le f_p^\rec T_p(\theta^\lobnd) + \left[\frac{T_p(\theta^\upbnd)-T_p(\theta^\lobnd)}{\theta^\upbnd-\theta^\lobnd}\right] (\underline{f\theta}_p^\rec-f_p^\rec \theta^\lobnd),
	\label{eq:conv-Fp-final-2}\\
	& H_p^\strip \le f_p^\strip T_p(\theta^\lobnd) + \left[\frac{T_p(\theta^\upbnd)-T_p(\theta^\lobnd)}{\theta^\upbnd-\theta^\lobnd}\right] (\underline{f\theta}_p^\strip-f_p^\strip \theta^\lobnd),
	\label{eq:conv-Fp-final-3}\\
	& (F_p-f_p^\feed)\theta^\lobnd \le F_p\theta-\underline{f\theta}_p^\feed \le (F_p-f_p^\feed)\theta^\upbnd,\\
	& f_p^\rec \theta^\lobnd \le \underline{f\theta}_p^\rec \le f_p^\rec \theta^\upbnd, \quad  f_p^\strip \theta^\lobnd \le \underline{f\theta}_p^\strip \le f_p^\strip \theta^\upbnd,\\
	& H_p^\feed = H_p^\rec + H_p^\strip, \quad \underline{f\theta}_p^\feed = \underline{f\theta}_p^\rec + \underline{f\theta}_p^\strip, \quad f_p^\feed = f_p^\rec +f_p^\strip.
	\label{eq:conv-Fp-final-6}
	\end{align}
\end{subequations}
and the positively homogeneous function $T_p^*(\lambda,\theta)$ is defined as in \eqref{eq:perspective-fn} from $T_p(\theta)$. Note that the convex hull description does not require introduction of auxiliary variables. This is in contrast to the typical application of disjunctive programming, where new variables are introduced to derive the convex hull in a lifted space. We remark that the above yields a tighter relaxation of $\mathcal{F}_p$ compared to the one obtained by relaxing each fraction and bilinear term separately over the bounds of $f^{\textnormal{in}}_p$, $f^{\textnormal{rs}}_p$, $f^{\textnormal{ss}}_p$, and $\theta$. This is because the first two equations in \eqref{eq:conv-Fp-final-6} are not implied in the latter set. Although, these relations can be obtained using RLT, appending these constraints does not result in \eqref{eq:conv-Fp-final}. This is because, the set described in \eqref{eq:conv-Fp-final} is the simultaneous convex hull of the fraction and bilinear terms. It is known that the simultaneous hull of these functions is strictly contained in the intersection of their individual hulls (see Example 3.8 in \cite{tawarmalani2010}). In particular, \eqref{eq:conv-Fp-final-2} and \eqref{eq:conv-Fp-final-3}, which are linearizations of $-\frac{f_p^\rec}{|\alpha_p-\theta|} \cdot (\theta^\upbnd-\theta) \cdot (\theta-\theta^\lobnd) \le 0$ and $-\frac{f_p^\strip}{|\alpha_p-\theta|} \cdot (\theta^\upbnd-\theta) \cdot (\theta-\theta^\lobnd) \le 0$ respectively, are not implied in the intersection of individual convex hulls.

The convex hull description in \eqref{eq:conv-Fp-final} is cone-quadratic representable (see Remark \ref{rem:CQR-representability}), since the constraints in \eqref{eq:conv-Fp-final-1} can be expressed as second-order cones. For example, $H_1^\rec \ge F_1T_1^*(f_1^\rec/F_1,\underline{f\theta}_1^\rec/F_1) = (f_1^\rec)^2/(\alpha_1 f_1^\rec-\underline{f\theta}_1^\rec)$, or $\sqrt{H_1} \cdot \sqrt{(\alpha_1 f_1^\rec-\underline{f\theta}_1^\rec)} \ge |f_1^\rec|$ (Note that $0 \le \theta^\lobnd f_1^\rec -\underline{f\theta}_1^\rec < \alpha_1f_1^\rec - \underline{f\theta}_1^\rec$). However, we use the cone-quadratic representation only in \textsection \ref{sec:Scenarios}. For our computational experiments in \textsection \ref{sec:Comp-Results},
we use its outer-approximation given by $\Conv_{OA}(\mathcal{F}_p) = \{(f_p,\theta,H_p,\underline{f\theta}_p) \mid H_p^\rec \ge \max\{f_p^\rec T_p(\overline{\theta}^r)+T_p'(\overline{\theta}^r) (\underline{f\theta}_p^\rec - \overline{\theta}^rf_p^\rec ) \}_{r=1}^R, \; H_p^\strip \ge \max\{f_p^\strip T_p(\overline{\theta}^r)+T_p'(\overline{\theta}^r) (\underline{f\theta}_p^\strip - \overline{\theta}^rf_p^\strip) \}_{r=1}^R, \; \eqref{eq:conv-Fp-final-2}-\eqref{eq:conv-Fp-final-6}  \}$ for some $\overline{\theta}^r \in [\theta^\lobnd,\theta^\upbnd]$, $r=1,\dots,R$, where $T'_p(\overline{\theta}^r)$ denotes the derivative of $T_p(\theta)$
at $\overline{\theta}^r$; see Remark \ref{rem:outer-approx}.

Next, consider the set $\mathcal{V}$, which contains bilinear terms defined over a polytope obtained by the intersection of a hyperrectangle in the positive orthant with the hyperplane $U^\rec-U^\strip=\Upsilon^\rec-\Upsilon^\strip$. Clearly, Proposition \ref{prop:poly-simul-hull} can be used to construct the convex hull of $\mathcal{V}$ (only \eqref{eq:prop:simultaneous-hull-gy-3}, the equation with $\underline{xy}$ as the left-hand-side in \eqref{eq:prop:simultaneous-hull-gy-4}, and \eqref{eq:prop:simultaneous-hull-gy-5} are needed to construct the hull). However, Proposition~\ref{prop:poly-simul-hull} requires enumeration of the extreme points of $X$. Instead, in this context, it is more convenient to directly use Proposition \ref{prop:simultaneous-hull-xy}, which is a special case of Proposition~2.2 in \cite{davarnia2017simultaneous}, to obtain $\Conv(\mathcal{V}) = \{(U,\Upsilon,\theta,\underline{U\theta},\underline{\Upsilon\theta}) \mid \eqref{eq:conv-Vth} \}$, where
\begin{subequations}\label{eq:conv-Vth}
	\begin{align}
	& (\underline{U\theta}^\rec -\theta^\lobnd U^\rec) - (\underline{U\theta}^\strip -\theta^\lobnd U^\strip) = (\underline{\Upsilon\theta}^\rec-\theta^\lobnd \Upsilon^\rec) -(\underline{\Upsilon\theta}^\strip-\theta^\lobnd \Upsilon^\strip),
	\label{eq:conv-Vth-1}\\
	& (\theta^\upbnd U^\rec-\underline{U\theta}^\rec) - (\theta^\upbnd U^\strip-\underline{U\theta}^\strip) = (\theta^\upbnd \Upsilon^\rec-\underline{\Upsilon\theta}^\rec) -(\theta^\upbnd \Upsilon^\strip-\underline{\Upsilon\theta}^\strip),
	\label{eq:conv-Vth-2}\\
	& 0 \le \underline{(\cdot)\theta} - \theta^\lobnd\; (\cdot) \le (\cdot)^\upbnd\; (\theta-\theta^\lobnd), \quad \forall \; (\cdot) \in \{U^\rec,U^\strip,\Upsilon^\rec,\Upsilon^\strip\},
	\label{eq:conv-Vth-3}\\
	& 0 \le \theta^\upbnd\; (\cdot) - \underline{(\cdot)\theta} \le (\cdot)^\upbnd\; (\theta^\upbnd-\theta), \quad \forall \; (\cdot) \in \{U^\rec,U^\strip,\Upsilon^\rec,\Upsilon^\strip\}.
	\label{eq:conv-Vth-4}
	\end{align}
\end{subequations}
Finally, we construct the convex relaxation of $\mathcal{U}$ as, $\mathcal{U}_{\Relax} = \{(f, U, \Upsilon, \theta,H,\underline{U\theta}, \underline{\Upsilon\theta}, \underline{f\theta}) \mid \eqref{eq:reformulated-UW}, \; (f_p,H_p,\theta,\underline{f\theta}_p) \in \Conv(\mathcal{F}_p),\;p=1,2, \; (U,\Upsilon,\theta,\underline{U\theta},\underline{\Upsilon\theta}) \in \Conv(\mathcal{V}) \}$.

\begin{proposition}[\cite{davarnia2017simultaneous}]
	\label{prop:simultaneous-hull-xy}
	Let $X=\{x\in \mathbb{R}^n \mid Bx \le b \}$ be a polytope, $\mathcal{D} = X \times [y^\lobnd,y^\upbnd] \times \mathbb{R}^n$, and $S = \{(x,y,z)\in \mathcal{D} \mid \underline{xy}_j = x_j \cdot y, \; j = 1,\dots,n \}$. Then, $\Conv(S) = \{ (x,y,\underline{xy}) \mid y^\lobnd \le y \le y^\upbnd, \; B(\underline{xy}-y^\lobnd x) \le b(y-y^\lobnd), \; B(y^\upbnd x-\underline{xy}) \le b(y^\upbnd-y)  \} $. 
\end{proposition}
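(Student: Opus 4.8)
The plan is to establish the two inclusions directly, using that the right-hand side --- call it $R$ --- is a polyhedron (hence convex and closed) and that $X$ is bounded. For the easy direction $\Conv(S)\subseteq R$ it suffices, by convexity of $R$, to check $S\subseteq R$: given $(x,y,\underline{xy})\in S$ with $x\in X$, $y^\lobnd\le y\le y^\upbnd$ and $\underline{xy}_j=x_jy$, I would multiply each scalar inequality $B_ix\le b_i$ by the nonnegative number $y-y^\lobnd$ and substitute $x_jy=\underline{xy}_j$ to get $B(\underline{xy}-y^\lobnd x)\le b(y-y^\lobnd)$, and similarly multiply by $y^\upbnd-y\ge0$ to get $B(y^\upbnd x-\underline{xy})\le b(y^\upbnd-y)$; together with $y^\lobnd\le y\le y^\upbnd$ this places the point in $R$.

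For the reverse inclusion $R\subseteq\Conv(S)$, which carries all the content, I would fix $(x,y,\underline{xy})\in R$ and, assuming first $y^\lobnd<y^\upbnd$, set $\lambda=(y-y^\lobnd)/(y^\upbnd-y^\lobnd)\in[0,1]$, $u=(\underline{xy}-y^\lobnd x)/(y^\upbnd-y^\lobnd)$, and $v=x-u=(y^\upbnd x-\underline{xy})/(y^\upbnd-y^\lobnd)$. Dividing the two blocks of inequalities defining $R$ by $y^\upbnd-y^\lobnd>0$ turns them into precisely $Bu\le\lambda b$ and $Bv\le(1-\lambda)b$. When $0<\lambda<1$, I would put $x^1=u/\lambda$ and $x^2=v/(1-\lambda)$, so $x^1,x^2\in X$, and take the points $(x^1,y^\upbnd,x^1y^\upbnd)$ and $(x^2,y^\lobnd,x^2y^\lobnd)$ of $S$; a short computation shows their $\lambda$-weighted convex combination reproduces $(x,y,\underline{xy})$ --- the $x$-block gives $u+v=x$, the $y$-coordinate gives $\lambda y^\upbnd+(1-\lambda)y^\lobnd=y$, and the $\underline{xy}$-block gives $uy^\upbnd+vy^\lobnd=\underline{xy}$ after clearing the common denominator.

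The remaining work is just the boundary bookkeeping, which is the only place needing care. If $\lambda=0$, \ie{} $y=y^\lobnd$, then $Bu\le0$, and since $X$ is a polytope its recession cone $\{u:Bu\le0\}$ is $\{0\}$, forcing $u=0$; hence $\underline{xy}=y^\lobnd x$ and $x=v$ with $Bx\le b$, so $(x,y^\lobnd,y^\lobnd x)\in S$ already. The case $\lambda=1$ is symmetric, and the degenerate interval $y^\lobnd=y^\upbnd$ (if it is not simply excluded) is handled the same way, boundedness of $X$ again forcing $\underline{xy}=y^\lobnd x$ so that $S$ is an affine image of $X$. I do not expect a genuine obstacle: once $\lambda,u,v$ are written down the computations are elementary, and the only subtlety is the degenerate weights, dispatched by boundedness of $X$. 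Alternatively one may simply invoke Proposition~2.2 of \citet{davarnia2017simultaneous}, of which this is the single-$y$ specialization.
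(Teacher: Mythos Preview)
Your proof is correct and takes essentially the same approach as the paper: both arguments decompose an arbitrary point of $R$ as a convex combination of a point with $y=y^\lobnd$ and a point with $y=y^\upbnd$, using exactly the same formulas $\lambda=(y-y^\lobnd)/(y^\upbnd-y^\lobnd)$ and $x^1,x^2$ given by $(\underline{xy}-y^\lobnd x)/(y^\upbnd-y^\lobnd)$ and $(y^\upbnd x-\underline{xy})/(y^\upbnd-y^\lobnd)$. The paper packages this via Krein--Milman and disjunctive programming (restricting to extreme $y$-values, writing the Balas lifted formulation, and solving the linear system to project out the auxiliaries), whereas you verify the two inclusions directly and handle the degenerate weights $\lambda\in\{0,1\}$ explicitly using boundedness of $X$; this extra care is a small bonus, since the paper's substitution step tacitly assumes $y^\lobnd<y^\upbnd$.
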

\begin{proof}
	See \textsection \ref{sec:ecom:xy-hull-proof} in the Appendix.
\end{proof}

\begin{remark}
	We remark that $\Conv(\mathcal{F}_p)$ and $\Conv(\mathcal{V})$ in \eqref{eq:conv-Fp-final} and \eqref{eq:conv-Vth} imply the convex envelope of $\sum_{p=1}^2 \left[- \frac{\alpha_p(\alpha_p-\theta^\upbnd)f_p^\rec}{\alpha_p-\theta}+\alpha_pf_p^\rec\right] + \Upsilon^\rec \cdot \theta - \Upsilon^\rec \cdot \theta^\upbnd$ over bound constraints on $f_p^\rec$, $\theta$, and $\Upsilon^\rec$ (see \eqref{eq:RDLT-2}). This is because, when all $f_p^\rec$ and $\Upsilon^\rec$ are fixed, the function is concave in $\theta$. Then, by Theorem 1.4 in \citet{rikun1997}, it follows that the convex envelope is obtained by replacing $-\frac{\alpha_p(\alpha_p-\theta^\upbnd)f_p^{\textnormal{rs}}}{\alpha_p-\theta}$ for all $p$ and $\Upsilon^\rec \cdot \theta$ by their convex envelopes.\qed
\end{remark}

We comment on the construction of convex relaxations of $\mathcal{U}$ when additional RDLT cuts described in \textsection \ref{sec:RDLT-generalization} are appended to $\mathcal{U}_\text{ref}$. Reformulation of Underwood constraints using quadratic polynomials of $\theta$ introduces nonconvex terms of the form $f_p \cdot \theta$, $\Upsilon \cdot \theta^2$ (see \eqref{eq:RDLT-quadratic}), in addition to the existing $f_p \cdot T_p(\theta)$ and $\Upsilon\cdot \theta$ terms in $\mathcal{U}_\text{ref}$. We relax $f_p \cdot T_p (\theta)$ and $f_p \cdot \theta$ using the simultaneous hull description in \eqref{eq:conv-Fp-final}. Although Proposition \ref{prop:poly-simul-hull} yields the simultaneous hull of $\Upsilon \cdot \theta^2$ and $\Upsilon\cdot \theta$ terms over the polytope in $\mathcal{V}$, we do not implement this relaxation. This is because the hull description does not project onto the space of problem variables in a striaghtforward manner. Instead, we convexify each pair of $\Upsilon \cdot \theta^2$ and $\Upsilon\cdot \theta$ terms over a box using Corollary \ref{prop:simultaneous-hull}, and append the RLT cuts $U^\rec \cdot \theta^2 - U^\strip \cdot \theta^2 = \Upsilon^\rec \cdot \theta^2 - \Upsilon^\strip \cdot \theta^2 $ and $U^\rec \cdot \theta - U^\strip \cdot \theta = \Upsilon^\rec \cdot \theta - \Upsilon^\strip \cdot \theta $.

On the other hand, reformulation of Underwood constraints using inverse bound factors introduces nonconvex terms of the form $f_p \cdot \theta^{-1}$ and $\Upsilon \cdot \theta^{-1}$ (see \eqref{eq:RDLT-inverse-bf}), in addition to the existing $f_p \cdot T_p(\theta)$ and $\underline{\Upsilon\theta}$ terms in $\mathcal{U}_\text{ref}$. We relax $f_p \cdot T_p (\theta)$ and $f_p \cdot \theta$ using the simultaneous hull description in \eqref{eq:conv-Fp-final}. We use a similar hull description, obtained using Proposition \ref{prop:poly-simul-hull}, to relax $f_p \cdot \theta^{-1}$ and $f_p\cdot \theta$. Finally, for the same reason mentioned above, we convexify each pair of $\Upsilon \cdot \theta^{-1}$ and $\Upsilon \cdot \theta$ terms using Corolloary \ref{prop:simultaneous-hull}, and append RLT cuts $U^\rec \cdot \theta^{-1} - U^\strip \cdot \theta^{-1} = \Upsilon^\rec \cdot \theta^{-1} - \Upsilon^\strip \cdot \theta^{-1} $ and $U^\rec \cdot \theta - U^\strip \cdot \theta = \Upsilon^\rec \cdot \theta - \Upsilon^\strip \cdot \theta $.

\begin{corollary}
	\label{prop:simultaneous-hull}
	Let $\mathcal{B}=[x^\lobnd,x^\upbnd]\times [y^\lobnd,y^\upbnd]\times \mathbb{R}^2$, where we assume $0\le x^\lobnd$, $g(y):[y^\lobnd,y^\upbnd] \rightarrow \mathbb{R}$ is convex, and $S=\{ (x,y,z,\underline{xy}) \in \mathcal{B} \mid \underline{xy}=x\cdot y,\; z = x\cdot g(y) \}.$ Then, $\Conv(S) =\{  (x,y,z,\underline{xy})  \mid \eqref{eq:Simul-hull} \} $, where
	\begin{subequations}\label{eq:Simul-hull}
		\begin{align}
		& z \ge x^\lobnd  g^*\left(\frac{x^\upbnd-x}{x^\upbnd-x^\lobnd},\frac{x^\upbnd y-\underline{xy}}{x^\upbnd-x^\lobnd}\right) + x^\upbnd  g^*\left(\frac{x-x^\lobnd}{x^\upbnd-x^\lobnd},\frac{\underline{xy}-x^\lobnd y}{x^\upbnd-x^\lobnd}\right), 
		\label{eq:Simul-hull-1}\\
		& z \le g(y^\lobnd)\cdot x + \left[\frac{g(y^\upbnd)-g(y^\lobnd)}{y^\upbnd-y^\lobnd}\right] \left(\underline{xy}-y^\lobnd x\right),
		\label{eq:Simul-hull-2}\\
		& (x^\upbnd-x)y^\lobnd \le x^\upbnd y-\underline{xy} \le (x^\upbnd-x)y^\upbnd, \quad (x-x^\lobnd)y^\lobnd \le \underline{xy}-x^\lobnd y \le (x-x^\lobnd)y^\upbnd,
		\label{eq:Simul-hull-4}
		\end{align}
	\end{subequations}
	and $g^*(\lambda^*,y^*)$ is defined as in \eqref{eq:perspective-fn}. Further, the outer-approximation of the convex hull is $\Conv_{OA}(S) = \{ (x,y,z,\underline{xy}) \mid \eqref{eq:Simul-hull-OA}, \; \eqref{eq:Simul-hull-2}-\eqref{eq:Simul-hull-4} \}$, where
	\begin{align}
	z \ge & \frac{x^\lobnd}{x^\upbnd-x^\lobnd} \max\{ g(\overline{y}^r) (x^\upbnd-x) + g'(\overline{y}^r) (x^\upbnd y-\underline{xy} - (x^\upbnd-x)\overline{y}^r) \}_{r=1}^R + \notag\\
	& \frac{x^\upbnd}{x^\upbnd-x^\lobnd} \max\{ g(\overline{y}^r) (x-x^\lobnd) + g'(\overline{y}^r) (\underline{xy} - x^\lobnd y - (x-x^\lobnd)\overline{y}^r) \}_{r=1}^R,
	\label{eq:Simul-hull-OA}
	\end{align}
	for some $\overline{y}^r \in [y^\lobnd,y^\upbnd]$, $r=1,\dots,R$, and $g'(y)$ denotes the first derivative of $g(y)$ w.r.t $y$.
\end{corollary}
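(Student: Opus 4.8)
The plan is to read off Corollary~\ref{prop:simultaneous-hull} as the projected, closed-form specialization of Proposition~\ref{prop:poly-simul-hull} to the one-dimensional polytope $X=[x^\lobnd,x^\upbnd]\subseteq\mathbb{R}_+$. First I would dispatch two degenerate situations: if $y^\lobnd=y^\upbnd$ everything is affine, and if $x^\lobnd=x^\upbnd$ then $S$ is an affine image of $\{(y,g(y)):y\in[y^\lobnd,y^\upbnd]\}$ whose convex hull is the sandwich between $g$ and its chord, which is exactly \eqref{eq:Simul-hull-1}--\eqref{eq:Simul-hull-2} while \eqref{eq:Simul-hull-4} collapses to $\underline{xy}=x^\lobnd y$. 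So assume $x^\lobnd<x^\upbnd$ and $y^\lobnd<y^\upbnd$. Then $X$ has the two extreme points $v^1=x^\lobnd$, $v^2=x^\upbnd$, and Proposition~\ref{prop:poly-simul-hull} with $n=1$, $m=2$ writes $\Conv(S)$ as the projection onto $(x,y,z,\underline{xy})$ of system~\eqref{eq:prop:simultaneous-hull-gy} in the lifting variables $\lambda^1,\lambda^2,y^1,y^2,w^1,w^2,w$.

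Next I would carry out that projection explicitly. The equations $\lambda^1+\lambda^2=1$ and $x=\lambda^1x^\lobnd+\lambda^2x^\upbnd$ from \eqref{eq:prop:simultaneous-hull-gy-5} determine $\lambda^1=\tfrac{x^\upbnd-x}{x^\upbnd-x^\lobnd}$, $\lambda^2=\tfrac{x-x^\lobnd}{x^\upbnd-x^\lobnd}$, and $y=y^1+y^2$ together with $\underline{xy}=x^\lobnd y^1+x^\upbnd y^2$ from \eqref{eq:prop:simultaneous-hull-gy-4}--\eqref{eq:prop:simultaneous-hull-gy-5} determine $y^1=\tfrac{x^\upbnd y-\underline{xy}}{x^\upbnd-x^\lobnd}$, $y^2=\tfrac{\underline{xy}-x^\lobnd y}{x^\upbnd-x^\lobnd}$; substituting these into \eqref{eq:prop:simultaneous-hull-gy-3} yields \eqref{eq:Simul-hull-4}, which in turn implies $x\in[x^\lobnd,x^\upbnd]$ and $y\in[y^\lobnd,y^\upbnd]$. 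The free variable $w=w^1+w^2$ occurs nowhere else and is dropped. The variables $w^1,w^2$ each satisfy their own lower bound $w^i\ge g^*(\lambda^i,y^i)$ from \eqref{eq:prop:simultaneous-hull-gy-1} and their own chord upper bound from \eqref{eq:prop:simultaneous-hull-gy-2}, and enter only through $z=x^\lobnd w^1+x^\upbnd w^2$. Since $g$ is convex, $g^*(\lambda^i,y^i)=\lambda^ig(y^i/\lambda^i)$ never exceeds $\lambda^i$ times the chord value at $y^i/\lambda^i$, i.e.\ never exceeds the right side of \eqref{eq:prop:simultaneous-hull-gy-2}, so the box $[L_i,U_i]$ bounding $w^i$ is nonempty (the case $\lambda^i=0$ forces $y^i=0$ with $L_i=U_i=0$); and because $x^\lobnd,x^\upbnd\ge0$ the image of the connected box $\{w^1\in[L_1,U_1],\,w^2\in[L_2,U_2]\}$ under $z=x^\lobnd w^1+x^\upbnd w^2$ is precisely the interval with endpoints $x^\lobnd L_1+x^\upbnd L_2$ and $x^\lobnd U_1+x^\upbnd U_2$. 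The lower endpoint, after substituting the closed forms for $\lambda^i,y^i$, is \eqref{eq:Simul-hull-1}; the upper endpoint collapses, using $x^\lobnd\lambda^1+x^\upbnd\lambda^2=x$ and $x^\lobnd y^1+x^\upbnd y^2=\underline{xy}$, to $z\le g(y^\lobnd)x+\tfrac{g(y^\upbnd)-g(y^\lobnd)}{y^\upbnd-y^\lobnd}(\underline{xy}-y^\lobnd x)$, i.e.\ \eqref{eq:Simul-hull-2}. This establishes the stated $\Conv(S)$.

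For the outer-approximation I would replace only the nonlinear ingredients, namely the two perspective terms in \eqref{eq:Simul-hull-1}, by their affine minorants at the chosen points: convexity gives $g(t)\ge g(\overline{y}^r)+g'(\overline{y}^r)(t-\overline{y}^r)$, and scaling by $\lambda^i\ge0$ with $t=y^i/\lambda^i$ gives $g^*(\lambda^i,y^i)\ge\lambda^ig(\overline{y}^r)+g'(\overline{y}^r)(y^i-\lambda^i\overline{y}^r)$, hence $g^*(\lambda^i,y^i)\ge\max_{r}\{\lambda^ig(\overline{y}^r)+g'(\overline{y}^r)(y^i-\lambda^i\overline{y}^r)\}$, which is the perspective-level version of the linearization in Remark~\ref{rem:outer-approx}. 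Substituting the closed forms for $\lambda^i,y^i$ into these minorants and into $z\ge x^\lobnd g^*(\lambda^1,y^1)+x^\upbnd g^*(\lambda^2,y^2)$ produces \eqref{eq:Simul-hull-OA}, with \eqref{eq:Simul-hull-2}--\eqref{eq:Simul-hull-4} carried over verbatim.

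The main obstacle is the tightness bookkeeping in the middle step: one must confirm that eliminating $w^1,w^2$ from $z=x^\lobnd w^1+x^\upbnd w^2$ loses nothing, i.e.\ that for every feasible choice of the other variables and every $z$ in the claimed interval there genuinely exist $w^1,w^2$ in their boxes summing with weights $x^\lobnd,x^\upbnd$ to $z$. This rests on nonemptiness of the boxes (convexity of $g$), their connectedness, nonnegativity of $x^\lobnd,x^\upbnd$, and a check that the degenerate perspective values prescribed at $\lambda^i=0$ by \eqref{eq:perspective-fn} are consistent with these arguments; everything else is routine substitution.
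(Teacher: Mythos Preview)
Your proposal is correct and follows essentially the same route as the paper: apply Proposition~\ref{prop:poly-simul-hull} with $X=[x^\lobnd,x^\upbnd]$ and extreme points $v^1=x^\lobnd$, $v^2=x^\upbnd$, solve the linear equations to recover $\lambda^i$ and $y^i$ in closed form, drop the free variable $w$, and eliminate $w^1,w^2$ (the paper just says ``Fourier--Motzkin,'' which is exactly your interval-image argument since the coefficients $x^\lobnd,x^\upbnd$ are nonnegative). Your handling of the degenerate cases $x^\lobnd=x^\upbnd$ and $y^\lobnd=y^\upbnd$, and your explicit verification that each $w^i$-box is nonempty, are more careful than the paper's terse treatment but do not change the argument.
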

\begin{proof}
See \textsection \ref{sec:ecom:hull-of-xg-xy} in the Appendix.
\end{proof}

\subsection{Valid Relaxation for $\theta^\lobnd=\alpha_2$ and/or $\theta^\upbnd = \alpha_1$}
\label{sec:full-relaxation}
In the previous subsection, we have assumed that $\alpha_2 < \theta^\lobnd$ and $\theta^\upbnd < \alpha_1$. Instead, if $\alpha_1$ and/or $\alpha_2$ is an admissible value of $\theta$, we cannot directly use \eqref{eq:conv-Fp-final} to convexify $\mathcal{F}_p$, because $T_1(\alpha_1)$ and $T_2(\alpha_2)$ are not well-defined. To construct a valid relaxation, we first restrict the admissible values of $\theta$ to a subset of the interval $[\alpha_2,\alpha_1]$ by recognizing that each fraction in $\mathcal{U}$ is bounded.
\begin{proposition}
	\label{prop:theta-bounds}
	(i) Valid upper bounds on $\frac{f_1^\rec}{\alpha_1-\theta}$, $(H_1^\rec)^\upbnd$, and on $\frac{f_2^\rec}{\theta-\alpha_2}$, $(H_2^\rec)^\upbnd$, are given by
	\begin{subequations}
		\begin{align}
		& (H_1^\rec)^\upbnd = \frac{(\Upsilon^\rec)^\upbnd(\alpha_1-\alpha_2)+\alpha_1F_1 + \alpha_2F_2}{\alpha_1(\alpha_1-\alpha_2)},
		\label{eq:H1up-bnd}\\
		& (H_2^\rec)^\upbnd = \frac{\alpha_1F_1+\alpha_2F_2-(E^\rec \cdot (\alpha_1-\theta))^\lobnd}{\alpha_2 (\alpha_1-\alpha_2))}.
		\label{eq:H2up-bnd}
		\end{align}
	\end{subequations}
	(ii) the admissible region of $\theta$ in the interval $[\alpha_2,\alpha_1]$ is given by
	\begin{align}
	\alpha_2 + \frac{f_2^\rec}{(H_2^\rec)^\upbnd} \le \theta \le \alpha_1-\frac{f_1^\rec}{(H_1^\rec)^\upbnd}.
	\label{eq:th-bound-cuts}
	\end{align}
\end{proposition}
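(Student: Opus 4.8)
The plan is to obtain both upper bounds in part (i) by the very polynomial-division device that underlies the RDLT cuts of \textsection \ref{sec:reformulation}: multiply the rectifying Underwood inequality \eqref{eq:surrogate-UW-receq} by a suitable nonnegative bound factor of $\theta$, collapse the fractional terms to polynomials in the component flows, and then bound the resulting \emph{linear} expression using $f_p^\rec \le F_p$ together with the bounds on $\Upsilon^\rec$ and on $E^\rec(\alpha_1-\theta)$. Concretely, I would write $H_1^\rec = f_1^\rec/(\alpha_1-\theta)$ and $H_2^\rec = f_2^\rec/(\theta-\alpha_2)$, both nonnegative on $[\alpha_2,\alpha_1]$ since $f_p^\rec \ge 0$, and record the elementary identities $(\alpha_1-\theta)H_1^\rec = f_1^\rec$, $(\theta-\alpha_2)H_2^\rec = f_2^\rec$, together with the ``cross'' identities $(\theta-\alpha_2)H_1^\rec = (\alpha_1-\alpha_2)H_1^\rec - f_1^\rec$ and $(\alpha_1-\theta)H_2^\rec = (\alpha_1-\alpha_2)H_2^\rec - f_2^\rec$, each obtained by writing one bound factor as $(\alpha_1-\alpha_2)$ minus the other.

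For $(H_1^\rec)^\upbnd$, I would start from the right inequality in \eqref{eq:surrogate-UW-receq}, namely $\alpha_1 H_1^\rec - \alpha_2 H_2^\rec \le \Upsilon^\rec \le (\Upsilon^\rec)^\upbnd$, and multiply it by $(\theta-\alpha_2) \ge 0$. Using the identities above, the left-hand side collapses to $\alpha_1(\alpha_1-\alpha_2)H_1^\rec - \alpha_1 f_1^\rec - \alpha_2 f_2^\rec$, while the right-hand side is at most $(\alpha_1-\alpha_2)(\Upsilon^\rec)^\upbnd$ because $\theta-\alpha_2 \le \alpha_1-\alpha_2$ and $(\Upsilon^\rec)^\upbnd \ge 0$; bounding $f_p^\rec \le F_p$ and dividing by $\alpha_1(\alpha_1-\alpha_2) > 0$ yields exactly the claimed $(H_1^\rec)^\upbnd$. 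Symmetrically, for $(H_2^\rec)^\upbnd$ I would instead use the \emph{left} inequality $E^\rec \le \alpha_1 H_1^\rec - \alpha_2 H_2^\rec$ and multiply by $(\alpha_1-\theta) \ge 0$; the $H_1^\rec$ term becomes $\alpha_1 f_1^\rec$ and the $H_2^\rec$ term becomes $\alpha_2[(\alpha_1-\alpha_2)H_2^\rec - f_2^\rec]$, so $\alpha_2(\alpha_1-\alpha_2)H_2^\rec \le \alpha_1 f_1^\rec + \alpha_2 f_2^\rec - E^\rec(\alpha_1-\theta)$; replacing $f_p^\rec$ by $F_p$ and $E^\rec(\alpha_1-\theta)$ by its valid lower bound $(E^\rec(\alpha_1-\theta))^\lobnd$ and dividing by $\alpha_2(\alpha_1-\alpha_2)$ gives $(H_2^\rec)^\upbnd$. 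Nonnegativity of the numerators (hence well-posedness of the subsequent division) follows from the sign conventions on the data, and the degenerate case $F_p = 0$ is trivial since $\alpha_2 \le \theta \le \alpha_1$ is already in \eqref{eq:surrogate-UW-Thbnds}.

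Part (ii) is then immediate: since $f_1^\rec = (\alpha_1-\theta)H_1^\rec \le (\alpha_1-\theta)(H_1^\rec)^\upbnd$ and $(H_1^\rec)^\upbnd > 0$ (when $f_1^\rec$ can be positive), we get $\alpha_1-\theta \ge f_1^\rec/(H_1^\rec)^\upbnd$, i.e.\ $\theta \le \alpha_1 - f_1^\rec/(H_1^\rec)^\upbnd$; likewise $f_2^\rec = (\theta-\alpha_2)H_2^\rec \le (\theta-\alpha_2)(H_2^\rec)^\upbnd$ gives $\theta \ge \alpha_2 + f_2^\rec/(H_2^\rec)^\upbnd$.

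I expect the only genuinely nontrivial step to be the first one: recognizing that a direct attempt to bound $H_1^\rec$ from $\alpha_1 H_1^\rec - \alpha_2 H_2^\rec \le (\Upsilon^\rec)^\upbnd$ stalls, since $\alpha_2 H_2^\rec$ on the right is itself unbounded as $\theta \downarrow \alpha_2$, so one must clear the offending denominator by multiplying through by the matching bound factor (the polynomial-division idea of \textsection \ref{sec:reformulation}); after that, it is routine algebra and sign bookkeeping. A secondary point worth stating explicitly is that the two bounds come from opposite sides of \eqref{eq:surrogate-UW-receq}---the $\Upsilon^\rec$ side for $H_1^\rec$, the $E^\rec$ side for $H_2^\rec$---which is precisely what forces the lower bound $(E^\rec(\alpha_1-\theta))^\lobnd$ to appear in $(H_2^\rec)^\upbnd$.
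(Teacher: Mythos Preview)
Your proposal is correct and essentially identical to the paper's proof: the paper invokes the RDLT inequalities \eqref{eq:reformulated-UW-3} and \eqref{eq:reformulated-UW-4} with the substitutions $\theta^\lobnd=\alpha_2$ and $\theta^\upbnd=\alpha_1$, which is exactly your ``multiply \eqref{eq:surrogate-UW-receq} by $(\theta-\alpha_2)$ or $(\alpha_1-\theta)$ and simplify'' step, and then bounds $f_p^\rec\le F_p$, $\underline{\Upsilon\theta}^\rec-\alpha_2\Upsilon^\rec\le(\Upsilon^\rec)^\upbnd(\alpha_1-\alpha_2)$, and $E^\rec(\alpha_1-\theta)\ge(E^\rec(\alpha_1-\theta))^\lobnd$ just as you do. Part~(ii) is handled by the same one-line rearrangement in both.
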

\begin{proof}
(i) Consider the second inequality in \eqref{eq:reformulated-UW-3}. Since this inequality holds for any $\theta^\lobnd$ less than $\theta$, if we substitute $\theta^\lobnd$ with $\alpha_2$, the inequality remains valid. Then, we obtain \eqref{eq:H1up-bnd} from $\alpha_1(\alpha_1-\alpha_2)H_1^\rec \le (\underline{\Upsilon\theta}^\rec-\alpha_2 \Upsilon^\rec) + \alpha_1f_1^\rec + \alpha_2f_2^\rec \le (\Upsilon^\rec)^\upbnd(\alpha_1-\alpha_2) + \alpha_1F_1 + \alpha_2 F_2$, where the last inequality is because $f_1^\rec\le F_1$, $f_2^\rec\le F_2$, and $\underline{\Upsilon\theta}^\rec-\alpha_2 \Upsilon^\rec\le (\Upsilon^\rec)^\upbnd(\alpha_1-\alpha_2)$. Similarly, we substitute $\theta^\upbnd=\alpha_1$ in the first inequality in \eqref{eq:reformulated-UW-4}, and rearrange to get $\alpha_2(\alpha_1-\alpha_2) H_2^\rec \le -E^\rec \cdot (\alpha_1-\theta) + \alpha_1f_1^\rec + \alpha f_2^\rec$. We maximize the right hand side by substituting $f_1^\rec = F_1$, $f_2^\rec=F_2$, and $(E^\rec \cdot (\alpha_1-\theta))$ by its lower bound which is computed using the bounds on $E^\rec$ and $\theta$. This leads to the bound in \eqref{eq:H2up-bnd}.
	
(ii) Every point feasible to $\mathcal{U}$ satisfies $f_1^\rec/(\alpha_1-\theta) \le (H_1^\rec)^\upbnd$ and $f_2^\rec/(\theta-\alpha_2) \le (H_2^\rec)^\upbnd$. Rearranging the inequalities yields \eqref{eq:th-bound-cuts}. 
\end{proof}

We remark that the bounds on $H_p^\feed$ and $H_p^\strip$ for $p=1,2$ can be computed in the same manner as in the proof of (i) in Proposition \ref{prop:theta-bounds}. Even when additional fractions are present in the Underwood constraints, each fraction can be bounded, since the remaining fractions are strictly bounded in the interval of $\theta$. We revisit the argument on bounds of $\theta$ in light of Proposition \ref{prop:theta-bounds}. As mentioned before, the common approach used in the literature to overcome the singularity arising due to $\theta$ approaching one of the adjoining relative volatilities has been to restrict $\theta$ to belong to $[\alpha_2+\epsilon_\theta,\alpha_1-\epsilon_\theta]$. However, observe that our bounds in \eqref{eq:th-bound-cuts} depend on $f_1^\rec$ and $f_2^\rec$. This explains the difficulty we encountered in choosing a value for $\epsilon_\theta$ in our computations with prior formulations. We have found that there are instances when $\theta$ is fairly close to one of the relative volatilities, particularly when the corresponding flow is small. We will provide a rigorous approach to addressing this singularity using \eqref{eq:th-bound-cuts}. Our approach will be to construct a relaxation of $\mathcal{F}_1$ as the intersection of simultaneous convex hulls of $f_1 \cdot T_1(\theta)$ and $f_1\cdot \theta$. For brevity, we only discuss the relaxation for $\mathcal{F}_1$ in detail, and remark that a similar result is easily derived for $\mathcal{F}_2$.

\begin{proposition}
	\label{prop:hull-of-H1}
	Let $\mathcal{H}_1=\{(f_1,\theta,H_1,\underline{f\theta}_1) \mid 0 \le f_1 \le F_1, \; \theta^\lobnd \le \theta \le \alpha_1-f_1/H_1^\upbnd, \; H_1 = f_1 \cdot T_1(\theta),  \textnormal{  if  } \theta < \alpha_1; \; H_1\in [0,H_1^\upbnd] \textnormal{  if   } \; \theta = \alpha_1,\; \underline{f\theta}_1 = f_1 \cdot \theta  \}$, where $\alpha_2 \le \theta^\lobnd$. Then, $\Conv(\mathcal{H}_1) = \textnormal{proj}_{(f_1,\theta,H_1,\underline{f\theta}_1)} \{ (f_1,\theta,H_1,\underline{f\theta}_1,\theta^a,\theta^b,\theta^c,\lambda^a,\lambda^b,\lambda^c) \mid \eqref{eq:prop:hull-of-H1} \}$, where
	\begin{subequations}\label{eq:prop:hull-of-H1}
		\begin{align}
		& H_1^\upbnd \lambda^b + F_1 T_1^*(\lambda^c,\theta^c) \le H_1 \le H_1^\upbnd\left(\frac{\theta^a-\theta^\lobnd \lambda^a}{\alpha_1-\theta^\lobnd}\right) + H_1^\upbnd\lambda^b + \frac{F_1\lambda^c}{\alpha_1-\theta^\lobnd} + H_1^\upbnd \left(\frac{\theta^c-\theta^\lobnd\lambda^c}{\alpha_1-\theta^\lobnd}\right),\\
		& H_1^\upbnd \left(\alpha_1-\frac{F_1}{H_1^\upbnd}\right)(\alpha_1\lambda^b-\theta^b)+F_1\theta^c \le \underline{f\theta}_1 \le H_1^\upbnd \left(\alpha_1\theta^b - \frac{(\theta^b)^2}{\lambda^b}\right)+F_1\theta^c,\\
		& \theta^\lobnd \lambda^a  \le \theta^a \le \alpha_1 \lambda^a, \quad \left(\alpha_1-\frac{F_1}{H_1^\upbnd}\right) \lambda^b \le \theta^b \le \alpha_1 \lambda^b, \quad \theta^\lobnd \lambda^c \le \theta^c \le \left(\alpha_1-\frac{F_1}{H_1^\upbnd}\right) \lambda^c,\\
		& f_1 = H_1^\upbnd(\alpha_1\lambda^b-\theta^b) + F_1\lambda^c, \quad \theta = \theta^a + \theta^b + \theta^c, \quad \lambda^a+\lambda^b+\lambda^c = 1, \quad \lambda^a,\lambda^b,\lambda^c \ge 0.
		\end{align}
	\end{subequations}
\end{proposition}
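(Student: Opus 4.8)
The plan is to write $\Conv(\mathcal{H}_1)$ as the convex hull of a disjunction of three low-dimensional pieces, convexify each, and then homogenize exactly as in the proof of Proposition~\ref{prop:poly-simul-hull}. First I would note that $\mathcal{H}_1$ is compact: it is the closure of $\{(f_1,\theta,f_1/(\alpha_1-\theta),f_1\theta) : \theta^\lobnd\le\theta<\alpha_1,\ 0\le f_1\le\min\{F_1,H_1^\upbnd(\alpha_1-\theta)\}\}$, the adjoined limiting points being precisely $(0,\alpha_1,H_1,0)$ with $H_1\in[0,H_1^\upbnd]$; here the finiteness of $H_1^\upbnd$ from \eqref{eq:H1up-bnd} and the bound \eqref{eq:th-bound-cuts} of Proposition~\ref{prop:theta-bounds} are what force $f_1=0$ whenever $\theta=\alpha_1$. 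Hence $\Conv(\mathcal{H}_1)$ is closed.

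Next I would carry out the key reduction. The projection of $\mathcal{H}_1$ onto $(f_1,\theta)$ is the trapezoid $T$ with vertices $(0,\theta^\lobnd)$, $(F_1,\theta^\lobnd)$, $(F_1,\alpha_1-F_1/H_1^\upbnd)$ and $(0,\alpha_1)$. For each fixed $\theta<\alpha_1$, the map $f_1\mapsto(f_1,\theta,f_1T_1(\theta),f_1\theta)$ is \emph{affine} in $f_1$, so the corresponding section of $\mathcal{H}_1$ is the segment joining $(0,\theta,0,0)$ to the point at $f_1=\min\{F_1,H_1^\upbnd(\alpha_1-\theta)\}$: the first endpoint lies on the edge $\{f_1=0\}$, and the second on $\{f_1=F_1\}$ when $\theta\le\alpha_1-F_1/H_1^\upbnd$ and on the slanted edge $\{f_1=H_1^\upbnd(\alpha_1-\theta)\}$ otherwise; while at $\theta=\alpha_1$ the section $\{(0,\alpha_1,H_1,0):0\le H_1\le H_1^\upbnd\}$ already lies on $\{f_1=0\}$. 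Thus every point of $\mathcal{H}_1$ is a convex combination of points of the three restrictions $\mathcal{H}_1^a$ (to $f_1=0$), $\mathcal{H}_1^b$ (to the slanted edge together with its limiting segment at $\theta=\alpha_1$) and $\mathcal{H}_1^c$ (to $f_1=F_1$), and these are subsets of $\mathcal{H}_1$; hence $\Conv(\mathcal{H}_1)=\Conv(\mathcal{H}_1^a\cup\mathcal{H}_1^b\cup\mathcal{H}_1^c)$.

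Then I would compute the hull of each piece using the elementary fact (a special case of Proposition~\ref{prop:poly-simul-hull}) that the convex hull of the graph of a convex or concave univariate function over an interval is the region between the graph and the chord, together with the perspective \eqref{eq:perspective-fn}: $\Conv(\mathcal{H}_1^a)$ is the triangle $\{f_1=\underline{f\theta}_1=0,\ 0\le H_1\le H_1^\upbnd(\theta-\theta^\lobnd)/(\alpha_1-\theta^\lobnd)\}$; $\Conv(\mathcal{H}_1^c)$ is bounded below by the perspective $F_1T_1^*(\cdot,\cdot)$ of the convex fraction $F_1T_1(\theta)$ and above by its chord, with $\underline{f\theta}_1=F_1\theta$; and $\Conv(\mathcal{H}_1^b)$ has $H_1$ ranging between the lifted value of the degenerate segment and $H_1^\upbnd$, with $\underline{f\theta}_1$ between the chord and the concave parabola $H_1^\upbnd(\alpha_1\theta-\theta^2)$. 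Finally, taking the convex hull of the disjunction by homogenization as in Proposition~\ref{prop:poly-simul-hull}: introduce multipliers $\lambda^a,\lambda^b,\lambda^c\ge0$ summing to one, split $\theta=\theta^a+\theta^b+\theta^c$ and $f_1,H_1,\underline{f\theta}_1$ into the corresponding $\lambda$-scaled copies, replace each piece's description by its positive homogenization—so the fraction becomes $F_1T_1^*(\lambda^c,\theta^c)$ and the concave quadratic becomes $H_1^\upbnd(\alpha_1\theta^b-(\theta^b)^2/\lambda^b)$—and collect terms (e.g.\ $f_1=H_1^\upbnd(\alpha_1\lambda^b-\theta^b)+F_1\lambda^c$ from the slanted and $f_1=F_1$ pieces). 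This reproduces exactly \eqref{eq:prop:hull-of-H1}.

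The main obstacle I anticipate is twofold. First, the reduction in the second step must be handled carefully at $\theta=\alpha_1$, where $T_1(\theta)$ blows up: one must use that the defining clause $H_1\in[0,H_1^\upbnd]$ there is exactly the closure of the graph, so that nothing is lost by restricting to the three boundary pieces. Second, $\mathcal{H}_1^b$ is genuinely three-dimensional—it carries the degenerate segment at $\theta=\alpha_1$ in addition to the slant curve—so checking that the stated inequalities for it, in particular the lower bound $H_1^\upbnd(\alpha_1-F_1/H_1^\upbnd)(\alpha_1\lambda^b-\theta^b)+F_1\theta^c\le\underline{f\theta}_1$ (the chord) and the upper bound involving $(\theta^b)^2/\lambda^b$ (the homogenized parabola), describe its hull exactly requires care; the remaining term-by-term matching to \eqref{eq:prop:hull-of-H1} is routine.
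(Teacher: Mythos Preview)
Your approach is essentially the paper's: fix $\theta$, observe the fiber is affine in $f_1$, restrict to the three boundary pieces of the $(f_1,\theta)$-trapezoid, convexify each, and take the disjunctive hull via homogenization. The paper's $S^a,S^b,S^c$ are your $\mathcal{H}_1^a,\mathcal{H}_1^b,\mathcal{H}_1^c$, and the final disjunctive-programming step is identical.

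The one discrepancy is where you place the degenerate segment $\{(0,\alpha_1,H_1,0):0\le H_1\le H_1^\upbnd\}$. The paper puts it only in $S^a$; its $S^b$ is just the slant curve, on which $H_1\equiv H_1^\upbnd$, so $\Conv(S^b)$ is the two-dimensional region with $H_1=H_1^\upbnd$ and $f_1=H_1^\upbnd(\alpha_1-\theta)$ as equalities and $\underline{f\theta}_1$ between the concave parabola and its chord. That is precisely what yields the term $H_1^\upbnd\lambda^b$ in the lower bound for $H_1$ in \eqref{eq:prop:hull-of-H1}. If you include the segment in $\mathcal{H}_1^b$ as well, your $\Conv(\mathcal{H}_1^b)$ becomes strictly larger (it is no longer contained in $\{H_1=H_1^\upbnd\}$), and the homogenized $b$-contribution to the $H_1$ lower bound would not be $H_1^\upbnd\lambda^b$. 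The overall hull is of course unchanged, since the segment is already in $\mathcal{H}_1^a$; but to reproduce the stated system \eqref{eq:prop:hull-of-H1} verbatim you should drop the segment from the $b$-piece. With that minor reallocation, your outline matches the paper's proof exactly, and the ``three-dimensional $\mathcal{H}_1^b$'' obstacle you flag disappears.
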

\begin{proof}
See \textsection \ref{sec:proof-of-H1} in the Appendix.
\end{proof}

The convex hull in Proposition~\ref{prop:hull-of-H1} requires several additional variables. To avoid the introduction of these additional variables, we use its relaxation, $\mathcal{H}_{1,\Relax}$, derived in \textsection \ref{sec:relax:hull-of-H1} and shown below:
\begin{subequations}\label{eq:relax-hull-of-H1}
	\begin{align}
	& \max \left\{ f_1 T_1(\overline{\theta}^r) + T_1'(\overline{\theta}^r)(\underline{f\theta}_1-\overline{\theta}^r f_1) \right\}_{r=1}^R
	\le H_1 \le \frac{f_1}{\alpha_1-\theta^\lobnd} + H_1^\upbnd \left(\frac{\theta-\theta^\lobnd}{\alpha_1-\theta^\lobnd}\right),
	\label{eq:relax-hull-of-H1-1}\\
	& \max\left\{ \theta^\lobnd f_1, \; F_1\theta + \alpha_1 f_1 - \alpha_1F_1 \right\} \le \min \left\{\alpha_1f_1, \; F_1\theta + \theta^\lobnd f_1 - \theta^\lobnd F_1 \right\},
	\label{eq:relax-hull-of-H1-2}\\
	&\theta^\lobnd \le \theta \le \alpha_1-\frac{f_1}{H_1^\upbnd},
	\label{eq:relax-hull-of-H1-5}
	\end{align}
\end{subequations}
where $\overline{\theta}^r \in [\theta^\lobnd,\alpha_1)$, $r=1,\dots,R$.
Here, we argue from first principles that \eqref{eq:relax-hull-of-H1} is a valid relaxation. To derive the first inequality in \eqref{eq:relax-hull-of-H1-1}, observe that $H_1 \ge f_1 \cdot T_1 (\theta) \ge f_1 \cdot \max \{ T_1(\overline{\theta}^r) + T_1'(\overline{\theta}^r)(\theta-\overline{\theta}^r) \}_{r=1}^R$. Disaggregating the product and linearizing the bilinear term yields \eqref{eq:relax-hull-of-H1-1}. To derive the second inequality in \eqref{eq:relax-hull-of-H1-1}, we begin with $H_1 \cdot (\alpha_1-\theta) \le f_1$, and replace the bilinear term on the left hand side with its convex envelope. \eqref{eq:relax-hull-of-H1-2} is the convex hull of $\underline{f\theta}_1 = f_1 \cdot \theta$ over $[0,F_1]\times[\theta^\lobnd,\alpha_1] $, and \eqref{eq:relax-hull-of-H1-5} is the same as \eqref{eq:th-bound-cuts}. Using \eqref{eq:relax-hull-of-H1}, we obtain a valid relaxation of $\mathcal{F}_1$ given by $\mathcal{F}_{1,\Relax} = \{ (f_1,\theta,H_1,\underline{f\theta}_1) \mid (f_1^\feed,\theta,H_1^\feed,\underline{f\theta}_1^\feed) \in \mathcal{H}_{1,\Relax}^\feed, \; (f_1^\rec,\theta,H_1^\rec,\underline{f\theta}_1^\rec) \in \mathcal{H}_{1,\Relax}^\rec, \; (f_1^\rec,\theta,H_1^\rec,\underline{f\theta}_1^\rec) \in \mathcal{H}_{1,\Relax}^\strip, \; H_1^\feed = H_1^\rec + H_1^\strip, \; \underline{f\theta}_1^\feed = \underline{f\theta}_1^\rec + \underline{f\theta}_1^\strip \} $. Inspired from \eqref{eq:conv-Fp-final}, the last two equations in the relaxation are derived by multiplying the component mass balance, \eqref{eq:surrogate-UW-flowbal}, with $T_1(\theta)$ and $\theta$, respectively.

\subsection{Discretization and Solution Procedure}
\label{sec:Discretization}
In this work, instead of using convex relaxations of $\mathcal{U}$ in a spatial branch-and-bound framework to solve the MINLP, we construct a piecewise relaxation (see Definition \ref{def:piecewise}) that is iteratively improved until we prove $\epsilon_r-$optimality. This approach capitalizes on state-of-the-art MIP solvers, such as Gurobi.

\begin{definition}[Piecewise Relaxation]
	\label{def:piecewise}
	Let $x = (x_1,\dots,x_n)$, $\mathcal{B}=[x^\lobnd,x^\upbnd]\times[y^\lobnd,y^\upbnd] \subset \mathbb{R}^{n+1} $, $S=\{ (x,y)\in \mathcal{B} \mid g_i(x,y) \le 0, \; i=1,\dots,m \}$, and $S_{\Relax}=\{(x,y)\in \mathcal{B} \mid \widecheck{g}_i(x,y) \le 0, \; i=1,\dots,m\}$ be its convex relaxation , where $\{\widecheck{g}_i\}_{i=1}^m$ denote convex underestimators of $\{g_i\}_{i=1}^m$ over $\mathcal{B}$. Let, the domain of $y$ be partitioned as $\mathcal{I} = \{[Y^0,Y^1],\dots,[Y^{|\mathcal{I}|-1},Y^{|\mathcal{I}|}]\}$ with $Y^0=y^\lobnd$, $Y^{|\mathcal{I}|} = y^\upbnd$ and $Y^0 \le Y^1 \le \dots Y^{|\mathcal{I}|}$. By piecewise relaxation of $S$, we refer to $\bigcup_{t=1}^{|\mathcal{I}|} S_{t,\Relax}$, where $S_{t,\Relax} = \{(x,y,z)\in \mathcal{B}_t \mid \widecheck{g}_{i,t}(x,y) \le 0, \; i=1,\dots,m \}$, $\mathcal{B}_t=[x^\lobnd,x^\upbnd]\times[Y^{t-1},Y^t]$, and $\widecheck{g}_{i,t}$ is the convex under-estimator of $g_i$ over $\mathcal{B}_t$. \qed
\end{definition}
Piecewise relaxation of $\mathcal{U}$ can be constructed by partitioning the domain of Underwood root as $\mathcal{I} =\{ [\Theta^0,\Theta^1],\dots,[\Theta^{|\mathcal{I}-1|},\Theta^{|\mathcal{I}|}] \}$, where $\Theta^0 = \alpha_2$, $\Theta^{|\mathcal{I}|} = \alpha_1$, and $\Theta^0 \le \Theta^1 \le \dots \le \Theta^{|\mathcal{I}|}$, and taking the union of sets $\bigcup_{t=1}^{|\mathcal{I}|} \mathcal{U}_{t,\Relax}$, where $\mathcal{U}_{t,\Relax}$ denotes the convex relaxation of $\mathcal{U}$ restricted to $\theta \in [\Theta^{t-1},\Theta^t]$. The set $\mathcal{U}_{t,\Relax}$ is constructed as outlined in \textsection\ref{sec:part-relaxation} and \textsection\ref{sec:full-relaxation}. Next, using standard disjunctive programming techniques, the piecewise relaxation can be expressed as a Mixed Integer Program (MIP). While this approach leads to a \textit{locally ideal} formulation, it leads to a bigger problem size, because of which the computational time required is higher. Thus, in favor of smaller problem size, we do the following.

Instead of reformulating $\mathcal{U}$ in each partition using the local bound factors of $\theta$, we reformulate with the overall bound factors of $\theta$: $(\theta-\alpha_2)$ and $(\alpha_1-\theta)$. Next, we require that $(f_p,\theta,H_p,\underline{f\theta}_p)$, $p=1,2$, and $(U,\Upsilon,\theta,\underline{U\theta},\underline{\Upsilon\theta})$ lie in piecewise relaxations of $\mathcal{F}_p$ and $\mathcal{V}$, respectively. We choose piecewise relaxation of $\mathcal{F}_1$ to be $\bigcup_{t=1}^{|\mathcal{I}|-1} \Conv_{OA}(\mathcal{F}_{1,t}) \cup \mathcal{F}_{1,|\mathcal{I}|,\Relax}$, piecewise relaxation of $\mathcal{F}_2$ to be $\mathcal{F}_{2,1,\Relax} \cup \bigcup_{t=2}^{|\mathcal{I}|} \Conv_{OA}(\mathcal{F}_{2,t}) $, and piecewise relaxation of $\mathcal{V}$ to be $\bigcup_{t=1}^{|\mathcal{I}|} \Conv(\mathcal{V}_t)$. Here, the additional subscript $t$ denotes that the set is restricted to $\theta \in [\Theta^{t-1},\Theta^t]$. Observe that if zero is not an admissible value to the denominators of the fractions, we use outer-approximation of convex hulls derived in \textsection \ref{sec:part-relaxation} to relax $\mathcal{F}_p$. Otherwise, we use a relaxation of the convex hull description, such as the one derived in \textsection \ref{sec:full-relaxation}. We use disjunctive programming to express the piecewise relaxations as mixed-integer sets (see \textsection\ref{sec:ecom-MIP-Representation} for description of the sets). In \textsection \ref{sec:Scenarios}, we illustrate through numerical examples the impact of various aspects described in this section in strengthening the overall relaxation of MINLP \MINLP{}.
\long\def\MIPRepresentation{\begin{subequations}
		\begin{align}
		& H_1^\rec \ge f_{1,t}^\rec T_1(\Theta^{t-1}) + T_1'(\Theta^{t-1}) (\underline{f\theta}_{1,t}^\rec - \Theta^{t-1} f_{1,t}^\rec), & & \quad \llbracket t \rrbracket_{1}^{|\mathcal{I}|}
		\label{eq:mip-F1-1}\\
		& H_1^\rec \ge f_{1,t}^\rec T_1(\Theta^{t}) + T_1'(\Theta^{t}) (\underline{f\theta}_{1,t}^\rec - \Theta^{t} f_{1,t}^\rec), & & \quad \llbracket t \rrbracket_{1}^{|\mathcal{I}|-1}\\
		& H_1^\strip \ge f_{1,t}^\strip T_1(\Theta^{t-1}) + T_1'(\Theta^{t-1}) (\underline{f\theta}_{1,t}^\strip - \Theta^{t-1} f_{1,t}^\strip), & & \quad \llbracket t \rrbracket_{1}^{|\mathcal{I}|}\\
		& H_1^\strip \ge f_{1,t}^\strip T_1(\Theta^{t}) + T_1'(\Theta^{t}) (\underline{f\theta}_{1,t}^\strip - \Theta^{t} f_{1,t}^\strip), & & \quad \llbracket t \rrbracket_{1}^{|\mathcal{I}|-1}
		\label{eq:mip-F1-4}\\
		& H_1^\rec \le \sum_{t=1}^{|\mathcal{I}|-1} f_{1,t}^\rec T_1(\Theta^{t-1}) + \left[\frac{T_1(\Theta^{t-1}) -T_1(\Theta^{t}) }{\Theta^{t-1}-\Theta^t}\right] (\underline{f\theta}_{1,t}^\rec - \Theta^{t-1} f_{1,t}^\rec) \notag\\
		& \hspace{2.5cm} + \frac{f_{1,|\mathcal{I}|}^\rec}{\alpha_1-\Theta^{|\mathcal{I}|-1}} + (H_1^\rec)^\upbnd \left[\frac{\theta_{|\mathcal{I}|}-\Theta^{|\mathcal{I}|-1}\mu_t}{\Theta^{|\mathcal{I}|}-\Theta^{|\mathcal{I}|-1}}\right]
		\label{eq:mip-F1-5}\\
		& H_1^\strip \le \sum_{t=1}^{|\mathcal{I}|-1} f_{1,t}^\strip T_1(\Theta^{t-1}) + \left[\frac{T_1(\Theta^{t-1}) -T_1(\Theta^{t}) }{\Theta^{t-1}-\Theta^t}\right] (\underline{f\theta}_{1,t}^\strip - \Theta^{t-1} f_{1,t}^\strip) \notag\\
		& \hspace{2.5cm} + \frac{f_{1,|\mathcal{I}|}^\strip}{\alpha_1-\Theta^{|\mathcal{I}|-1}} + (H_1^\strip)^\upbnd \left[\frac{\theta_{|\mathcal{I}|}-\Theta^{|\mathcal{I}|-1}\mu_t}{\Theta^{|\mathcal{I}|}-\Theta^{|\mathcal{I}|-1}}\right]\\
		& (F_1\mu_t-f_{1,t}^\rec-f_{1,t}^\strip) \Theta^{t-1}\le  (F_1\theta_t-\underline{f\theta}_{1,t}^\rec-\underline{f\theta}_{1,t}^\strip) \le (F_1\mu_t-f_{1,t}^\rec-f_{1,t}^\strip) \Theta^t, & & \quad \llbracket t \rrbracket_{1}^{|\mathcal{I}|} \\
		& f_{1,t}^\rec \Theta^{t-1} \le \underline{f\theta}_{1,t}^\rec \le f_{1,t}^\rec \Theta^t, \quad f_{1,t}^\strip \Theta^{t-1} \le \underline{f\theta}_{1,t}^\strip \le f_{1,t}^\strip \Theta^t, & & \quad \llbracket t \rrbracket_{1}^{|\mathcal{I}|}
		\label{eq:mip-F1-8}\\
		& H_1^\feed = H_1^\rec + H_1^\strip, \quad \underline{f\theta}_1^\feed = \underline{f\theta}_1^\rec + \underline{f\theta}_1^\strip\\
		& f_1^\rec = \sum_{t=1}^{|\mathcal{I}|} f_{1,t}^\rec, \quad f_1^\strip = \sum_{t=1}^{|\mathcal{I}|} f_{1,t}^\strip, \quad \theta = \sum_{t=1}^{|\mathcal{I}|} \theta_t\\
		& \sum_{t=1}^{|\mathcal{I}|} \mu_t = 1, \quad  \mu_t \in\{0,1\}, \quad \llbracket t \rrbracket_{1}^{|\mathcal{I}|}
		\label{eq:mip-F1-11}
		\end{align}
		\label{eq:mip-F1}
	\end{subequations}
	and
	\begin{subequations}
		\begin{align}
		& \underline{U\theta}^\rec -\underline{U\theta}^\strip = \underline{\Upsilon\theta}^\rec - \underline{\Upsilon\theta}^\strip\\
		& U^\rec_t - U^\strip_t = \Upsilon^\rec_t - \Upsilon^\strip_t, & & \llbracket t \rrbracket_{1}^{|\mathcal{I}|}\\
		& 0 \le \underline{(\cdot)\theta} - \sum_{t=1}^{|\mathcal{I}|}\Theta^{t-1} (\cdot)_t \le (\cdot)^\upbnd \theta - (\cdot)^\upbnd\sum_{t=1}^{|\mathcal{I}|}\Theta^{t-1} \mu_t, \; \forall \; (\cdot)\in\{U^\rec,U^\strip,\Upsilon^\rec,\Upsilon^\strip\}, & & \llbracket t \rrbracket_{1}^{|\mathcal{I}|}\\
		& 0 \le \sum_{t=1}^{|\mathcal{I}|}\Theta^{t} (\cdot)_t - \underline{(\cdot)\theta} \le (\cdot)^\upbnd\sum_{t=1}^{|\mathcal{I}|}\Theta^{t} \mu_t - (\cdot)^\upbnd \theta, \; \forall \; (\cdot)\in\{U^\rec,U^\strip,\Upsilon^\rec,\Upsilon^\strip\} , & & \llbracket t \rrbracket_{1}^{|\mathcal{I}|}\\
		& (\cdot) = \sum_{t=1}^{|\mathcal{I}|} (\cdot)_t, \quad 0 \le (\cdot) \le (\cdot)^\upbnd \mu_t, \quad; \forall \; (\cdot)\in\{U^\rec,U^\strip,\Upsilon^\rec,\Upsilon^\strip\}\\
		&\sum_{t=1}^{|\mathcal{I}|} \mu_t = 1, \quad \mu_t \in\{0,1\},\; \llbracket t \rrbracket_{1}^{|\mathcal{I}|}.
		\end{align}
\end{subequations}}

Algorithm \ref{algo:adaptive} outlines our approach to solve the MINLP. We start with a coarse discretization and use an adaptive partitioning scheme to iteratively refine the partitions until $\epsilon_r-$optimality is achieved. To avoid numerical issues, we maintain that each partition, $(\Theta_{ijq}^t-\Theta_{ijq}^{t-1})$, is at least \texttt{MinPrtSize} in length.

\begin{algorithm}
	\includegraphics[scale=0.95]{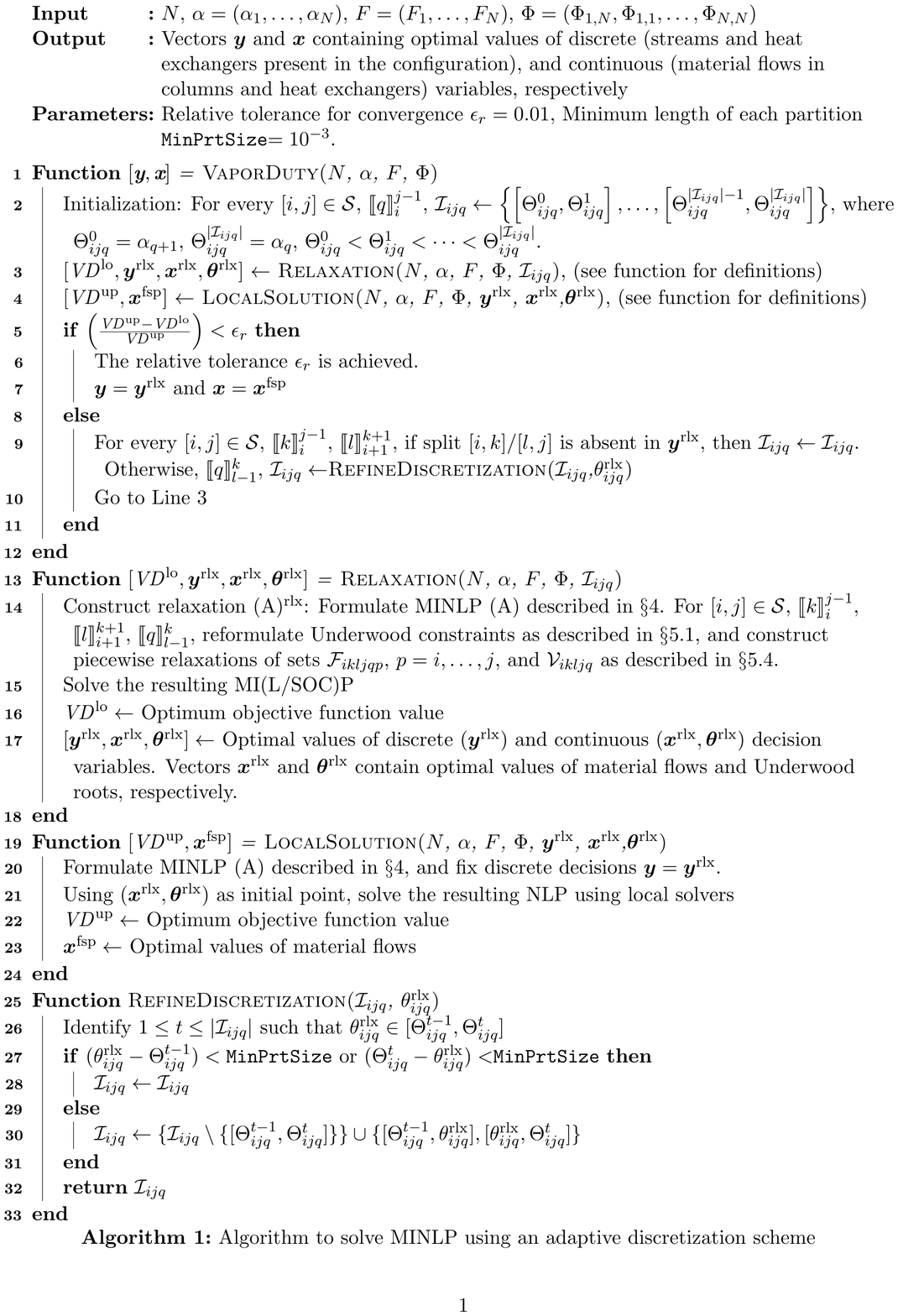}
	\caption{Adaptive partitioning scheme to solve MINLP \MINLP{}}
	\label{algo:adaptive}
\end{algorithm}

 \section{Effect of Individual Cuts on Relaxation}
\label{sec:Scenarios}
This section illustrates, through numerical examples, the impact of of various aspects described in \textsection \ref{sec:Relaxation} in strengthening the overall relaxation of MINLP \MINLP{}. We highlight the individual effect of RDLT cuts derived from Underwood constraints, simultaneous hulls derived in \textsection \ref{sec:part-relaxation}, and discretization on the overall relaxation. In all the scenarios below, stream and heat exchanger variables are considered to be binary.
	\begin{itemize}
		\item[\textbf{Scenario 1}]: (BARON's root node relaxation) Here, we use BARON 18.5.8, on GAMS 25.1, to construct and solve the relaxation of MINLP \MINLP. This is achieved by specifying BARON option \texttt{MaxIter} = 1, which terminates the branch-and-cut algorithm after processing the root node. We let $\theta_{ijq}\in[\alpha_{q+1}+\epsilon_\theta,\alpha_{q}-\epsilon_\theta]$, with $\epsilon_\theta=10^{-7}$, for every $\llbracket q \rrbracket_{i}^{j-1}$, $[i,j]\in\mathcal{S}$ to avoid a possible division by zero. We use BARON's root node relaxation as a reference for comparison. We remark that BARON solves MIP relaxations as needed \citep{kilincc}. We also verified that the bound obtained is close to solving a factorable MIP relaxation.
		
		\item[\textbf{Scenario 2}]: (Simultaneous hull of fractional terms) This scenario illustrates the improvement in relaxation due to the use of simultaneous convexification techniques. We linearize all Underwood constraints in the MINLP by introducing auxiliary variables for each fraction. To relax fractional terms, we use \eqref{eq:conv-Fp-final}, or \eqref{eq:relax-hull-of-H1} if zero is an admissible value for the range of the denominator of fractions. The nonlinear constraints in \eqref{eq:conv-Fp-final} are expressed as second-order cones, and the resulting Mixed Integer Second-order Cone Program (MISOCP) is solved with Gurobi 8.0 using Gurobi/MATLAB interface.
		
		\item[\textbf{Scenario 3}]: (RDLT with linear polynomials of $\theta$) This scenario illustrates the improvement in relaxation due to reformulation of Underwood constraints using RDLT. We reformulate Underwood constraints as in \eqref{eq:reformulated-UW}, convexify fractional terms using \eqref{eq:conv-Fp-final} or \eqref{eq:relax-hull-of-H1}, and convexify bilinear terms of the form $\underline{\Upsilon\theta} =\Upsilon \cdot \theta $ using \eqref{eq:conv-Vth}.
		
		\item[\textbf{Scenario 4}]: (RDLT with quadratic polynomials of $\theta$) To the relaxation in Scenario 3, we add cuts derived by reformulating Underwood constraints with quadratic polynomials of $\theta$ (see \eqref{eq:RDLT-quadratic}), as described in \textsection \ref{sec:RDLT-generalization}. This introduces additional nonlinear terms of the form $\Upsilon \cdot \theta^2$, which we relax in the manner described towards the end of \textsection \ref{sec:part-relaxation}.
		
		\item[\textbf{Scenario 5}]: (RDLT with inverse bound factors of $\theta$) To the relaxation in Scenario 3, we add cuts derived by reformulating Underwood constraints with inverse bound factors (see \eqref{eq:RDLT-inverse-bf}). This introduces additional nonlinear terms of the form $f_1/\theta$ and $V/\theta$, which we relax in the manner described towards the end of \textsection \ref{sec:part-relaxation}.
		
		\item[\textbf{Scenario 6}]: (Discretization) Finally, to illustrate the potential of discretization, we construct piecewise relaxation of Scenario 3. We discretize the domain of each Underwood root into two partitions, and choose the roots of columns performing the split of the process feed, $\{\theta_{1Nq}\}_{q=1}^{N-1}$, as the partition points. In other words, we let $\mathcal{I}_{ijq}=\{[\alpha_{q+1}, \theta_{1Nq}],[\theta_{1Nq},\alpha_q]\}$ for $i\leq q<j$ and $[i,j]\in \mathcal{S}$. As pointed out in Remark \ref{rem:Feed-Bounds}, these roots can be computed prior to solving the optimization problem. We construct the piecewise relaxation of MINLP \MINLP{} as outlined in \textsection\ref{sec:Discretization}.
	\end{itemize}
	
Table \ref{tab:gap-comparison} reports the percentage gap value, defined as
\begin{equation}
	\text{\% Gap}=100\times\left(1-\frac{\text{Optimal value of relaxation}}{\text{Optimal value of \MINLP}}\right)
	\label{eq:gapdef1}
\end{equation}
on a set of cases evaluated for all the Scenarios. To compare against BARON, we also report \% gap closed (numbers in parenthesis in Table \ref{tab:gap-comparison}), defined as
\begin{align}
	\text{\% Gap Closed} = 100 \times\left( 1-\frac{\text{Optimal value of \MINLP} - \text{Optimal value of relaxation}}{\text{Optimal value of \MINLP} - \text{Optimal value in Scenario 1}}\right)
	\label{eq:gap-closed}
\end{align}
We refer to a particular combination of parameter settings: $N$, $\{F_p\}_{p=1}^N$, $\{\alpha_p\}_{p=1}^N$, $\Phi_{1,N}$ and $\{\Phi_{p,p}\}_{p=1}^N$, as a \textit{case}. The parameter settings for the cases considered in Table \ref{tab:gap-comparison} are listed in the caption. It is worth noting that \texttt{Case-A} \citep{caballero2004}, \texttt{Case-B} and \texttt{Case-C} \citep{nadgir} correspond to physical mixtures: mixture of alcohols, mixture of light paraffins and mixture of light olefins and paraffins. The remaining cases do not directly correspond to physical mixtures, but are representative of specific classes of separations (see \cite{giridhar2010a} for more details). Under Scenario 2, we report \% Gap value, and \% Gap closed for all cases when simultaneous hulls are used to convexify fractions. It can be observed that, this approach closes on an average 45.8\% of the gap. In particular, in \texttt{Case-E}, implementation of simultaneous hull completely closes the gap at root node. Next, under Scenario 3, we report the combined effect of our RDLT approach and simultaneous hulls. This approach closes on an average 74.1\% of the gap.
	Under Scenarios 4 and 5, we report further improvement in relaxation due to addition RDLT cuts discussed in \textsection \ref{sec:RDLT-generalization} to the relaxation in Scenario 3.
	RDLT cuts with quadratic polynomials of Underwood roots closes the gap completely in \texttt{Case-B}. Finally, the gap can be completely closed for all the cases considered in Table \ref{tab:gap-comparison} by discretizing the domain of Underwood root into two partitions, as described in Scenario 6.

\begin{sidewaystable}
	\centering
	\renewcommand\arraystretch{1.5}
	\begin{tabular}{cccccccc}
		\toprule
		&  & \multicolumn{6}{c}{\% Gap as defined in \eqref{eq:gapdef1} (\% Reduced Gap as defined in \eqref{eq:gap-closed})} \\
		\cline{3-8}
		& Optimum & Scenario 1 & Scenario 2 & Scenario 3 & Scenario 4 & Scenario 5 & Scenario 6\\
		\midrule
		\texttt{Case-A} & 402.7 & 31.2\% & 27\% (13.5\%) & 19.6\% (37.2\%) & 13.9\% (55.4) & 15.5\% (50.3\%) & 0\% (100\%) \\
		\texttt{Case-B} & 272.5 & 38.3\% & 13.7\% (64.2\%) & 3.3\% (91.4\%) & 0\% (100\%) & 0.1\% (99.7\%) & 0\% (100\%) \\
		\texttt{Case-C} & 260 & 25.7\% & 17.7\% (31.1\%) & 6\% (76.7\%) &1\% (96.1\%) & 1.1\% (95.7\%) & 0\% (100\%) \\
		\texttt{Case-D} & 896.4 & 45\% & 20.4\% (54.7\%) & 14.9\% (66.9\%) & 7.8\% (82.7\%) & 8.4\% (81.3\%) & 0\% (100\%) \\
		\texttt{Case-E} & 695.6 & 27.7\% & 0\% (100\%) & 0\% (100\%) & 0\% (100\%) & 0\% (100\%) & 0\% (100\%) \\
		\texttt{Case-F} & 929.1 & 32.5\% & 21.4\% (34.2\%) & 4.4\% (86.5\%) & 2.4\% (92.6\%) & 4\% (87.7\%) & 0\% (100\%) \\
		\texttt{Case-G} & 902.7 & 45.8\% & 22.7\% (50.4\%) & 7.4\% (83.8\%) & 1.2\% (97.4\%) & 3.2\% (93\%) & 0\% (100\%) \\
		\texttt{Case-H} & 542 & 27.8\% & 22.7\% (18.3\%) & 13.8\% (50.4\%) & 3.8\% (86.3\%) & 4.4\% (84.2\%) & 0\% (100\%) \\
		\midrule
		Average Gap Closed & & & 45.8\% & 74.1\% & 88.8\% & 86.5\% & 100\%\\
		\bottomrule
	\end{tabular}
	\caption{Variation of duality gap across the scenarios described in \textsection \ref{sec:Scenarios}. Here, a gap value less than $10^{-4}\%$ is marked as $0\%$.  For all the cases, $N=5$, $\Phi_{1,N}=\Phi_{1,1}=\cdots=\Phi_{N,N}=1$. In \texttt{Case-A}, $F = \{20,30,20,20,10\}$ and $\alpha = \{4.1,3.6,2.1,1.42,1\}$; In \texttt{Case-B}, $F=\{5,15,25,20,35\}$ and $\alpha=\{7.98,3.99,3,1.25,1\}$; In \texttt{Case-C}, $F = \{25,10,25,20,20\}$ and $\alpha = \{13.72,3.92,3.267,1.21,1\}$; In \texttt{Case-D}, $F=\{42.5,42.5,5,5\}$ and $\alpha = \{3.3275,3.025,1.21,1.1,1\}$; In \texttt{Case-E}, $F=\{30,30,5,5,30\}$ and $\alpha=\{1.4641,1.331,1.21,1.1,1\}$; In \text{Case-F}, $F=\{5,5,5,42.5,42.5\}$ and $\alpha = \{3.3275,1.331,1.21,1.1,1\}$; In \texttt{Case-G}, $F = \{5,5,5,42.5,42.5\}$ and $\alpha = \{17.1875,6.875,2.75,1.1,1\}$; In \texttt{Case-H}, $F = \{20,20,20,20,20\}$ and $\alpha = \{7.5625,3.025,1.21,1.1,1\}$. Data for \texttt{Case-A} is taken from \cite{caballero2004}, \texttt{Case-B} and \texttt{Case-C} from \citet{nadgir}, and \texttt{Case-D} through \texttt{Case-H} from \citet{giridhar2010a}.}
	\label{tab:gap-comparison}
\end{sidewaystable}

\section{Computational Results}
\label{sec:Comp-Results}
We conducted computational experiments on a \textit{test set} of 496 cases, taken from \citep{giridhar2010a,nallasivam2013}, which is a representative of a majority of separations. Parameter settings for the test set are listed in \textsection \ref{sec:ecom:test-set} in e-companion. In this section, we demonstrate that our proposed approach is able to solve MINLP \MINLP{} within a relative tolerance of 1\%. We also compare the performance of our approach with prior approaches in the literature \citep{caballero2004,nallasivam2016,tumbalamgooty2019}. Since the prior approaches develop an (MI)NLP model, we use BARON 18.5.8 via GAMS 25.1 to solve these (MI)NLPs, where all BARON options are set at their default values. For the adpative partitioning scheme described in Algorithm 1, we use Gurobi 8.0 \citep{gurobi} to solve the resulting MIPs, and use IPOPT \citep{ipoptref} as a local solver. The model is loaded into Gurobi using the MATLAB/Gurobi interface, while IPOPT is used via MATLAB/GAMS interface and GAMS 25.1. We used single CPU thread to solve the MIPs so as to keep the comparison with BARON fair. Besides the setting of number of threads, the remaining options for Gurobi and IPOPT were left at their defaults. All computations were done on a Dell Optiplex 5040 with 16 GB RAM, which has Intel Core i7-6700 3.4 GHz processor and is running 64-bit Windows 7.

\subsection{Comparison with Prior Approaches}
Here, we compare the performance of three approaches, namely those of \cite{caballero2006, tumbalamgooty2019}, and the one proposed here. For all the computations, we set the relative tolerance for convergence ($\epsilon_r$), defined as
\begin{align}
\epsilon_r = \left( 1- \frac{\text{BestLB}}{\text{BestUB}} \right)
\end{align}
where BestLB and BestUB are the best-known relaxation bound and feasible solution, to 1\% i.e., $\epsilon_r=0.01$. We impose a CPU time limit of five hours as the termination criterion.

\begin{itemize}
	\item[\textbf{Approach 1}]: We solve MINLP \MINLP{} using the adaptive partitioning approach described in Algorithm \ref{algo:adaptive}. We begin with four partitions for each Underwood root \ie{} $\mathcal{I}_{ijq}=\{ [\alpha_{q+1},(\alpha_{q+1}+\theta_{1Nq})/2], [(\alpha_{q+1}+\theta_{1Nq})/2,\theta_{1Nq}], [\theta_{1Nq},(\alpha_{q}+\theta_{1Nq})/2], [(\theta_{1Nq}+\alpha_{q})/2,\alpha_{q}] \}$ for every $\llbracket q \rrbracket_{i}^{j-1}$, $[i,j] \in\mathcal{S}$. We compute the Underwood roots for the splits of the process feed $\{\theta_{1Nq}\}_{q=1}^{N-1}$ prior to solving the MINLP (see Remark \ref{rem:Feed-Bounds}). For all but 4 cases, we set $\texttt{MinPrtSize}=10^{-3}$. For the remaining cases, we reduced $\texttt{MinPrtSize}$ to $10^{-4}$ in order to achieve the relative tolerance of 1\%. Finally, we point out that the upper bounds on material flows are computed by solving \eqref{eq:bounding-LPs}, where we choose
	\begin{align}
	\vapduty^* = \max_{q\in\{1,\dots,N-1\}} \sum_{p=1}^q \frac{\alpha_pF_p}{\alpha_p-\theta_{1Nq}}
	\label{eq:FTC-Vapduty}
	\end{align}
	and $\phi=1.5$. We note that \eqref{eq:FTC-Vapduty} is the objective function value corresponding to a feasible point of one of the admissible configurations, commonly known in literature as Fully Thermally Coupled or Petlyuk configuration (see \cite{fidkowski1986,halvorsen2003c}).
	
	\item[\textbf{Approach 2}]: We obtained the GAMS code of the model proposed in \citet{caballero2006} from the MINLP library \citep{cabexample}. There, the authors were interested in identifying the configuration minimizing the total annual cost. For our computations, we modify their code in the following manner. First, as mentioned in \citep{tumbalamgooty2019}, the model of \cite{cabexample} admits solutions that are physically infeasible. This is because the constraints corresponding to \eqref{eq:UWmvr-strip1} in their model should be tight for certain Underwood roots, and their model does not impose this requirement. We have added these missing constraints to their GAMS code. Second, the authors employed the BigM approach in order to transform certain disjunctions into a set of inequalities. Unfortunately, the BigM value used for vapor and liquid bypass in their GAMS code made a few test cases infeasible. Therefore, we specified $2.5\vapduty^*$ as the BigM value for the vapor and liquid bypasses. This number was found by choosing the smallest BigM value for which we found a feasible solution. Third, the authors use a parameter $\epsilon_\theta$ and restrict $\theta_{ijq} \in [\alpha_{q+1}+\epsilon_\theta, \alpha_q-\epsilon_\theta]$ for $\llbracket q \rrbracket_{i}^{j-1}$, $[i,j]\in\mathcal{P}$ in order to avoid the singularity associated with $\theta_{ijq}$ approaching $\alpha_q$ or $\alpha_{q+1}$. Their choice of $\epsilon_\theta$, in some cases, made the optimal solution infeasible. Empirically, we found that $\epsilon_\theta=10^{-4}$ does not cut off the optimal solution, so we set $\epsilon_\theta=10^{-4}$. Fourth, the cost equations required for the evaluation of the objective function were removed from the model, and the objective function was modified to compute the total vapor duty instead. The resulting MINLP is then solved with BARON.
	
	\item[\textbf{Approach 3}]: Here, we consider the MINLP proposed in \cite{tumbalamgooty2019}. For a consistent comparison, we set the upper bound on all vapor flows to be $1.5 \vapduty^*$. Further, we restrict $\theta_{ijq} \in [\alpha_{q+1}+\epsilon_\theta, \alpha_q-\epsilon_\theta]$, where $\epsilon_\theta=10^{-4}$, for $\llbracket q \rrbracket_{i}^{j-1}$, $[i,j]\in\mathcal{S}$ in order to avoid the singularity associated with $\theta_{ijq}$ approaching $\alpha_q$ or $\alpha_{q+1}$. The resulting MINLP is then solved using BARON.
\end{itemize}

\begin{figure}[h]
	\centering
	\subfigure[]{\includegraphics[scale=0.55]{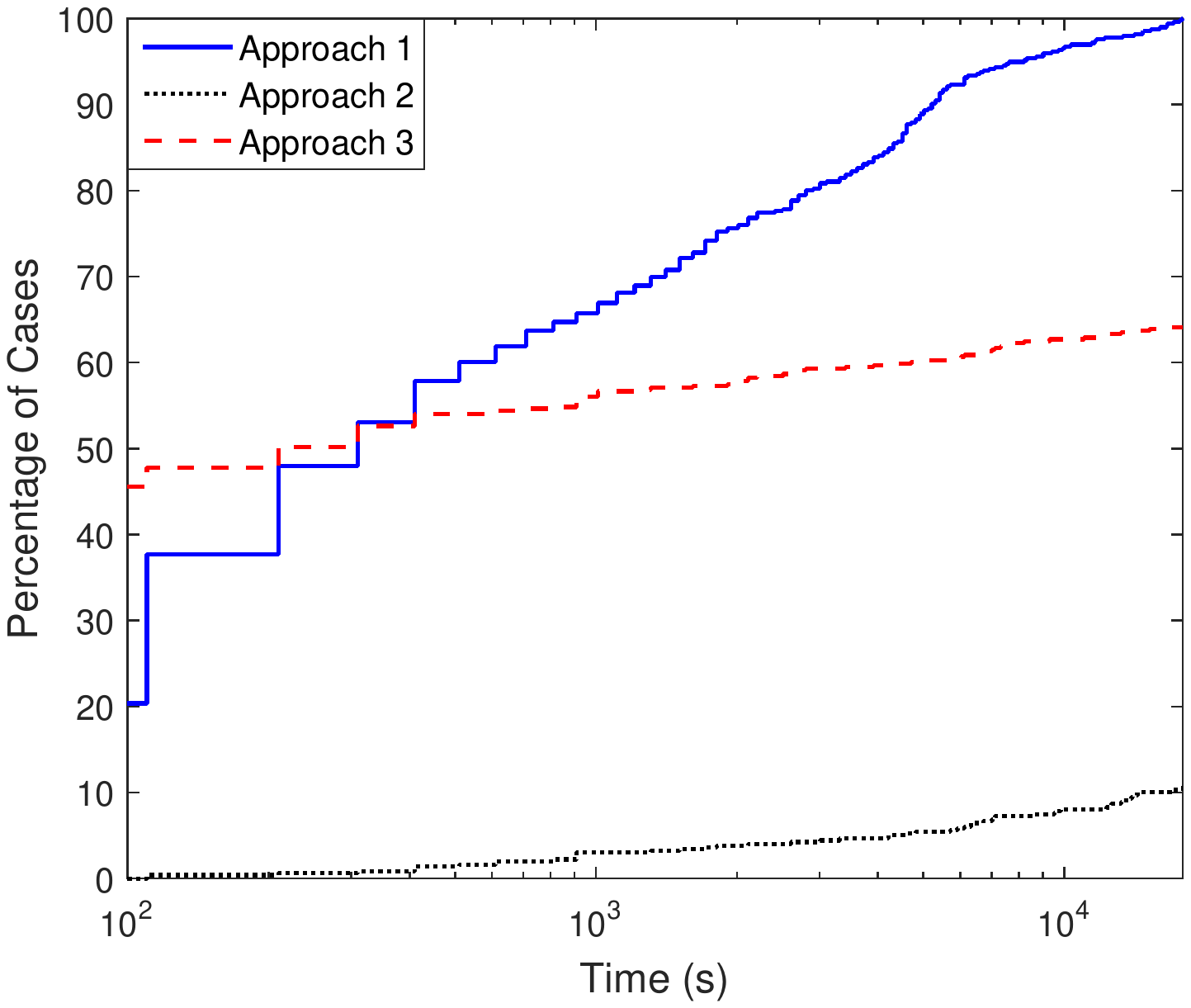}}
	\subfigure[]{\includegraphics[scale=0.55]{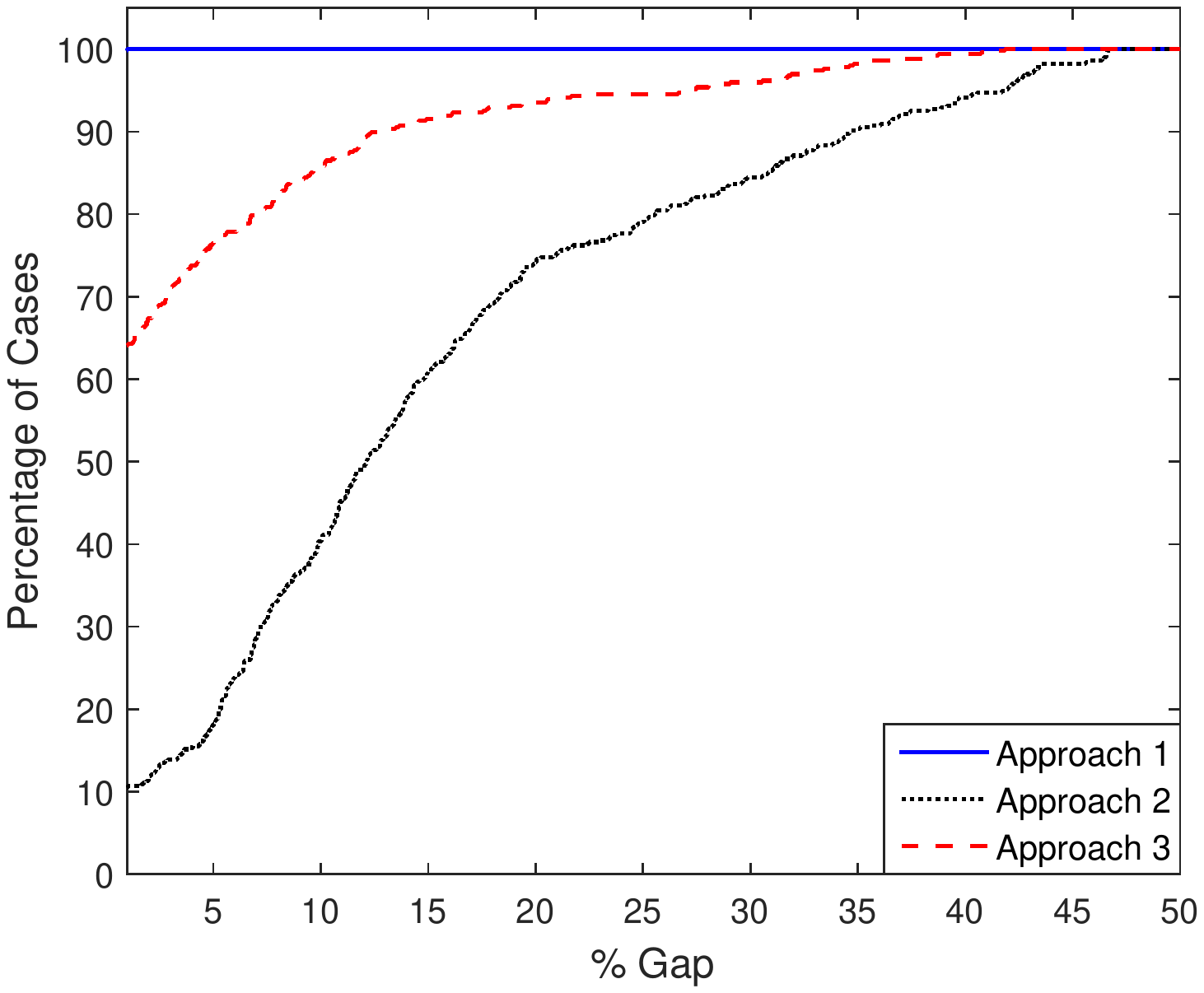}}
	\caption{(a) Plot showing percentage of cases solved to 1\%--optimality against time. Here, Approach 1 corresponds to the current work, Approach 2 corresponds to the model proposed in \citet{caballero2006} solved with BARON, after making the changes described in \textsection \ref{sec:Comp-Results}, and Approach 3 corresponds to the model proposed in \citet{tumbalamgooty2019} solved with BARON  (b) Plot showing the remaining duality Gap at the end of five hours for all the three approaches.}
	\label{fig:MINLPPerfPlot}
\end{figure}

Figure \ref{fig:MINLPPerfPlot}(a) shows the percentage of cases solved to 1\%-optimality against time, with Approach 1 (solid blue curve), Approach 2 (dotted black curve), and Approach 3 (dashed red curve). Observe that Approach 2 solves about 10\% of cases to 1\%-optimality within five hours. This is not surprising because \cite{caballero2004,caballero2006} also reported difficulties in convergence. To overcome the challenges, the authors architected an algorithm by modifying logic-based outer-approximation. While the method resulted in good solutions, optimality was not guaranteed. Approach 3 solves 64\% of the cases in the test set.

We remark that \cite{tumbalamgooty2019} introduced a new search-space formulation, derived cuts that exploit monotonicity of Underwood constraints, and modeled the absence/presence of a column using disjunctions. Nevertheless, this approach fails to solve the problem to 1\%-optimality for 36\% of the cases. The progress of lower bound for a majority of these cases is either stagnant or very slow. Figure \ref{fig:MINLPPerfPlot}(b) depicts the cumulative percentage of cases as a function of the remaining duality gap at the end of five hours. In contrast, our approach, for the first time, solves all 496 cases from this test set within an optimality tolerance of 1\%.


\begin{figure}[h]
	\centering
	\includegraphics[scale=0.55]{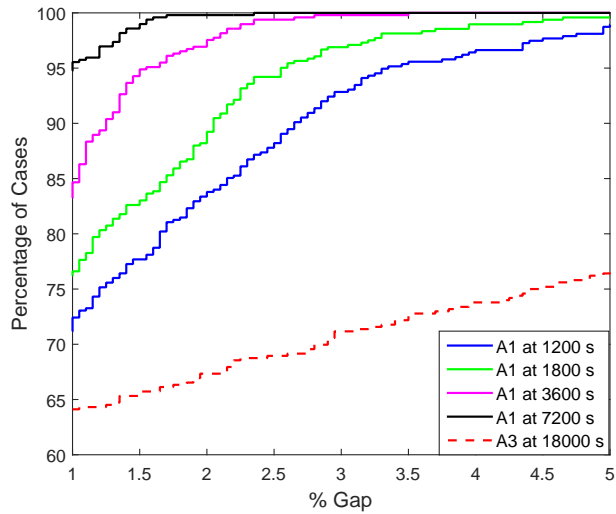}
	\caption{Profiles showing remaining \% Gap at the end of specific time instances for Approach 1 (A1) and Approach 3 (A3).}
	\label{fig:gapplots}
\end{figure}

Figure \ref{fig:gapplots} depicts cumulative percentage of cases as a function of the remaining duality gap at specific time instances for Approach 1. This graph demonstrates that our solution approach, with a CPU time of twenty minutes, already outperforms the best prior MINLP based approach allowed to run for a CPU time of five hours. Further, within 1800 s (green curve), 3600s (magenta curve) and 7200s (black curve), the proposed approach solves all 496 cases to less than 5.5\%, 3.5\% and 2.5\% gap, respectively. Since \MINLP{} is primarily designed as a screening tool for an otherwise highly cumbersome search of optimal distillation configuration, practicing engineers can use Approach 1 to quickly identify near optimal solutions that are worthy of further exploration. Although we do not provide specific configurations found using our procedure, the potential benefits are documented in \cite{shah2010,tumbalamgooty2019} for a crude distillation case study.

\subsection{Comparison with \citet{nallasivam2016}}
Recently, \citet{nallasivam2016} proposed an alternative technique that relies on explicit enumeration for identifying distillation configuration requiring the least vapor duty. After enumerating all the configurations, an NLP is formulated for each configuration and solved to $1\%-$optimality with BARON. We refer to this as \textbf{Approach 4}. We compare the performance of Approach 4, with Approaches 1 and 3 by fixing the discrete decisions to a specific configuration. We choose Fully Thermally Coupled (FTC) configuration, characterized by $\zeta_{i,j}=1\; \forall \; [i,j]\in \mathcal{T}$, $\chi_{i,j}=0 \; \forall \; (i,j)\in \mathcal{C}\setminus\{(1,1)\}$, $\chi_{1,1} = 1$, $\rho_{i,j}=0\; \forall \; (i,j) \in \mathcal{R} \setminus\{(N,N)\}$, and $\rho_{N,N} = 1$, for comparison. This comparison ignores the advances in the search space formulation discussed in \textsection\ref{sec:space_of_admissible_configurations} and other advances that relate Underwood constraints with stream variables, since we fix the binary variables a priori. We set the time limit as one hour and a relative gap of 1\% ($\epsilon_r=0.01$) as termination criteria.

\begin{figure}[h]
	\centering
	\includegraphics[scale=0.55]{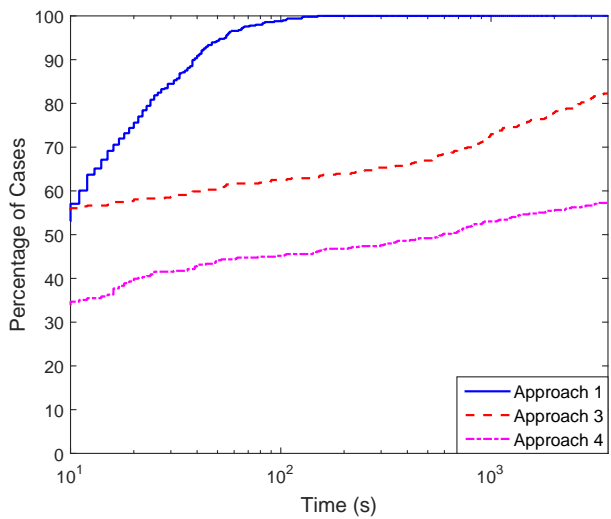}
	\caption{Plot showing percentage of cases solved to 1\%--optimum against time, when discrete variables are fixed to fully thermally coupled configuration (see \textsection \ref{sec:Comp-Results}.2). Approach 4 corresponds to the model proposed in \citet{nallasivam2016} solved with BARON.}
	\label{fig:FTCPerfPlot}
\end{figure}

Figure \ref{fig:FTCPerfPlot} depicts the percentage of cases solved as a function of computational time for the three approaches. Clearly, BARON solves more number of cases to $1\%-$optimality with Approach 3 than with Approach 4.
Despite the improvement, only 82\% of the cases are solved to $1\%-$optimality using Approach 3. In contrast, our approach solves all cases in this test set within 100 s.

\section{Concluding Remarks}
\label{sec:Conclusion}
This work addressed the optimal design of distillation configurations, which are widely used in all chemical and petrochemical industries, and are significant consumers of energy in the world economy. We proposed a novel MINLP that identifies energy-efficient configurations for a given application. Given the complexity from combinatorial explosion of the choice set and nonconvex Underwood constraints, this problem has resisted solution approaches. In this paper, we report on the first successful approach and solve this problem to global optimality for five-component mixtures. The key contributions that make this possible are (i) new formulation for discrete choices that is strictly tighter than the previous formulations, (ii) new valid cuts to the problem using RDLT, and various other convexification results for special structures, and (iii) discretization techniques and an adaptive partitioning scheme to solve the MINLP to $\epsilon-$optimality. On a test set that is a representative of a majority of five-component separations, we demonstrated that our approach solves all the instances in a reasonable amount of time, which was not possible using existing approaches. In summary, this paper describes the first solution approach that can reliably and quickly screen several thousands of alternative distillation configurations and identify solutions that consume less energy and, thereby, lead to less greenhouse gas emissions. This approach has the potential to reduce the carbon footprint and energy usage of thermal separation processes.

\section*{Acknowledgments}
This work is supported by the US Department of Energy (Award number: DE -- EE0005768).

\section*{Disclaimer}
The information, data, or work presented herein was funded in part by an agency of the United States Government. Neither the United States Government nor any agency thereof, nor any of their employees, makes any warranty, express or implied, or assumes any legal liability or responsibility for the accuracy, completeness, or usefulness of any information, apparatus, product, or process disclosed, or represents that its use would not infringe privately owned rights. Reference herein to any specific commercial product, process, or service by trade name, trademark, manufacturer, or otherwise does not necessarily constitute or imply its endorsement, recommendation, or favoring by the United States Government or any agency thereof. The views and opinions of authors expressed herein do not necessarily state or reflect those of the United States Government or any agency thereof.

\bibliographystyle{plainnat}  
\bibliography{references}  
\clearpage

\appendix
\renewcommand{\thepage}{A-\arabic{page}}
\setcounter{page}{1}
\section{Proof of Proposition \ref{prop:bin-simultaneous-hull}}
\label{sec:ecom:acyclic-proof}
Let, $\mathcal{B}=[0,1]^n \times [0,1]^n$. Since the set $S = \{ (x,z) \in \mathcal{B} \; | \; z_j = \prod_{p=1}^{j} x_p, \; j=1,\dots,n \}$ is compact, $\Conv(S)$ is compact and, by Krein-Milman Theorem, is the convex hull of its extreme points. Therefore, we determine the extreme points of $S$, and take their disjunctive union to obtain $\Conv(S)$. When $(x_2,\dots,x_n)$ in $S$ are restricted to $(\overline{x}_2,\dots,\overline{x}_n) \in [0,1]^{n-1}$, then the set $S$ is convex and its extreme points are such that $x_1 \in \{0,1\}$. Let $S_1$ and $\tilde{S}_2$ denote the set $S$ restricted to $x_1=0$ and $x_1=1$, respectively, \ie{} $S_1=\{(x,z)\in \mathcal{B} \; | \; x_1=0, \; z_j=0, \; j=1,\dots,n \}$ and $\tilde{S}_2=\{ (x,z)\in\mathcal{B} \; | \; x_1=z_1=1, \; z_j=\prod_{p=2}^j x_j, \; j=2,\dots,n \}$. Observe that $S_1$ is convex, and $\tilde{S}_2$ is nonconvex. Next, when $(x_3,\dots,x_n)$ in $\tilde{S}_2$ are restricted to $(\overline{x}_3,\dots,\overline{x}_n) \in [0,1]^{n-2}$, then $\tilde{S}_2$ is convex and its extreme points are such that $x_2 \in \{0,1\}$. Let $S_2$ and $\tilde{S}_3$ denote the set $\tilde{S}_2$ restricted to $x_2=0$ and $x_2=1$, respectively, \ie{} $S_2=\{(x,z) \in \mathcal{B} \; | \; x_1=z_1=1, \; x_2=z_2=\dots,z_n=0 \}$ and $\tilde{S}_3 = \{ (x,z)\in \mathcal{B} \; | \; x_1=z_1=x_2=z_2=1, \; z_j = \prod_{p=3}^jx_j, \;j=3,\dots,n \}$. As before, $S_2$ is convex and $\tilde{S}_3$ is nonconvex. Repeating the argument leads to sets $S_3,\dots,S_{n+1}$, where $S_i = \{ (x,z) \in \mathcal{B} \; | \; x_1=z_1=\dots=x_{i-1}=z_{i-1} = 1, \; x_i=z_i=\dots z_n=0 \}$ for $i=3,\dots,n$ and $S_{n+1} = \tilde{S}_{n+1} = \{x_1=z_1=\dots x_n=z_n=1\}$. The sets $S_1$ through $S_{n+1}$ contain the extreme points of convex hull of $S$. Therefore, $\Conv(S) = \Conv(S_1\cup S_2 \cup \dots \cup S_{n+1})$, where $S_1\cup S_2 \cup \dots \cup S_{n+1}$ is given below
\begin{gather*}
\left[ \begin{aligned}
& x_1 = 0 \\
& z_1 = \dots= z_n =0\\
& 0 \le x_j \le 1, \; j = 2,\dots,n
\end{aligned} \right]
\bigvee_{i=2}^n
\left[\begin{aligned}
& x_1=\dots=x_{i-1}=1,\; x_{i}=0\\
& z_1=\dots=z_{i-1}=1\\
& z_{i}=\dots=z_n=0\\
& 0\le x_j \le 1,\; j = i+1,\dots,n
\end{aligned}\right]\bigvee
\left[ \begin{aligned}
& x_1 = \dots = x_n = 1 \\
& z_1 = \dots= z_n =1
\end{aligned} \right].
\end{gather*}
Application of disjunctive programming technique leads to
\begin{gather}
\Conv(S) = \left\{
\begin{aligned}
& x_j^i = \lambda^i, & & \quad \text{for }j = 1,\dots,i-1; & &i = 2,\dots,n+1\\
& x_i^i=0, & & \quad \text{for } & & i = 1,\dots,n\\
& 0 \le x_j^i \le \lambda^i,&  &\quad \text{for } j = i+1,\dots,n;& & i = 1,\dots,n-1\\
& x_j = \sum\nolimits_{i=1}^{n+1} x_j^i,&  &\quad \text{for } j = 1,\dots,n&  &\\
& z_j = \sum\nolimits_{i=j+1}^{n+1} \lambda^i, & & \quad \text{for }j = 1,\dots,n\\
& \sum\nolimits_{i=1}^{n+1} \lambda^i = 1, \; \lambda^i \ge 0 ,&  &\quad \text{for } & & i = 1,\dots,n+1\\
\end{aligned}\right\},
\label{eq:prop1-hull1}
\end{gather}
where $\{ x_j^i \}_{i=1}^{n+1}$ are to be regarded as linearization of $x_j\cdot \lambda^i$. We eliminate $\{ x_j^i \}_{i=j}^{n+1}$ by direct substitution (see \eqref{eq:prop1-hull1}). This leads to $x_j=\sum_{i=1}^{j-1} x_j^i +\sum_{i=j+1}^{n+1} \lambda^i$, or $\sum_{i=1}^{j-1}x_j^i \le x_j - \sum_{i=j+1}^{n+1} \lambda^i \le \sum_{i=1}^{j-1}x_j^i $, where $\{x_j^i\}_{i=1}^{j-1}$ are constrained by $0 \le x_j^i \le \lambda^i$. Now, using Fourier-Motzkin elimination, we eliminate $\{x_j^i\}_{i=1}^{j-1}$ to obtain $0 \le x_j - \sum_{i=j+1}^{n+1} \lambda^i \le \sum_{i=1}^{j-1} \lambda^i$, or $\sum_{i=j+1}^{n+1} \lambda^i \le x_j \le \sum_{i=1}^{j-1} \lambda^i + \sum_{i=j+1}^{n+1} \lambda^i =1-\lambda^j$. This transforms \eqref{eq:prop1-hull1} to
\begin{align}
\Conv(S) = \left\{
\begin{aligned}
& \sum\nolimits_{i=j+1}^{n+1} \lambda^i \le x_j \le 1-\lambda^j,&  &\quad \text{for } j = 1,\dots,n\\
& z_j = \sum\nolimits_{i=j+1}^{n+1} \lambda^i, & & \quad \text{for }j = 1,\dots,n\\
& \sum\nolimits_{i=1}^{n+1} \lambda^i = 1, \; \lambda^i \ge 0 ,&  &\quad \text{for } i = 1,\dots,n+1\\
\end{aligned}\right\}.
\label{eq:prop1-hull2}
\end{align}
Next, we determine $\lambda^i$ in terms of $z_j$. From $z_j=\sum_{i=j+1}^{n+1} \lambda^i$ for $j=1,\dots,n$ and $\sum_{i=1}^{n+1} \lambda^i = 1$, $z_n=\lambda^{n+1}$, $z_{n-1}=\lambda^n+\lambda^{n+1}$ or $z_{n-1}-z_n=\lambda^n$, $z_{n-2}=\lambda^{n-1}+\lambda^n+\lambda^{n+1}$ or $z_{n-2}-z_{n-1}=\lambda^{n-1},\dots,z_1-z_2=\lambda^2$, and $\lambda^1=1-\sum_{i=2}^{n+1} \lambda^i=1-z_1$. Using these relations, we eliminate $\lambda^i$ variables from \eqref{eq:prop1-hull2} to obtain
\begin{align}
\Conv(S) = \left\{
\begin{aligned}
& z_1 \le x_1 \le z_1\\
& z_j \le x_j \le 1-z_{j-1}+z_{j},&  &\quad \text{for } j = 2,\dots,n\\
& z_n\ge 0, \; (1-z_1)\ge 0,\\
& z_{j-1}-z_j \ge 0, & & \quad\text{for }j = 2,\dots,n
\end{aligned}\right\}.
\label{eq:prop1-hull3}
\end{align}
Observe that the same set of inequalities result from recursive McCormick relaxation of $z_j = z_{j-1} \cdot x_j$ for $j = 2,\dots,n$. Therefore, the convex hull of set $S$ can be constructed by a recursive application of McCormick procedure on $z_j = z_{j-1} \cdot x_j$, $j = 2,\dots,n$. \qed

\section{Proof of Remark \ref{rem:linearization-tb}}
\label{sec:ecom:tau-hull-proof}
We show the proof for $\tau_{i,k,j}$ variables, and the proof for $\beta_{i,l,j}$ variables is similar. By Remark \ref{rem:linearization}, the convex hull of $\nu_{i,k,j}=\prod_{n=k}^j (1-\zeta_{i,n})$ over  $(\zeta_{i,k},\dots,\zeta_{i,j})\in [0,1]^{j-k+1}$, given by
\begin{subequations}
	\begin{align}
	& \nu_{i,k,j} \ge \max\{0,-\zeta_{i,k}-\dots-\zeta_{i,j}+1\} \label{eq:nu-hull1}\\
	& \nu_{i,k,j} \le \min\{1-\zeta_{i,k},\dots,1-\zeta_{i,j}\},\label{eq:nu-hull2}
	\end{align}
\end{subequations}
is implied from \eqref{eq:mccor}, for every $[i,j] \in \mathcal{P}$, $\llbracket k \rrbracket_{i}^{j-1}$. We use the above inequalities, in addition to \eqref{eq:secvar} and \eqref{eq:mccor}, for the proof. We consider two cases: $k+1<j$ and $k+1=j$. When, $k+1 <j$, the convex hull of $\tau_{i,k,j}=\zeta_{i,k}(1-\zeta_{i,k+1})\dots(1-\zeta_{i,j-1})\zeta_{i,j}$ over $(\zeta_{i,k},\dots,\zeta_{i,j})\in [0,1]^{j-k+1}$ is given by \citep{crama1993concave}
\begin{subequations}
	\begin{align}
	& \tau_{i,k,j} \ge 0, \label{eq:tau-hull-1}\\
	& \tau_{i,k,j} \ge \zeta_{i,k}-\zeta_{i,k+1}-\dots-\zeta_{i,j-1}+\zeta_{i,j}-1, \label{eq:tau-hull-2}\\
	& \tau_{i,k,j} \le \zeta_{i,k}, \label{eq:tau-hull-3}\\
	& \tau_{i,k,j} \le 1-\zeta_{i,n}, \quad \llbracket n \rrbracket_{k+1}^{j-1}, \label{eq:tau-hull-4}\\
	& \tau_{i,k,j} \le \zeta_{i,j}. \label{eq:tau-hull-5}
	\end{align}
\end{subequations}
On the other hand, when $k+1=j$, the convex hull of $\tau_{i,k,j}=\tau_{i,j-1,j}=\zeta_{i,j-1}\zeta_{i,j}$ over $(\zeta_{i,j-1,\zeta_{i,j}})\in[0,1]^2$ is given by
\begin{subequations}
	\begin{align}
	& \tau_{i,j-1,j} \ge \max\{0,\zeta_{i,j-1}+\zeta_{i,j}-1\},\\
	& \tau_{i,j-1,j} \le \min\{\zeta_{i,j-1},\zeta_{i,j}\}.
	\end{align}
\end{subequations}
In the following, we present the proof only for $k+1<j$, and point out that the proof for the case $k+1=j$ is similar. \\

\noindent\eqref{eq:tau-hull-1}: From \eqref{eq:mccor}, $\nu_{i,k,j-1}+\nu_{i,k+1,j}-\nu_{i,k+1,j-1} \le \nu_{i,k,j} \implies 0 \le \nu_{i,k+1,j-1}-\nu_{i,k,j-1}-\nu_{i,k+1,j}+\nu_{i,k,j} \overset{\eqref{eq:secvar}}{=}\tau_{i,k,j}$.\\

\noindent\eqref{eq:tau-hull-2}: $\tau_{i,k,j} \overset{\eqref{eq:secvar}}{=} \nu_{i,k+1,j-1}-\nu_{i,k,j-1}-\nu_{i,k+1,j}+\nu_{i,k,j} \stackrel{\eqref{eq:nu-hull1}}{\ge} -\zeta_{i,k+1}-\dots-\zeta_{i,j-1}+1-\nu_{i,k,j-1}-\nu_{i,k+1,j}\stackrel{\eqref{eq:nu-hull2}}{\ge}-\zeta_{i,k+1}-\dots-\zeta_{i,j-1}+1 -(1-\zeta_{i,k})-(1-\zeta_{i,j-1})=\zeta_{i,k}-\zeta_{i,k+1}-\dots-\zeta_{i,j-1}+\zeta_{i,j}-1$.\\

\noindent\eqref{eq:tau-hull-3}: $\tau_{i,k,j} \overset{\eqref{eq:secvar}}{=} \nu_{i,k+1,j-1}-\nu_{i,k,j-1}-\nu_{i,k+1,j}+\nu_{i,k,j} \stackrel[\nu_{i,k,j}\le \nu_{i,k+1,j}]{\eqref{eq:mccor}}{\le } \nu_{i,k+1,j-1}-\nu_{i,k,j-1} \stackrel[\nu_{i,k,j-1}\ge \nu_{i,k+1,j-1}+\nu_{i,k,k}-1]{\eqref{eq:mccor}}{\le } 1-\nu_{i,k,k} \overset{\eqref{eq:mccor}}{=}\zeta_{i,k}$.\\

\noindent\eqref{eq:tau-hull-4}: $\tau_{i,k,j} \overset{\eqref{eq:secvar}}{=} \nu_{i,k+1,j-1}-\nu_{i,k,j-1}-\nu_{i,k+1,j}+\nu_{i,k,j} \stackrel[\nu_{i,k,j}\le \nu_{i,k+1,j}]{\eqref{eq:mccor}}{\le } \nu_{i,k+1,j-1}-\nu_{i,k,j-1} \overset{\eqref{eq:nu-hull1}}{\le} \nu_{i,k+1,j-1} \overset{\eqref{eq:nu-hull2}}{\le} 1-\zeta_{i,n}, \; \text{for}\; k+1\le n\le j-1$.\\

\noindent\eqref{eq:tau-hull-5}: $\tau_{i,k,j} \overset{\eqref{eq:secvar}}{=} \nu_{i,k+1,j-1}-\nu_{i,k,j-1}-\nu_{i,k+1,j}+\nu_{i,k,j} \stackrel[\nu_{i,k,j}\le \nu_{i,k,j-1}]{\eqref{eq:mccor}}{\le } \nu_{i,k+1,j-1}-\nu_{i,k+1,j} \stackrel[\nu_{i,k+1,j}\ge \nu_{i,k+1,j-1}+\nu_{i,j,j}-1]{\eqref{eq:mccor}}{\le } 1-\nu_{i,j,j} \overset{\eqref{eq:mccor}}{=}\zeta_{i,j}$.

\section{Proof of Proposition \ref{prop:projection-parent-network}}
\label{sec:ecom:projection-proof}
\begin{definition}
	Let, $\mathcal{D} = (V,A)$ be a digraph and $b \in \mathbb{R}^{|V|}$. A function $f:A\rightarrow \mathbb{R}$ is called as $b-$\textit{transshipment} if $\text{excess}_f(v_i) \coloneqq f\left\langle \delta^{\text{in}}(v_i) \right\rangle - f\left\langle \delta^{\text{out}}(v_i) \right\rangle = b(v_i)$ $\forall \; v_i \in V$, where $\delta^\text{in}(v_i) \subseteq A$ (\resp $\delta^\text{out}(v_i) \subseteq A$) is the set of all arcs entering (\resp leaving) the vertex $v_i$, and $f\left\langle \delta(v_i) \right\rangle \coloneqq \sum\limits_{a\in \delta(v_i)} f(a)$. In our case, the function $f(a)$ evaluates the flow along the arc $a$.
\end{definition}

\begin{lemma}[\cite{rado1943}]
	\label{lemma:Rado}
	Let $\mathcal{D}=(V,A)$ be a digraph, and let $b:V\rightarrow \mathbb{R}$ with $b\, \langle V \rangle =0$. Then there exists a b-transshipment $f\ge \textbf{0}$ if and only if $b\, \langle U \rangle \le 0$ for each $U \subseteq V$ with $\delta^\textnormal{in}(U) = \emptyset $.
\end{lemma}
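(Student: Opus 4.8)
The plan is to establish the two directions separately, reading the existence of a nonnegative $b$-transshipment as a linear feasibility problem and invoking a Farkas-type theorem of the alternative for the difficult implication.

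\textbf{Necessity (existence $\Rightarrow$ cut condition).} This is the routine direction. Suppose $f \ge \mathbf{0}$ is a $b$-transshipment and let $U \subseteq V$ satisfy $\delta^{\textnormal{in}}(U) = \emptyset$. I would sum the excess identity over the vertices of $U$:
\[
b\langle U\rangle \;=\; \sum_{v \in U} \bigl( f\langle \delta^{\textnormal{in}}(v)\rangle - f\langle \delta^{\textnormal{out}}(v)\rangle \bigr).
\]
Each arc with both endpoints in $U$ contributes $+f(a)$ to one inner term and $-f(a)$ to another and cancels; arcs with head in $U$ and tail outside $U$ would survive with a $+$ sign, but these form $\delta^{\textnormal{in}}(U)=\emptyset$ and so do not occur; the only surviving contributions are $-f(a)$ for arcs in $\delta^{\textnormal{out}}(U)$. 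Hence $b\langle U\rangle = -\,f\langle\delta^{\textnormal{out}}(U)\rangle \le 0$ since $f \ge \mathbf{0}$.

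\textbf{Sufficiency (cut condition $\Rightarrow$ existence).} Let $A$ be the signed incidence matrix of $\mathcal{D}$, with $A_{v,a} = +1$ if $v$ is the head of $a$, $-1$ if $v$ is the tail, and $0$ otherwise, so that the transshipment constraints read exactly $Af = b$. By the Farkas lemma (equivalently, $b$ lies in the cone spanned by the columns of $A$ iff it lies in the double polar of that cone), existence of $f \ge \mathbf{0}$ with $Af = b$ is equivalent to the implication: every $y \in \mathbb{R}^{|V|}$ with $y^\top A \le \mathbf{0}$ satisfies $y^\top b \le 0$. The column of $A$ indexed by an arc $a = (u \to w)$ gives $y^\top A_{\cdot,a} = y_w - y_u$, so $y^\top A \le \mathbf{0}$ says precisely that $y$ is non-increasing along every arc, \ie{} $y_w \le y_u$ whenever $(u\to w) \in A$. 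The crux is then to connect these arc-monotone potentials to the hypothesis. I would introduce the superlevel sets $U_t := \{ v \in V : y_v \ge t\}$: for any arc $(u \to w)$, if $w \in U_t$ then $y_u \ge y_w \ge t$, so $u \in U_t$ as well; thus no arc has head in $U_t$ and tail outside, meaning $\delta^{\textnormal{in}}(U_t) = \emptyset$, and the hypothesis yields $b\langle U_t\rangle \le 0$ for every $t$.

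It remains to express $y^\top b$ as a nonnegative combination of the quantities $b\langle U_t\rangle$. Since $V$ is finite, let $\mu_1 > \dots > \mu_m$ be the distinct values of $y$ and set $U_i = \{v : y_v \ge \mu_i\}$, giving the nested chain $U_1 \subseteq \dots \subseteq U_m = V$. The layer-cake identity $y_v = \mu_m + \sum_{i=1}^{m-1}(\mu_i - \mu_{i+1})\,\mathbf{1}[v \in U_i]$ then gives
\[
y^\top b \;=\; \mu_m\, b\langle V\rangle \;+\; \sum_{i=1}^{m-1} (\mu_i - \mu_{i+1})\, b\langle U_i\rangle,
\]
where $b\langle V\rangle = 0$ by assumption, each coefficient $\mu_i - \mu_{i+1} > 0$, and each $b\langle U_i\rangle \le 0$ by the previous paragraph; hence $y^\top b \le 0$, which is exactly the Farkas condition and certifies the existence of $f \ge \mathbf{0}$. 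The main obstacle is this sufficiency argument, and within it the decisive insight is recognizing that the dual certificates produced by Farkas are precisely the potentials non-increasing along arcs, and that their superlevel sets are exactly the source-closed sets $U$ with $\delta^{\textnormal{in}}(U)=\emptyset$ appearing in the hypothesis; once this correspondence is in place, the layer-cake decomposition (using $b\langle V\rangle = 0$ to absorb the additive constant) finishes the argument mechanically. An alternative route would adjoin a super-source and super-sink, encode the supplies and demands as arc capacities, and derive the claim from max-flow--min-cut, with minimum cuts corresponding to the sets $U$; I would prefer the Farkas route as it is shorter and avoids the capacitated reformulation.
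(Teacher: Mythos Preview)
The paper does not supply its own proof of this lemma: it is stated with the citation \cite{rado1943} and then simply applied to prove Proposition~\ref{prop:projection-parent-network}. So there is no in-paper argument to compare against.

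Your proof is correct and is one of the standard routes to this classical feasibility result. The necessity direction is exactly the usual telescoping computation $b\langle U\rangle = -f\langle\delta^{\textnormal{out}}(U)\rangle$. For sufficiency, casting the transshipment system as $Af=b,\ f\ge 0$ and invoking Farkas is clean; the key observation that the dual certificates $y$ with $y^\top A\le 0$ are precisely the arc-monotone potentials, and that their superlevel sets are exactly the sets $U$ with $\delta^{\textnormal{in}}(U)=\emptyset$, is the heart of the matter and you identify it clearly. The layer-cake identity is verified correctly (the hypothesis $b\langle V\rangle=0$ is needed to dispose of the constant term $\mu_m$, and you use it). The alternative max-flow--min-cut reduction you mention is the other textbook path and would also work; your Farkas argument is the more direct of the two here.
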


We now use Lemma \ref{lemma:Rado} to prove Proposition \ref{prop:projection-parent-network}.

Consider the digraph $\mathcal{D} = (V,A)$, where $V = D_6 \cup D_7$ and $A = (D_6 \times D_7)\setminus \{ (N+1,0) \}$ (see \textsection \ref{sec:parent-presence} and Figure \ref{fig:ParentConstraints} for definition of $D_6$ and $D_7$). We have discarded the arc from $N+1 \in D_6$ to $0\in D_7$, because the flow along that arc is zero (see \eqref{eq:Parent-network}). Observe that, for every $n \in D_6$, $b(n) = \text{excess}_\psi (n)= -\sum_{m \in D_7} \psi_{i,n,m,j}= -\tau_{i,j,n}$ (see Figure \ref{fig:ParentConstraints}). Similarly, for every $m \in D_7$, $b(m) = \text{excess}_\psi(m) = \sum_{n \in D_6} \psi_{i,n,m,j} = \beta_{m,i,j}$. Then, $b\, \langle V \rangle = \sum_{n=j+1}^{N+1} b(n) + \sum_{m=0}^{i-1} b(m) = -\sum_{n=j+1}^{N+1} \tau_{i,j,n} + \sum_{m=0}^{i-1} \beta_{m,i,j}  = 0$ (from definition of $S_{i,j}$). From Lemma \ref{lemma:Rado}, a $b-$transshipment $\psi \ge \textbf{0}$ exists if and only if $b\, \langle U \rangle \le 0$ for each $U \subseteq V$ with $\delta^\textnormal{in}(U) = \emptyset $. For every $U \subseteq D_6 \subset V$, $b \, \langle U \rangle \le 0$ is satisfied trivially. On the other hand, $U$ cannot be chosen to be a subset of $D_7$, because for every $U \subseteq D_7$, $\delta^\text{in} (U) \ne \emptyset$. Therefore, in order to derive non-trivial inequalities, we must choose subsets of $V$ containing vertices of both $D_6$ and $D_7$.

Let $U = (D_6\setminus\{N+1\}) \cup \{0\}$. Note that $\delta^\text{in} (U) = \emptyset$. Then, a $b-$transshipment $\psi \ge \textbf{0}$ exists if and only if $b \, \langle U \rangle = -\sum_{n=j+1}^N \tau_{i,j,n} + \beta_{0,i,j} \le 0$, or
\begin{equation}
\beta_{0,i,j} \le \sum_{n=j+1}^N \tau_{i,j,n}.
\label{eq:Prop6-proof-1}
\end{equation}

It can be verified that for every other subset $U \subseteq V$ satisfying $\delta^\text{in} (U) = \emptyset$, the inequality ensuring $b \, \langle U \rangle \le 0$ is implied from $\sum_{m=0}^{i-1} \beta_{m,i,j} = \sum_{n=j+1}^{N+1} \tau_{i,j,n}$. Therefore,
\begin{align}
\text{proj}_{(\tau,\beta)}(S_{i,j}) = \left\{ \eqref{eq:Prop6-proof-1}; \; \sum_{m=0}^{i-1} \beta_{m,i,j} = \sum_{n=j+1}^{N+1} \tau_{i,j,n}; \; \tau_{i,j,n} \ge 0, \; \llbracket n \rrbracket_{j+1}^{N+1}; \; \beta_{m,i,j} \ge 0, \; \llbracket m \rrbracket_{0}^{i-1} \right\}.
\label{eq:ecom:network-proj}
\end{align}
Indeed, $\psi_{i,n,m,j}$ can be defined to verify that \eqref{eq:ecom:network-proj} is the projection of $S_{i,j}$.
\begin{itemize}
	\item[\textbf{Def1}:] Define $\psi_{i,N+1,0,j} = 0$.
	\item[\textbf{Def2}:] For $1 \le m \le i-1$, define
	\begin{align*}
	\psi_{i,N+1,m,j} = \begin{cases}
	\tau_{i,j,N+1} \cdot \frac{\beta_{m,i,j}}{\sum_{m=1}^{i-1} \beta_{m,i,j}}, & \text{ if } \sum_{m=1}^{i-1} \beta_{m,i,j}>0\\
	0, &  \text{ if } \sum_{m=1}^{i-1} \beta_{m,i,j} = 0.
	\end{cases}
	\end{align*}
	Since $\sum_{m=0}^{i-1} \beta_{m,i,j} = \sum_{n=j+1}^{N+1} \tau_{i,j,n}$, \eqref{eq:Prop6-proof-1} implies $\tau_{i,j,N+1} \le \sum_{m=1}^{i-1} \beta_{m,i,j} $. Then, the above definition guarantees that $\psi_{i,N+1,m,j} \le \beta_{m,i,j}$ for every $1 \le m \le i-1$, and $\sum_{m=1}^{i-1} \psi_{i,N+1,m,j} = \tau_{i,j,N+1}$.
	
	\item[\textbf{Def3}:] For $j+1 \le n \le N$, define
	\begin{align*}
	\psi_{i,n,0,j} = \begin{cases}
	\beta_{0,i,j} \cdot \frac{\tau_{i,j,n}}{\sum_{n=j+1}^N \tau_{i,j,n}}, & \text{ if } \sum_{n=j+1}^N \tau_{i,j,n} > 0\\
	0, & \text{ if } \sum_{n=j+1}^N \tau_{i,j,n}=0.
	\end{cases}
	\end{align*}
	Because $\beta_{0,i,j} \le \sum_{n=j+1}^N \tau_{i,j,n}$ (see \eqref{eq:Prop6-proof-1}), the above definition guarantees that $\psi_{i,n,0,j} \le \tau_{i,j,n}$ for every $j+1 \le n \le N$, and $\sum_{n=j+1}^N \psi_{i,n,0,j} = \beta_{0,i,j}$.
	
	\item[\textbf{Def4}:] For every $1 \le m \le i-1$ and $j+1 \le n \le N$, define
	\begin{align*}
	\psi_{i,n,m,j} = \begin{cases}
	\frac{(\tau_{i,j,n}-\psi_{i,n,0,j}) \cdot (\beta_{m,i,j}-\psi_{i,N+1,m,j}) }{\sum_{n=j+1}^N (\tau_{i,j,n}-\psi_{i,n,0,j}) }, & \text{ if } \sum_{n=j+1}^N (\tau_{i,j,n}-\psi_{i,n,0,j})\\
	0, & \text{ if } \sum_{n=j+1}^N (\tau_{i,j,n}-\psi_{i,n,0,j}) = 0.
	\end{cases}
	\end{align*}
	Since $(\beta_{m,i,j}-\psi_{i,N+1,m,j}) \ge 0$ (see \textbf{Def2}) and $(\tau_{i,j,n}-\psi_{i,n,0,j}) \ge 0$ (see \textbf{Def3}), the above definition guarantees $\psi_{i,n,m,j} \ge 0$ for every $1 \le m \le i-1$ and $j+1 \le n \le N$. Next, it can be shown that $\sum_{n=j+1}^N (\tau_{i,j,n}-\psi_{i,n,0,j}) = \sum_{m=1}^{i-1} (\beta_{m,i,j}-\psi_{i,N+1,m,j})$ from $\sum_{n=j+1}^{N+1} \tau_{i,j,n} = \sum_{m=0}^{i-1} \beta_{m,i,j}$, $\sum_{m=1}^{i-1} \psi_{i,N+1,m,j} = \tau_{i,j,N+1}$ (see \textbf{Def2}), and $\sum_{n=j+1}^N \psi_{i,n,0,j} = \beta_{0,i,j}$ (see \textbf{Def3}). Then, the above definition guarantees that $\sum_{n=j+1}^N \psi_{i,n,m,j} = \beta_{m,i,j}-\psi_{i,N+1,m,j}$ and $\sum_{m=1}^{i-1} =\tau_{i,j,n}-\psi_{i,n,0,j} $.
\end{itemize}

\clearpage
\section{Proof of Proposition \ref{prop:searchspace}}
\label{sec:ecom:search-space-proof}
In addition to binary variables associated with the presence/absence condensers and reboilers, CG06 has variables for the presence of heat exchanger, which we denote as $\eta_{i,j}$. To our model, we add
\begin{equation}
\eta_{i,j} = \chi_{i,j} + \rho_{i,j}.
\label{eq:CGM-Hex}
\end{equation}
Further, we remark that for $i \le k \le j-1$, $[i,j] \in \mathcal{P}$,
\begin{align}
\sum_{m=i}^{k} \tau_{i,m,j} \stackrel{\eqref{eq:RLTbin}}{=} \sum_{m=i}^{k} \sum_{l=i+1}^{m+1} \sigma_{i,m,l,j} \stackrel{\text{Fig. \ref{fig:SplitFeasibility}}}{=} \sum_{l=i+1}^{k+1} \sum_{m=l-1}^k \sigma_{i,m,l,j} \stackrel{k \le j-1}{\le} \sum_{l=i+1}^{k+1} \sum_{m=l-1}^{j-1} \sigma_{i,m,l,j} \stackrel{\eqref{eq:RLTbin}}{=} \sum_{l=i+1}^{k+1} \beta_{i,l,j}.
\label{eq:ss-tightness-proof}
\end{align}
In Tables \ref{tab:GAmodel-proof}, \ref{tab:CGModel-proof} and \ref{tab:TATModel-proof}, we prove that the set defined by \eqref{eq:secvar}--\eqref{eq:Presence-of-parent}, $\zeta_{i,j} \in [0,1], \; \forall \; [i,j]\in\mathcal{T}$, $\rho_{i,j}\in[0,1], \; \forall \; (i,j) \in \mathcal{R}$ and $\chi_{i,j}\in[0,1], \; \forall \; (i,j)\in \mathcal{C}$ is tighter than CG06, GA10 and TAT19, respectively. We point out that, in GA06, the authors did not consider thermally coupled configurations. Thus, we show the proof only for the constraints they reported.

Next, we show \textit{strict tightness} with a numerical example. Consider $N=4$:
\begin{enumerate}
	\item When restricted to $\zeta_{1,2}= \zeta_{1,3}= 0$, $\zeta_{1,1} = \zeta_{1,4} = \zeta_{2,2} = \zeta_{3,3} = \zeta_{4,4} = 1$ and $\zeta_{2,3}=\zeta_{2,4}=\zeta_{3,4} =1/2$, CG06 is feasible, while \MINLP{} is infeasible.
	\item The point $\tau_{1,1,3} = \tau_{1,2,3} = \tau_{1,1,4} = \tau_{1,3,4} = \tau_{2,2,4} = \beta_{1,2,3} = \beta_{1,3,3} = \beta_{1,3,4} = \beta_{2,4,4} = 0$; $\tau_{1,1,2}=\tau_{1,2,4}=\beta_{1,2,2} = 1$ and $\tau_{2,2,3}=\tau_{2,3,4}=\tau_{3,3,4}= \beta_{1,2,4} = \beta_{1,4,4} = \beta_{2,3,3} = \beta_{2,3,4} = \beta_{3,4,4} = 1/2$ is an extreme point to GA10, and infeasible to \MINLP{}.
	\item When restricted to $\zeta_{3,4} = 0$, $\zeta_{1,1} = \zeta_{1,2} = \zeta_{1,4} = \zeta_{2,2} = \zeta_{3,3} = \zeta_{4,4} = 1$ and $\zeta_{1,3} = \zeta_{2,3} = \zeta_{2,4} = 1/2$, TAT19 is feasible, while \MINLP{} is infeasible. \qed
\end{enumerate}

\begin{table}
	\centering
	\renewcommand{\arraystretch}{1.5}
	\begin{tabular}{l@{\hspace{0.5cm}}l@{\hspace{0.5cm}}}
		\toprule
		\# & Proof\\
		\midrule
		$[1]$ & $\begin{aligned}
		\sum_{k=i}^{j-1}\sum_{l=i+1}^{k+1} \sigma_{i,k,l,j} \stackrel{\text{\eqref{eq:RLTbin}, Co.\ref{prop:sumteqz}}}{=} \zeta_{i,j} \le 1
		\end{aligned}$\\
		\midrule
		$[2]$ & $\begin{aligned}
		& \sum_{n=j+1}^N\sum_{m=i+1}^{j+1} \sigma_{i,j,m,n} \stackrel{\text{\eqref{eq:RLTbin}}}{=} \sum_{n=j+1}^N \tau_{i,j,n} \stackrel{\text{Pr.\ref{prop:conv-hull-PresParent}}}{\le} \zeta_{i,j} \le 1 \\
		& \sum_{m=1}^{i-1}\sum_{n=i-1}^{j-1} \sigma_{m,n,i,j} \stackrel{\text{\eqref{eq:RLTbin}}}{=} \sum_{m=1}^{i-1} \beta_{m,i,j} \stackrel{\text{Pr.\ref{prop:conv-hull-PresParent}}}{\le} \zeta_{i,j} \le 1
		\end{aligned}$\\
		\midrule
		$[3]$ & $\begin{aligned}
		\sum_{m=1}^{i-1} \sigma_{m,i-1,i,i} + \sum_{n=i+1}^{N} \sigma_{i,i,i+1,n} \stackrel{\text{\eqref{eq:RLTbin}}}{=} \sum_{m=1}^{i-1} \beta_{m,i,i} + \sum_{n=i+1}^{N} \tau_{i,i,n} \stackrel{\text{\eqref{eq:Presence-of-parent}}}{\ge} \zeta_{i,i} \stackrel{\text{\eqref{eq:feedprod}}}{\ge} 1,\quad \text{for} \quad [i,i] \in \mathcal{T}
		\end{aligned}$\\
		\midrule
		$[6]$ & $ \begin{aligned}
		& \left.\sigma_{i,j,m,n} \stackrel{\eqref{eq:RLTbin}}{\le} \tau_{i,j,n} \stackrel{\text{Re.\ref{rem:linearization-tb}}}{\le} \zeta_{i,j} \stackrel{\text{Co.\ref{prop:sumteqz}, \eqref{eq:RLTbin}}}{=} \sum_{k=i}^{j-1}\sum_{l=i+1}^{k+1} \sigma_{i,k,l,j} \right\} & & \text{for}\quad \llbracket m \rrbracket_{i+1}^{j+1}; \quad \llbracket n \rrbracket_{j+1}^N\\
		& \left.\sigma_{m,n,i,j} \stackrel{\eqref{eq:RLTbin}}{\le} \beta_{m,i,j} \stackrel{\text{Re.\ref{rem:linearization-tb}}}{\le} \zeta_{i,j} \stackrel{\text{Co.\ref{prop:sumteqz}, \eqref{eq:RLTbin}}}{=}  \sum_{k=i}^{j-1}\sum_{l=i+1}^{k+1} \sigma_{i,k,l,j} \right\} & & \text{for}\quad \llbracket n \rrbracket_{i-1}^{j-1}; \quad \llbracket m \rrbracket_{1}^{i-1}
		\end{aligned} $\\
		\midrule
		$[6]$ & $\begin{aligned} \left.
		\sigma_{i,k,l,j} \stackrel{\text{Pr.\ref{prop:conv-hull-SplitFeas}}}{\le} \zeta_{i,j}  \stackrel{\text{\eqref{eq:Presence-of-parent}}}{\le}  \sum_{n=i+1}^{N} \tau_{i,j,n} + \sum_{m=1}^{i-1} \beta_{m,i,j} \stackrel{\text{\eqref{eq:RLTbin}}}{=} \sum_{n=j+1}^N\sum_{m=i+1}^{j+1} \sigma_{i,j,m,n} + \sum_{m=1}^{i-1}\sum_{n=i-1}^{j-1} \sigma_{m,n,i,j} \right\} \\
		\text{for}\quad \llbracket l \rrbracket_{i+1}^{k+1};\quad \llbracket k \rrbracket_{i}^{j-1}
		\end{aligned}$\\
		\midrule
		$[12]$ & $\begin{aligned}
		\sigma_{i,k,l,j} \stackrel{\text{Pr.\ref{prop:conv-hull-SplitFeas}}}{\le} \zeta_{i,j}, \quad \text{for}\quad \llbracket l \rrbracket_{i+1}^{k+1};\quad \llbracket k \rrbracket_{i}^{j-1}
		\end{aligned}$\\
		\midrule
		$[13]$ & $\begin{aligned}
		\sum_{k=i}^{j-1}\sum\limits_{l=i+1}^{k+1} \sigma_{i,k,l,j} \stackrel{\text{\eqref{eq:RLTbin}, Co.\ref{prop:sumteqz}}}{\ge} \zeta_{i,j}
		\end{aligned} $\\
		\midrule
		$[4]$  &  $\begin{aligned}
		& \left. 1-\eta_{i,i} \stackrel[\rho_{i,i} \ge 0]{\eqref{eq:CGM-Hex}}{\le} 1- \chi_{i,i} \stackrel{\eqref{eq:hex2}}{\le} 1- \beta_{0,i,i} \stackrel{\eqref{eq:feedprod},\text{ Pr.\ref{prop:conv-hull-parents}}}{=} \sum_{m=1}^{i-1} \beta_{m,i,i} \stackrel{\eqref{eq:RLTbin}}{=} \sum_{m=1}^{i-1} \sigma_{m,i-1,i,i} \right\}\quad
		\text{for} \quad (i,i) \in \mathcal{C} \\
		& \left. 1-\eta_{i,i} \stackrel[\chi_{i,i} \ge 0]{\eqref{eq:CGM-Hex}}{\le} 1- \rho_{i,i} \stackrel{\eqref{eq:hex2}}{\le} 1- \tau_{i,i,N+1} \stackrel{\eqref{eq:feedprod},\text{ Pr.\ref{prop:conv-hull-parents}}}{=} \sum_{n=i+1}^{N} \tau_{i,i,n} \stackrel{\eqref{eq:RLTbin}}{=} \sum_{n=i+1}^{N} \sigma_{i,i,i+1,N} \right\}\quad
		\text{for} \quad (i,i) \in \mathcal{R} \\
		\end{aligned}$\\
		\midrule
		$[5]$  & $ \begin{aligned}
		\eta_{i,i} \stackrel{\eqref{eq:CGM-Hex}}{=} \chi_{i,j} + \rho_{i,j} \stackrel{\eqref{eq:hex1},\text{ Pr.\ref{prop:conv-hull-parents}}}{\le} \zeta_{i,j} - \sum_{m=1}^{i-1} \beta_{m,i,j} + \zeta_{i,j} - \sum_{n=j+1}^N \tau_{i,j,n} \stackrel[\zeta_{i,j} \le 1]{\beta,\tau \ge 0}{\le} (1-\beta_{m,i,j}) + (1-\tau_{i,j,n'}) \\
		\stackrel{\text{Pr.\ref{prop:conv-hull-SplitFeas}}}{\le} (1-\sigma_{m,n,i,j}) + (1-\sigma_{i,j,m',n'}) \quad \text{for} \quad \llbracket m' \rrbracket_{i+1}^{j+1};\quad \llbracket n' \rrbracket_{j+1}^N;\quad \llbracket m \rrbracket_{1}^{i-1};\quad \llbracket n \rrbracket_{i-1}^{j-1}
		\end{aligned} $\\
		\midrule
		$[7]$ & $ \begin{aligned}
		\eta_{i,j} \stackrel{\eqref{eq:CGM-Hex}}{=} \chi_{i,j} + \rho_{i,j} \stackrel{\eqref{eq:hex1},\text{ Pr.\ref{prop:conv-hull-parents}},\; \eqref{eq:Presence-of-parent}}{\le}  \zeta_{i,j} \stackrel{\eqref{eq:Presence-of-parent},\;\eqref{eq:RLTbin}}{\le}   \sum_{n=j+1}^N\sum_{m=i+1}^{j+1} \sigma_{i,j,m,n} + \sum_{m=1}^{i-1}\sum_{n=i-1}^{j-1} \sigma_{m,n,i,j}
		\end{aligned}$\\
		\bottomrule
	\end{tabular}
	\caption{CG06 for the space of admissible configurations. The first column indicates the constraint number in Table 1 of \cite{caballero2006}. `Co.', `Re.' and `Pr.' stand for Corollary, Remark and Proposition, respectively.}
	\label{tab:CGModel-proof}
\end{table}

\begin{table}[h]
	\centering
	\renewcommand{\arraystretch}{1.3}
	\begin{tabular}{l@{\hspace{0.5cm}}l@{\hspace{0.5cm}}}
		\toprule
		\textsection & Proof\\
		\midrule
		3.1 & $\begin{aligned}
		\sum_{l=i+1}^{j} \beta_{i,l,j} = \sum_{k=i}^{j-1} \tau_{i,k,j} \stackrel{\text{Co.\ref{prop:sumteqz}}}{=} \zeta_{i,j} \le 1
		\end{aligned}$\\
		\midrule
		3.1& $\begin{aligned}
		\sum_{n=j+1}^{N} \tau_{i,j,n} \stackrel{\text{Pr.\ref{prop:conv-hull-PresParent}}}{\le} \zeta_{i,j} \stackrel{\text{Co.\ref{prop:sumteqz}}}{=} \sum_{k=i}^{j-1} \tau_{i,k,j}\\
		\sum_{m=1}^{i-1} \beta_{m,i,j} \stackrel{\text{Pr.\ref{prop:conv-hull-PresParent}}}{\le} \zeta_{i,j} \stackrel{\text{Co.\ref{prop:sumteqz}}}{=} \sum_{k=i}^{j-1} \tau_{i,k,j}
		\end{aligned} $\\
		\midrule
		3.1 & $\begin{aligned}
		\sum_{k=i}^{j-1} \tau_{i,k,j} \stackrel{\text{Co.\ref{prop:sumteqz}}}{=} \zeta_{i,j} \stackrel{\eqref{eq:Presence-of-parent}}{\le} \sum_{n=j+1}^{N} \tau_{i,j,n} + \sum_{m=1}^{i-1} \beta_{m,i,j}
		\end{aligned}$\\
		\midrule
		3.2 & $\begin{aligned}
		& \max \left\{ (j-i+1)\sum_{m=1}^{i-1} \beta_{m,i,j}, \; (j-i+1) \sum_{n=j+1}^N \tau_{i,j,n} \right\} \stackrel{\text{Pr.\ref{prop:conv-hull-PresParent}}}{\le} (j-i+1) \zeta_{i,j} \stackrel{\text{Co.\ref{prop:sumteqz}}}{=} \\
		& (j-i+1) \sum_{k=i}^{j-1} \tau_{i,k,j} = \sum_{k=i}^{j-1} (j-k) \tau_{i,k,j} + \sum_{k=i}^{j-1} (k-i+1)\tau_{i,k,j} = \sum_{k=i}^{j-1} \sum_{m=i}^k \tau_{i,m,j}  + \\
		& \sum_{k=i}^{j-1} (k-i+1)\tau_{i,k,j} \stackrel{\eqref{eq:ss-tightness-proof}}{\le} \sum_{k=i}^{j-1} \sum_{l=i+1}^{k+1} \beta_{i,l,j}  + \sum_{k=i}^{j-1} (k-i+1) \tau_{i,k,j} = \sum_{l=i+1}^j \sum_{k=l-1}^{j-1} \beta_{i,l,j} + \\
		& \sum_{k=i}^{j-1} (k-i+1) \tau_{i,k,j} = \sum_{l=i+1}^j (j-l+1) \beta_{i,l,j} + \sum_{k=i}^{j-1} (k-i+1) \tau_{i,k,j}
		\end{aligned}$\\
		\midrule
		3.3 & $\begin{aligned}
		& \sum_{n=j+1}^N \tau_{i,j,n} \stackrel{\text{Pr.\ref{prop:conv-hull-PresParent}}}{\le} \zeta_{i,j} \le 1\\
		& \sum_{m=1}^{i-1} \beta_{m,i,j} \stackrel{\text{Pr.\ref{prop:conv-hull-PresParent}}}{\le} \zeta_{i,j} \le 1\\
		\end{aligned}$\\
		\midrule
		3.3 & $\begin{aligned}
		\sum_{n=i+1}^{N} \tau_{i,i,n} + \sum_{m=1}^{i-1} \beta_{m,i,i} \stackrel{\eqref{eq:Presence-of-parent}}{\ge} \zeta_{i,i} \stackrel{\eqref{eq:feedprod}}{=} 1
		\end{aligned}$\\
		\bottomrule
	\end{tabular}
	\caption{GA10 for space of admissible configurations. The first column indicates the section number in \cite{giridhar2010b}. `Co.', `Re.' and `Pr.' stand for Corollary, Remark and Proposition, respectively.}
	\label{tab:GAmodel-proof}
\end{table}

\begin{table}[h]
	\centering
	\renewcommand{\arraystretch}{1.5}
	\begin{tabular}{l@{\hspace{0.5cm}}l@{\hspace{0.5cm}}}
		\toprule
		\# & Proof\\
		\midrule
		(H2) & $\zeta_{1,N} \stackrel{\eqref{eq:feedprod}}{=} \zeta_{i,i} \stackrel{\eqref{eq:feedprod}}{=} 1$\\
		\midrule
		(H3) & $\begin{aligned}
		\zeta_{i,j} \stackrel{\eqref{eq:Presence-of-parent}}{\le} \sum_{n=j+1}^{N} \tau_{i,j,n} + \sum_{m=1}^{i-1} \beta_{m,i,j} \stackrel{\text{Pr.\ref{prop:conv-hull-SplitFeas}}}{\le} \sum_{n=j+1}^{N} \zeta_{i,n}+ \sum_{m=1}^{i-1} \zeta_{m,j}
		\end{aligned} $  \\
		\midrule
		(H4) & $\begin{aligned}
		& \left. \zeta_{i,k} -\sum_{n=k+1}^{j-1} \zeta_{i,n} + \zeta_{i,j} -1 \stackrel{\text{Re.\ref{rem:linearization-tb}}}{\le} \tau_{i,k,j} \stackrel{\text{Pr.\ref{prop:conv-hull-SplitFeas}}}{\le} \sum_{l=i+1}^{k+1} \beta_{i,l,j} \stackrel{\text{Re.\ref{rem:linearization-tb}}}{\le} \sum_{l=i+1}^{k+1} \zeta_{l,j} \right\} \quad \text{for} \quad \llbracket k \rrbracket_{i}^{j-1}
		\end{aligned}$\\
		\midrule
		(H5) & $\begin{aligned}
		& \left. \zeta_{i,j} - \sum_{m=i+1}^{l-1} \zeta_{m,j} + \zeta_{l,j} -1 \stackrel{\text{Re.\ref{rem:linearization-tb}}}{\le} \beta_{i,l,j} \stackrel{\text{Pr.\ref{prop:conv-hull-SplitFeas}}}{\le} \sum_{k=l-1}^{j-1} \tau_{i,k,j} \stackrel{\text{Re.\ref{rem:linearization-tb}}}{\le} \sum_{k=l-1}^{j-1} \zeta_{i,k} \right\} \quad \text{for} \quad \llbracket l \rrbracket_{i+1}^{j}
		\end{aligned}$\\
		\midrule
		(H6) & $\begin{aligned}
		\chi_{i,j} + \rho_{i,j} \stackrel{\eqref{eq:hex1},\text{ Pr. \ref{prop:conv-hull-parents}}}{\le} \zeta_{i,j} - \sum_{m=1}^{i-1} \beta_{m,i,j} + \zeta_{i,j} - \sum_{n=j+1}^N \tau_{i,j,n} \stackrel{\eqref{eq:Presence-of-parent}}{\le} \zeta_{i,j}
		\end{aligned}$\\
		\midrule
		(H7) & $\begin{aligned}
		& \chi_{i,j} \stackrel{\eqref{eq:hex1},\text{ Pr. \ref{prop:conv-hull-parents}}}{\le} \zeta_{i,j} - \sum_{m=1}^{i-1} \beta_{m,i,j} \stackrel{\eqref{eq:Presence-of-parent}}{\le} \sum_{n=j+1}^N \tau_{i,j,n} \stackrel{\text{Re.\ref{rem:linearization-tb}}}{\le} \sum_{n=j+1}^N \zeta_{i,n}\\
		& \rho_{i,j} \stackrel{\eqref{eq:hex1},\text{ Pr. \ref{prop:conv-hull-parents}}}{\le} \zeta_{i,j} - \sum_{n=j+1}^N \tau_{i,j,n} \stackrel{\eqref{eq:Presence-of-parent}}{\le}  \sum_{m=1}^{i-1} \beta_{m,i,j} \stackrel{\text{Re.\ref{rem:linearization-tb}}}{\le} \sum_{m=1}^{i-1} \zeta_{m,j}
		\end{aligned}$\\
		\midrule
		(H8) & $\begin{aligned}
		\chi_{i,j} + \rho_{i,j} \stackrel{\eqref{eq:hex1}}{\le} \nu_{i,j+1,N} - \nu_{i,j,N} + \omega_{1,i-1,j} - \omega_{1,i,j} \stackrel{\eqref{eq:mccor}}{\le} (1-\zeta_{i,n}) + (1-\zeta_{m,j}),\\
		\text{for} \quad \llbracket m \rrbracket_{1}^{i-1};\; \llbracket n \rrbracket_{j+1}^N
		\end{aligned} $\\
		\midrule
		$\begin{gathered}
		\text{(H9)}\\
		\&\\
		\text{(H10)}
		\end{gathered}$ & $\begin{aligned}
		& \left. \chi_{i,i} \stackrel{\eqref{eq:hex2},\text{ Pr. \ref{prop:conv-hull-parents}}}{\ge} \zeta_{i,i} - \sum_{m=1}^{i-1} \beta_{m,i,i} \stackrel{\text{Re.\ref{rem:linearization-tb},\;\eqref{eq:feedprod}}}{\ge} 1 - \sum_{m=1}^{i-1} \zeta_{m,i} \right\} \quad \text{for} \quad (i,i) \in \mathcal{C}\\
		& \left. \rho_{i,i} \stackrel{\eqref{eq:hex2},\text{ Pr. \ref{prop:conv-hull-parents}}}{\ge} \zeta_{i,i} - \sum_{n=i+1}^{N} \tau_{i,i,n} \stackrel{\text{Re.\ref{rem:linearization-tb},\;\eqref{eq:feedprod}}}{\ge} 1 - \sum_{n=i+1}^{N} \zeta_{i,n} \right\} \quad \text{for} \quad (i,i) \in \mathcal{R}
		\end{aligned}$\\
		\bottomrule
	\end{tabular}
	\caption{TAT19 for the space of admissible configurations. The first column indicates the constraint number in \citet{tumbalamgooty2019}. `Re.' and `Pr.' stand for Remark and Proposition, respectively.}
	\label{tab:TATModel-proof}
\end{table}
\clearpage

\section{Derivation of $\Conv(\mathcal{F}_p)$}
\label{sec:conv-hull-Fp}
Let $X \coloneqq \{(f_p^\feed,f_p^\rec,f_p^\strip) \in [0,F_p]^3 \; | \; f_p^\feed = f_p^\rec+f_p^\strip \}$. Then, the extreme points of the polytope $X$ are $v^1=(0,0,0)$, $v^2=(F_p,F_p,0)$ and $v^3=(F_p,0,F_p)$. From Proposition \ref{prop:poly-simul-hull}, the convex hull of $\mathcal{F}_p$  is obtained as $\Conv(\mathcal{F}_p)=\text{proj}_{(f,\theta,H,\underline{f\theta})}\{\eqref{eq:conv-Fp}\}$, where
\begin{subequations}
	\label{eq:conv-Fp}
	\begin{align}
	& w^i \ge T_p^*(\lambda^i,\theta^i), \hspace{5.8cm} i = 1,2,3\\
	& w^i \le \lambda^iT_p(\theta^\lobnd)+ \left[ \frac{T_p(\theta^\upbnd)-T_p(\theta^\lobnd)}{\theta^\upbnd-\theta^\lobnd} \right] (\theta^i-\lambda^i \theta^\lobnd), \quad i = 1,2,3\\
	&\lambda^i \theta^\lobnd \le \theta^i \le \lambda^i \theta^\upbnd, \hspace{5.35cm} i=1,2,3\\
	& H_p^\feed = F_p w^2+ F_p w^3, \quad H_p^\rec = F_p w^2, \quad H_p^\strip = F_p w^3, \label{eq:flowhull-Hbal}\\
	& \underline{f\theta}_p^\feed = F_p \theta^2 + F_p \theta^3, \quad \underline{f\theta}_p^\rec = F_p \theta^2, \quad \underline{f\theta}_p^\strip = F_p \theta^3, \label{eq:flowhull-fthbal}\\
	& f_p^\feed = F_p \lambda^2 + F_p \lambda^3, \quad f_p^\rec = F_p \lambda^2,\quad f_p^\strip = F_p \lambda^3, \label{eq:flowhull-flowbal}\\
	& w = w^1 + w^2 + w^3, \quad \theta = \theta^1 + \theta^2 + \theta^3,\\
	& \lambda^1+\lambda^2+\lambda^3 = 1, \quad \lambda^1,\lambda^2,\lambda^3 \ge 0.
	\end{align}
\end{subequations}
We solve linear equations and obtain auxiliary variables in terms of problem variables as $(\lambda^2,\theta^2,w^2) = (f_p^\rec/F_p, \underline{f\theta}_p^\rec/F_p, H_p^\rec/F_p)$, $(\lambda^3,\theta^3,w^3) = (f_p^\strip/F_p, \underline{f\theta}_p^\strip/F_p, H_p^\strip/F_p)$, $(\lambda^1,\theta^1,w^1) = (1-\lambda^2-\lambda^3, \theta-\theta^2-\theta^3, w-w^2-w^3)  = ((F_p-f_p^\feed)/F_p,(F_p\theta-\underline{f\theta}_p^\feed)/F_p,(F_pw-H_p^\feed)/F_p)$ (from first equation in \eqref{eq:flowhull-Hbal},\eqref{eq:flowhull-fthbal}, and \eqref{eq:flowhull-flowbal}). Using these relations, all variables can be eliminated from the hull description, except $w$, which is constrained by
\begin{align*}
T_p^*(\lambda^1,\theta^1) \le \frac{F_pw-H_p^\feed}{F_p}  \le \lambda^1 T_p(\theta^\lobnd) + \left(\frac{T_p(\theta^\upbnd)-T_p(\theta^\lobnd)}{\theta^\upbnd-\theta^\lobnd}\right)(\theta^1-\lambda^1\theta^\lobnd).
\end{align*}
We eliminate $w$ using Fourier-Motzkin elimination to obtain $T_p^*(\lambda^1,\theta^1) \le \lambda^1 T_p(\theta^\lobnd) + \left[\frac{T_p(\theta^\upbnd)-T_p(\theta^\lobnd)}{\theta^\upbnd-\theta^\lobnd}\right](\theta^1-\lambda^1\theta^\lobnd)$. The resulting constraint is redundant, so we do not impose it explicitly. This leads to the convex hull description described in \textsection \ref{sec:part-relaxation}.

\section{Proof of Proposition \ref{prop:simultaneous-hull-xy}}
\label{sec:ecom:xy-hull-proof}
When $x$ is restricted to $\overline{x} \in X$, the set $S = \{(x,y,\underline{xy})\in \mathcal{D} \;|\; \underline{xy} = \overline{x} \cdot y, \; x=\overline{x}  \}$ can be expressed as an affine transformation of $y^\lobnd \le y \le y^\upbnd$, whose extreme points are $y\in \{y^\lobnd,y^\upbnd\}$. Therefore, the extreme points of convex hull of $S$ are contained in the set of points where $y \in\{y^\lobnd,y^\upbnd\} $. Let $S^1=\{(x,y,\underline{xy}) \; | \; \underline{xy} = y^\lobnd x, \; Bx \le b, \; y=y^\lobnd \}$ and $S^2=\{(x,y,\underline{xy}) \; | \; \underline{xy} = y^\upbnd x, \; Bx \le b, \; y=y^\lobnd \}$. Then, by Krein-Milman theorem, convex hull of $S$ is obtained by taking the disjunctive union of $S^1$ and $S^2$, i.e., $\Conv(S) = \text{proj}_{(x,y,\underline{xy})} \{ (x,y,\underline{xy},x^1,x^2,\lambda^1,\lambda^2) \; | \;  Bx^i \le b\lambda^i, \; i=1,2,\; \eqref{eq:proof:hull-xy}, \; \lambda^1 \ge 0, \; \lambda^2 \ge 0  \} $, where
\begin{subequations}
	\begin{align}
	& \underline{xy} = x^1 y^\lobnd + x^2 y^\upbnd, \quad x = x^1 + x^2,\\
	& y = y^\lobnd \lambda^1 + y^\upbnd \lambda^2, \quad \lambda^1+\lambda^2 = 1,
	\end{align}
	\label{eq:proof:hull-xy}
\end{subequations}
Solving the above equations leads to
\begin{align}
x^1= \frac{y^\upbnd x-\underline{xy}}{y^\upbnd-y^\lobnd}, \quad x^2 = \frac{\underline{xy}-y^\lobnd x}{y^\upbnd-y^\lobnd}, \quad \lambda^1 = \frac{y^\upbnd-y}{y^\upbnd-y^\lobnd}, \quad \lambda^2 = \frac{y-y^\lobnd}{y^\upbnd-y^\lobnd}.
\end{align}
Using the above relations, we substitute out $x^1$, $x^2$, $\lambda^1$ and $\lambda^2$ to obtain the convex hull description in the proposition. \qed

\section{Proof of Corollary \ref{prop:simultaneous-hull}}
\label{sec:ecom:hull-of-xg-xy}
Here, $x$ lies in the polytope $x^\lobnd \le x \le x^\upbnd$, whose extreme points are $x^\lobnd$ and $x^\upbnd$. Application of Proposition \ref{prop:poly-simul-hull} yields $\Conv(S) = \text{proj}_{(x,y,z,\underline{xy})} \{ (x,y,z,\underline{xy},w,y^1,y^2,w^1,w^2,\lambda^1,\lambda^2)  \} $
\begin{subequations}
	\begin{align}
	& w^1 \ge g^*(\lambda^1,y^1), \quad w^2 \ge g^*(\lambda^2,y^2),\\
	& w^i \le \lambda^i g(y^\lobnd) + \left[\frac{g(y^\upbnd)-g(y^\lobnd)}{y^\upbnd-y^\lobnd}\right](y^i-\lambda^iy^\lobnd),\quad i = 1,2,\\
	& \lambda^1y^\lobnd \le y^1 \le \lambda^1 y^\upbnd, \quad \lambda^2y^\lobnd \le y^2 \le \lambda^2 y^\upbnd,\\
	& z = x^\lobnd w^1 + x^\upbnd w^2, \quad \underline{xy} = x^\lobnd y^1+x^\upbnd y^2, \quad w = w^1+w^2,\\
	& y = y^1+y^2, \quad x=x^\lobnd\lambda^1+x^\upbnd \lambda^2, \lambda^1+\lambda^2 =1, \; \lambda^1,\lambda^2 \ge 0.
	\end{align}
\end{subequations}
We remove the equality $w=w^1+w^2$ to project out $w$. Solving the linear equations yields $\lambda^1 = (x^\upbnd-x)/(x^\upbnd-x^\lobnd)$, $\lambda^2 = (x-x^\lobnd)/(x^\upbnd-x^\lobnd)$, $y^1 = (x^\upbnd y-\underline{xy})/(x^\upbnd-x^\lobnd)$ and $y^2 = (\underline{xy}-x^\lobnd y)/(x^\upbnd-x^\lobnd)$. Using these equations, we substitute out auxiliary variables $y^1$, $y^2$, $\lambda^1$ and $\lambda^2$. Finally, eliminating variables $w^1$ and $w^2$ using Fourier-Motzkin elimination yields the convex hull description in the Proposition. The outer-approximation of the convex hull follows directly from Remark \ref{rem:outer-approx}. \qed

\section{Proof of Proposition \ref{prop:hull-of-H1}}
\label{sec:proof-of-H1}
We assume w.l.o.g that
\begin{equation}\label{eq:thetalo_assume}
\theta^\lobnd \le \alpha_1-F_1/H_1^\upbnd.
\end{equation}
Otherwise, we update $F_1=H_1^\upbnd(\alpha_1-\theta^\lobnd)$ (See Figure \ref{fig:H1hull}).

We begin by determining the extreme points of the convex hull of $\mathcal{H}_1$. When $\theta$ is restricted to $\overline{\theta} \in [\theta^\lobnd,\alpha_1]$, the set $\mathcal{H}_1=\{(f_1,\theta,H_1,\underline{f\theta}_1) \mid \; 0 \le f_1 \le \min\{F_1,H_1^\upbnd(\alpha_1-\overline{\theta})\}, \; \theta=\overline{\theta}, \; \underline{f\theta}_1 =f_1 \cdot \overline{\theta}, H_1 = \{f_1/(\alpha_1-\overline{\theta}), \text{ if }\overline{\theta} < \alpha_1; \; H_1 \in [0,H_1^\upbnd], \text{ if } \overline{\theta}=\alpha_1  \}   \}$ can be expressed as an affine transform of $0 \le f_1 \le \min\{ F_1, \; H_1^\upbnd(\alpha_1-\overline{\theta}) \}$ whose extreme points are $f_1 \in \{0,\min\{ F_1, \; H_1^\upbnd(\alpha_1-\overline{\theta}) \}\}$. Therefore, the extreme points of $\Conv(\mathcal{H}_1)$ are contained in the set of points where $f_1=0$, or $f_1 = F_1$ and $\theta^\lobnd \le \theta \le (\alpha_1-F_1/H_1^\upbnd)$, or $f_1=H_1^\upbnd(\alpha_1-\theta)$ and $(\alpha_1-F_1/H_1^\upbnd) \le \theta \le \alpha_1$ (see Figure \ref{fig:H1hull}). Let,
\begin{enumerate}
	\item $S^a$ be $\mathcal{H}_1$ restricted to $f_1=0$ \ie{} $S^a=\{ (f_1,\theta,H_1,\underline{f\theta}_1) \mid f_1 = 0,\; \theta^\lobnd \le \theta \le \alpha_1,\; \underline{f\theta}_1=0, \; H_1= 0 \text{ if }\theta<\alpha_1; \; H_1\in [0,H_1^\upbnd],\text{ if }\theta=\alpha_1 \}$ (see Figure \ref{fig:H1hull}).
	\item $S^b$ be $\mathcal{H}_1$ restricted to $f_1=H_1^\upbnd(\alpha_1-\theta)$ and $(\alpha_1-F_1/H_1^\upbnd) \le \theta \le \alpha_1$ \ie{} $S^b=\{ (f_1,\theta,H_1,\underline{f\theta}_1) \mid f_1=H_1^\upbnd(\alpha_1-\theta), \; (\alpha_1-F_1/H_1^\upbnd) \le \theta \le \alpha_1, \; H_1 = H_1^\upbnd, \; \underline{f\theta}_1 = H_1^\upbnd (\alpha_1-\theta)\theta \}$ (see Figure \ref{fig:H1hull}).
	\item $S^c$ be $\mathcal{H}_1$ restricted to $f_1 = F_1$ and $\theta^\lobnd \le \theta \le (\alpha_1-F_1/H_1^\upbnd)$ \ie{} $S^c = \{(f_1,\theta,H_1,\underline{f\theta}_1) \; |\; f_1=F_1, \; \theta^\lobnd \le \theta \le (\alpha_1-F_1/H_1^\upbnd), \; H_1 =F_1/(\alpha_1-\theta), \; \underline{f\theta}_1 = F_1 \cdot \theta \}$ (see Figure \ref{fig:H1hull}).
\end{enumerate}
\begin{figure}
	\centering
	\includegraphics[scale=0.9]{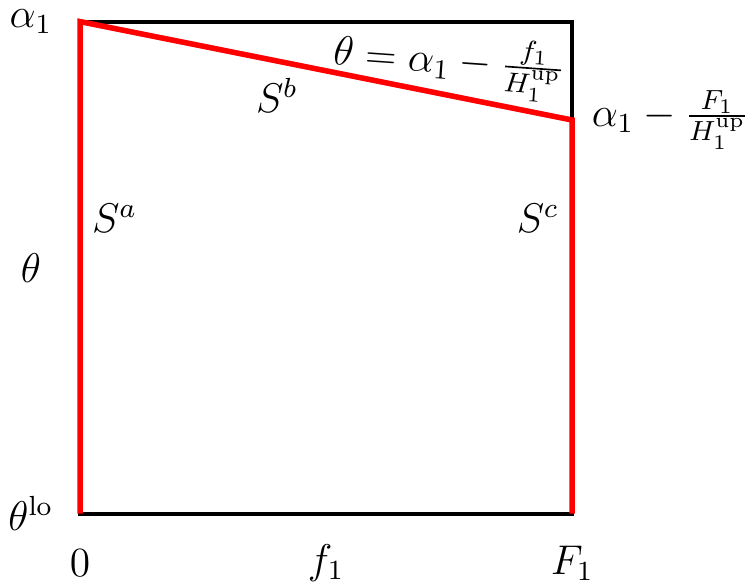}
	\caption{$(f_1,\theta)$ domain for \textsection \ref{sec:proof-of-H1}. The extreme points of $\Conv(\mathcal{H}_1)$ are contained in points in red. }
	\label{fig:H1hull}
\end{figure}

By Krein-Milman theorem, $\Conv(\mathcal{H}_1) = \Conv(S^a \cup S^b \cup S^c) = \Conv(\Conv(S^a)\cup \Conv(S^b)\cup \Conv(S^c))$, where
\begin{align}
& \Conv(S^a) = \left\{(f_1,\theta,H_1,\underline{f\theta}_1)\left|\begin{aligned}
& f_1 = 0, \; \underline{f\theta}_1 =0 \\
& 0 \le H_1 \le H_1^\upbnd\left( \frac{\theta-\theta^\lobnd}{\alpha_1-\theta^\lobnd} \right)\\
& \theta^\lobnd \le \theta \le \alpha_1
\end{aligned}\right.\right\}, \\
& \Conv(S^b) = \left\{(f_1,\theta,H_1,\underline{f\theta}_1)\left|\begin{aligned}
& f_1 = H_1^\upbnd(\alpha_1-\theta), H_1 = H_1^\upbnd\\
& H_1^\upbnd \left(\alpha_1-\frac{F_1}{H_1^\upbnd}\right) (\alpha_1-\theta) \le \underline{f\theta}_1 \\
& \underline{f\theta}_1 \le H_1^\upbnd\theta (\alpha_1-\theta)\\
& \left(\alpha_1-\frac{F_1}{H_1^\upbnd}\right) \le \theta\le  \alpha_1
\end{aligned}\right.\right\},\\
& \Conv(S^c) = \left\{(f_1,\theta,H_1,\underline{f\theta}_1)\left|\begin{aligned}
& f_1 = F_1, \; \underline{f\theta}_1=F_1 \cdot \theta\\
& \frac{F_1}{\alpha_1-\theta} \le H_1\\
& H_1 \le \frac{F_1}{\alpha_1-\theta^\lobnd} + \frac{H_1^\upbnd}{\alpha_1-\theta^\lobnd} (\theta-\theta^\lobnd)\\
& \theta^\lobnd \le \theta \le \left(\alpha_1-\frac{F_1}{H_1^\upbnd}\right)
\end{aligned}\right.\right\}.
\end{align}
Disjunctive union of $\Conv(S^a)$, $\Conv(S^b)$ and $\Conv(S^c)$ leads to \eqref{eq:prop:hull-of-H1}. \qed

\section{Relaxation of \eqref{eq:prop:hull-of-H1}}\label{sec:relax:hull-of-H1}
Since \eqref{eq:prop:hull-of-H1} introduces many variables, we derive a relaxation of $\Conv(\mathcal{H}_1)$ instead.
Let, $\overline{\theta}^r \in [\theta^\lobnd,\alpha_1]$ for $r=1,\dots,R$. First, we outer approximate $\Conv(S^b)$ and $\Conv(S^c)$ as shown below:
\begin{align*}
& \Conv_{OA}(S^2) = \left\{(f_1,\theta,H_1,\underline{f\theta}_1)\left|\begin{aligned}
& f_1 = H_1^\upbnd(\alpha_1-\theta), H_1 = H_1^\upbnd\\
& H_1^\upbnd \left(\alpha_1-\frac{F_1}{H_1^\upbnd}\right) (\alpha_1-\theta) \le \underline{f\theta}_1 \\
& \underline{f\theta}_1 \le \min\left\{ H_1^\upbnd\overline{\theta}^r (\alpha_1-\overline{\theta}^r)+H_1^\upbnd(\alpha_1-2\overline{\theta}^r)(\theta-\overline{\theta}^r)\right\}_{r=1}^R\\
& \left(\alpha_1-\frac{F_1}{H_1^\upbnd}\right) \le \theta\le  \alpha_1
\end{aligned}\right.\right\},\\
& \Conv_{OA}(S^3) = \left\{(f_1,\theta,H_1,\underline{f\theta}_1)\left|\begin{aligned}
& f_1 = F_1, \; \underline{f\theta}_1=F_1 \cdot \theta\\
& H_1 \ge \max \left\{\frac{F_1}{\alpha_1-\overline{\theta}^r} + \frac{F_1}{(\alpha_1-\overline{\theta}^r)^2} (\theta-\overline{\theta}^r) \right\}_{r=1}^R\\
& H_1 \le \frac{F_1}{\alpha_1-\theta^\lobnd} + \frac{H_1^\upbnd}{\alpha_1-\theta^\lobnd} (\theta-\theta^\lobnd)\\
& \theta^\lobnd \le \theta \le \left(\alpha_1-\frac{F_1}{H_1^\upbnd}\right)
\end{aligned}\right.\right\}.
\end{align*}
Next, we take the disjunctive union of $\Conv(S^a)$, $\Conv_{OA}(S^b)$ and $\Conv_{OA}(S^c)$ to obtain
\begin{subequations}
	\begin{align}
	& H_1 \ge H_1^\upbnd \lambda^b + \max \left\{\frac{F_1\lambda^c}{\alpha_1-\overline{\theta}^r} + \frac{F_1}{(\alpha_1-\overline{\theta}^r)^2} (\theta^3-\overline{\theta}^r\lambda^c) \right\}_{r=1}^R
	\label{eq:ecom:h1-1}\\
	& H_1 \le H_1^\upbnd \left(\frac{\theta^a-\theta^\lobnd\lambda^a}{\alpha_1-\theta^\lobnd}\right) + H_1^\upbnd \lambda^b + \frac{F_1\lambda^c}{\alpha_1-\theta^\lobnd} + \frac{H_1^\upbnd}{\alpha_1-\theta^\lobnd}(\theta^c-\theta^\lobnd \lambda^c)
	\label{eq:ecom:h1-2}\\
	& \underline{f\theta}_1 \ge H_1^\upbnd \left(\alpha_1-\frac{F_1}{H_1^\upbnd}\right)(\alpha_1\lambda^b-\theta^b)+ F_1\theta^c
	\label{eq:ecom:h1-3}\\
	& \underline{f\theta}_1 \le \min\left\{ H_1^\upbnd\overline{\theta}^r (\alpha_1-\overline{\theta}^r)\lambda^b+H_1^\upbnd(\alpha_1-2\overline{\theta}^r)(\theta^b-\overline{\theta}^r\lambda^b)\right\}_{r=1}^R+F_1\theta^c
	\label{eq:ecom:h1-4}\\
	& f_1=H_1^\upbnd(\alpha_1\lambda^b-\theta^b)+F_1\lambda^c
	\label{eq:ecom:h1-5}\\
	& \theta = \theta^a+\theta^b+\theta^c
	\label{eq:ecom:h1-6}\\
	& \theta^\lobnd \lambda^a \le \theta^a \le \alpha_1 \lambda^a
	\label{eq:ecom:h1-7}\\
	& \left(\alpha_1-\frac{F_1}{H_1^\upbnd}\right)\lambda^b \le \theta^b \le \alpha_1 \lambda^b
	\label{eq:ecom:h1-8}\\
	& \theta^\lobnd \lambda^c \le \theta^c \le \left(\alpha_1-\frac{F_1}{H_1^\upbnd}\right)\lambda^c
	\label{eq:ecom:h1-9}\\
	& \lambda^a+\lambda^b+\lambda^c = 1, \quad \lambda^a,\lambda^b,\lambda^c \ge 0
	\label{eq:ecom:h1-10}
	\end{align}
	\label{eq:ecom:h1}
\end{subequations}
In the following, we derive relaxed version of each inequality in terms of problem variables.
\begin{alignat*}{3}
\allowdisplaybreaks
&& H_1 &\ge H_1^\upbnd \lambda^b + \frac{F_1\lambda^c}{\alpha_1-\overline{\theta}^r} + \frac{F_1}{(\alpha_1-\overline{\theta}^r)^2} (\theta^c-\overline{\theta}^r\lambda^c)&\quad&\eqref{eq:ecom:h1-1}\\
&&&  =
\frac{(\alpha_1-2\overline{\theta}^r)f_1 + H_1^\upbnd\overline{\theta}^r (\alpha_1-\overline{\theta}^r)\lambda^b}{(\alpha_1-\overline{\theta}^r)^2}\\
&&&\mskip 30mu +\frac{H_1^\upbnd(\alpha_1-2\overline{\theta}^r)(\theta^2-\overline{\theta}^r\lambda^b)+F_1\theta^c}{(\alpha_1-\overline{\theta}^r)^2}&&\eqref{eq:ecom:h1-5}\\
&&& \ge \frac{(\alpha_1-2\overline{\theta}^r)f_1 + \underline{f\theta}_1}{(\alpha_1-\overline{\theta}^r)^2}&&\eqref{eq:ecom:h1-4}\\
&&&= \frac{f_1}{(\alpha_1-\overline{\theta}^r)}+\frac{1}{(\alpha_1-\overline{\theta}^r)^2} (\underline{f\theta}_1-\overline{\theta}^r f_1)&&\\
&&& = f_1 T_1(\overline{\theta}^r) + T_1'(\overline{\theta}^r)(\underline{f\theta}_1-\overline{\theta}^r f_1),\\
\\
&& H_1 &\le \frac{H_1^\upbnd(\theta^a-\theta^\lobnd \lambda^a)}{\alpha_1-\theta^\lobnd}\\
&&&\mskip 30mu+ \frac{H_1^\upbnd(\alpha_1\lambda^b-\theta^b+\theta^b-\theta^\lobnd\lambda^b)}{\alpha_1-\theta^\lobnd}\\
&&&\mskip 30mu+ \frac{F_1\lambda^c + H_1^\upbnd(\theta^c-\theta^\lobnd\lambda^c)}{\alpha_1-\theta^\lobnd}&&\eqref{eq:ecom:h1-2}\\
&&&= \frac{f_1}{\alpha_1-\theta^\lobnd} + H_1^\upbnd \left(\frac{\theta-\theta^\lobnd}{\alpha_1-\theta^\lobnd}\right)&&\eqref{eq:ecom:h1-5},\eqref{eq:ecom:h1-6},\eqref{eq:ecom:h1-10},\\
\\
&&\underline{f\theta}_1 &\ge  H_1^\upbnd \left(\alpha_1-\frac{F_1}{H_1^\upbnd}\right)(\alpha_1\lambda^b-\theta^b) + F_1\theta^c &&\eqref{eq:ecom:h1-3}\\
&&& = \mathrlap{\theta^\lobnd H_1^\upbnd(\alpha_1\lambda^b-\theta^b) + H_1^\upbnd \left(\alpha_1-\frac{F_1}{H_1^\upbnd}-\theta^\lobnd\right)(\alpha_1\lambda^b-\theta^b) + F_1\theta^c}\\
&&& = \theta^\lobnd f_1 + H_1^\upbnd \left(\alpha_1-\frac{F_1}{H_1^\upbnd}-\theta^\lobnd\right)(\alpha_1\lambda^b-\theta^b) \\
&&&\mskip 30mu + F_1 (\theta^c-\theta^\lobnd \lambda^c)&&\eqref{eq:ecom:h1-5},\eqref{eq:thetalo_assume}\\
&&&\ge \theta^\lobnd f_1&&\eqref{eq:ecom:h1-8},\eqref{eq:ecom:h1-9},\\
\\
&&\underline{f\theta}_1 &\ge   H_1^\upbnd \left(\alpha_1-\frac{F_1}{H_1^\upbnd}\right)(\alpha_1\lambda^b-\theta^b)+ F_1\theta^c && \eqref{eq:ecom:h1-3}\\
&&&= \alpha_1H_1^\upbnd (\alpha_1\lambda^b-\theta^b)-F_1(\alpha_1\lambda^b-\theta^b)+F_1\theta^c\\
&&&=\alpha_1(f_1-F_1\lambda^c)-F_1(\alpha_1\lambda^b-\theta^b) + F_1\theta^c&&\eqref{eq:ecom:h1-5}\\
&&& = \alpha_1f_1-\alpha_1F_1(1-\lambda^a)+F_1(\theta-\theta^a) &&\eqref{eq:ecom:h1-6},\eqref{eq:ecom:h1-10}\\
&&&\overset{}{\ge} \alpha_1f_1 +F_1 \theta -F_1\alpha_1&&\eqref{eq:ecom:h1-7},\\
\\
&&\underline{f\theta}_1 &\le  \Bigl\{H_1^\upbnd\overline{\theta}^r (\alpha_1-\overline{\theta}^r)\lambda^b\\
&&&\mskip 30mu +H_1^\upbnd(\alpha_1-2\overline{\theta}^r)(\theta^b-\overline{\theta}^r\lambda^b)\Bigr\}_{\overline{\theta}^r = \alpha_1} + F_1\theta^c&&\eqref{eq:ecom:h1-4}\\
&&& = \alpha_1H_1^\upbnd(\alpha_1\lambda^b-\theta^b) + F_1\theta^c\\
&&&= \alpha_1f_1 -\alpha_1F_1\lambda^c + F_1\theta^c&&\eqref{eq:ecom:h1-5}\\
&&&\le \alpha_1f_1&&\eqref{eq:ecom:h1-9},\\
\\
&&\underline{f\theta}_1 &\le
\Bigl\{
H_1^\upbnd\overline{\theta}^r (\alpha_1-\overline{\theta}^r)\lambda^b\\
&&&\mskip 30mu +H_1^\upbnd(\alpha_1-2\overline{\theta}^r)(\theta^b-\overline{\theta}^r\lambda^b)
\Bigr\}_{
	\overline{\theta}^r = \alpha-\frac{F_1}{H_1^\upbnd}
} + F_1\theta^c&&\eqref{eq:ecom:h1-4}\\
&&& =\mathrlap{F_1\left(\alpha_1-\frac{F_1}{H_1^\upbnd}\right) \lambda^b + \left( \alpha_1-2\frac{F_1}{H_1^\upbnd} \right)\left[H_1^\upbnd(\alpha_1\lambda^b-\theta^b)-F_1\lambda^b\right]+F_1\theta^c} \\
&&& =\mathrlap{\left( \alpha_1-\frac{F_1}{H_1^\upbnd} \right)\left[H_1^\upbnd(\alpha_1\lambda^b-\theta^b)-F_1\lambda^b\right]+F_1\theta^b+F_1\theta^c}\\
&&&\le \theta^\lobnd \left[H_1^\upbnd(\alpha_1\lambda^b-\theta^b)-F_1\lambda^b\right]+F_1\theta^b+F_1\theta^c&&
\eqref{eq:ecom:h1-8},\eqref{eq:thetalo_assume}
\\
&&& \le \theta^\lobnd\left[f_1-F_1\lambda^b-F_1\lambda^c\right] + F_1\theta^b+F_1\theta^c && \eqref{eq:ecom:h1-5}\\
&&&\le \theta^\lobnd\left[f_1-F_1\lambda^a-F_1\lambda^b-F_1\lambda^c\right]\\
&&&\mskip 30mu +F_1\theta^a+F_1\theta^b+F_1\theta^c&&
\eqref{eq:ecom:h1-7}
\\
&&&\le F_1\theta + \theta^\lobnd f_1  - F_1\theta^\lobnd&&
\eqref{eq:ecom:h1-6},\eqref{eq:ecom:h1-10},\\
\\
&&\theta &= \theta^a + \theta^b + \theta^c&&\eqref{eq:ecom:h1-6}\\
&&&\le \alpha_1\lambda^a + \theta^b + \left(\alpha_1 - \frac{F_1}{H_1^\upbnd}\right)\lambda^c
&&\eqref{eq:ecom:h1-7},\eqref{eq:ecom:h1-9}\\
&&&\le \alpha_1 - \frac{f_1}{H_1^\upbnd}
&&\eqref{eq:ecom:h1-5},\eqref{eq:ecom:h1-10}.\\
\end{alignat*}

\clearpage
\section{MIP Representations}
\label{sec:ecom-MIP-Representation}
Here, we present MIP representation of piecewise relaxations of sets $\mathcal{F}_1$ and $\mathcal{V}$. The piecewise relaxation of $\mathcal{F}_2$ can be expressed as a mixed-integer set in a similar manner. The derivation of these sets is provided in \textsection \ref{sec:ecom:mip-F1} and \textsection \ref{sec:ecom:mip-v}.
\MIPRepresentation{}

\section{Derivation of MIP Representation of Piecewise Relaxation of $\mathcal{F}_1$}
\label{sec:ecom:mip-F1}
Let the domain of Underwood root be partitioned as $\mathcal{I}=\{[\Theta^0,\Theta^1],\dots,[\Theta^{|\mathcal{I}|-1},\Theta^{\mathcal{|I|}}]\}$, such that $\alpha_2 = \Theta^0 \le \dots \le \Theta^{|\mathcal{I}|} = \alpha_1$. We express the piecewise relaxation of $\mathcal{F}_1$, given by $\bigcup_{t=1}^{|\mathcal{I}|-1}\Conv_{OA}(\mathcal{F}_{1,t}) \cup \mathcal{F}_{1,|\mathcal{I}|,\Relax}$, as the following disjunction:
\begin{align}
\bigvee_{t=1}^{|\mathcal{I}|-1} \left[ \begin{aligned}
& H_1^\rec \ge f_1^\rec T_1(\Theta^{t-1}) + T_1'(\Theta^{t-1}) (\underline{f\theta}_1^\rec - \Theta^{t-1} f_1^\rec)\\
& H_1^\rec \ge f_1^\rec T_1(\Theta^{t}) + T_1'(\Theta^{t}) (\underline{f\theta}_1^\rec - \Theta^{t} f_1^\rec)\\
& H_1^\strip \ge f_1^\strip T_1(\Theta^{t-1}) + T_1'(\Theta^{t-1}) (\underline{f\theta}_1^\strip - \Theta^{t-1} f_1^\strip)\\
& H_1^\strip \ge f_1^\strip T_1(\Theta^{t}) + T_1'(\Theta^{t}) (\underline{f\theta}_1^\strip - \Theta^{t} f_1^\strip)\\
& H_1^\rec \le f_1^\rec T_1(\Theta^{t-1}) + \left[\frac{T_1(\Theta^{t-1}) -T_1(\Theta^{t}) }{\Theta^{t-1}-\Theta^t}\right] (\underline{f\theta}_1^\rec - \Theta^{t-1} f_1^\rec)\\
& H_1^\strip \le f_1^\strip T_1(\Theta^{t-1}) + \left[\frac{T_1(\Theta^{t-1}) -T_1(\Theta^{t}) }{\Theta^{t-1}-\Theta^t}\right] (\underline{f\theta}_1^\strip - \Theta^{t-1} f_1^\strip)\\
& (F_1-f_1^\feed) \Theta^{t-1}\le  (F_1\theta-\underline{f\theta}_1^\feed) \le (F_1-f_1^\feed) \Theta^t\\
& f_1^\rec \Theta^{t-1} \le \underline{f\theta}_1^\rec \le f_1^\rec \Theta^t, \quad f_1^\strip \Theta^{t-1} \le \underline{f\theta}_1^\strip \le f_1^\strip \Theta^t\\
& H_1^\feed = H_1^\rec + H_1^\strip, \quad \underline{f\theta}_1^\feed =\underline{f\theta}_1^\rec + \underline{f\theta}_1^\strip, \quad f_1^\feed = f_1^\rec + f_1^\strip
\end{aligned}\right]\notag\\
\bigvee_{t=|\mathcal{I}|} \left[\begin{aligned}
& H_1^\rec \ge f_1^\rec T_1(\Theta^{t-1}) + T_1'(\Theta^{t-1}) (\underline{f\theta}_1^\rec - \Theta^{t-1} f_1^\rec)\\
& H_1^\strip \ge f_1^\strip T_1(\Theta^{t-1}) + T_1'(\Theta^{t-1}) (\underline{f\theta}_1^\strip - \Theta^{t-1} f_1^\strip)\\
& H_1^\feed \ge f_1^\feed T_1(\Theta^{t-1}) + T_1'(\Theta^{t-1}) (\underline{f\theta}_1^\feed - \Theta^{t-1} f_1^\feed)\\
& H_1^\rec \le \frac{f_1^\rec}{\alpha_1-\Theta^{t-1}} + (H_1^\rec)^\upbnd \left[\frac{\theta-\Theta^{t-1}}{\alpha_1-\Theta^{t-1}}\right] \\
& H_1^\strip \le \frac{f_1^\strip}{\alpha_1-\Theta^{t-1}} + (H_1^\strip)^\upbnd \left[\frac{\theta-\Theta^{t-1}}{\alpha_1-\Theta^{t-1}}\right]\\
& H_1^\feed \le \frac{f_1^\feed}{\alpha_1-\Theta^{t-1}} + (H_1^\feed)^\upbnd \left[\frac{\theta-\Theta^{t-1}}{\alpha_1-\Theta^{t-1}}\right]\\
& (F_1-f_1^\feed) \Theta^{t-1}\le  (F_1\theta-\underline{f\theta}_1^\feed) \le (F_1-f_1^\feed) \Theta^t\\
& f_1^\rec \Theta^{t-1} \le \underline{f\theta}_1^\rec \le f_1^\rec \Theta^t, \quad f_1^\strip \Theta^{t-1} \le \underline{f\theta}_1^\strip \le f_1^\strip \Theta^t\\
& H_1^\feed = H_1^\rec + H_1^\strip, \quad \underline{f\theta}_1^\feed =\underline{f\theta}_1^\rec + \underline{f\theta}_1^\strip, \quad f_1^\feed = f_1^\rec + f_1^\strip\;\quad\;\;
\end{aligned} \right].
\end{align}
In $\Conv_{OA}(\mathcal{F}_{1,t})$, we choose the extreme points of the partition, $\overline{\theta} = \Theta^{t-1}$ and $\overline{\theta}=\Theta^t$, for linearization; and in $\mathcal{F}_{1,|\mathcal{I}|,\Relax}$, we choose only $\overline{\theta}=\Theta^{t-1}$ since $T_1(\cdot)$ is not defined at $\overline{\theta}=\Theta^{|\mathcal{I}|}$. In order to derive an MIP representation that is reasonable in size, we make the following simplifications to the set $\mathcal{F}_{1,|\mathcal{I}|,\Relax}$. First, observe that the third inequality in $\mathcal{F}_{1,|\mathcal{I}|,\Relax}$ is implied from the first two inequalities and $H_1^\feed = H_1^\rec+H_1^\strip$, so we drop it from the set. Next, if $(H_1^\feed)^\upbnd > (H_1^\rec)^\upbnd + (H_1^\strip)^\upbnd$,
we reduce $(H_1^\feed)^\upbnd$ to $(H_1^\rec)^\upbnd + (H_1^\strip)^\upbnd$ because of fourth and fifth inequalities and $H_1^\feed = H_1^\rec+H_1^\strip$. Otherwise, we relax the sixth inequality by letting $(H_1^\feed)^\upbnd = (H_1^\rec)^\upbnd + (H_1^\strip)^\upbnd$. Then, the sixth inequality is implied from the fourth and fifth inequalities, so we drop it from the set. Next, using disjunctive programming techniques, we obtain
\begin{subequations}
	\begin{align}
	& H_{1,t}^\rec \ge \max\left\{f_{1,t}^\rec T_1(\Theta^{t-1}) + T_1'(\Theta^{t-1}) (\underline{f\theta}_{1,t}^\rec - \Theta^{t-1} f_{1,t}^\rec),\right. \notag\\
	& \hspace{2.5cm}\left. f_{1,t}^\rec T_1(\Theta^{t}) + T_1'(\Theta^{t}) (\underline{f\theta}_{1,t}^\rec - \Theta^{t} f_{1,t}^\rec)\right\}, & & \quad \llbracket t \rrbracket_{1}^{|\mathcal{I}|-1}\\
	& H_{1,t}^\rec \ge f_{1,t}^\rec T_1(\Theta^{t-1}) + T_1'(\Theta^{t-1}) (\underline{f\theta}_{1,t}^\rec - \Theta^{t-1} f_{1,t}^\rec),& & \quad t = |\mathcal{I}|\\
	& H_{1,t}^\strip \ge \max\left\{f_{1,t}^\strip T_1(\Theta^{t-1}) + T_1'(\Theta^{t-1}) (\underline{f\theta}_{1,t}^\strip - \Theta^{t-1} f_{1,t}^\strip),\right. \notag\\
	& \hspace{2.5cm}\left. f_{1,t}^\strip T_1(\Theta^{t}) + T_1'(\Theta^{t}) (\underline{f\theta}_{1,t}^\strip - \Theta^{t} f_{1,t}^\strip)\right\},& & \quad \llbracket t \rrbracket_{1}^{|\mathcal{I}|-1}\\
	& H_{1,t}^\strip \ge f_{1,t}^\strip T_1(\Theta^{t-1}) + T_1'(\Theta^{t-1}) (\underline{f\theta}_{1,t}^\strip - \Theta^{t-1} f_{1,t}^\strip), & & \quad t = |\mathcal{I}|\\
	& H_{1,t}^\rec \le  f_{1,t}^\rec T_1(\Theta^{t-1}) + \left[\frac{T_1(\Theta^{t}) -T_1(\Theta^{t-1}) }{\Theta^{t}-\Theta^{t-1}}\right] (\underline{f\theta}_{1,t}^\rec - \Theta^{t-1} f_{1,t}^\rec), & &\quad \llbracket t \rrbracket_{1}^{|\mathcal{I}|-1}\\
	& H_{1,t}^\rec \le \frac{f_{1,t}^\rec}{\alpha_1-\Theta^{t-1}} + (H_1^\rec)^\upbnd \left[\frac{\theta_{t}-\Theta^{t-1}\mu_t}{\Theta^{t}-\Theta^{t-1}}\right], & & \quad t = |\mathcal{I}|\\
	& H_{1,t}^\strip \le f_{1,t}^\strip T_1(\Theta^{t-1}) + \left[\frac{T_1(\Theta^{t}) -T_1(\Theta^{t-1}) }{\Theta^{t}-\Theta^{t-1}}\right] (\underline{f\theta}_{1,t}^\strip - \Theta^{t-1} f_{1,t}^\strip), & & \quad \llbracket t \rrbracket_{1}^{|\mathcal{I}|-1}\\
	& H_{1,t}^\strip \le \frac{f_{1,t}^\strip}{\alpha_1-\Theta^{t-1}} + (H_1^\strip)^\upbnd \left[\frac{\theta_{t}-\Theta^{t-1}\mu_t}{\Theta^{t}-\Theta^{t-1}}\right], & & \quad t = |\mathcal{I}|\\
	& (F_1\mu_t-f_{1,t}^\feed) \Theta^{t-1}\le  (F_1\theta_t-\underline{f\theta}_{1,t}^\feed) \le (F_1\mu_t-f_{1,t}^\feed) \Theta^t, & & \quad \llbracket t \rrbracket_{1}^{|\mathcal{I}|} \\
	& f_{1,t}^\rec \Theta^{t-1} \le \underline{f\theta}_{1,t}^\rec \le f_{1,t}^\rec \Theta^t, \quad f_{1,t}^\strip \Theta^{t-1} \le \underline{f\theta}_{1,t}^\strip \le f_{1,t}^\strip \Theta^t, & & \quad \llbracket t \rrbracket_{1}^{|\mathcal{I}|}\\
	& H_{1,t}^\feed = H_{1,t}^\rec + H_{1,t}^\strip, \quad \underline{f\theta}_{1,t}^\feed =\underline{f\theta}_{1,t}^\rec + \underline{f\theta}_{1,t}^\strip, \quad f_{1,t}^\feed = f_{1,t}^\rec + f_{1,t}^\strip, & & \quad \llbracket t \rrbracket_{1}^{|\mathcal{I}|}\\
	& H_1^\feed = \sum_{t=1}^{|\mathcal{I}|} H_{1,t}^\feed, \quad H_1^\rec = \sum_{t=1}^{|\mathcal{I}|} H_{1,t}^\rec, \quad H_1^\strip = \sum_{t=1}^{|\mathcal{I}|} H_{1,t}^\strip,\\
	& \underline{f\theta}_1^\feed = \sum_{t=1}^{|\mathcal{I}|} \underline{f\theta}_{1,t}^\feed, \quad \underline{f\theta}_1^\rec = \sum_{t=1}^{|\mathcal{I}|} \underline{f\theta}_{1,t}^\rec, \quad \underline{f\theta}_1^\strip = \sum_{t=1}^{|\mathcal{I}|} \underline{f\theta}_{1,t}^\strip, \\
	& f_1^\feed = \sum_{t=1}^{|\mathcal{I}|} f_{1,t}^\feed, \quad f_1^\rec = \sum_{t=1}^{|\mathcal{I}|} f_{1,t}^\rec, \quad f_1^\strip = \sum_{t=1}^{|\mathcal{I}|} f_{1,t}^\strip, \quad \theta = \sum_{t=1}^{|\mathcal{I}|} \theta_t,\\
	& \sum_{t=1}^{|\mathcal{I}|} \mu_t = 1, \quad  \mu_t \ge 0, \quad \llbracket t \rrbracket_{1}^{|\mathcal{I}|}
	\end{align}
\end{subequations}
Here, $\mu_t$ are the convex multipliers in disjunctive progamming, and variables with subscript $t$ are to be regarded as linearizations of products of the corresponding variables with $\mu_t$. For example, $\theta^\feed_{1,t}$ linearizes $\theta^\feed\mu_t$. To control the problem size, we project out $H_{1,t}^\feed$, $\underline{f\theta}_{1,t}^\feed$ and $f_{1,t}^\feed$ variables by substitution. Next, we eliminate $H_{1,t}^\rec$ and $H_{1,t}^\strip$ variables using Fourier-Motzkin. This leads to

\begin{subequations}\label{eq:proof-mip-F1}
	\begin{align}
	& H_1^\rec \ge\sum_{t=1}^{|\mathcal{I}|-1} \max\left\{f_{1,t}^\rec T_1(\Theta^{t-1}) + T_1'(\Theta^{t-1}) (\underline{f\theta}_{1,t}^\rec - \Theta^{t-1} f_{1,t}^\rec), \right. \notag\\
	& \hspace{1.5cm} \left. f_{1,t}^\rec T_1(\Theta^{t}) + T_1'(\Theta^{t}) (\underline{f\theta}_{1,t}^\rec - \Theta^{t} f_{1,t}^\rec)\right\} + f_{1,t}^\rec T_1(\Theta^{t-1}) + T_1'(\Theta^{t-1}) (\underline{f\theta}_{1,t}^\rec - \Theta^{t-1} f_{1,t}^\rec)
	\label{eq:proof-mip-F1-1}\\
	& H_1^\strip \ge \sum_{t=1}^{|\mathcal{I}|-1} \max\left\{f_{1,t}^\strip T_1(\Theta^{t-1}) + T_1'(\Theta^{t-1}) (\underline{f\theta}_{1,t}^\strip - \Theta^{t-1} f_{1,t}^\strip),\right. \notag\\
	& \hspace{1.5cm} \left. f_{1,t}^\strip T_1(\Theta^{t}) + T_1'(\Theta^{t}) (\underline{f\theta}_{1,t}^\strip - \Theta^{t} f_{1,t}^\strip)\right\} + f_{1,t}^\strip T_1(\Theta^{t-1}) + T_1'(\Theta^{t-1}) (\underline{f\theta}_{1,t}^\strip - \Theta^{t-1} f_{1,t}^\strip)
	\label{eq:proof-mip-F1-2}\\
	& \eqref{eq:mip-F1-5} - \eqref{eq:mip-F1-11}
	\end{align}
\end{subequations}
Now, we observe that each linear function in \eqref{eq:proof-mip-F1-1} and \eqref{eq:proof-mip-F1-2} is nonnegative. For example, consider $f_{1,t}^\rec T_1(\Theta^{t-1}) + T_1'(\Theta^{t-1}) (\underline{f\theta}_{1,t}^\rec - \Theta^{t-1} f_{1,t}^\rec)$ in \eqref{eq:proof-mip-F1-1}. Here, $f_{1,t}^\rec T_1(\Theta^{t-1}) \ge 0$, $T_1'(\Theta^{t-1}) \ge 0$, and $(\underline{f\theta}_{1,t}^\rec - \Theta^{t-1} f_{1,t}^\rec) \ge 0$ (see \eqref{eq:mip-F1-8}). We use this observation, and relax \eqref{eq:proof-mip-F1-1} and \eqref{eq:proof-mip-F1-2} to \eqref{eq:mip-F1-1}--\eqref{eq:mip-F1-4}. 
Finally, we require the solution to lie in a single partition by imposing integrality constraint on $\mu_t$ variables.

\section{Derivation of MIP representation of Piecewise Relaxation of $\mathcal{V}$}
\label{sec:ecom:mip-v}
For convenience, we replace \eqref{eq:conv-Vth-1} and \eqref{eq:conv-Vth-2} in $\Conv(\mathcal{V})$ with $U^\rec-U^\strip= \Upsilon^\rec-\Upsilon^\strip$ and $\underline{U\theta}^\rec - \underline{U\theta}^\strip = \underline{\Upsilon\theta}^\rec - \underline{\Upsilon\theta}^\strip$. Note that this still captures $\Conv(\mathcal{V})$, since the former can be derived by a linear combination of the latter. Next, we use disjunctive programming to construct the convex hull of piecewise relaxation of $\mathcal{V} = \bigcup_{t=1}^{|\mathcal{I}|} \Conv(\mathcal{V}_t) $
\begin{subequations}\label{eq:vmip-proof1}
	\begin{align}
	& U^\rec_t-U^\strip_t= \Upsilon^\rec_t-\Upsilon^\strip_t, & & \llbracket t \rrbracket_{1}^{|\mathcal{I}|}
	\label{eq:vmip-proof1-1}\\
	&\underline{U\theta}^\rec_t -\underline{U\theta}^\strip_t = \underline{\Upsilon\theta}^\rec_t - \underline{\Upsilon\theta}^\strip_t, & & \llbracket t \rrbracket_{1}^{|\mathcal{I}|}
	\label{eq:vmip-proof1-2}\\
	& 0 \le \underline{(\cdot)\theta}_t - \Theta^{t-1} (\cdot)_t \le (\cdot)^\upbnd (\theta_t-\Theta^{t-1} \mu_t), \quad \forall \; (\cdot) \in \{U^\rec,U^\strip,\Upsilon^\rec,\Upsilon^\strip\}, & & \llbracket t \rrbracket_{1}^{|\mathcal{I}|}
	\label{eq:vmip-proof1-3}\\
	& 0 \le \Theta^t (\cdot)_t - \underline{(\cdot)\theta}_t \le (\cdot)^\upbnd (\theta^\upbnd \mu_t - \theta_t), \quad \forall \; (\cdot) \in \{U^\rec,U^\strip,\Upsilon^\rec,\Upsilon^\strip\}, & & \llbracket t \rrbracket_{1}^{|\mathcal{I}|}\\
	& \underline{(\cdot)\theta} = \sum_{t=1}^{|\mathcal{I}|} \underline{(\cdot)\theta}_t, \quad (\cdot) = \sum_{t=1}^{|\mathcal{I}|}(\cdot)_t, \quad \forall \; (\cdot) \in \{U^\rec,U^\strip,\Upsilon^\rec,\Upsilon^\strip\}
	\label{eq:vmip-proof-5}\\
	&\sum_{t=1}^{|\mathcal{I}|}\theta_t = \theta,\quad \sum_{t=1}^{|\mathcal{I}|} \mu_t = 1, \quad \mu_t \ge 0, \; \llbracket t \rrbracket_{1}^{|\mathcal{I}|}
	\end{align}
\end{subequations}
Here, $\mu_t$ are disjunctive programming variables, and variables $\underline{U\theta}^\rec_t$, $U^\rec_t$ are to be regarded as the linearizations of $\underline{U\theta}^\rec \cdot \mu_t$, $U^\rec \cdot \mu_t$, respectively. To the above, we append the redundant constraint $\underline{U\theta}^\rec -\underline{U\theta}^\strip = \underline{\Upsilon\theta}^\rec - \underline{\Upsilon\theta}^\strip$, which is derived by adding all the equations in \eqref{eq:vmip-proof1-1}, and using \eqref{eq:vmip-proof-5}. Then, we relax \eqref{eq:vmip-proof1} by discarding all the equations in \eqref{eq:vmip-proof1-2}. Next, we eliminate variables of the form $\underline{U\theta}_t$ and $\theta_t$ in the following manner. For notational convenience, we present the elimination process assuming we have three partitions. Consider
\begin{subequations}
	\begin{align}
	& 0 \le \underline{U\theta}_t -\Theta^{t-1} U_t \le U^\upbnd (\theta_t-\Theta^{t-1} \mu_t), \quad t=1,2,3\\
	& 0 \le \Theta^t U_t - \underline{U\theta}_t \le U^\upbnd (\Theta^t\mu_t - \theta_t) , \quad t = 1,2,3\\
	& \underline{U\theta} = \underline{U\theta}_1 + \underline{U\theta}_2 + \underline{U\theta}_3, \quad \theta = \theta_1+\theta_2+\theta_3
	\end{align}
\end{subequations}
First, we substitute out $\underline{U\theta}_1$ by  $\underline{U\theta} - \underline{U\theta}_2 - \underline{U\theta}_3$. Then, we rearrange the inequalities governing $\underline{U\theta}_2$ in the following manner:
\begin{align}
\left.\begin{alignedat}{2}
-(\Theta^0 U_1 - \underline{U\theta} + \underline{U\theta}_3)-U^\upbnd(\theta_1-\Theta^0\mu_1) &\le \underline{U\theta}_2 &&\le  -(\Theta^0 U_1 - \underline{U\theta} + \underline{U\theta}_3)\\
-(\Theta^1U_1-\underline{U\theta}+\underline{U\theta}_3) &\le \underline{U\theta}_2 &&\le  U^\upbnd(\Theta^1\mu_1-\theta_1) -(\Theta^1U_1-\underline{U\theta}+\underline{U\theta}_3)\\
\Theta^1 U_2 &\le \underline{U\theta}_2 &&\le U^\upbnd (\theta_2 - \Theta^1 \mu_2) + \Theta^1U_2\\
\Theta^2U_2-U^\upbnd(\Theta^2\mu_2-\theta_2) & \le \underline{U\theta}_2 &&\le \Theta^2 U_2
\end{alignedat}\right\}
\label{eq:proof-mip-v}
\end{align}
Now, we eliminate $\underline{U\theta}_2$ using Fourier-Motzkin. We write (L1R3) to denote first inequality from the left hand side, and third inequality from the right hand side.
\begin{alignat*}{2}
\textnormal{(L1R1) and (L2R2):} &\quad&&\Theta^0 \mu_1 \le \theta_1  \le \Theta^1\mu_1\\
\textnormal{(L2R1) and (L1R2):} &&&0 \le U_1 \le \mu_1 U^\upbnd\\
\textnormal{(L3R3) and (L4R4):} &&&\Theta^1 \mu_2 \le \theta_2  \le \Theta^2\mu_2\\
\textnormal{(L3R4) and (L4R3):} &&& 0 \le U_2 \le \mu_2 U^\upbnd\\
\textnormal{(L1R3) and (L3R1):} &&& -(\Theta^0 U_1 +\Theta^1 U_2-\underline{U\theta})-U^\upbnd(\theta_1+\theta_2-\Theta^0\mu_1-\Theta^1\mu_2)  \\
&&&\mskip 30mu \le \underline{U\theta}_3 \le  -(\Theta^0U_1+\Theta^1U_2 -\underline{U\theta}) \\
\textnormal{(L2R4) and (L4R2):} &&& -(\Theta^1U_1+\Theta^2U_2-\underline{U\theta}) \\
&&&\mskip 30mu \le \underline{U\theta}_3 \le U^\upbnd (\Theta^1\mu_1 + \Theta^2\mu_2 -\theta_1-\theta_2) - (\Theta^1U_1-\Theta^2U_2 - \underline{U\theta})\\
\textnormal{(L1R4) and (L4R1):} &&& -(\Theta^0U_1-\underline{U\theta}) -\Theta^2U_2 - U^\upbnd(\theta_1-\Theta^0 \mu_1)\\
&&&\mskip 30mu  \le \underline{U\theta}_3 \le -(\Theta^0U_1 - \underline{U\theta}) - \Theta^2U_2 + U^\upbnd(\Theta^2\mu_2-\theta_2)  \\
\textnormal{(L2R3) and (L3R2):} &&& -(\Theta^1U_1 - \underline{U\theta}) - \Theta^1 U_2-U^\upbnd(\theta_2-\Theta^1\mu_2)\\
&&&\mskip 30mu \le \underline{U\theta}_3 \le U^\upbnd(\Theta^1\mu_1-\theta_1)-\Theta^1U_2 -(\Theta^1U_1-\underline{U\theta})
\end{alignat*}
We relax the set by discarding inequalities obtained from (L1R4), (L4R1), (L2R3) and (L3R2). The inequalities obtained from (L1R3), (L3R1), (L2R4) and (L4R2) have the same form as the four inequalities in \eqref{eq:proof-mip-v}. As before, we eliminate $\underline{U\theta}_3$ using Fourier-Motzkin, and discard inequalities obtained from (L1R4), (L4R1), (L2R3) and (L3R2). This leads to
\begin{subequations}
	\begin{align}
	& 0\le \underline{U\theta}-\sum_{t=1}^3 \Theta^{t-1}U_t\le U^\upbnd\left(\theta - \sum_{t=1}^3 \Theta^{t-1}\mu_t\right)
	\label{eq:vmip-proof2-1}\\
	& 0 \le \sum_{t=1}^3 \Theta^{t}U_t - \underline{U\theta} \le U^\upbnd\left( \sum_{t=1}^3 \Theta^{t}\mu_t - \theta\right)
	\label{eq:vmip-proof2-2}\\
	& \theta = \sum_{t=1}^3 \theta_t, \quad \Theta^{t-1}\mu_t \le \theta_t \le \Theta^t \mu_t, \quad 0 \le U_t \le U^\upbnd \mu_t, \quad t = 1,2,3
	\label{eq:vmip-proof2-3}
	\end{align}
\end{subequations}
In this manner, we eliminate all variables of the form $\underline{(\cdot)\theta}_t$ from \eqref{eq:vmip-proof1}. Then, we eliminate all $\theta_t$ variables, which are now constrained only by \eqref{eq:vmip-proof2-3}, using Fourier-Motzkin. This leads to $\sum_{t=1}^{|\mathcal{I}|} \Theta^{t-1}\mu^t \le \theta \le \sum_{t=1}^{|\mathcal{I}|} \Theta^{t}\mu^t$. Since it is implied from \eqref{eq:vmip-proof2-1} and \eqref{eq:vmip-proof2-2}, we do not impose it explicitly. Finally, we require the solution to lie in a single partition by imposing integrality constraint on $\mu_t$ variables.

\section{Test Set}
\label{sec:ecom:test-set}
The test set for computational experiments is borrowed from \citet{giridhar2010a}. The current state-of-the-art methods can handle design problem involving four components. However, they are often unable to scale to five components, which are practically relevant and remains challenging. In this study, we focus on five component separations, i.e., $N=5$.

The parameter settings are generated in the following manner. For every $a\in \{1,\dots, 2^N-1\}$, we first construct $N-$digit binary representation of $a$, denoted as $\bin(a)$. Let $\bin(a)(p)$ deonte the $p$\textsuperscript{th} digit of $\bin(a)$. We define two sets: $\mathcal{D}_0 = \{p:\;\bin(a)(p)=0\}$ and $\mathcal{D}_1 = \{p:\;\bin(a)(p)=1\}$. $\bin(a)(p)=0$ indicates that component $p$ is lean in the mixture, and its composition is set to 5\%. On the other hand, $\bin(a)(p) = 1$ indicates that component $p$ is abundant in the mixture. We consider the case, where all abundant components are present in equal proportions. Therefore, for a given $a$, the feed composition $\{F_p^a\}_{p=1}^N$ is obtained as
\begin{gather}
F_p^a = \begin{cases}
5 & \text{ if } p\in \mathcal{D}_0\\
\frac{100-5\times|\mathcal{D}_0|}{|\mathcal{D}_1|} & \text{ if } p \in \mathcal{D}_1
\end{cases}  \quad \forall \quad p \in \{1,\dots,N\}
\end{gather}
In a similar manner, for every $b\in \{0,\dots,2^{N-1}-1\}$, we first construct $(N-1)-$digit binary representation of $b$. Here, $\bin(b)(p) = 0$ (resp. $\bin(b)(p) = 1$) indicates that the separation between component $p$ and $p+1$ is easy (resp. difficult). We take relative volatility value of 2.5 and 1.1 for an easy and difficult separation, respectively. For a given $b$, expressing all relative volatilities w.r.t to the heaviest component, we have $\alpha_N^b = 1$ and
\begin{gather}
\alpha_p^b = \prod_{q=p}^{N-1} \left[2.5\; (1-\bin(b)(q)) + 1.1\; \bin(b)(q)\right] \quad \forall \quad p \in \{1,\dots,N-1\}
\end{gather}
The parameter settings for $\texttt{Case(a,b)}$ are then given by $N=5$, $\{F_p^a\}_{p=1}^N$, $\{\alpha_p^b\}_{p=1}^N$, $\Phi_{1,N}=\Phi_{1,1}=\dots=\Phi_{N,N}=1$. Since $a\in \{1,\dots,2^N-1\}$ and $b\in\{0,\dots,2^{N-1}-1\}$, total number of cases in the test set is $ (2^5-1)\times 2^4 = 496$.

%
%
%
%

\end{document}